\newcommand{\F}{\mathbb{F}}
\def\Ind#1#2{#1\setbox0=\hbox{$#1x$}\kern\wd0\hbox to 0pt{\hss$#1\mid$\hss}
\lower.9\ht0\hbox to 0pt{\hss$#1\smile$\hss}\kern\wd0}
\def\Notind#1#2{#1\setbox0=\hbox{$#1x$}\kern\wd0\hbox to 0pt{\mathchardef
\nn="3236\hss$#1\nn$\kern1.4\wd0\hss}\hbox to 0pt{\hss$#1\mid$\hss}\lower.9\ht0
\hbox to 0pt{\hss$#1\smile$\hss}\kern\wd0}
\def\indi{\mathop{\mathpalette\Ind{}}}
\theoremstyle{plain}
\newtheorem{theorem}{Theorem}[section]
\newtheorem*{theorem*}{Theorem}
\newtheorem{prop}[theorem]{Proposition}
\newtheorem{fact}[theorem]{Fact}
\newtheorem*{fact*}{Fact}
\newtheorem{lemma}[theorem]{Lemma}
\newtheorem{cor}[theorem]{Corollary}
\newtheorem{claim}[theorem]{Claim}
\theoremstyle{definition}
\newtheorem{defn}[theorem]{Definition}
\newtheorem{remark}[theorem]{Remark}
\newtheorem{problem}[theorem]{Problem}
\newtheorem{conj}[theorem]{Conjecture}
\newcommand{\M}{\mathcal{M}}
\newcommand{\Span}{\operatorname{Span}}
\newcommand{\ded}{\operatorname{ded}}
\newcommand{\qftp}{\operatorname{qftp}}
\newcommand{\RCF}{\operatorname{RCF}}
\newcommand{\ACF}{\operatorname{ACF}}
\newcommand{\Aut}{\operatorname{Aut}}
\newcommand{\ring}{\operatorname{ring}}
\newcommand{\lex}{\operatorname{lex}}
\newcommand{\IP}{\operatorname{IP}}
\newcommand{\opg}{\operatorname{opg}}
\newcommand{\op}{\operatorname{op}}
\newcommand{\Th}{\operatorname{Th}}
\newcommand{\VC}{\operatorname{VC}}
\newcommand{\tp}{\operatorname{tp}}
\newcommand{\ten}{\otimes}
\newcommand{\Ten}{\bigotimes}
\newcommand{\Bf}{$\langle - , - \rangle$\ }
\newcommand{\Bi}{$\langle - , -  \rangle_2$\ }
\newcommand{\Bli}{\langle - , - \rangle_2\ }
\newcommand{\Bn}{$\langle - ,\ldots , - \rangle_n$\ }
\newcommand{\Cn}{$[-,\ldots , -]_n$\ }
\newcommand{\Bln}{\langle -,\dots , - \rangle_n }
\newcommand{\Cln}{[ - ,\ldots , -]_n }
\newcommand{\Alt}{\operatorname{Alt}}
\newcommand{\Sym}{\operatorname{Sym}}
\newcommand{\st}{\,|\,}
\newcommand{\id}{\operatorname{id}}
\newcommand{\C}{\mathbb{M}}
\newcommand{\VS}{\operatorname{VS}}
\newcommand{\ind}{\operatorname{\indi}}
\newcommand{\Odd}{\operatorname{Odd}}
\newcommand{\Ev}{\operatorname{Even}}
\newcommand{\set}[1]{\left[#1\right]}
\newcommand{\dist}{\operatorname{dist}}
\DeclareMathOperator{\sign}{sign}
\numberwithin{equation}{section}
\title[On n-dependent groups and fields III]{On n-dependent groups and fields III. Multilinear forms and  invariant connected components}
\author{Artem Chernikov and Nadja Hempel}
\begin{document}

\maketitle
\begin{abstract}
We develop some model theory of multi-linear forms, generalizing Granger in the bi-linear case. In particular, after proving a quantifier elimination result, we show that for an NIP field $K$, the theory of infinite dimensional non-degenerate alternating $n$-linear spaces over $K$ is strictly $n$-dependent;  and it is NSOP$_1$ if $K$ is. This relies on a new Composition Lemma for functions of arbitrary arity and NIP relations (which in turn relies on certain higher arity generalizations of Sauer-Shelah lemma). We also study the invariant connected components $G^{\infty}$ in $n$-dependent groups, demonstrating their relative absoluteness in the abelian case.
\end{abstract}

\tableofcontents

\section{Introduction}

In this article we study groups, fields and related structures satisfying
a model-theoretic tameness condition called \emph{$n$-dependence}, for $n \in \mathbb{N}$, continuing \cite{hempel2016n, chernikov2019mekler, chernikov2021n}, as well as develop further the pure theory of $n$-dependent structures and some related combinatorics, contributing to the emerging \emph{higher classification theory}.
 The class of $n$-dependent theories was introduced by Shelah in \cite{shelah2014strongly}, with the $1$-dependent (or just dependent) case corresponding to the class of \emph{NIP
theories} that has attracted a lot of attention recently. Roughly
speaking, $n$-dependence of a theory guarantees that the edge relation of an infinite generic $(n + 1)$-hypergraph is not definable in its models (see Definition 2.1). For $n \geq 2$, we say
that a theory is strictly $n$-dependent if it is $n$-dependent, but not $(n-1)$-dependent.
Basic properties of $n$-dependent theories are investigated in \cite{chernikov2019n}, where in particular the numeric parameter \emph{$\VC_n$-dimension}  for families of subsets of $n$-fold product spaces (whose finiteness characterizes $n$-dependence in the same way as finiteness of $\VC$-dimension characterizes NIP \cite{laskowski1992vapnik}) is defined and investigated quantitatively, including a higher arity version of Sauer-Shelah lemma for VC$_n$ dimension \cite[Proposition 3.9]{chernikov2019n} (where the case $n=1$ corresponds to the usual Sauer-Shelah lemma and VC-dimension, see Section \ref{sec: fin type count}).
Since then, the study of VC$_n$-dimension, or \emph{higher VC-theory}, has found deep connections to hypergraph combinatorics \cite{chernikov2020hypergraph, terry2021irregular, terry2021higher, chernikov2024perfect}.

In Section \ref{sec: multilinear forms}, we develop some basic model theory of multilinear forms,  viewed as structures with two sorts for the vector space and for the field, generalizing (and correcting) some influential work of Granger \cite{granger1999stability} in the case $n=2$.
 In Section \ref{sec: non-degen} we consider possible generalizations of \emph{non-degeneracy} from bilinear forms to $n$-linear forms, for an arbitrary $n \in \mathbb{N}_{\geq 2}$. Several notions of non-degeneracy for multi-linear forms appear in the literature, see e.g.~\cite{gelfand2009discriminants, hitchin2001stable, kazhdan2020properties}, but differ from the one that we consider here and that works well in the alternating case.  We consider multilinear spaces as two-sorted structures $(V,K)$ in the language $\mathcal{L}^{K}_{\theta,f}$ containing the field language on $K$, functions for vector addition on $V$ and scalar multiplication on $V \times K$, function $\langle -, \ldots, - \rangle_n : V^n \to K$ for the $n$-linear form, an $m$-ary relation symbol $\theta_m(v_1, \ldots, v_m)$ expressing that $v_1, \ldots, v_m \in V$ are $K$-linearly independent for each $m$, and for each $p$ and $i \leq p$, a function $ f_i^{p}(v; v_1, \dots, v_p) = \lambda_i$ if $v_1, \ldots, v_p$ are linearly independent and $v = \sum_{i=1}^{p} \lambda_i v_i$ for some $\lambda_i \in K$ (see Definition \ref{def: language of bilin forms}).  Our main model theoretic result is the following:
\begin{theorem*}[Theorem \ref{thm: QE for multilinear forms}]
	For any $n \geq 1$ and field $K$, the theory $\prescript{}{\Alt}T^K_n$ of infinite dimensional alternating non-degenerate $n$-linear spaces over $K$ has quantifier elimination in the language $\mathcal{L}^{K}_{\theta,f}$,  and is complete. If $K$ is finite, then $\prescript{}{\Alt}T^K_n$ is $\omega$-categorical (Remark \ref{rem: VS complete and omega-cat}).
\end{theorem*}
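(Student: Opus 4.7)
The plan is to prove QE by the standard substructure-extension test: for any two models $M_1, M_2 \models \prescript{}{\Alt}T^K_n$ with $M_2$ sufficiently saturated, any partial $\mathcal{L}^K_{\theta,f}$-isomorphism $f: A \to B$ between finitely generated substructures of $M_1$ and $M_2$ is extended by one additional element of $M_1$ at a time; iterating (with back-and-forth across both sides) yields the desired embedding $M_1 \hookrightarrow M_2$, hence QE. An analysis of the language shows that a finitely generated substructure takes the form $(V_0, K_0)$, where $K_0 \leq K(M_1)$ is a finitely generated subfield containing all form values $\langle v_{i_1}, \ldots, v_{i_n}\rangle_n$ on tuples from $V_0$, and $V_0 \leq V(M_1)$ is a finite-dimensional $K_0$-subspace; the coordinate functions $f_i^p$ and independence predicates $\theta_m$ make this closure automatic.

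The extension step splits into two cases. Adding a new field element $a \in K(M_1) \setminus K_0$ produces only field-sort atomic constraints, so by saturation of $M_2$ it reduces to realizing $\qftp(a/K_0)$ in $K(M_2)$, handled as a standard field-theoretic move (absolute if the language includes constants for every element of $K$, or relative to field formulas otherwise). Adding a new vector $v \in V(M_1) \setminus V_0$ is the substantive case: its quantifier-free type over $A$ is controlled by (i) the $K_0$-linear dependence pattern of $v$ relative to a basis $v_1, \ldots, v_k$ of $V_0$, recorded by $\theta_m$ and $f_i^p$, and (ii) the tuple of form values $\langle v, v_{i_2}, \ldots, v_{i_n}\rangle_n$ for $2 \leq i_2 < \cdots < i_n \leq k$, which assemble into an alternating $(n-1)$-linear functional $\alpha_v: V_0^{n-1} \to K_0$.

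The main obstacle is producing a vector $v' \in V(M_2)$, linearly independent from $f(V_0)$, whose contraction $\langle v', -, \ldots, -\rangle_n$ restricted to $f(V_0)^{n-1}$ equals $f_* \alpha_v$. This is precisely where the non-degeneracy notion developed in Section \ref{sec: non-degen} is essential: it must be formulated so that every alternating $(n-1)$-linear functional on a finite-dimensional subspace of $V(M_2)$ arises as a contraction $\langle u, -, \ldots, -\rangle_n$ for some $u \in V(M_2)$. Once this surjectivity of the dual pairing is in hand (a finite-dimensional linear-algebraic consequence of the definition), the infinite-dimensionality of $V(M_2)$ allows $u$ to be shifted by a vector in the kernel of the relevant contraction map to land outside $f(V_0)$, and $\omega$-saturation of $M_2$ handles the realization of the full quantifier-free type over $A$.

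Completeness of $\prescript{}{\Alt}T^K_n$ then follows formally from QE: any two models share a common prime substructure (for instance $(\{0\}, K)$ when $K$ is named by constants), forcing the same complete theory. For $\omega$-categoricity when $K$ is finite, invoke Ryll--Nardzewski: by QE, the number of complete $n$-types over $\emptyset$ equals the number of quantifier-free $n$-types, which is finite because each such type is determined by a finite dependence pattern together with a tuple of form values in the finite set $K$.
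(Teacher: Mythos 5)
Your proposal follows essentially the same route as the paper's proof: a back-and-forth/embedding argument over finitely generated substructures (which, exactly as in the paper's auxiliary lemmas, are spans over finitely generated subfields closed under the form values, with the $f_i^p$ and $\theta_p$ making this automatic), where the field step is handled via quantifier elimination/saturation on the field sort and the vector step is handled by precisely the paper's key Lemma \ref{L:findingw} --- surjectivity of the contraction pairing onto duals of finite-dimensional subspaces (genericity, which by Lemma \ref{L:NonDegiffGeneric} is equivalent to non-degeneracy in infinite dimension, a point that does use infinite-dimensionality rather than pure finite-dimensional linear algebra) together with a kernel shift to keep the new vector independent of the given finite-dimensional subspace. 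The only cosmetic differences are that the paper runs a symmetric back-and-forth between two $\omega$-saturated models and obtains $\omega$-categoricity for finite $K$ directly by back-and-forth between countable models, whereas you embed into one saturated model and invoke Ryll--Nardzewski via counting quantifier-free types; these are equivalent in substance.
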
 
\noindent In the bilinear case, Granger \cite[Corollary 9.2.3]{granger1999stability} incorrectly claimed quantifier elimination in a smaller language without the coordinate functions $f_i^p$. A corrected language was suggested by Dugald Macpherson who pointed out the error, by analogy with Delon's quantifier elimination in pairs of algebraically closed fields \cite{delon2012elimination} (see also the discussion in \cite{met2023sets} in the paragraph after Definition 2.7).
While this paper was in preparation, a more general version of the $\omega$-categorical case  was developed using Fra\"iss\'e constructions in \cite{harman2024ultrahomogeneous} (see also related papers \cite{bik2022isogeny, harman2024tensor, neretin2023oligomorphic}); and the bilinear case of Theorem \ref{thm: QE for multilinear forms} was also considered in \cite{abd2023higher}.

Bilinear forms play an important role in the study of $n$-dependent theories up to date, and in fact until recently all  known
``algebraic'' examples of strictly $n$-dependent theories with $n \geq 2$ looked like bilinear forms over \emph{finite} fields. Namely, it was observed in  \cite{hempel2016n}  that the theory of a bilinear form on an infinite dimensional vector
space over a finite field is strictly $2$-dependent; smoothly approximable structures \cite{cherlin2003finite} are $2$-dependent (see \cite[Example 2.2(4)]{chernikov2019n})
and coordinatizable via bilinear forms over finite fields; and the strictly $n$-dependent
pure groups constructed in \cite{chernikov2019mekler} using Mekler's construction are essentially of this form as
well, using Baudisch's interpretation of Mekler's construction in alternating bilinear maps \cite{baudisch2002mekler}. In the previous paper, we demonstrated that finite fields can be replaced by arbitrary NIP fields for $n=2$:

\begin{fact}\label{fac: bilin 2-dep}
\cite[Theorem 6.3]{chernikov2021n}
Let $T$ be a theory of  bilinear forms on infinite dimensional vector spaces over $K$  eliminating quantifiers in the language $\mathcal{L}^{K}_{\theta,f}$. Then $T$ is $2$-dependent if and only if $\Th(K)$ is dependent.
\end{fact}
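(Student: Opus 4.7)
The plan is to prove both implications by leveraging the quantifier elimination in $\mathcal{L}^K_{\theta,f}$ to obtain a manageable normal form for formulas, and then handle each direction by a different style of argument. For the direction ``$T$ is $2$-dependent $\Rightarrow \Th(K)$ is NIP'', I would argue by contraposition. Suppose $\phi(x;\bar y)$ is a field formula witnessing IP, providing field elements $(a_{(i,j)})_{(i,j) \in \omega^2}$ and parameter tuples $(\bar b_S)_{S \subseteq \omega^2}$ with $K \models \phi(a_{(i,j)};\bar b_S) \iff (i,j) \in S$. In a sufficiently saturated model of $T$, choose bases $(e_i)_{i<\omega}$ and $(f_j)_{j<\omega}$ of two infinite-dimensional subspaces and arrange $\langle e_i,f_j\rangle = a_{(i,j)}$ with all intra-subspace pairings vanishing; the realizability of such a configuration follows from universality (a consequence of QE together with back-and-forth). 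Then $\psi(v,w;\bar y) := \phi(\langle v,w\rangle;\bar y)$ witnesses IP$_2$ via $(e_i), (f_j), (\bar b_S)$, so $T$ is not $2$-dependent.

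For the harder direction $\Th(K)$ NIP $\Rightarrow T$ is $2$-dependent, I would use QE to reduce to a quantifier-free formula $\phi(\bar v^{(0)};\bar v^{(1)};\bar v^{(2)})$ with three tuples of vector variables. Each atomic subformula is either a linear-independence predicate $\theta_m$ or a field-atomic formula applied to scalar terms built from $\langle -,-\rangle$ and the coordinate functions $f_i^p$. The key observation is that since the bilinear form is binary, every scalar term appearing in $\phi$ depends on variables from \emph{at most two} of the three tuples; after moving parameters aside, $\phi$ can be put in the form
$\chi\bigl(F_{01}(\bar v^{(0)},\bar v^{(1)}),\, F_{02}(\bar v^{(0)},\bar v^{(2)}),\, F_{12}(\bar v^{(1)},\bar v^{(2)})\bigr)$,
where $\chi$ is a formula in the pure field $K$ and each $F_{ij}$ is a definable function into a Cartesian power of $K$ depending on only two of the three tuples.

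The concluding step is a composition argument. Assuming for contradiction that $\phi$ witnesses IP$_2$, Ramsey for hypergraphs and mutual indiscernibility produce a strongly indiscernible $\omega$-cube realizing the IP$_2$ pattern; the values of the three $2$-ary functions $F_{ij}$ on this cube form field-indexed arrays, and by freezing one of the three groups one extracts a classical IP pattern for $\chi$ in $K$, contradicting NIP of $K$. The linear-independence predicates $\theta_m$ stabilize on the strongly indiscernible array and contribute no additional IP$_2$. The main obstacle is making this transfer robust: coordinate functions may introduce dependence on an auxiliary basis that must be absorbed into parameters, and one must ensure that the extracted IP pattern in $K$ genuinely lives within a single ``slice'' rather than smearing across many values of the $F_{ij}$. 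This is precisely the content of a composition lemma of the kind the paper develops (in higher arity) to handle the analogous reduction for $n \geq 2$.
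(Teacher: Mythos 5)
There is a genuine gap in your reduction for the direction ``$\Th(K)$ NIP $\Rightarrow$ $T$ is $2$-dependent'', namely in the claimed normal form. Quantifier elimination is in the language $\mathcal{L}^{K}_{\theta,f}$, so a quantifier-free formula in three groups of vector variables contains not only field-atomic formulas in terms built from $\langle -,-\rangle$, but also the linear-independence predicates $\theta_m$ and, crucially, the coordinate functions $f_i^p(v;v_1,\dots,v_p)$. These are \emph{not} binary: a single scalar term $f_i^p(v;v_1,\dots,v_p)$ can take vector arguments from all three groups, so your ``key observation'' that every scalar term depends on at most two of the three tuples is false, and the asserted shape $\chi\bigl(F_{01},F_{02},F_{12}\bigr)$ with $\chi$ a pure field formula does not hold. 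The correct decomposition (this is the content of the term-reduction step, Fact \ref{fac: term reduction}, used in the proof of Theorem \ref{thm: Granger}) keeps the $f_i^p$'s and $\theta_m$'s \emph{inside} an outer formula of the two-sorted vector-space reduct $\mathcal{L}^{K}_{\VS}$, and extracts as ``inner'' functions only the form values $\langle x^V_{i},x^V_{j}\rangle$, which indeed depend on at most two groups, together with the variables themselves. But then the Composition Lemma must be applied with the outer structure being $T^{K}_{\VS,\infty}$, not $\Th(K)$, and this requires the nontrivial additional ingredient that the theory of infinite-dimensional vector spaces over an NIP field is NIP (in this paper Corollary \ref{cor: VS's are NIP}, via the pairs-of-fields results). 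Your proposal silently bypasses this input by pretending the outer formula lives in the pure field; the $\theta_m$'s cannot be dismissed by saying they ``stabilize on the indiscernible array'', since they (and the $f_i^p$'s) are part of the relation being composed, not decoration. Relatedly, the final step of ``freezing one group'' to extract an IP pattern for $\chi$ in $K$ is not a routine Ramsey argument: the witness varies with the subset being cut out, and making the transfer work is exactly the Composition Lemma, which is a substantive theorem (Sections \ref{sec: fin type count}--\ref{sec: comp lemma proof}); citing it as a black box is legitimate (the quoted Fact \ref{fac: bilin 2-dep} is proved that way in \cite{chernikov2021n}), but then your argument must feed it the correct decomposition described above.

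For the converse direction (``$T$ $2$-dependent $\Rightarrow$ $\Th(K)$ NIP''), your construction of vectors with $\langle e_i,f_j\rangle=a_{(i,j)}$ is not a consequence of quantifier elimination and infinite-dimensionality alone: realizing an arbitrary prescribed matrix of values is precisely \emph{genericity} of the form (Definition \ref{def: non-degen}(3), equivalent to non-degeneracy in infinite dimension by Lemma \ref{L:NonDegiffGeneric}), and it fails for degenerate forms. This matches how the paper states the strictness clause in Theorem \ref{thm: Granger} (``strictly $n$-dependent if the form is generic''), so you should make the non-degeneracy/genericity hypothesis explicit at this point rather than attributing the configuration to a back-and-forth/universality argument.
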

\noindent The two main ingredients were Granger's quantifier elimination for bilinear forms, and what we called the \emph{Composition Lemma} (\cite[Theorem 5.12]{chernikov2021n}) showing that the composition of a relation definable in an NIP structure with \emph{arbitrary} binary functions remains $2$-dependent.

In Section \ref{sec: Composition lemma} we obtain a generalization of the Composition Lemma for functions of arbitrary arity:
\begin{theorem*}[Composition Lemma, Theorem \ref{Composition Lemma}]
	Let $\mathcal{M}$ be an $\mathcal{L}'$-structure such that its reduct to a sublanguage $\mathcal{L} \subseteq \mathcal{L}'$ is NIP. Let $d,k \in \mathbb{N}$, $\varphi(x_1, \ldots, x_d)$  be an $\mathcal{L}$-formula, and $(y_0, \ldots, y_{k})$ be arbitrary $k+1$ tuples of variables. For each $1 \leq t \leq d$, let $0 \leq i^{t}_{1}, \ldots, i^{t}_k \leq k$ be arbitrary, and let $f_t: M^{y_{i^t_{1}}} \times \ldots \times M^{y_{i^{t}_{k}}} \to M^{x_t}$ be an arbitrary $\mathcal{L}'$-definable $k$-ary function. Then the $\mathcal{L}'$-formula 
	$$\psi\left( y_0; y_1, \ldots, y_k \right) := \varphi \left(f_1 \left(y_{i^1_1}, \ldots, y_{i^1_k} \right), \ldots, f_d \left(y_{i^d_1}, \ldots, y_{i^d_k} \right) \right)$$
	 is $k$-dependent in $\mathcal{M}$.
\end{theorem*}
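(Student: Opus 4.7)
The plan is to reduce to a Sauer-Shelah-type counting argument that exploits NIP of $\varphi$ together with the structural constraint that each $f_t$, being $k$-ary while $\psi$ has $k+1$ variable groups, must miss at least one group. Concretely, for each $t \in [d]$ pick $m_t \in \{0, 1, \ldots, k\} \setminus \{i^t_1, \ldots, i^t_k\}$ (which exists by pigeonhole), partition $[d] = D_0 \sqcup \cdots \sqcup D_k$ via $m_t$, and bundle $\vec{g}_m(\vec{y}_{\neq m}) := (f_t)_{t \in D_m}$ into a tuple-valued $\mathcal{L}'$-definable function avoiding $y_m$. Reordering the variables of $\varphi$ according to this partition gives an $\mathcal{L}$-formula $\tilde{\varphi}(z_0; z_1; \ldots; z_k)$ (still NIP, hence $k$-dependent with finite $\VC_k$-dimension) such that
\[
\psi(y_0; y_1, \ldots, y_k) = \tilde{\varphi}\bigl(\vec{g}_0(y_1, \ldots, y_k);\, \vec{g}_1(y_0, y_2, \ldots, y_k);\, \ldots;\, \vec{g}_k(y_0, y_1, \ldots, y_{k-1})\bigr),
\]
with each slot of $\tilde{\varphi}$ avoiding exactly one of the variable groups of $\psi$.

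I would then argue by contradiction: if $\psi$ is not $k$-dependent, then for each $N$ there exist $a^j_i \in M_{y_j}$ ($j \in [k]$, $i \in [N]$) and $b_s \in M_{y_0}$ ($s \subseteq [N]^k$) with $\mathcal{M} \models \psi(b_s; a^1_{i_1}, \ldots, a^k_{i_k}) \iff \vec{i} \in s$. Setting $\vec{u}_{\vec{i}} := \vec{g}_0(a^1_{i_1}, \ldots, a^k_{i_k})$ and $\vec{v}^l_{s, \vec{i}_{\widehat{l}}} := \vec{g}_l\bigl(b_s, (a^{j}_{i_j})_{j \neq l}\bigr)$ for $l \in [k]$, the shattering translates to
\[
\mathcal{M} \models \tilde{\varphi}\bigl(\vec{u}_{\vec{i}};\, \vec{v}^1_{s, \vec{i}_{\widehat{1}}};\, \ldots;\, \vec{v}^k_{s, \vec{i}_{\widehat{k}}}\bigr) \iff \vec{i} \in s,
\]
encoding $2^{N^k}$ distinct trace patterns via the NIP formula $\tilde{\varphi}$.

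The final step is to derive a contradiction from finiteness of the $\VC_k$-dimension of $\tilde{\varphi}$ using the higher-arity Sauer-Shelah inequality (\cite[Proposition 3.9]{chernikov2019n}). The structural constraints that $\vec{u}$ does not depend on $s$ and each $\vec{v}^l$ does not depend on $i_l$ force the image sets to be of polynomial (rather than exponential) size in $N$ along each coordinate direction, which should in turn force the number of realizable traces of $\tilde{\varphi}$ on such a configuration to be sub-exponential in $N^k$, contradicting the $2^{N^k}$ lower bound.

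The main obstacle is executing this counting cleanly. The parameterizations of the $\tilde{\varphi}$-slots do not align directly with the standard $\VC_k$-shattering setup: $\vec{u}$ is indexed by $\vec{i}$ (not $s$), and each $\vec{v}^l$ is jointly indexed by $s$ and $\vec{i}_{\widehat{l}}$. The right approach is likely to first pass to a mutually indiscernible subarray of $(a^j_i)$ via a standard Ramsey-type extraction (stabilizing combinatorial profiles), and then iterate the higher Sauer-Shelah inequality along the $k$ coordinate directions, using at each step that $\vec{v}^l$ misses $i_l$ to keep the $l$-th image set polynomial in $N$. This would generalize to arbitrary $k$ the delicate counting underlying the proof of the $k=2$ case in \cite[Theorem 5.12]{chernikov2021n}.
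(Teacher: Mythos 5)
Your first step---grouping the $f_t$'s according to a variable group they omit and rewriting $\psi$ as $\tilde{\varphi}$ composed with $k$-ary functions $\vec{g}_0,\ldots,\vec{g}_k$, each avoiding exactly one $y_m$---is exactly the paper's opening reduction (there the bundles are $f'_r$ over the sets $S_r$ and the reshuffled formula is $\varphi'$). The gap is in your final counting step. You assert that finiteness of the $\VC_k$-dimension of $\tilde{\varphi}$, together with the structural constraints that $\vec{u}$ ignores $s$ and $\vec{v}^l$ ignores $i_l$, forces the number of realizable trace patterns on $[N]^k$ to be subexponential in $N^k$, "by iterating the higher Sauer--Shelah inequality along the $k$ coordinate directions." This is precisely the content of the paper's Array Shattering Lemma (Lemma \ref{lem: comp lemma induction}$(2)_k$), and it is not a formal consequence of Fact \ref{fac: shatter}: higher Sauer--Shelah bounds the traces cut on a fixed box by instances $\tilde{\varphi}(\cdot;\vec{c})$ of a single formula with one parameter tuple varying freely, whereas here each slot of $\tilde{\varphi}$ is fed an entire \emph{array} of parameters, and the arrays $\vec{v}^l_{s,\vec{i}_{\widehat{l}}}$ depend on $s$, so "the $l$-th image set is polynomial in $N$" fails once $s$ varies---which is exactly the regime producing the $2^{N^k}$ patterns you need to rule out. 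Passing to a mutually indiscernible subarray of the $a^j_i$'s does not by itself repair this.

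Note also that the paper's proof of the missing lemma uses NIP of the \emph{whole} $\mathcal{L}$-reduct, not merely NIP of $\varphi$: the induction interleaves a "dynamic" statement $(1)_k$ (with a moving index ranging over a long extra sequence and an interval extracted via the finitary type-counting criterion, Proposition \ref{prop: type count}) with $(2)_k$, and the step $(2)_k\Rightarrow(1)_{k+1}$ applies UDTFS-based shrinking on indiscernible arrays (Fact \ref{fac: UDTFS}, Lemma \ref{lem: fin shrinking}) to auxiliary formulas $\xi_{\varphi,n,\eta}$ that existentially quantify over whole arrays $z^t$ and conjoin exponentially many instances of $\varphi$; these auxiliary formulas are NIP only because the ambient reduct theory is. If your sketch could be carried out using only finite $\VC_k$-dimension of $\tilde{\varphi}$ as stated, it would in particular answer the paper's open question of whether NIP of the formula $\varphi$ alone suffices in the Composition Lemma. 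So the architecture you propose is the right one, but the heart of the argument---how to control the simultaneous dependence of the slot parameters on $s$ and on the indices---is missing and is where all the work of Sections \ref{sec: array shrink NIP}--\ref{sec: array shattering} goes.
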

\noindent Our proof of this theorem for $n = 2$ in \cite{chernikov2021n} utilized an infinite type-counting criterion for $2$-dependence involving the function $\ded(\kappa)$ and types realized \emph{cofinally often} over mutually indiscernible sequences (\cite[Section 5.1]{chernikov2021n}), combined with a set-theoretic absoluteness argument. There does not seem to be a natural way to generalize it to higher $n$.
Here we give a purely combinatorial proof for arbitrary $n \in \omega$. First we provide some preliminaries on (finitary versions of) shrinking of indiscernible sequences and generalize it to indiscernible arrays in NIP theories in Section \ref{sec: array shrink NIP} (utilizing uniform definability of types over finite sets in NIP theories   \cite{chernikov2015externally}). In Section \ref{sec: n-dep and gen indisc} we  recall basic properties of $n$-dependent formulas and a characterization of $n$-dependence in terms of  generalized indiscernibles from \cite{chernikov2019n}. In Section \ref{sec: fin type count} we provide a finitary type-counting criterion for $n$-dependence using the aforementioned generalization of Sauer-Shelah lemma for $\VC_n$-dimension from  \cite{chernikov2019n}, modeled on the infinitary type-counting criterion for $2$-dependence from  \cite{chernikov2021n}: 
\begin{theorem*} [see Proposition \ref{prop: type count} for a precise statement] A formula $\varphi(x; y_1, \ldots, y_k)$ is $k$-dependent if and only if given any tuple $b \in \mathbb{M}^x$ and finite mutually indiscernible sequences $I_1, \ldots, I_{k-1}, I_k$ with $I_1, \ldots, I_{k-1}$ of length $n$ and $I_k$ much longer, there is a large interval $J$ of $I_k$ with fewer than maximal possible number (more precisely, $\leq 2^{n^{k - 1 - \varepsilon}}$) of $\varphi(y)$-types over $b, I_1, \ldots, I_{k-1}, I_k$ realized in $J$.
\end{theorem*}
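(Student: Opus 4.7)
The plan is to prove both directions of the criterion by translating the $\varphi(y)$-type count into an application of the higher arity Sauer--Shelah lemma for $\VC_k$-dimension (\cite[Proposition 3.9]{chernikov2019n}) recalled in this section. For fixed $b$ and mutually indiscernible $I_1,\ldots,I_k$, the $\varphi(y)$-type realized by $a\in J\subseteq I_k$ is naturally identified with its trace
\[
S_a := \bigl\{(c_1,\ldots,c_{k-1})\in I_1\times\cdots\times I_{k-1} : \models \varphi(b;c_1,\ldots,c_{k-1},a)\bigr\} \subseteq I_1\times\cdots\times I_{k-1},
\]
which takes at most $2^{n^{k-1}}$ values.

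For the forward direction, assume $\varphi$ is $k$-dependent. Invoking the symmetry of $n$-dependence under permutations of variable blocks, the partitioned formula $\varphi^*(y_k;x,y_1,\ldots,y_{k-1})$ obtained by promoting $y_k$ to the distinguished variable is again $k$-dependent, hence has finite $\VC_k$-dimension. The number of distinct traces $\{S_a : a\in \mathbb{M}^{y_k}\}$ equals the dual shatter function of $\varphi^*$ evaluated on the grid $\{b\}\times I_1\times\cdots\times I_{k-1}$ of sizes $(1,n,\ldots,n)$, and applying the higher arity Sauer--Shelah bound to $\varphi^*$ yields at most $2^{c\, n^{(k-1)(1-\delta)}}$ such traces for constants $c$ and $\delta>0$ depending only on $\varphi$ and $k$. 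Choosing $\varepsilon < (k-1)\delta$ and $n$ large enough, this is bounded by $2^{n^{k-1-\varepsilon}}$, independently of $|I_k|$, hence on every sub-interval $J$.

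For the reverse direction, I would argue by contraposition. If $\varphi$ is not $k$-dependent then for every $n$ there exist $A_1,\ldots,A_{k-1}$ of size $n$ and $A_k$ of size $2^{n^{k-1}}$ together with a shattering family $\{b_T\}_{T\subseteq A_1\times\cdots\times A_k}$ of $\varphi$. Pre-designing $T$ so that its $A_k$-fibers enumerate all $2^{n^{k-1}}$ subsets of $A_1\times\cdots\times A_{k-1}$, the witness $b:=b_T$ realizes every one of the $2^{n^{k-1}}$ possible traces in $A_k$. A standard iterated Ramsey / generalized indiscernible extraction (using the characterization of $k$-dependence through ordered indiscernible arrays from Section \ref{sec: n-dep and gen indisc}) then converts this shattered configuration into mutually indiscernible sequences $(I_1,\ldots,I_k)$ together with $b$ preserving the shattering pattern, so that the maximum count $2^{n^{k-1}}$ persists in every large sub-interval $J\subseteq I_k$. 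The hard part of the argument will be the forward direction: one must correctly apply the higher Sauer--Shelah bound on the degenerate grid of aspect $(1,n,\ldots,n)$ and verify the symmetry passage from $\varphi$ to $\varphi^*$; the sub-interval localization built into the statement, though immediate here, will be crucial for iterated applications in the proof of the Composition Lemma in Section \ref{sec: Composition lemma}.
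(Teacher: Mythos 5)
Your forward direction has a genuine gap. Passing to $\varphi^*(y_k;x,y_1,\ldots,y_{k-1})$ and noting it is still $k$-dependent (Fact \ref{fac: props of n-dependent formulas}(2)) is fine, but the higher Sauer--Shelah lemma (Fact \ref{fac: shatter}) does not give the bound you claim on the grid $\{b\}\times I_1\times\cdots\times I_{k-1}$: applied to the family of traces $\{S_a\}$ on the $(k-1)$-dimensional grid $I_1\times\cdots\times I_{k-1}$, a count $\geq 2^{n^{k-1-\varepsilon}}$ only produces a shattered $d$-box \emph{inside that grid}, i.e.\ a box whose $x$-side is the singleton $\{b\}$. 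Shattering such a degenerate box is just IP$_{k-1}$-type behaviour of the single instance $\varphi(b;\cdot)$, which is perfectly compatible with $k$-dependence of $\varphi$ and contradicts nothing. In fact your intermediate claim that $|\{S_a : a\in\mathbb{M}^{y_k}\}|\leq 2^{c\,n^{(k-1)(1-\delta)}}$ is false: take $k=2$, the non-degenerate alternating bilinear form over $\mathbb{F}_2$, $\varphi(x;y_1,y_2):=\langle y_1,y_2\rangle_2=0$ with $x$ a dummy variable, and $I_1$ linearly independent of length $n$; by genericity (Lemma \ref{L:NonDegiffGeneric}) every subset of $I_1$ is a trace $S_a$, giving $2^n$ traces, while $\varphi$ is $2$-dependent (Fact \ref{fac: bilin 2-dep}). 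To contradict $k$-dependence one must shatter a box with all $k$ sides large, i.e.\ one must vary the parameter $b$ --- and your argument never does. This is precisely the role of the two ingredients you do not use: in the paper's proof of Proposition \ref{prop: type count} one partitions $I_k$ into $n^d$ intervals, assumes each realizes $\geq 2^{n^{k-1-\varepsilon}}$ types, extracts via Sauer--Shelah and pigeonhole a single $d_0$-box shattered by $d_0$ many of these intervals, and then, for each pattern $R$, chooses one witness per interval (so that the witnesses come in the prescribed order and with the needed multiplicities) and moves the whole configuration together with $b$ by an automorphism, using \emph{mutual indiscernibility}, onto a fixed box --- producing the varying parameters $b_R$ and hence a shattered $k$-dimensional box. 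Accordingly, your remark that the bound holds ``independently of $|I_k|$, hence on every sub-interval'' with the localization ``immediate'' asserts something strictly stronger than the proposition, and it is exactly that stronger statement which fails; the conclusion is only that \emph{some} large interval has few types.

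Your reverse direction is closer in spirit to the paper's, but as sketched it under-specifies the key design: to refute $(\dagger)_{f,\varepsilon}$ for a given $f$ the $I_k$-part must consist of $2f(n)$ consecutive blocks, each enumerating all $2^{n^{k-1}}$ traces, so that \emph{every} interval of length $\geq |I_k|/f(n)-1$ contains a full block (this is the hypergraph $R^{k-1}_{d,n}$ of Claim \ref{cla: bad hypergraph}); enumerating each trace once does not make the maximal count persist on every large sub-interval. Moreover, the passage to mutually indiscernible sequences preserving the exact pattern should go through the $G_{k,p}$-indiscernible witness of Fact \ref{fac: indisc witness to IPn} (which you allude to) rather than a plain iterated Ramsey extraction, since the latter only preserves patterns up to the EM-type and would destroy the prescribed configuration.
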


 In Section \ref{sec: array shattering} we prove an \emph{Array Shattering Lemma} for $k$-ary arrays in NIP structures, which we view as a natural generalization of the Sauer-Shelah lemma from binary to higher arity relations definable in NIP structures. We only state it here in the case $k=2$, and refer to Lemma \ref{lem: comp lemma induction}(2) for the precise statement of the general case (and its ``dynamic'' version with an additional moving parameter used in the inductive proof, Lemma \ref{lem: comp lemma induction}(1); see also illustrations in the proof of Lemma \ref{lem: comp lemma induction}).

\begin{theorem*}[Array Shattering Lemma, Lemma \ref{lem: comp lemma induction}(2)  for $k=2$]
	Assume that an $\mathcal{L}$-structure $\mathcal{M}$ is NIP.	
%
For every formula $\varphi(y_1, y_2, y_3) \in \mathcal{L}$ there exist some $n' \in \mathbb{N}$ and $\varepsilon \in \mathbb{R}_{>0}$ satisfying the following. Let $n \geq n'$ and
		$\bar{\delta}=(\delta_{i_1,i_2} : \bar{i}  \in [n]^{2})$ be an array of tuples with $\delta_{i_1,i_2} \in \mathcal{M}^{y_3}$. For any sequences $\bar{\zeta}^1 = (\zeta^1_{i} : i \in [n])$ with $\zeta^1_{i} \in \mathcal{M}^{y_1}$ and $\bar{\zeta}^2 = (\zeta^2_{i} : i \in [n])$ with $\zeta^2_{i} \in \mathcal{M}^{y_2}$, define 
		 $$S^{\varphi,\bar{\delta}}_{\bar{\zeta}^1, \bar{\zeta}^2} := \left\{(i_1,i_2) \in [n]^{2}: \mathcal{M} \models \varphi \left(\zeta^1_{i_2},  \zeta^2_{i_1}, \delta_{i_1,i_2} \right) \right\}.$$
		  Then the family of sets 
		  $$\mathcal{F}^{\varphi,\bar{\delta}} = \left\{S^{\varphi,\bar{\delta}}_{\bar{\zeta}^1, \bar{\zeta}^2} \subseteq [n]^{2} : \bar{\zeta}^1,  \bar{\zeta}^2 \textrm{ arbitrary sequences} \right\}$$  has cardinality $\leq 2^{n^{2 - \varepsilon}}$.
\end{theorem*}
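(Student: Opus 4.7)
The plan is to iterate the Sauer-Shelah lemma along both axes of $[n]^{2}$ and combine the resulting per-axis bounds via uniform definability of types for NIP theories (honest definitions) from \cite{chernikov2015externally}. The general arity $k$ case in Lemma~\ref{lem: comp lemma induction}(2) is then proved by induction on $k$ in its dynamic form Lemma~\ref{lem: comp lemma induction}(1), whose extra moving parameter makes the recursion close; here I spell out the explicit case $k=2$ stated in the theorem.

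First, fix $\bar\zeta^{1}$ and $\bar\delta$. The $i_{1}$-th row $(\varphi(\zeta^{1}_{i_{2}},\zeta^{2}_{i_{1}},\delta_{i_{1},i_{2}}))_{i_{2}\in[n]}$ of the matrix $S^{\varphi,\bar\delta}_{\bar\zeta^{1},\bar\zeta^{2}}$ depends only on the single parameter $\zeta^{2}_{i_{1}}$, evaluated on the $n$-point parameter set $B_{i_{1}}=\{(\zeta^{1}_{i_{2}},\delta_{i_{1},i_{2}}):i_{2}\in[n]\}$. Viewing $\varphi$ as an NIP formula $\varphi(y_{2};y_{1},y_{3})$ of finite VC dimension $d$, the standard Sauer-Shelah lemma gives at most $n^{d}$ possible row patterns as $\zeta^{2}_{i_{1}}$ ranges over $\mathcal{M}^{y_{2}}$; since the $n$ rows are independently controlled by the coordinates of $\bar\zeta^{2}$, we obtain at most $n^{dn}$ matrices for each fixed $\bar\zeta^{1}$. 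Symmetrically (swapping $y_{1}$ with $y_{2}$ and rows with columns), there are at most $n^{dn}$ matrices for each fixed $\bar\zeta^{2}$.

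The main step is to translate these per-axis bounds into a bound on $|\mathcal{F}^{\varphi,\bar\delta}|$ itself without naively summing over the possibly infinite collection of $\bar\zeta^{1}$'s. For this we invoke uniform honest definability: there is a single $\mathcal{L}$-formula $\psi(y_{1},y_{3};\bar u)$, with $|\bar u|=\ell$ depending only on $\varphi$, such that for every finite $B\subseteq\mathcal{M}^{y_{1}y_{3}}$ and every $\zeta^{2}\in\mathcal{M}^{y_{2}}$ the $\varphi$-type of $\zeta^{2}$ over $B$ is defined by $\psi(y_{1},y_{3};\bar u')$ for some $\bar u'\in B^{\ell}$. Applied row-wise to each $B_{i_{1}}$ and dually column-wise to the analogous parameter sets $B'_{i_{2}}=\{(\zeta^{2}_{i_{1}},\delta_{i_{1},i_{2}}):i_{1}\in[n]\}$, each matrix in $\mathcal{F}^{\varphi,\bar\delta}$ can be encoded by an $O(n)$-size combinatorial object (a tuple of indices from $[n]$), giving $|\mathcal{F}^{\varphi,\bar\delta}|\leq n^{Cn}=2^{Cn\log n}$ for some constant $C=C(\varphi)$. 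Since $Cn\log n\leq n^{2-\varepsilon}$ for every $\varepsilon\in(0,1)$ and $n\geq n'(\varepsilon,\varphi)$, the desired bound follows after choosing the threshold $n'$ accordingly.

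The critical obstacle is exactly the combination step: the per-axis Sauer-Shelah counts are immediate, but turning them into a global bound on $|\mathcal{F}^{\varphi,\bar\delta}|$ without multiplying by the cardinality of the ambient structure requires the NIP-specific encoding afforded by honest definability. The dynamic formulation of Lemma~\ref{lem: comp lemma induction}(1), which threads an auxiliary moving parameter through the argument, is the device that propagates this encoding through the induction and yields the analogous bound $2^{n^{k-\varepsilon}}$ at general arity $k$.
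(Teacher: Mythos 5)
Your per-axis counts are fine (for fixed $\bar\zeta^1$ and $\bar\delta$, Sauer--Shelah gives at most $n^{d}$ patterns per row and hence at most $n^{dn}$ matrices, and symmetrically with the roles of the two sequences exchanged), but the step you yourself flag as critical — turning these into a bound on $|\mathcal{F}^{\varphi,\bar\delta}|$ — is asserted rather than proved, and as written it fails. UDTFS (Fact \ref{fac: UDTFS}) tells you that the row of index $i_1$ is defined by $\psi(y_1,y_3;\bar u_{i_1})$ for some $\bar u_{i_1}\in B_{i_1}^{\ell}$, so you can record the \emph{positions} of the defining parameters as an $\ell$-tuple of indices in $[n]$; but the row itself is $\{i_2: \models\psi(\zeta^1_{i_2},\delta_{i_1,i_2};(\zeta^1_{j},\delta_{i_1,j})_{j})\}$, i.e.\ the defining formula must still be evaluated at the actual elements $\zeta^1_{i_2},\zeta^1_{j}$, which range over all of $\mathcal{M}$. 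Two choices of $\bar\zeta^1$ with identical index data can therefore produce different matrices, so the matrix is \emph{not} determined by ``an $O(n)$-size tuple of indices from $[n]$'', and the bound $|\mathcal{F}^{\varphi,\bar\delta}|\le n^{Cn}$ does not follow. If you try to push the encoding further, you are left with counting the patterns $(i_1,i_2)\mapsto \psi(\zeta^1_{i_2},\delta_{i_1,i_2};(\zeta^1_j,\delta_{i_1,j})_{j\in J_{i_1}})$ as $\bar\zeta^1$ varies against the fixed array $\bar\delta$ — a problem of exactly the same type and difficulty as the one you started with, so the quantification over the unboundedly many sequences is never eliminated. (Note also that $2^{Cn\log n}$ is far stronger than the bound $2^{n^{2-\varepsilon}}$ actually proved; the paper explicitly leaves such improvements open, so any argument yielding it would need substantially more than this sketch.)

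For comparison, the paper's route is different in exactly the place where your sketch breaks. It never applies UDTFS to arbitrary arrays: assuming $|\mathcal{F}^{\varphi,\bar\delta}|\ge 2^{n^{2-\varepsilon}}$, it first uses the higher-arity Sauer--Shelah lemma (Fact \ref{fac: shatter}) to extract a large box shattered by $\mathcal{F}^{\varphi,\bar\delta}$, then Ramsey (Lemma \ref{lem: indisc subarray}) to pass to a $\Delta$-indiscernible subarray of $\bar\delta$ inside that box, and only then contradicts the dynamic statement $(1)_2$ of Lemma \ref{lem: comp lemma induction}; the proof of $(1)_2$ is where UDTFS enters, via the shrinking Lemma \ref{lem: fin shrinking}, and it is precisely the indiscernibility of the array (together with ordinary Sauer--Shelah, i.e.\ $(2)_1$) that makes the UDTFS data reduce to finitely many combinatorial possibilities on a long interval. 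Your proposal is missing this mechanism (or a replacement for it) for handling arbitrary $\bar\delta$ and the unbounded range of the $\bar\zeta$'s.
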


\noindent For $k=3$ we instead consider subsets of a hypercube $\left(\delta_{i_1,i_2,i_3} : i_1,i_2,i_3 \in [n] \right)$ of the form 
$$\left \{ (i_1,i_2,i_3) \in [n]^3 : \models \varphi \left(  \zeta^1_{i_2,i_3}, \zeta^2_{i_1, i_3}, \zeta^3_{i_1,i_2}, \delta_{i_1,i_2,i_3} \right) \right \}$$
that can be obtained by varying two-dimensional arrays $\bar{\zeta}^1 = (\zeta^1_{i_2,i_3} : (i_2,i_3) \in [n]^2)$, $\bar{\zeta}^2 = (\zeta^2_{i_1,i_3} : (i_1,i_3) \in [n]^2)$ and $\bar{\zeta}^3 = (\zeta^3_{i_1,i_2} : (i_1,i_2) \in [n]^2)$; etc.

\tikzset{every picture/.style={line width=0.75pt}} 

\begin{tikzpicture}[x=0.75pt,y=0.75pt,yscale=-1,xscale=1]

\draw   (50,65.45) -- (197.49,65.45) -- (197.49,217.78) -- (50,217.78) -- cycle ;
\draw    (4.1,64.82) -- (4.41,217.78) ;
\draw    (50,265.9) -- (196.57,265.9) ;
\draw  [fill={rgb, 255:red, 0; green, 0; blue, 0 }  ,fill opacity=0.1 ] (89.48,105.13) .. controls (107.83,95.57) and (190.45,86.01) .. (172.09,105.13) .. controls (153.73,124.25) and (153.73,133.81) .. (172.09,162.49) .. controls (190.45,191.17) and (107.83,191.17) .. (89.48,162.49) .. controls (71.12,133.81) and (71.12,114.69) .. (89.48,105.13) -- cycle ;
\draw  [fill={rgb, 255:red, 0; green, 0; blue, 0 }  ,fill opacity=1 ] (113.04,131.82) .. controls (113.04,130.89) and (113.76,130.14) .. (114.64,130.14) .. controls (115.53,130.14) and (116.25,130.89) .. (116.25,131.82) .. controls (116.25,132.74) and (115.53,133.49) .. (114.64,133.49) .. controls (113.76,133.49) and (113.04,132.74) .. (113.04,131.82) -- cycle ;
\draw  [dash pattern={on 0.84pt off 2.51pt}]  (114.64,131.82) -- (114.26,265.9) ;
\draw  [dash pattern={on 0.84pt off 2.51pt}]  (114.64,131.82) -- (4.41,132.06) ;
\draw  [fill={rgb, 255:red, 0; green, 0; blue, 0 }  ,fill opacity=1 ] (112.88,265.9) .. controls (112.88,265.11) and (113.5,264.47) .. (114.26,264.47) .. controls (115.02,264.47) and (115.64,265.11) .. (115.64,265.9) .. controls (115.64,266.69) and (115.02,267.33) .. (114.26,267.33) .. controls (113.5,267.33) and (112.88,266.69) .. (112.88,265.9) -- cycle ;
\draw  [fill={rgb, 255:red, 0; green, 0; blue, 0 }  ,fill opacity=1 ] (2.95,132.06) .. controls (2.95,131.22) and (3.6,130.54) .. (4.41,130.54) .. controls (5.21,130.54) and (5.86,131.22) .. (5.86,132.06) .. controls (5.86,132.89) and (5.21,133.57) .. (4.41,133.57) .. controls (3.6,133.57) and (2.95,132.89) .. (2.95,132.06) -- cycle ;
\draw    (194.74,58.44) -- (148.57,95.28) ;
\draw [shift={(147,96.52)}, rotate = 321.42] [color={rgb, 255:red, 0; green, 0; blue, 0 }  ][line width=0.75]    (10.93,-3.29) .. controls (6.95,-1.4) and (3.31,-0.3) .. (0,0) .. controls (3.31,0.3) and (6.95,1.4) .. (10.93,3.29)   ;
\draw    (207.13,144.17) -- (192.61,156.04) ;
\draw [shift={(191.07,157.31)}, rotate = 320.71] [color={rgb, 255:red, 0; green, 0; blue, 0 }  ][line width=0.75]    (10.93,-3.29) .. controls (6.95,-1.4) and (3.31,-0.3) .. (0,0) .. controls (3.31,0.3) and (6.95,1.4) .. (10.93,3.29)   ;
\draw    (21.7,57.65) -- (5.98,76.89) ;
\draw [shift={(4.71,78.44)}, rotate = 309.24] [color={rgb, 255:red, 0; green, 0; blue, 0 }  ][line width=0.75]    (10.93,-3.29) .. controls (6.95,-1.4) and (3.31,-0.3) .. (0,0) .. controls (3.31,0.3) and (6.95,1.4) .. (10.93,3.29)   ;
\draw    (212.64,250.28) -- (193.12,265.09) ;
\draw [shift={(191.53,266.3)}, rotate = 322.82] [color={rgb, 255:red, 0; green, 0; blue, 0 }  ][line width=0.75]    (10.93,-3.29) .. controls (6.95,-1.4) and (3.31,-0.3) .. (0,0) .. controls (3.31,0.3) and (6.95,1.4) .. (10.93,3.29)   ;
\draw  [dash pattern={on 0.84pt off 2.51pt}]  (159.85,197.7) -- (159.7,264.7) ;
\draw  [dash pattern={on 0.84pt off 2.51pt}]  (158.71,198.98) -- (4.41,198.74) ;
\draw  [fill={rgb, 255:red, 0; green, 0; blue, 0 }  ,fill opacity=1 ] (158.78,265.66) .. controls (158.78,264.87) and (159.4,264.23) .. (160.16,264.23) .. controls (160.92,264.23) and (161.54,264.87) .. (161.54,265.66) .. controls (161.54,266.45) and (160.92,267.09) .. (160.16,267.09) .. controls (159.4,267.09) and (158.78,266.45) .. (158.78,265.66) -- cycle ;
\draw  [fill={rgb, 255:red, 0; green, 0; blue, 0 }  ,fill opacity=1 ] (2.95,198.74) .. controls (2.95,197.9) and (3.6,197.22) .. (4.41,197.22) .. controls (5.21,197.22) and (5.86,197.9) .. (5.86,198.74) .. controls (5.86,199.57) and (5.21,200.25) .. (4.41,200.25) .. controls (3.6,200.25) and (2.95,199.57) .. (2.95,198.74) -- cycle ;
\draw  [fill={rgb, 255:red, 0; green, 0; blue, 0 }  ,fill opacity=1 ] (158.4,197.7) .. controls (158.4,196.87) and (159.05,196.19) .. (159.85,196.19) .. controls (160.66,196.19) and (161.31,196.87) .. (161.31,197.7) .. controls (161.31,198.54) and (160.66,199.22) .. (159.85,199.22) .. controls (159.05,199.22) and (158.4,198.54) .. (158.4,197.7) -- cycle ;
\draw   (299.67,93.5) -- (347.17,46) -- (462.67,46) -- (462.67,156.83) -- (415.17,204.33) -- (299.67,204.33) -- cycle ; \draw   (462.67,46) -- (415.17,93.5) -- (299.67,93.5) ; \draw   (415.17,93.5) -- (415.17,204.33) ;
\draw    (347.17,45) -- (348,158) ;
\draw    (348,158) -- (462.67,156.83) ;
\draw    (348,158) -- (299.67,204.33) ;
\draw   (264.33,129.17) -- (379.83,129.17) -- (379.83,240) -- (264.33,240) -- cycle ;
\draw   (349.57,244.83) -- (463.67,244.83) -- (414.77,292.33) -- (300.67,292.33) -- cycle ;
\draw   (516.67,46) -- (516.67,156.83) -- (469.17,204.33) -- (469.17,93.5) -- cycle ;
\draw  [fill={rgb, 255:red, 0; green, 0; blue, 0 }  ,fill opacity=0.1 ] (432.47,99.82) -- (419.47,169.59) .. controls (418.23,176.29) and (399.09,178.35) .. (376.74,174.19) .. controls (354.39,170.03) and (337.28,161.22) .. (338.53,154.51) -- (351.53,84.75) .. controls (352.77,78.04) and (371.91,75.98) .. (394.26,80.14) .. controls (416.61,84.3) and (433.72,93.12) .. (432.47,99.82) .. controls (431.22,106.53) and (412.09,108.59) .. (389.74,104.42) .. controls (367.38,100.26) and (350.28,91.45) .. (351.53,84.75) ;
\draw  [fill={rgb, 255:red, 0; green, 0; blue, 0 }  ,fill opacity=1 ] (386.45,129.66) .. controls (386.45,128.87) and (387.07,128.23) .. (387.83,128.23) .. controls (388.59,128.23) and (389.2,128.87) .. (389.2,129.66) .. controls (389.2,130.45) and (388.59,131.09) .. (387.83,131.09) .. controls (387.07,131.09) and (386.45,130.45) .. (386.45,129.66) -- cycle ;
\draw  [dash pattern={on 0.84pt off 2.51pt}]  (387.83,129.66) -- (311.33,209.33) ;
\draw    (294.13,62.17) -- (316.4,78.8) ;
\draw [shift={(318,80)}, rotate = 216.77] [color={rgb, 255:red, 0; green, 0; blue, 0 }  ][line width=0.75]    (10.93,-3.29) .. controls (6.95,-1.4) and (3.31,-0.3) .. (0,0) .. controls (3.31,0.3) and (6.95,1.4) .. (10.93,3.29)   ;
\draw  [fill={rgb, 255:red, 0; green, 0; blue, 0 }  ,fill opacity=1 ] (309.96,210.77) .. controls (309.96,209.98) and (310.57,209.33) .. (311.33,209.33) .. controls (312.09,209.33) and (312.71,209.98) .. (312.71,210.77) .. controls (312.71,211.56) and (312.09,212.2) .. (311.33,212.2) .. controls (310.57,212.2) and (309.96,211.56) .. (309.96,210.77) -- cycle ;
\draw  [dash pattern={on 0.84pt off 2.51pt}]  (387.83,129.66) -- (496,129.33) ;
\draw  [dash pattern={on 0.84pt off 2.51pt}]  (387.83,129.66) -- (389.38,257.23) ;
\draw  [fill={rgb, 255:red, 0; green, 0; blue, 0 }  ,fill opacity=1 ] (494.62,127.9) .. controls (494.62,127.11) and (495.24,126.47) .. (496,126.47) .. controls (496.76,126.47) and (497.38,127.11) .. (497.38,127.9) .. controls (497.38,128.69) and (496.76,129.33) .. (496,129.33) .. controls (495.24,129.33) and (494.62,128.69) .. (494.62,127.9) -- cycle ;
\draw  [fill={rgb, 255:red, 0; green, 0; blue, 0 }  ,fill opacity=1 ] (386.62,257.23) .. controls (386.62,256.44) and (387.24,255.8) .. (388,255.8) .. controls (388.76,255.8) and (389.38,256.44) .. (389.38,257.23) .. controls (389.38,258.02) and (388.76,258.67) .. (388,258.67) .. controls (387.24,258.67) and (386.62,258.02) .. (386.62,257.23) -- cycle ;
\draw    (354,36) -- (371.29,81.32) ;
\draw [shift={(372,83.19)}, rotate = 249.12] [color={rgb, 255:red, 0; green, 0; blue, 0 }  ][line width=0.75]    (10.93,-3.29) .. controls (6.95,-1.4) and (3.31,-0.3) .. (0,0) .. controls (3.31,0.3) and (6.95,1.4) .. (10.93,3.29)   ;
\draw    (266,113.33) -- (269.32,129.81) ;
\draw [shift={(269.71,131.77)}, rotate = 258.61] [color={rgb, 255:red, 0; green, 0; blue, 0 }  ][line width=0.75]    (10.93,-3.29) .. controls (6.95,-1.4) and (3.31,-0.3) .. (0,0) .. controls (3.31,0.3) and (6.95,1.4) .. (10.93,3.29)   ;
\draw    (504.7,37.65) -- (507.74,61.35) ;
\draw [shift={(508,63.33)}, rotate = 262.67] [color={rgb, 255:red, 0; green, 0; blue, 0 }  ][line width=0.75]    (10.93,-3.29) .. controls (6.95,-1.4) and (3.31,-0.3) .. (0,0) .. controls (3.31,0.3) and (6.95,1.4) .. (10.93,3.29)   ;
\draw    (308.67,268) -- (326.4,270.38) ;
\draw [shift={(328.38,270.64)}, rotate = 187.64] [color={rgb, 255:red, 0; green, 0; blue, 0 }  ][line width=0.75]    (10.93,-3.29) .. controls (6.95,-1.4) and (3.31,-0.3) .. (0,0) .. controls (3.31,0.3) and (6.95,1.4) .. (10.93,3.29)   ;
\draw  [dash pattern={on 0.84pt off 2.51pt}]  (410.83,186.66) -- (369.33,230.67) ;
\draw  [dash pattern={on 0.84pt off 2.51pt}]  (410.83,186.66) -- (482,186.33) ;
\draw  [dash pattern={on 0.84pt off 2.51pt}]  (410.83,186.66) -- (412.67,254.33) ;
\draw  [fill={rgb, 255:red, 0; green, 0; blue, 0 }  ,fill opacity=1 ] (411.62,252.23) .. controls (411.62,251.44) and (412.24,250.8) .. (413,250.8) .. controls (413.76,250.8) and (414.38,251.44) .. (414.38,252.23) .. controls (414.38,253.02) and (413.76,253.67) .. (413,253.67) .. controls (412.24,253.67) and (411.62,253.02) .. (411.62,252.23) -- cycle ;
\draw  [fill={rgb, 255:red, 0; green, 0; blue, 0 }  ,fill opacity=1 ] (367.96,231.1) .. controls (367.96,230.31) and (368.57,229.67) .. (369.33,229.67) .. controls (370.09,229.67) and (370.71,230.31) .. (370.71,231.1) .. controls (370.71,231.89) and (370.09,232.53) .. (369.33,232.53) .. controls (368.57,232.53) and (367.96,231.89) .. (367.96,231.1) -- cycle ;
\draw  [fill={rgb, 255:red, 0; green, 0; blue, 0 }  ,fill opacity=1 ] (409.45,188.09) .. controls (409.45,187.3) and (410.07,186.66) .. (410.83,186.66) .. controls (411.59,186.66) and (412.2,187.3) .. (412.2,188.09) .. controls (412.2,188.89) and (411.59,189.53) .. (410.83,189.53) .. controls (410.07,189.53) and (409.45,188.89) .. (409.45,188.09) -- cycle ;
\draw    (427.33,202) -- (412.29,188.02) ;
\draw [shift={(410.83,186.66)}, rotate = 42.9] [color={rgb, 255:red, 0; green, 0; blue, 0 }  ][line width=0.75]    (10.93,-3.29) .. controls (6.95,-1.4) and (3.31,-0.3) .. (0,0) .. controls (3.31,0.3) and (6.95,1.4) .. (10.93,3.29)   ;
\draw  [fill={rgb, 255:red, 0; green, 0; blue, 0 }  ,fill opacity=1 ] (479.33,185.9) .. controls (479.33,184.92) and (479.94,184.13) .. (480.69,184.13) .. controls (481.44,184.13) and (482.04,184.92) .. (482.04,185.9) .. controls (482.04,186.88) and (481.44,187.67) .. (480.69,187.67) .. controls (479.94,187.67) and (479.33,186.88) .. (479.33,185.9) -- cycle ;

\draw (115.38,115.66) node [anchor=north west][inner sep=0.75pt]  [font=\footnotesize] [align=left] {$\displaystyle \delta _{i_{1} ,i_{2}}$};
\draw (206.26,130.21) node [anchor=north west][inner sep=0.75pt]  [font=\footnotesize] [align=left] {$\displaystyle \overline{\delta }$};
\draw (193.42,37.83) node [anchor=north west][inner sep=0.75pt]  [font=\small] [align=left] {$\displaystyle S_{\overline{\zeta }^{1} ,\overline{\zeta }^{2}}^{\varphi ,\overline{\delta }}$};
\draw (4.89,109.09) node [anchor=north west][inner sep=0.75pt]  [font=\small] [align=left] {$\displaystyle \zeta _{i_{2}}^{1}$};
\draw (17.33,36.13) node [anchor=north west][inner sep=0.75pt]  [font=\small] [align=left] {$\displaystyle \overline{\zeta }^{1}$};
\draw (213.78,231.96) node [anchor=north west][inner sep=0.75pt]  [font=\small] [align=left] {$\displaystyle \overline{\zeta }^{2}$};
\draw (117.35,243.89) node [anchor=north west][inner sep=0.75pt]  [font=\small] [align=left] {$\displaystyle \zeta _{i_{1}}^{2}$};
\draw (160.32,183.77) node [anchor=north west][inner sep=0.75pt]  [font=\footnotesize] [align=left] {$\displaystyle \delta _{j_{1} ,j_{2}}$};
\draw (4.89,175.77) node [anchor=north west][inner sep=0.75pt]  [font=\small] [align=left] {$\displaystyle \zeta _{j_{2}}^{1}$};
\draw (160.03,243.17) node [anchor=north west][inner sep=0.75pt]  [font=\small] [align=left] {$\displaystyle \zeta _{j_{1}}^{2}$};
\draw (-3.47,302.49) node [anchor=north west][inner sep=0.75pt]  [font=\footnotesize] [align=left] {$\displaystyle \models \varphi \left( \zeta _{i_{2}}^{1} ,\zeta _{i_{1}}^{2} ,\delta _{i_{1} ,i_{2}}\right) \land \neg \varphi \left( \zeta _{j_{2}}^{1} ,\zeta _{j_{1}}^{2} ,\delta _{j_{1} ,j_{2}}\right)$};
\draw (371.4,110.09) node [anchor=north west][inner sep=0.75pt]  [font=\footnotesize] [align=left] {$\displaystyle \delta _{i_{1} ,i_{2} ,i_{3}}$};
\draw (286.26,47.21) node [anchor=north west][inner sep=0.75pt]  [font=\footnotesize] [align=left] {$\displaystyle \overline{\delta }$};
\draw (349.75,7.83) node [anchor=north west][inner sep=0.75pt]  [font=\small] [align=left] {$\displaystyle S_{\overline{\zeta }^{1} ,\overline{\zeta }^{2} ,\overline{\zeta }^{3}}^{\varphi ,\overline{\delta }}$};
\draw (262.33,91.46) node [anchor=north west][inner sep=0.75pt]  [font=\small] [align=left] {$\displaystyle \overline{\zeta }^{2}$};
\draw (500.33,16.13) node [anchor=north west][inner sep=0.75pt]  [font=\small] [align=left] {$\displaystyle \overline{\zeta }^{1}$};
\draw (290.33,256.33) node [anchor=north west][inner sep=0.75pt]  [font=\small] [align=left] {$\displaystyle \overline{\zeta }^{3}$};
\draw (380.68,261.22) node [anchor=north west][inner sep=0.75pt]  [font=\footnotesize] [align=left] {$\displaystyle \zeta _{i_{1} ,i_{2}}^{3}$};
\draw (481.35,104.56) node [anchor=north west][inner sep=0.75pt]  [font=\small] [align=left] {$\displaystyle \zeta _{i_{2} ,i_{3}}^{1}$};
\draw (295.83,215.17) node [anchor=north west][inner sep=0.75pt]  [font=\footnotesize] [align=left] {$\displaystyle \zeta _{i_{1} ,i_{3}}^{2}$};
\draw (415.38,247.23) node [anchor=north west][inner sep=0.75pt]  [font=\footnotesize] [align=left] {$\displaystyle \zeta _{j_{1} ,j_{3}}^{2}$};
\draw (338.83,213.83) node [anchor=north west][inner sep=0.75pt]  [font=\footnotesize] [align=left] {$\displaystyle \zeta _{j_{1} ,j_{3}}^{2}$};
\draw (426.74,189.76) node [anchor=north west][inner sep=0.75pt]  [font=\footnotesize] [align=left] {$\displaystyle \delta _{j_{1} ,j_{2} ,j_{3}}$};
\draw (470.01,155.22) node [anchor=north west][inner sep=0.75pt]  [font=\small] [align=left] {$\displaystyle \zeta _{j_{2} ,j_{3}}^{1}$};
\draw (267.53,301) node [anchor=north west][inner sep=0.75pt]  [font=\scriptsize] [align=left] {$\displaystyle  \begin{array}{{>{\displaystyle}l}}
\models \varphi \left( \zeta _{i_{2} ,i_{3}}^{1} ,\zeta _{i_{1} ,i_{3}}^{2} ,\zeta {_{i_{1} ,i_{2}}^{3}} ,\delta _{i_{1} ,i_{2} ,i_{3}}\right) \land \\
\neg \varphi \left( \zeta _{j_{2} ,j_{3}}^{1} ,\zeta _{j_{1} ,j_{3}}^{2} ,\zeta _{j_{1} ,j_{2}}^{3} ,\delta _{j_{1} ,j_{2} ,j_{3}}\right)
\end{array}$};
\draw    (258,-2) -- (296,-2) -- (296,20) -- (258,20) -- cycle  ;
\draw (261,2) node [anchor=north west][inner sep=0.75pt]  [font=\footnotesize] [align=left] {$\displaystyle k=3$};
\draw    (2,1) -- (40,1) -- (40,23) -- (2,23) -- cycle  ;
\draw (5,5) node [anchor=north west][inner sep=0.75pt]  [font=\footnotesize] [align=left] {$\displaystyle k=2$};

\end{tikzpicture}

In Section \ref{sec: comp lemma proof} we combine the type counting criterion and the array shattering lemma in order to prove the Composition Lemma for all $k$ (Theorem \ref{Composition Lemma}).
Finally, in Section \ref{sec: comp lem discussion} we include some discussion of the composition lemma and its possible generalizations and refinements. In particular, we provide a different proof of the dynamic array shattering lemma (Lemma \ref{lem: comp lemma induction}(1)) in the $2$-dimensional case using UDTFS, giving a polynomial bound on the number of sets of pairs that can be cut out (Lemma \ref{lem: array shatter poly bound}).

Combining quantifier elimination with the Composition Lemma discussed above, in Section \ref{sec: multilin n-dep and NSOP1} we obtain a generalization of Fact \ref{fac: bilin 2-dep} to $n$-linear forms for all $n$:
\begin{theorem*}[Theorem \ref{thm: Granger}]
Let $n \in \omega$ and let $T$ be a theory of \emph{infinite dimensional}  $n$-linear $K$-spaces eliminating quantifiers in the language $\mathcal{L}^{K}_{\theta,f}$. If $K$ is NIP, then $T$ is $n$-dependent (and strictly $n$-dependent if the form is generic).  In particular, if $K$ is NIP, the theory of non-degenerate alternating forms is $n$-dependent.
\end{theorem*}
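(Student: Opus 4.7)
The plan is to combine the quantifier elimination result (Theorem \ref{thm: QE for multilinear forms}) with the Composition Lemma (Theorem \ref{Composition Lemma}) applied with $k = n$. Given any formula $\varphi(y_0; y_1, \ldots, y_n)$ whose variables are partitioned into $n+1$ tuples, the QE assumption on $T$ reduces $\varphi$ to a Boolean combination of atomic $\mathcal{L}^{K}_{\theta,f}$-formulas, and since $n$-dependence is preserved under Boolean combinations, it suffices to handle a single atomic formula.

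The heart of the argument is to present each such atomic formula in the shape $\varphi_K(G_1(\bar{y}), \ldots, G_d(\bar{y}))$, where $\varphi_K$ is a formula in the field language on $K$ (hence NIP by the assumption on $K$) and each $G_t$ is an $\mathcal{L}^{K}_{\theta,f}$-definable function whose inputs involve at most $n$ of the tuples $y_0, \ldots, y_n$, so that the Composition Lemma with $k = n$ applies. Multilinearity lets one expand every term $\langle v_1, \ldots, v_n \rangle_n$ with compound vector arguments into a polynomial in the ``basic'' form values $\langle u_{j_1}, \ldots, u_{j_n} \rangle_n$, where each $u_{j_s}$ is a single vector variable; each such basic value depends on at most $n$ variable tuples, and the scalar coefficients are single field variables which live in a single tuple. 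Atomic formulas involving the predicates $\theta_m$ (linear independence) and the coordinate functions $f_i^p$ are unwound into the same kind of polynomial data on basic form values, using that in the non-degenerate setting coordinates with respect to a given linearly independent tuple can be recovered by Cramer-type expressions from form values, and linear dependence is detected by the vanishing of suitable determinants in these same form values. Once each atomic formula is rewritten this way, the Composition Lemma concludes that it is $n$-dependent.

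The main obstacle will be the careful bookkeeping in this reduction for the coordinate functions $f_i^p$, which are a priori $(p+1)$-ary in their vector inputs and thus could involve more than $n$ distinct tuples at once. The strategy is to case-split on the linear-dependence patterns of the vector arguments (definable by the $\theta_m$), within each case eliminating the $f_i^p$ in favour of terms that manifestly live in $\leq n$ of the tuples, paralleling the analysis behind Fact \ref{fac: bilin 2-dep} in the bilinear case and generalizing it uniformly in $n$.

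For the strictness claim in the generic case, consider the formula $\psi(y_1, \ldots, y_n) := (\langle y_1, \ldots, y_n \rangle_n \neq 0)$ in $n$ tuples, each consisting of a single vector variable. Genericity of the form lets one realize any prescribed pattern of form values on a finitely generated part of the space (immediate in the non-degenerate alternating setting from Theorem \ref{thm: QE for multilinear forms} together with completeness). Hence $\psi$ encodes the edge relation of arbitrary finite $n$-partite $n$-uniform hypergraphs, witnessing that $T$ fails to be $(n-1)$-dependent; combined with the preceding $n$-dependence, $T$ is strictly $n$-dependent.
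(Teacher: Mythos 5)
Your top-level skeleton (QE plus closure of $n$-dependence under Boolean combinations, then the Composition Lemma with $k=n$, and genericity for strictness) is the paper's, and your strictness argument via shattering with the form is essentially the paper's part (2). But the core reduction step has a genuine gap: you want the outer formula in the Composition Lemma to be a formula of the \emph{field} $K$, and to get there you claim that the predicates $\theta_m$ and the coordinate functions $f_i^p$ (and, implicitly, vector-sort equalities between terms) can be recovered from the basic form values $\langle u_{j_1},\ldots,u_{j_n}\rangle_n$ by Cramer-type expressions and vanishing of determinants. This is false in the setting of the theorem. In an infinite dimensional non-degenerate alternating $n$-linear space there are finite-dimensional subspaces on which the form vanishes identically (``totally isotropic'' configurations; consistency of such a type follows from Lemma \ref{lem QE2}, Lemma \ref{lem: ext multilin to gen} and quantifier elimination). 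On such a tuple all basic form values are $0$ whether or not the vectors are linearly independent, and whatever the coordinates of one vector in terms of the others are; so $\theta_m$ and $f_i^p$ are simply not functions of the form values, and no determinant identity can detect them. The proposed repair by case-splitting on linear-dependence patterns does not close the gap either: the case conditions are instances of $\theta_m$ on vector variables that may come from all $n+1$ tuples, so they are neither formulas of the field sort nor expressible as $\leq n$-ary functions composed into a field formula, and hence fall outside the field-only application of Theorem \ref{Composition Lemma} with $k=n$.

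The paper resolves exactly this point differently: instead of pushing everything into $K$, it enlarges the outer NIP structure to the two-sorted vector-space reduct $\mathcal{L}^K_{\VS}$ (which \emph{contains} $\theta_p$, $f_i^p$ and vector equality), using the term-reduction Fact \ref{fac: term reduction} to write every atomic formula as an $\mathcal{L}^K_{\VS}$-formula applied to the original variables together with the basic form values, and then treats only the form values (arity $\leq n$ in the tuples $y_0,\ldots,y_n$) as the ``arbitrary definable functions'' of the Composition Lemma. This requires the nontrivial input that $T^K_{\VS,\infty}$ is NIP when $K$ is (Corollary \ref{cor: VS's are NIP}, via Fact \ref{fac: pairs of fields} on algebraically closed fields with an NIP subfield), an ingredient your proposal omits entirely; NIP-ness of $K$ alone is not enough for your version of the outer formula. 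Your strictness paragraph is fine modulo the routine check that the pure tensors used are linearly independent in $\bigwedge^{n-1}V$, but the $n$-dependence half needs to be rebuilt along the lines above.
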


We also take an opportunity to discuss an (orthogonal) model theoretic tameness property NSOP$_1$. Preservation of NSOP$_1$ in bilinear forms over NSOP$_1$ fields was considered in various contexts, starting with \cite{chernikov2016model} where it was demonstrated (relying on the results of Granger \cite{granger1999stability}) that if $K$ is an algebraically closed field, then the theory of non-degenerate symmetric or alternating bilinear forms over $K$ is NSOP$_1$. Following this, \cite{kaplan2020kim}  proposed a description of Kim-independence over models for $K \models \ACF$,  followed by \cite{met2023sets} which proposed some corrections to both the description and the proof, and a generalization from models to arbitrary sets in the same setting. A variant of the argument describing Kim-independence with these corrections incorporated was given in \cite{kruckman2024new} (allowing real closed fields).
It turns out that all of these proofs contain gaps (see the discussion in Remark \ref{rem: errors in proofs}). Here we provide a (hopefully) correct proof of the description of Kim-independence, generalizing to multilinear forms and $K$ an arbitrary NSOP$_1$ field:

\begin{theorem*}[Theorem \ref{thm: NSOP1 and Kim indep}]
	If $\Th(K)$ is NSOP$_1$, then  $T := \prescript{}{\Alt}T^{K}_n$ is also NSOP$_1$. And for any $M\models T$ and $A, B \supseteq M$ small ($\mathcal{L}^{K}_{\theta,f}$-)substructures, $\tp(A/B)$ does not Kim-divide over $M$ if and only if $A \ind^{K}_M B$ and $A \ind^{V}_M B$ (in the notation of Section \ref{sec: NSOP1 preserved}). Moreover, if $K$ is finite, then $T$ is simple (Corollary \ref{cor: multilin fin field simple}).
\end{theorem*}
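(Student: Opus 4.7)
The plan is to apply the Kaplan--Ramsey style characterization of NSOP$_1$: if there is an $\Aut(\C/M)$-invariant ternary relation $\ind^*$ over models of $T$ satisfying invariance, symmetry, monotonicity, strong finite character, existence, extension, witnessing, and the independence theorem over models, then $T$ is NSOP$_1$ and $\ind^*$ agrees with Kim-independence over models. The candidate is $A \ind^*_M B \Leftrightarrow A \ind^K_M B \ \&\ A \ind^V_M B$, where $\ind^K$ is Kim-independence in the field sort and $\ind^V$ is linear disjointness of the vector-space parts over $M$. The preceding quantifier elimination (Theorem~\ref{thm: QE for multilinear forms}) is the key reduction: a complete type over $M$ is determined by the field type of its scalars, the linear type of its vectors over $M\cap V$ (controlled by the predicates $\theta_m$ and the coordinate functions $f_i^p$), and the values of $\langle -,\ldots,-\rangle_n$ on $K$-linearly independent $n$-tuples drawn from a chosen basis (where alternation and multilinearity already determine all other values).

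First, I would verify the routine axioms. Invariance, symmetry, monotonicity and strong finite character for $\ind^*$ are inherited coordinate-wise from the corresponding properties of $\ind^K$ (which hold since $K$ is NSOP$_1$) and of linear disjointness. Existence and extension on the vector sort follow from the ability to realize any consistent linear-disjoint extension by choosing a fresh basis over $MB$ and freely assigning form-values on tuples mixing old and new basis elements; on the field sort they are the corresponding properties of Kim-independence in $K$. Witnessing is reduced, via QE, to witnessing on each sort: on the field side this is the witnessing property for NSOP$_1$ in $K$, on the vector side linear disjointness is witnessed by Morley sequences of bases taken $\ind^V$-independently over $M$.

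The main obstacle is the independence theorem over models, which is also precisely the place where the prior proofs (discussed in Remark~\ref{rem: errors in proofs}) contained gaps. Given $M \preceq \C$, $A_1 \ind^*_M A_2$, and $C_1 \equiv_M C_2$ with $C_i \ind^*_M A_i$, one must build $C \equiv_{MA_1} C_1$, $C \equiv_{MA_2} C_2$ with $C \ind^*_M A_1A_2$. Using QE I would construct $C$ in three layers: (i) amalgamate the field-parts of $C_1,C_2$ over $M$, relative to $K$-generated scalars of $A_1A_2$, using the independence theorem for Kim-independence in $K$; (ii) amalgamate the vector-space parts by linear disjointness, taking a direct sum over $M \cap V$ of the vector part of $C_i$ with the vector part of $A_{3-i}$, compatibly with the linear data coming from (i); (iii) extend the form to the amalgam: on tuples lying inside $C + A_i$ the values are already fixed by $\tp(C_i A_i/M)$, while on ``mixed'' tuples, pulling in basis vectors from both sides, the values can be freely prescribed as new elements of $K$ — and the consistency of these choices with the prescribed types on each side reduces back to the field-side independence theorem in (i). One must verify that the alternation and linearity relations are preserved and that the resulting $C$ remains $\ind^*$-independent from $A_1A_2$, which is immediate from the construction.

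Finally, for the moreover clause, when $K$ is finite the theory $\Th(K)$ is stable (indeed $\omega$-categorical with no infinite definable structure in the field sort alone), so Kim-independence in $K$ coincides with non-forking and has local character. Combining this with the fact that linear disjointness over small models trivially has local character of cardinality $\leq |M| + \aleph_0$ gives local character of $\ind^*$; together with the already-established axioms this upgrades the conclusion from NSOP$_1$ to simplicity, yielding Corollary~\ref{cor: multilin fin field simple}.
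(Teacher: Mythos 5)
Your route is genuinely different from the paper's: you propose to verify the Kaplan--Ramsey criterion for the relation $\ind^{*}:=\ind^{K}\wedge\ind^{V}$ (axioms plus the independence theorem over models and witnessing), whereas the paper never proves an independence theorem at all. It proves directly that non-Kim-dividing over a model $M$ is \emph{equivalent} to $A\ind^{K}_{M}B\wedge A\ind^{V}_{M}B$ (Proposition \ref{prop: char of Kim div}): one direction is Fact \ref{fac: KR lemma}, and the other is an explicit realization of the types of $A$ along an $M$-invariant Morley sequence $(B_i)$, built by hand; NSOP$_1$ then follows simply because this characterization is symmetric, via Fact \ref{fac: basic props of NSOP1}(2). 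Your approach is viable in principle, but the burden it carries (independence theorem plus witnessing for $\ind^{*}$) is at least as heavy as the paper's construction, and your sketch collapses exactly at that construction.

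The gap is your step (iii). You assert that the form values on ``mixed'' tuples ``can be freely prescribed as new elements of $K$'' and that consistency ``reduces back to the field-side independence theorem.'' This is precisely the move that Remark \ref{rem: errors in proofs} identifies as the source of the errors in the earlier published proofs. Values of $\langle -,\ldots,-\rangle_n$ on tuples mixing vectors of the amalgam $C$ with basis vectors of $A_1$ (resp.\ $A_2$) are \emph{not} free: they are prescribed by $\tp(C_1/MA_1)$ (resp.\ $\tp(C_2/MA_2)$), and they are field elements of the substructure generated by $C_iA_i$ which need not lie in $(C_i)_K\cup(A_i)_K$, hence are not controlled by a sort-wise field amalgamation of $(C_1)_K,(C_2)_K$ over $M_K$. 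To get $C\equiv_{MA_i}C_i$ as $\mathcal{L}^{K}_{\theta,f}$-structures (the types include the $\mathcal{L}^K$-type over $M_K$ of all these mixed scalars, together with the $\theta_p$ and $f^p_i$ data), one must place these scalars correctly in the field sort. The paper's mechanism is: pass to the substructure generated by everything, use stable embeddedness of the field sort (Remark \ref{rem: field sort stab emb}) to extend the partial isomorphisms to elementary maps $\sigma^K_i$ on the \emph{generated} field $C_K$, define the form on new-vector/old-vector tuples as the $\sigma^K_i$-images of the original values (only tuples mixing vectors from two different copies $B_i,B_j$ with a new vector are assigned arbitrarily), then extend the resulting abstract structure to a model of $T$ over the same field by Lemma \ref{lem: ext multilin to gen} and embed it into the monster over the base by quantifier elimination (Theorem \ref{thm: QE for multilinear forms}), verifying the equivalences by an explicit isomorphism (Claim \ref{cla: elem maps in NSOP1}). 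Without an analogue of this mechanism, your claimed equivalences in the amalgam are unjustified, so neither the independence theorem nor witnessing for $\ind^{*}$ is established. A secondary point: for the ``moreover'' clause, local character alone does not upgrade NSOP$_1$ to simplicity; the paper uses the base-monotonicity criterion Fact \ref{fac: basic props of NSOP1}(3), noting that for finite $K$ the relation $\ind^{K}$ is trivial and Kim-independence reduces to $\ind^{V}$ (Corollary \ref{cor: multilin fin field simple}); your $\ind^{V}$ does satisfy base monotonicity, so this part is fixable, but as written it is not a proof.
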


Composition lemma (Theorem \ref{Composition Lemma}) was recently used to obtain some further natural examples of $n$-dependent structures: generic nilpotent groups and Lie algebras over finite fields  \cite{d2025model},  
generic nilpotent Lie algebras over algebraically closed fields in a two-sorted language \cite{d2024two}, 
pseudo-finite quadratic geometries \cite{kestner2024some}.
All of these are still explicitly controlled by multilinear maps over the NIP part. In view of this, it is tempting to speculate that $n$-dependence
of a theory should imply some form of ``linearity over the dependent part''. While
formulating this precisely still appears difficult, this motivated our more precise conjecture:
\begin{conj} (\cite[Conjecture 1.1]{chernikov2021n}; also \cite[Problem 4.10]{chernikov2019mekler})
If a field (viewed as a structure in the pure ring language) is $n$-dependent for some $n \in \omega$, then it is already dependent.
\end{conj}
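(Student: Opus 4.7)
Since this is an open conjecture, any plan is necessarily speculative; here is the program I would follow, guided by the analogy with Shelah's NIP fields conjecture. The natural attack is induction on $n$: assume every $(n-1)$-dependent field (in the pure ring language) is NIP, and derive the same for $n$-dependent fields. The first concrete target is to extract structural restrictions from $n$-dependence. In particular, one would aim to generalize the Kaplan--Scanlon--Wagner theorem and show that an $n$-dependent field of characteristic $p>0$ has only finitely many Artin--Schreier extensions; the strategy is to produce, from an infinite supply of such extensions, an explicit definable $(n+1)$-hypergraph using trace pairings and additive combinatorics, contradicting $n$-dependence in the style of the composition-lemma machinery of Section \ref{sec: Composition lemma}.

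The second step is valuation-theoretic. One wants to show that every nontrivial henselian valuation on an $n$-dependent field is definable, and that both the residue field and the value group inherit $n$-dependence (preservation under reducts is immediate from the definition). The key tool would be the Composition Lemma (Theorem \ref{Composition Lemma}): any ``wild'' configuration in a valued field should be witnessable by a multilinear-style coding, which via Theorem \ref{thm: Granger} forces strict $n$-dependence of an interpreted subfield, eventually contradicting $n$-dependence of $K$ itself after iteration. Combined with the induction hypothesis, this should reduce the problem to fields that are henselian-with-NIP-residue-and-value-group, algebraically closed, real closed, or $p$-adically closed, all of which are NIP.

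The third and conceptually hardest step is to close the induction by ruling out \emph{strictly} $n$-dependent fields: one would try to show that such a field must interpret a non-degenerate alternating $(n-1)$-linear form over an NIP field, contradicting Theorem \ref{thm: Granger} together with the induction hypothesis. The main obstacle --- and in my view the core difficulty of the whole conjecture --- is that no analogue of dp-rank, generic type, or forking-based stratification is presently available for $n$-dependent theories when $n \geq 2$. Without such a rank, there is no apparent way to formalize the heuristic ``$n$-dependence implies $(n-1)$-linearity over the NIP part'' at the level of fields, and indeed all currently known strictly $n$-dependent algebraic examples are built explicitly from multilinear maps over NIP fields rather than obtained from a fine geometric analysis. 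Genuine progress on the conjecture would therefore seem to require, as a prerequisite, a higher-arity analogue of geometric stability theory, a task well beyond the scope of this paper.
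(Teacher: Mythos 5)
This statement is an open conjecture: the paper does not prove it, and explicitly says so (``While this conjecture remains open, some evidence towards it is provided by\dots''), citing as evidence only the known results that $n$-dependent fields are Artin--Schreier closed, that $n$-dependent valued fields of positive characteristic are henselian, and that the valued-field analogue reduces to pure fields. So there is no proof in the paper to compare against, and your proposal, as you yourself acknowledge, is a research program rather than a proof. The decisive gap is exactly where you locate it: your third step is the entire content of the conjecture, and nothing in the first two steps (even if carried out) closes it. Moreover, as stated, the third step does not even have the right logical shape. Showing that a strictly $n$-dependent field interprets a non-degenerate alternating multilinear form over an NIP field would not ``contradict Theorem \ref{thm: Granger} together with the induction hypothesis'': Theorem \ref{thm: Granger} says precisely that such multilinear structures \emph{are} $n$-dependent, so interpreting one is perfectly consistent with $n$-dependence of the field. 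The contradiction would have to come from an additional argument that a pure field cannot carry such a coding while remaining a field --- and that argument is the missing heart of the matter, requiring the higher-arity geometric machinery you correctly note does not yet exist.

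Two smaller inaccuracies in the first two steps: the Artin--Schreier statement you propose to prove is weaker than what is already known --- Hempel's theorem (cited in the paper as \cite{hempel2016n}) shows $n$-dependent fields of characteristic $p$ are Artin--Schreier \emph{closed}, not merely of finite Artin--Schreier rank, and its proof does not go through the Composition Lemma. Similarly, the henselianity result in positive characteristic and the reduction of the valued-field question to pure fields are already in the literature (\cite{chernikov2021n}, \cite{boissonneau2024nipn}); invoking the Composition Lemma and Theorem \ref{thm: Granger} to ``witness wild configurations'' in a valued field is a heuristic, not an argument --- those results run in the opposite direction (they build $n$-dependent structures from NIP data, they do not extract NIP data from $n$-dependent structures). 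In short: your proposal correctly identifies the state of the art and the central obstruction, but it does not prove the statement, and the paper does not either.
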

\noindent While this conjecture remains  open, some evidence towards it is provided by the following results which demonstrate that certain known properties of NIP fields also hold for $n$-dependent fields:
$n$-dependent fields are Artin-Schreier closed \cite{hempel2016n},
$n$-dependent valued fields of positive characterizing are Henselian
\cite{chernikov2021n}, an analog of the question for valued fields reduces to pure fields \cite{boissonneau2024nipn}.

Some further evidence is provided by the intersection conditions on the connected components, generalizing their absoluteness in NIP groups and resembling 
modular behavior in the $2$-dependent case. Given an ($\emptyset$-)type-definable group $G$ and a small set of parameters $A$, we denote by $G^{00}_A$ (respectively, $G^{\infty}_A$) the intersection of all subgroups of $G$ of bounded index type-definable over $A$ (respectively, $\Aut(\mathbb{M}/A)$-invariant), see Section \ref{sec: inv conn comp} for more details. A crucial fact about definable groups in dependent theories   is that for every small set $A$ one has $G^{00}_A = G^{00}_{\emptyset}$ \cite{shelah2008minimal} and $G^{\infty}_A = G^{\infty}_{\emptyset}$ (Shelah \cite{MR3666349} in the abelian case, Gismatullin \cite{Gismatullin2011} in general). Generalizing Shelah in the $2$-dependent case (Fact \ref{fac: Sh G00 2-dep}) we have established the following in \cite{chernikov2021n}:
\begin{fact*}[Fact \ref{fac: G00 for n-dep}]
	If $T$ is $n$-dependent and $G = G(\mathbb{M})$ is a type-definable group (over $\emptyset)$, then for any small model $M$ and finite tuples $b_1, \ldots, b_{n-1}$ sufficiently independent over $M$ in an appropriate sense (see Fact \ref{fac: G00 for n-dep} for the exact statement), we have that 
$$G^{00}_{\M \cup b_1 \cup \dots \cup  b_{n-1}} = \bigcap_{i=1, \dots, n-1} G_{\M \cup  b_1 \cup \ldots \cup b_{i-1} \cup b_{i+1} \cup \ldots \cup  b_{n-1}}^{00} \cap G^{00}_{C \cup  b_1\cup \dots\cup  b_{n-1}}$$ 
for some $C \subseteq \M$ of absolutely bounded size.
\end{fact*}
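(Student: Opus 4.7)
The plan is to generalize Shelah's argument in the $2$-dependent case (Fact \ref{fac: Sh G00 2-dep}) via the higher-arity mutually indiscernible arrays that are central throughout the paper. The inclusion LHS $\subseteq$ RHS is automatic: each $G^{00}_{M \bar{b}_{\hat i}}$ is type-definable over a subset of $M\bar{b}$ and hence contains $G^{00}_{M\bar{b}}$, and the same holds for $G^{00}_{C\bar{b}}$ once we choose $C \subseteq M$. Thus only $\supseteq$ is at stake, and by the Newelski--Gismatullin reformulation of $G^{00}$ via thick symmetric formulas, this reduces to the following claim: for every formula $\varphi(x; \bar{y}, \bar{z}_1, \ldots, \bar{z}_{n-1})$ which defines a subgroup of $G$ of bounded index on the relevant type of parameters, there exists an absolutely bounded $C \subseteq \mathbb{M}$, depending only on $\varphi$ and $T$, such that any $g$ lying in all the leave-one-out components and in $G^{00}_{C\bar{b}}$ must satisfy $\varphi(g; \bar{m}, \bar{b})$ for every admissible $\bar{m} \in M$.

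Assume for contradiction that no such absolutely bounded $C$ exists. By iterated compactness and Erd\H{o}s--Rado style extraction along $M$-indiscernible sequences (of length strictly exceeding the putative bound on $|C|$), I would produce a mutually indiscernible $n$-dimensional array
$\left( \bar{m}_{i_0}, \bar{b}^1_{i_1}, \ldots, \bar{b}^{n-1}_{i_{n-1}} \right)_{\bar i \in [\lambda]^n}$
extending the original parameters, together with a single element $g \in G$ which witnesses the leave-one-out hypotheses and fails $\varphi(g; \bar{m}_{i_0}, \bar{b}^1_{i_1}, \ldots, \bar{b}^{n-1}_{i_{n-1}})$ on many positions. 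This is exactly the setting in which the characterization of $n$-dependence via generalized indiscernibles recalled in Section \ref{sec: n-dep and gen indisc} applies.

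The core step is then to show that each leave-one-out containment $g \in G^{00}_{M\bar{b}_{\hat j}}$ forces the $\varphi$-truth value to stabilise along the $j$-th axis of the array when the other $n-1$ axes are held fixed; otherwise one could modify $g$ by an element of $G^{00}_{M\bar{b}_{\hat j}}$ to flip the value, contradicting that the pointwise stabiliser of the relevant coset has bounded index. Consequently every $\varphi$-pattern on the array compatible with our hypotheses must depend simultaneously on all $n$ coordinates, yielding an explicit $\IP_n$-configuration for $\varphi$ and contradicting $n$-dependence of $T$.

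The main obstacle I foresee is the precise formulation of ``$\bar{b}$ sufficiently independent over $M$'' needed for the extraction of the $n$-dimensional mutually indiscernible array: it must be strong enough to guarantee that the array does not collapse to a lower-dimensional one, which is exactly the technical content encoded in the hypothesis of Fact \ref{fac: G00 for n-dep}. The absolutely bounded cardinality of $C$ then arises from the Ramsey bound used in the extraction and depends only on the formula $\varphi$ and intrinsic Ramsey-theoretic parameters of $T$.
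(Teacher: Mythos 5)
You are reviewing a statement that this paper does not actually prove: it is quoted as a Fact from the authors' previous paper (\cite[Corollary 4.10]{chernikov2021n}). The closest argument written out in the present paper is the proof of Theorem \ref{thm: Ginfty k-dep main} for $G^{\infty}$ in Section \ref{sec: proof of Ginf}, which follows the same template as the cited proof for $G^{00}$. Measured against that template, your overall shape is right — the inclusion $G^{00}_{\M\bar b}\subseteq$ RHS is trivial, and the reverse inclusion is proved by contradiction, extracting indiscernible arrays and contradicting $n$-dependence — but the core of your argument has a genuine gap, and one reduction is misplaced: the ``thick symmetric formula'' machinery (Gismatullin, Section \ref{sec: Lstp and thick} here) describes $G^{\infty}$ via Lascar strong types, not $G^{00}$; for $G^{00}$ one works instead with type-definable bounded-index subgroups and their relatively definable approximations.

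The gap is in the step where you keep a \emph{single} element $g$ and argue that the leave-one-out hypotheses force the $\varphi$-value to ``stabilise along each axis,'' concluding that patterns ``depend on all coordinates'' and hence give $\IP_n$. This does not work: $g\in G^{00}_{\M\bar b_{\hat\jmath}}$ carries no information about how $\varphi(g;\cdot)$ varies along the $j$-th axis of an array of parameters, modifying $g$ by an element of one leave-one-out component destroys the hypotheses relative to the others, and in any case ``dependence on all coordinates'' is far weaker than shattering an $n$-dimensional box, which is what $\IP_n$ requires. The actual proof (see Claims \ref{claim Ginftymain}--\ref{C:ddnotinXek} for the $G^{\infty}$ analogue) constructs, by transfinite induction of length $\beth_2(|T|)^{+}$ together with pigeonhole and Erd\H{o}s--Rado, a whole grid-indexed family of witnesses $d_{\alpha_1,\ldots,\alpha_n}$, using the generic position hypothesis ($\kappa$-coheirs) to \emph{duplicate} the tuples $\bar b_i$ into long sequences inside the models $\M_i$ while preserving the membership/non-membership conditions at every node — not merely to keep the array from collapsing. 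Shattering is then obtained by an algebraic manipulation: for each finite set $I$ of grid nodes one forms a product of the node witnesses, which lies in the local subgroup attached to a node $\bar\alpha$ exactly when $\bar\alpha\notin I$ (for $G^{00}$ this uses the genuine subgroup structure of the components; for $G^{\infty}$ the paper needs abelianity to run it with the approximate subgroups $X_S^m$). Without this family-plus-products construction there is no $\IP_n$ configuration, so as written your argument does not go through; also, the bound on $|C|$ is $\beth_2(|T|+|A|)$, coming from the length of the transfinite construction and Erd\H{o}s--Rado, not from a Ramsey bound depending on $\varphi$.
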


\noindent In Section \ref{sec: inv conn comp} we prove an analog of Fact \ref{fac: G00 for n-dep} for $G^{\infty}$ in $k$-dependent \emph{abelian} groups:
\begin{theorem*}[Theorem \ref{thm: Ginfty k-dep main}]
	For any $k\geq 1$, let $T$ be a $k$-dependent theory and $G = G(\mathbb{M})$ a ($\emptyset$-)type-definable \emph{abelian} group. Let $\M$ be a small model and $\bar{b}_1, \ldots, \bar{b}_{k-1}$ finite tuples in $\mathbb{M}$ sufficiently independent over $M$. Then there is some $C \subseteq \M$ with $|C| \leq \beth_2(|T|)$ such that
	\begin{gather*}
		G^{\infty}_{\M \cup \bar{b}_1 \cup \ldots \cup \bar{b}_{k-1}} = \left( \bigcap_{i = 1, \ldots, k-1} G^{\infty}_{\M \cup \bar{b}_1 \cup \ldots \cup \bar{b}_{i-1} \cup \bar{b}_{i+1} \cup \ldots \cup \bar{b}_{k-1}} \right) \cap G^{\infty}_{C \cup \bar{b}_1 \cup \ldots \cup \bar{b}_{k-1}}.
		\end{gather*}
\end{theorem*}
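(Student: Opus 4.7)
The plan is to parallel the proof of Fact \ref{fac: G00 for n-dep} for the type-definable $G^{00}$ case, but with the adjustments needed to handle $\Aut$-invariant subgroups in place of type-definable ones. This is precisely where the abelian hypothesis becomes essential, as in Shelah's original absoluteness theorem for $G^{\infty}$ in NIP abelian groups. The inclusion $\supseteq$ is the non-trivial direction (the other is immediate from monotonicity of $G^{\infty}$ in the base). So suppose $g$ lies in the right-hand intersection and that $g \notin H$ for some $\Aut(\mathbb{M}/M \bar{b}_1 \cdots \bar{b}_{k-1})$-invariant bounded-index subgroup $H \leq G$; I aim at a contradiction.

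First, using the ``sufficiently independent'' hypothesis on the $\bar{b}_i$ together with compactness, extend each $\bar{b}_i$ to a long sequence $(\bar{b}_i^{\alpha_i})_{\alpha_i < \lambda}$ so that the resulting $(k-1)$-dimensional system is mutually indiscernible over $M$; choose automorphisms $\sigma_{\bar{\alpha}} \in \Aut(\mathbb{M}/M)$ sending $\bar{b}_i \mapsto \bar{b}_i^{\alpha_i}$ and set $H_{\bar{\alpha}} := \sigma_{\bar{\alpha}}(H)$. Whenever $\bar{\alpha}$ agrees with the original on all coordinates except the $i$-th, $H_{\bar{\alpha}}$ is invariant over a parameter set missing $\bar{b}_i$, so the assumption $g \in G^{\infty}_{M \bar{b}_1 \cdots \hat{\bar{b}}_i \cdots \bar{b}_{k-1}}$ forces $g \in H_{\bar{\alpha}}$ for all such ``axis-aligned'' $\bar{\alpha}$. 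Analogously, a suitable choice of $C \subseteq M$ with $|C| \leq \beth_2(|T|)$ (obtained by absorbing into an elementary substructure the parameters witnessing a Morley-style construction of the array) together with the assumption $g \in G^{\infty}_{C \bar{b}_1 \cdots \bar{b}_{k-1}}$ forces $g \in H_{\bar{\alpha}}$ whenever $\sigma_{\bar{\alpha}}$ fixes $C$ pointwise.

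The heart of the argument is then to combine $k$-dependence with abelian structure to conclude $g \in H$ itself. Work in the abelian quotient $G / \bigcap_{\bar{\alpha}} H_{\bar{\alpha}}$ and consider the coset function $\bar{\alpha} \mapsto g H_{\bar{\alpha}}$ along the mutually indiscernible array. Applying $k$-dependence through the characterization by generalized indiscernibles of Section \ref{sec: n-dep and gen indisc} and the array-shrinking lemma for NIP reducts of Section \ref{sec: array shrink NIP}, this coset function takes only boundedly many values and factors in a constrained way through the individual coordinates. A Baldwin--Saxl style argument, valid because $G$ is abelian so that finite sums of cosets behave well, then expresses the coset at the ``origin'' multi-index (which is $gH$) as a combination of cosets from the axis-aligned sub-array, each of which is trivial by the previous step. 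Hence $g \in H$, a contradiction.

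\textbf{Main obstacle.} The delicate point is the move from type-definable to merely $\Aut$-invariant: the latter are not compact objects, so the compactness-driven counting arguments underlying Fact \ref{fac: G00 for n-dep} do not apply directly. The abelian hypothesis is used twice to rescue this — first, to allow ``thick set'' substitutes for type-definability, namely intersections of $\Aut$-translates of the symmetric difference of $H$ with itself, which remain of bounded index; and second, to justify the Baldwin--Saxl reduction expressing large intersections as finite ones. Upgrading the finitary array-shrinking and array-shattering estimates used in the $G^{00}$ version of Fact \ref{fac: G00 for n-dep} so as to apply to these thick-set replacements, while preserving the quantitative bound $|C| \leq \beth_2(|T|)$, is the principal new work.
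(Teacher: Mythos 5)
There is a genuine gap at the very first substantive step, and it is fatal to the proposed structure. You fix an $\Aut(\mathbb{M}/\M\bar b_1\cdots\bar b_{k-1})$-invariant bounded-index subgroup $H$ with $g\notin H$, conjugate it by automorphisms $\sigma_{\bar\alpha}\in\Aut(\mathbb{M}/\M)$ sending $\bar b_i\mapsto\bar b_i^{\alpha_i}$, and claim that $g\in G^{\infty}_{\M\cup\bar b_1\cup\cdots\cup\bar b_{i-1}\cup\bar b_{i+1}\cup\cdots\cup\bar b_{k-1}}$ forces $g\in H_{\bar\alpha}$ for axis-aligned $\bar\alpha$. But $H_{\bar\alpha}=\sigma_{\bar\alpha}(H)$ is only guaranteed to be invariant over the \emph{image} parameter set $\M\cup\bar b_1\cup\cdots\cup\bar b_i^{\alpha_i}\cup\cdots\cup\bar b_{k-1}$, which strictly contains the base $\M\cup\bigcup_{j\neq i}\bar b_j$; invariance over a larger set is a weaker condition, so $G^{\infty}$ over the smaller base need not be contained in $H_{\bar\alpha}$. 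The same objection kills the step ``$g\in G^{\infty}_{C\cup\bar b_1\cup\cdots\cup\bar b_{k-1}}$ forces $g\in H_{\bar\alpha}$ whenever $\sigma_{\bar\alpha}$ fixes $C$'': that translate is invariant over $C$ together with the moved tuples $\bar b_i^{\alpha_i}$, not over $C\cup\bar b_1\cup\cdots\cup\bar b_{k-1}$. Moreover the existential quantifier over $C$ is never really engaged: the theorem asserts the existence of some $C$, so its negation yields, for \emph{every} small candidate $C$, an element of the right-hand side outside the left-hand side; choosing one $C$ ``by absorbing the parameters of the array'' and then hoping the hypotheses control $g$ for that particular $C$ is not a proof strategy that can be completed as stated. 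Finally, the quantitative tools you invoke are unavailable here: the array-shrinking results of Section \ref{sec: array shrink NIP} (via UDTFS) require an NIP reduct, which is a hypothesis of the Composition Lemma but is absent when $T$ is merely $k$-dependent, so the claim that the coset function takes boundedly many values is unsupported, and there is no Baldwin--Saxl lemma for invariant (non-definable) subgroups.

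The paper's proof runs in the opposite direction. Assuming the conclusion fails, Lemma \ref{lem: countable intersec} produces $\beth_2(|T|)^+$ many witnesses $d_\alpha$ together with \emph{countable} sets $S_\alpha\subseteq\M$ satisfying \eqref{eq: con comp 1}--\eqref{eq: con comp 3}; invariant subgroups are then replaced throughout by the approximate subgroups $(X_S)^m$ and $(X_S^{\Phi})^m$ coming from Lascar distance and uniformly thick formulas (Fact \ref{fac: generates G infty}), with Erd\H{o}s--Rado and pigeonhole uniformizing $m$, $\Phi$ and $n$. The generic-position coheir assumptions are used in a downward induction (Claim \ref{claim Ginftymain}) to spread the $\bar b_i$'s into a $k$-dimensional configuration, which is then made into an indiscernible array (Claim \ref{cla ind}). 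Abelianity enters not through a Baldwin--Saxl argument but through the parity products $d_I$ over finite sets of multi-indices and the telescoping computations of Claims \ref{C:ddinXek} and \ref{C:ddnotinXek}, and the contradiction is reached by exhibiting an explicit formula $\psi$, built from the uniformly thick formula $\Phi$, that shatters the array and hence witnesses IP$_k$. If you want to salvage your outline, you would need to (i) replace the fixed subgroup $H$ and its conjugates by such thickly-definable approximations so that compactness-style manipulations become available, and (ii) restructure the argument so that the failure of the statement for all small $C$ is converted into a combinatorial configuration contradicting $k$-dependence, rather than trying to verify membership in a single invariant subgroup directly.
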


\noindent We note that Theorem \ref{thm: Ginfty k-dep main} could be deduced from Fact \ref{fac: G00 for n-dep} and the results in \cite{krupinski2019amenability} (see Remark  \ref{rem: amenable follows}), however the proof that we provide here has the potential to apply to the case of general $G$.
 In Section \ref{sec: inv conn prelims} we recall some preliminaries on the connected components and our independence assumption on the tuples $\bar{b}_1, \ldots, \bar{b}_{k-1}$ (\emph{generic position}, see Definition \ref{def: generic position}). In Section \ref{sec: Lstp and thick} we recall Lascar strong types and their description in terms of thick formulas and indiscernible sequences. We note that it can be strengthened to \emph{uniformly thick formulas}, which will allow us to carry out some compactness arguments. In Section \ref{sec: Ginfty and Lstp} we recall the description of $G^{\infty}_S$ in terms of the commutator sets $X_{S} = \{ a^{-1}b : a,b \in G(\mathbb{M}), a \equiv^{L}_{S} b \}$ and their local approximations, and prove some lemmas on manipulating these sets. In Section \ref{sec: proof of Ginf} we give a proof of Theorem \ref{thm: Ginfty k-dep main}. Compared to our proof of Fact \ref{fac: G00 for n-dep} in \cite{chernikov2021n}, the situation is more complicated since we cannot work with subgroups, but rather only with approximate subgroups given by the powers of the sets $X_S$. Finally, in Section \ref{sec: Ginfty in multilin} we calculate the example of $G^{\infty}$ for the additive group in multilinear forms over finite fields.

\subsection*{Notation}
As usual, given $n \in \mathbb{N}$, we write $[n]$ to denote the set $\{1, 2, \ldots, n \}$. Given a formula $\varphi$, we write $\varphi^1$ to denote $\varphi$, and $\varphi^0$ to denote $\neg \varphi$. Given a sequence of tuples $I$, we write $|I|$ to denote the length of the sequence. As usual, without further context $\delta_{i,j}$ denotes Kronecker delta symbol, i.e.~$\delta_{i,j}=1$ if $i=j$ and $0$ if $i\neq j$. Throughout the paper, $T$ will denote a first-order theory and $\mathbb{M}$ a monster model of $T$. Given a model $\mathcal{M}$ of $T$ and a tuple of variables $x$, we write $\mathcal{M}^x$ for the product of sorts of $\mathcal{M}$ corresponding to the variables in $x$.

\subsection*{Acknowledgments}
The majority of the results in this paper were obtained by 2020 while both authors were at UCLA, and circulated since then, along with some presentation improvements added later.
We thank Jan Dobrowolski, Alex Kruckman and Nick Ramsey for comments on Section  \ref{sec: NSOP1 preserved}. 
Chernikov was partially supported by the NSF
Research Grant DMS-2246598, and both authors were partially supported by the NSF CAREER grant DMS-1651321.

\section{Multilinear forms}\label{sec: multilinear forms}

\subsection{Perfect pairings, non-degenerate and generic $n$-linear forms}\label{sec: non-degen}
In this section we consider possible generalizations of \emph{non-degeneracy} from bilinear forms to $n$-linear forms, for an arbitrary $n \in \mathbb{N}_{\geq 2}$. Several other notions of non-degeneracy for multi-linear forms appear in the literature, see e.g.~\cite{gelfand2009discriminants, hitchin2001stable, kazhdan2020properties} (see also \cite[Remark 1.1]{harman2024ultrahomogeneous}).

Let $V$ be a vector space over a field $K$. Recall that a bilinear form $\langle -, - \rangle: V^2 \to K$ is \emph{degenerate} if there exists a vector $v \in V, v \neq 0$ such that $\langle v,w \rangle = 0$ for all $w \in V$.

If $V$ has finite dimension, a bilinear form \Bf is non-degenerate if and only if it is a \emph{perfect pairing}, i.e.~the maps $V \rightarrow V^\ast,\ v \mapsto  \langle v, - \rangle$ and $V \rightarrow V^\ast,\ v \mapsto \langle -, v \rangle$ are isomorphisms. In other words, for any basis $v_1, \ldots, v_n$ of $V$ and any $k_1, \ldots, k_n \in K$ there is $w \in V$ such that $\langle v_i, w \rangle =k_i$ for all $i=1, \ldots, n$. This equivalence is no longer true in infinite dimensional vector spaces for dimensional reasons. However, we can still obtain a ``local'' version: the bilinear form \Bf is non-degenerate if and only if for any $m \in \mathbb{N}$, any linearly independent vectors $v_1, \dots, v_m$ in $V$ and any $k_1, \dots, k_m \in K$ there is $w \in V$ such that $\langle v_i, w \rangle =k_i$ for all $i=1, \dots, m$. In fact, we can find such a vector $w$ in any subspace $W \subseteq V$ such that $W^\perp \cap \Span \left(v_1, \dots, v_m \right) = \{0\}$.

A naive attempt to generalize non-degeneracy to $n$-linear forms \Bn$:V^n \to K$ would be: for any non-zero $v_1, \ldots, v_{n-1} \in V$ there is $w \in V$ such that $ \langle v_1, \dots, v_n, w \rangle \neq 0$. However, this condition typically cannot be satisfied in an $n$-linear form satisfying some additional natural requirements (e.g.~alternating or symmetric). In the case of an alternating form, we have for example that $\langle v,v, v_3, \dots, v_{n-1}, w \rangle_n=0$ regardless of the choice of $v, v_3, \ldots, v_{n-1},w \in V$. To circumvent this issue, we work in the tensor product space $\Ten^{n-1} V$ modulo the subspace $N$ of $\Ten^{n-1} V$ generated by the elements $v_1 \ten \dots \ten v_{n-1}$ for which the map $ V \rightarrow K, \ w \mapsto  \langle v_1, \dots, v_{n-1}, w \rangle$ should be the zero map. For example,  for  alternating $n$-linear forms, we take the subspace $N$ to be
 $$\Alt :=  \Span \left( \left\{v_1 \ten \dots\ten v_{n-1} \mid v_1, \dots, v_{n-1} \text{ are linearly dependent}\right\} \right),$$ 
 and for symmetric $n$-linear forms we let $N$ be
  $$\Sym :=\Span \left( \{v_1\ten \dots \ten v_{n-1} - v_{\sigma(1)} \ten \dots \ten v_{\sigma(n-1)} \mid \sigma \in \Sym \left( \{1, \dots,n-1\} \right) \} \right).$$ 
  In these cases we obtain:
  \begin{gather*}
  	\left( \Ten^{n-1} V\right)\big/\Alt\ = \bigwedge^{n-1} V, \quad \text{ i.e.~the } (n-1)\text{th exterior power of }V, \\
  	\left(\Ten^{n-1} V\right)\big/\Sym\ = \bigvee^{n-1} V, \quad \text{i.e.~the } (n-1)\text{th symmetric power of }V.
  \end{gather*}

   To talk about these cases in a uniform way (as many properties apply to both of them), from now on we let  $\lozenge \in \{\wedge, \vee\}$ and write  
$\lozenge^{n-1} V$ to mean either $\bigwedge^{n-1} V$ or $\bigvee^{n-1} V$. Moreover, given $\sum_{i=1}^m k_i\, (v_{i,1} \ten \dots \ten v_{i,n-1}) \in \Ten^{n-1} V$, we write $\overline{\sum_{i=1}^m k_i\, (v_{i,1} \ten \dots \ten v_{i,n-1})}$ for its equivalence class in $\lozenge^{n-1} V$.

Another way of thinking about these powers is via their universal property. This gives rise to a functor sending a vector space $V$ to its exterior/symmetric power. As the embedding $i$ of any subspace $W$ into a vector space $V$ is split injective, $\lozenge^{n}W$ embeds into $\lozenge^{n}V$ via $\lozenge^n i$ and can be naturally viewed as a subspace.

Then the $n$-linear form \Bn  gives rise to a bilinear form \Bi on $\big( \lozenge^{n-1} V \big) \times V$ defined by \[ \left \langle \overline{\sum_{i=1}^m k_i\, (v_{i,1} \ten \dots \ten v_{i,n-1})},v \right\rangle_2 := \sum_{i=1}^m k_i\,\langle v_{i,1}, \dots , v_{i,n-1},v \rangle_n.\]

This remains true restricting to $
\left( \lozenge^{n-1} U \right) \times W$ for subspaces $U$ and $W$ of $V$. To ease the notation, we will not distinguish between the bilinear forms 
 \begin{gather*}
 	\Bli:  \big(\lozenge^{n-1} V \big) \times V  \rightarrow K \textrm{ and }\\
 \Bli:  \big( \lozenge^{n-1} U \big)\times W \rightarrow K\ \text{ \scriptsize (its restriction  to $U$ and $W$}).
 \end{gather*}

 \textbf{For the rest of the section, we let $\left(V,\Bln \right)$  be an alternating/symmetric $n$-linear space ($n\geq 2$) and let \Bi  be the associated bilinear form on $\big( \lozenge^{n-1} V \big) \times V$.}

\begin{defn}\label{def: non-degen} We say that  the $n$-linear form \Bn is:
\begin{enumerate}
\item \emph{non-degenerate} if for any non-zero  $t \in \lozenge^{n-1} V $ there is $w \in V$ such that $\langle t,w \rangle_2 \neq 0$;
\item a \emph{perfect pairing} if the maps 
$$V \rightarrow  \left(\lozenge^{n-1} V\right)^\ast, \ v \mapsto \langle -, v \rangle_2\quad \text{and}\quad \lozenge^{n-1} V\rightarrow V^\ast, \ t \mapsto \langle t, - \rangle_2$$ are vector space isomorphisms (where as usual $\ast$ denotes the dual space);
\item \emph{generic} if for any $m\in \mathbb{N}$, any linearly independent elements $t_1, \dots, t_m \in \lozenge^{n-1} V$ and any $k_1, \dots, k_m \in K$ there is $w \in V$ such that $\langle t_i, w \rangle_2=k_i$ for all $i \in \set{m}$.	
\end{enumerate}
We also refer to the corresponding alternating/symmetric $n$-linear space $\left(V, \Bln \right)$ as \emph{non-degenerate} or \emph{generic}, respectively. 
\end{defn}

\noindent	It follows immediately from the definitions that any perfect pairing is a generic form, and any generic form is non-degenerate.
 Now, we explore the different characterizations as well as  connections between these  notions.

\begin{defn}
	We define the map
$$\Psi: \lozenge^{n-1} V  \rightarrow V^\ast, \ t \mapsto \langle t, - \rangle_2, $$

\noindent  and, for any subspace $W$ of $V$,
  $$\Phi_W: V \rightarrow  \big(\lozenge^{n-1} W\big)^\ast, \ v \mapsto \langle -, v \rangle_2.$$
We  write $\Phi$ for $\Phi_V$. 
\end{defn}

\noindent The following lemma is immediate from the definitions:

\begin{lemma}\label{L:Non-DegGenChar}\
\begin{enumerate}
\item\label{Enu:Non-Deg} The form $\Bln$ on $V$ is non-degenerate if and only if  $\Psi$ is injective.
\item\label{Enu:Generic}  The following are equivalent
:

\begin{enumerate}
 \item 	\Bn is generic;
 \item $\Phi_W$ is surjective for any finite dimensional subspace $W$ of $V$;
 \item for any finite dimensional subspace $W$ of $V$ there is a basis $t_1, \dots, t_l$ of $\lozenge^{n-1} W$ and $u_1, \dots, u_l$ in $V$  such that 	$\langle t_i,u_j \rangle_2= \delta_{ij}$. In this case we say that  the tuples $(t_1, \dots, t_l)$  and  $(u_1, \dots, u_l)$ are \emph{dual}.
\end{enumerate}

\end{enumerate}
\end{lemma}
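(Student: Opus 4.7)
The plan is to unpack the definitions directly: part (1) is essentially a restatement of non-degeneracy in terms of $\ker\Psi$, while part (2) reduces to finite-dimensional linear algebra applied to each finite-dimensional subspace $W \subseteq V$, using the fact (recalled just before the lemma) that $\lozenge^{n-1} W$ embeds naturally into $\lozenge^{n-1} V$ as a subspace.

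For (1), note that $\Psi(t) = 0$ in $V^{\ast}$ means $\langle t, w\rangle_2 = 0$ for every $w \in V$. Hence $\Psi$ is injective if and only if no non-zero $t$ is orthogonal to all of $V$, which is exactly non-degeneracy.

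For (2), I would establish (a)$\Rightarrow$(b)$\Rightarrow$(c)$\Rightarrow$(a). For (a)$\Rightarrow$(b): given a finite-dimensional $W$, any basis $t_1, \dots, t_l$ of $\lozenge^{n-1} W$ remains linearly independent in $\lozenge^{n-1} V$, so for any $\phi \in (\lozenge^{n-1} W)^{\ast}$ with values $k_i := \phi(t_i)$, genericity produces $w \in V$ with $\langle t_i, w\rangle_2 = k_i$, and hence $\Phi_W(w) = \phi$. For (b)$\Rightarrow$(c): fix a basis $t_1, \dots, t_l$ of $\lozenge^{n-1} W$ and its dual basis $t_1^{\ast}, \dots, t_l^{\ast} \in (\lozenge^{n-1} W)^{\ast}$; by surjectivity of $\Phi_W$, choose $u_i \in V$ with $\Phi_W(u_i) = t_i^{\ast}$, so that $\langle t_j, u_i\rangle_2 = \delta_{ij}$. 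For (c)$\Rightarrow$(a): given linearly independent $t_1, \dots, t_m \in \lozenge^{n-1} V$ and scalars $k_1, \dots, k_m \in K$, pick any finite-dimensional subspace $W \subseteq V$ containing all vectors that appear in simple-tensor representatives of the $t_i$, so that $t_1, \dots, t_m \in \lozenge^{n-1} W$; extend to a basis $t_1, \dots, t_l$ of $\lozenge^{n-1} W$ with dual tuple $u_1, \dots, u_l$ given by (c), and set $w := \sum_{i=1}^{m} k_i u_i$, which satisfies $\langle t_j, w\rangle_2 = k_j$ for $j \leq m$.

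There is no serious obstacle. The only point requiring care is the transition between $\lozenge^{n-1} W$ and $\lozenge^{n-1} V$: linear independence and the choice of $W$ containing given elements are justified by the canonical embedding $\lozenge^{n-1} W \hookrightarrow \lozenge^{n-1} V$ coming from functoriality of exterior/symmetric powers applied to the split injection $W \hookrightarrow V$, as recalled before the lemma.
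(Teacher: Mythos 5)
Your proof is correct and is essentially the argument the paper has in mind: the paper states this lemma without proof, calling it immediate from the definitions, and your unpacking of (1) via $\ker\Psi$ and the cycle (a)$\Rightarrow$(b)$\Rightarrow$(c)$\Rightarrow$(a) using the embedding $\lozenge^{n-1}W \hookrightarrow \lozenge^{n-1}V$ is exactly that unpacking. The only nitpick is in (c)$\Rightarrow$(a): condition (c) provides a dual tuple for \emph{some} basis of $\lozenge^{n-1}W$, not necessarily for a basis extending your chosen $t_1,\dots,t_m$, but this is bridged immediately by observing that (c) already forces $\Phi_W$ to be surjective (write any functional in terms of the basis from (c) and use its dual tuple), after which every basis — in particular your extended one — admits a dual tuple and your formula for $w$ goes through.
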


We consider separately the cases when $V$ has finite or infinite dimension.

\subsubsection{Finite dimensional case} Let $V$ be of dimension $d \in \mathbb{N}$ and $n>2$.

Then, by Lemma \ref{L:Non-DegGenChar}(2b), an alternating/symmetric $n$-linear form \Bn on $V$ is generic if and only if $\Phi$ is surjective.    Thus, an alternating/symmetric $n$-linear form can only be generic if $\dim \lozenge^{n-1} V \leq d$. The same holds for non-degenerate by Lemma \ref{L:Non-DegGenChar}(\ref{Enu:Non-Deg}). We have that $ \dim \left(\bigwedge^{n-1} V \right)  = { d \choose n-1}$ and $\dim \left(\bigvee^{n-1} V \right) = {d+n-2 \choose n-1}$. Thus, if $d\neq n$ (respectively,  $d \neq 1$), an alternating $n$-linear form (respectively, symmetric) cannot  be generic or non-degenerate.
Thus, in contrast to the bilinear case $n=2$, for $n>2$ there are no generic or non-degenerate alternating/symmetric $n$-linear forms on vector spaces of dimension greater than $n$.
If $d=n$ and the $n$-linear form is alternating, then 
 all three notions from Definition \ref{def: non-degen} again coincide.

\subsubsection{Infinite dimensional case} 

 For an infinite dimensional vector space $V$ over a field $K$, its dual space   $V^\ast $ is never isomorphic to $V$ itself (in fact, if a basis of $V$ has size $\kappa$, i.e.~$V$ is isomorphic to $K^{(\kappa)}$, its dual space is isomorphic to $K^\kappa$). Hence, as the dimension of $\lozenge^{n-1} V$ is at least as big as the dimension of $V$, an alternating/symmetric $n$-linear form \Bn on $V$ can never be a perfect pairing.

First, we want to show that any $n$-linear form can be extended to a non-degenerate $n$-linear form on a larger (and  infinite dimensional in general) vector space. In particular, non-degenerate $n$-linear forms on infinite dimensional vector spaces exist.

\begin{lemma}\label{lem: ext multilin to gen} For any alternating/symmetric $n$-linear space  $\left(U,\Cln \right)$ there is a vector space $V$ of dimension at most $\aleph_0 + \dim(U)$ (over the same field) containing $U$ and an alternating/symmetric $n$-linear form \Bn on $V$ extending \Cn and such that   $ \left( V,\Bln \right)$ is non-degenerate. 
\end{lemma}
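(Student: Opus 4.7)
The plan is to construct $V$ as the union of an increasing $\omega$-chain $U = V_0 \subseteq V_1 \subseteq V_2 \subseteq \ldots$ of alternating (respectively, symmetric) $n$-linear spaces over $K$, where at each successor step we ``kill'' the kernel $K_m$ of the associated map $\lozenge^{n-1} V_m \to V_m^{\ast},\ t \mapsto \langle t, -\rangle_2$ by adjoining fresh vectors that pair non-trivially with the previously bad tensors. Non-degeneracy of $V$ will then follow from the fact that $\lozenge^{n-1}$ commutes with filtered colimits, so every nonzero element of $\lozenge^{n-1} V$ already lives in some $\lozenge^{n-1} V_m$ and is either paired non-trivially by a vector in $V_m$ or handled at stage $m+1$.

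It is convenient to encode the alternating (respectively, symmetric) $n$-linear form on $V_m$ as a linear functional $\varphi_m : \lozenge^n V_m \to K$ via the universal property of the exterior (respectively, symmetric) power; under this correspondence the associated bilinear form satisfies $\langle t, w\rangle_2 = \varphi_m(t \cdot w)$, where $t \cdot w$ denotes the image of $t \otimes w$ in $\lozenge^n V_m$. To build $V_{m+1}$ from $V_m$, fix a basis $(t_i)_{i \in I_m}$ of $K_m$, extend it to a basis of $\lozenge^{n-1} V_m$, and let $(\lambda_i)_{i \in I_m}$ be the corresponding dual coordinate functionals, so $\lambda_i(t_j) = \delta_{i,j}$. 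Set $V_{m+1} := V_m \oplus W_m$, where $W_m$ is a fresh $K$-vector space with basis $(w_i)_{i \in I_m}$. Using the standard Künneth-type decomposition
$$\lozenge^n(V_m \oplus W_m) = \bigoplus_{p+q=n} \lozenge^p V_m \otimes \lozenge^q W_m,$$
define $\varphi_{m+1}$ to agree with $\varphi_m$ on the summand $\lozenge^n V_m$, to send $t \otimes w_i$ in $\lozenge^{n-1} V_m \otimes W_m$ to $\lambda_i(t)$, and to vanish on all remaining summands. This yields a well-defined linear functional on $\lozenge^n V_{m+1}$ whose associated $n$-linear form is alternating (respectively, symmetric) and restricts to $\varphi_m$ on $V_m^{\,n}$.

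Now let $V := \bigcup_m V_m$ with the induced form. For non-degeneracy, pick $0 \neq t \in \lozenge^{n-1} V$; since $\lozenge^{n-1}$ commutes with filtered colimits, there is some $m$ with $t \in \lozenge^{n-1} V_m$ and $t \neq 0$ there. If $t \notin K_m$, some $w \in V_m$ already satisfies $\langle t, w\rangle_2 \neq 0$. Otherwise, expanding $t = \sum_i a_i t_i$ in the chosen basis of $K_m$ and picking $i_0$ with $a_{i_0} \neq 0$, we compute $\langle t, w_{i_0}\rangle_2 = \varphi_{m+1}(t \cdot w_{i_0}) = \lambda_{i_0}(t) = a_{i_0} \neq 0$, as desired. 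The dimension bound is a short cardinal-arithmetic estimate: one has $\dim V_{m+1} \leq \dim V_m + \dim \lozenge^{n-1} V_m$, and a short induction (treating the cases $\dim U$ finite and $\dim U$ infinite separately) yields $\dim V_m \leq \aleph_0 + \dim U$ for every $m$, whence $\dim V \leq \sum_m \dim V_m \leq \aleph_0 + \dim U$.

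The main obstacle is making sure that the recipe for $\varphi_{m+1}$ actually defines a linear functional on $\lozenge^n V_{m+1}$ corresponding to a genuinely alternating (respectively, symmetric) $n$-linear form, and that $\omega$ many steps suffice despite each stage potentially creating new kernel tensors in $\lozenge^{n-1} V_{m+1}$. Both issues are resolved by combining the Künneth decomposition (which lets us prescribe $\varphi_{m+1}$ summand-by-summand while remaining in the exterior/symmetric world) with the fact that $\lozenge^{n-1}$ commutes with filtered colimits (so any bad tensor in the final $\lozenge^{n-1} V$ already appears at some finite stage and is killed one step later).
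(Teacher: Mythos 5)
Your proof is correct and follows essentially the same route as the paper: an increasing $\omega$-chain in which at each stage one adjoins new vectors dual to a basis of $\ker\left(\lozenge^{n-1}V_m \to V_m^{\ast}\right)$ so that every nonzero element of $\lozenge^{n-1}V_m$ is detected by a vector of $V_{m+1}$, followed by the observation that every element of $\lozenge^{n-1}\bigl(\bigcup_m V_m\bigr)$ already appears (nonzero) at some finite stage. The only difference is presentational: where the paper simply asserts that the form extends with $\langle t_i, w_j\rangle_2=\delta_{i,j}$, you justify the existence of the alternating/symmetric extension via the decomposition $\lozenge^n(V_m\oplus W_m)\cong\bigoplus_{p+q=n}\lozenge^p V_m\otimes\lozenge^q W_m$, which is a legitimate, slightly more detailed way of carrying out the same step.
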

\begin{proof}
Let $\kappa := \aleph_0 + \dim(U)$ and let $W$ be an arbitrary vector space over a field $K$ of dimension $\kappa^+$ containing $U$. We construct an increasing chain $(V_i : i \in \mathbb N)$ of subspaces of $W$ each of dimension $\kappa$ containing $U$, and define an alternating/symmetric $n$-linear form \Bn  on  each $V_{i+1}$ extending \Bn on $V_i$ so that the map $ \lozenge^{n-1} V_m \rightarrow V_{m+1}^\ast,\ t \mapsto \langle t, -\rangle_2  $ is injective for all $m$.

 Let $V_0 := U$ and define \Bn on $V_0$ to be \Cn. Assume $\left(V_m, \Bln\right)$ has already been constructed for some $m\in \mathbb N$. 

If the map $\Psi_{V_m} :  \lozenge^{n-1} V_m  \rightarrow V_m^\ast, \ t \mapsto \langle t, -\rangle_2 $ is  injective, then \Bn is non-degenerate on $V_m$ by Lemma \ref{L:Non-DegGenChar}(\ref{Enu:Non-Deg}). We let $V:= V_m$ and  conclude. 

Otherwise  we can find a basis $(t_i: i \in I)$ of $\ker \left(\Psi_{V_m} \right)$ with $I$ of size $\leq \kappa$, and extend this to a basis $(t_i: i \in J)$ of $\lozenge^{n-1} V_m$, where $I$ is an initial segment of $J$. Let $(w_i : i \in I)$ be arbitrary elements in $W$  linearly independent over $V_m$, and let  $V_{m+1} := V_m + \Span \left( \left\{ w_i : i\in I \right\} \right)$. 
As the set $(w_i : i \in I)$ is linearly independent from $V_m$ one can extend the existing $n$-linear form on $V_m$ to a linear form on $V_{m+1}$ so that $\langle t_i, w_j\rangle_2 = \delta_{i,j}$ for all $i \in J$ and $j \in I$.

Then  the map $  \lozenge^{n-1} V_m \rightarrow V_{m+1}^\ast,\ t \mapsto \langle t, -\rangle_2  $ is injective by construction, completing the inductive step. 

Now, let $V := \bigcup_{i\in \mathbb N} V_i$ and  $t \in \lozenge^{n-1} V $ be  non-zero. Then there is $m\in \mathbb N$ and non-zero $s  \in \Ten^{n-1} V_m$ such that $\overline s=t$. As $  \langle \overline s, -\rangle_2 \in V_{m+1}^\ast $ is non-zero by construction,  there is some $v \in V_{m+1}$ such that $\langle t,v\rangle _2=\langle \overline s,v \rangle _2 \neq 0$. Hence \Bn is non-degenerate on $V$ (by Lemma \ref{L:Non-DegGenChar}(1)).
\end{proof}
Next, we show that non-degeneracy and genericity are in fact equivalent:
\begin{lemma}\label{L:NonDegiffGeneric} Let $V$ be of infinite dimension. Then the form \Bn  is non-degenerate if and only if it is generic.
\end{lemma}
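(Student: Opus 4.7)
The plan is as follows. Since generic always implies non-degenerate (as noted immediately after Definition \ref{def: non-degen}), I only need to establish the reverse implication in the infinite dimensional setting. By Lemma \ref{L:Non-DegGenChar}(\ref{Enu:Generic})(b), it suffices to show that assuming non-degeneracy of $\Bln$, the map $\Phi_W : V \to \bigl( \lozenge^{n-1} W \bigr)^{\ast}$ is surjective for every finite-dimensional subspace $W \subseteq V$.

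Fix a finite-dimensional subspace $W \subseteq V$. The key observation is that $\lozenge^{n-1} W$ is then finite-dimensional, and by the functoriality/splitting remark immediately before Definition \ref{def: non-degen}, it embeds canonically as a subspace of $\lozenge^{n-1} V$. I will argue by contradiction: suppose $\Phi_W$ is not surjective. Then $\im(\Phi_W)$ is a proper subspace of the finite-dimensional space $\bigl( \lozenge^{n-1} W \bigr)^{\ast}$. Using the canonical identification $\bigl( \lozenge^{n-1} W \bigr)^{\ast \ast} \cong \lozenge^{n-1} W$ (valid in finite dimension), a proper subspace has non-trivial annihilator in the double dual, so I can pick a non-zero $t \in \lozenge^{n-1} W$ with $\ell(t) = 0$ for every $\ell \in \im(\Phi_W)$.

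Unpacking this, $\Phi_W(v)(t) = \langle t, v \rangle_2 = 0$ for all $v \in V$. Viewing $t$ as a non-zero element of $\lozenge^{n-1} V$ via the embedding $\lozenge^{n-1} W \hookrightarrow \lozenge^{n-1} V$, this means $\Psi(t) = \langle t, - \rangle_2 = 0$, contradicting non-degeneracy via Lemma \ref{L:Non-DegGenChar}(\ref{Enu:Non-Deg}). Hence $\Phi_W$ is surjective, as required.

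The main obstacle — such as it is — is just bookkeeping: making sure that the natural embedding $\lozenge^{n-1} W \hookrightarrow \lozenge^{n-1} V$ really does preserve non-zeroness of $t$ (so that the contradiction lands in the right place) and that the restriction of $\Bli$ to $\lozenge^{n-1} W \times V$ is literally the pairing whose adjoint is $\Phi_W$. Both are built into the setup of the section, so the argument is essentially a finite-dimensional double-dual computation combined with the embedding property already recorded.
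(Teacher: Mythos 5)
Your proof is correct, and it takes a genuinely different route from the paper's. The paper proves the non-degenerate $\Rightarrow$ generic direction via characterization (2c) of Lemma \ref{L:Non-DegGenChar}: an induction on $l$ constructing dual tuples $u_1,\dots,u_l$ with $\langle t_i,u_j\rangle_2=\delta_{ij}$, where the inductive step argues that $\bigcap_{i\neq j}\ker(f_i)\not\subseteq\ker(f_j)$ by a direct computation showing that otherwise $\Psi$ kills the nonzero element $\sum_i\langle t_{l+1},u_i\rangle_2\,t_i-t_{l+1}$; that step uses infinite dimensionality of $V$ (to get $\bigcap_{i\le l}\ker(f_i)\neq\{0\}$) and yields Corollary \ref{C:KerKer} as a byproduct, which the paper then needs for Lemma 2.7 and hence for the simplified non-degeneracy axiom. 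You instead verify characterization (2b) directly: if $\Phi_W$ were not surjective for some finite-dimensional $W$, the proper subspace $\im(\Phi_W)\subseteq(\lozenge^{n-1}W)^{\ast}$ would have a nonzero annihilator $t\in\lozenge^{n-1}W\cong(\lozenge^{n-1}W)^{\ast\ast}$, and since $\lozenge^{n-1}W$ embeds in $\lozenge^{n-1}V$ with the pairing compatible under restriction, this $t$ would be a nonzero element of $\ker\Psi$, contradicting Lemma \ref{L:Non-DegGenChar}(\ref{Enu:Non-Deg}). Your argument is shorter and, notably, never uses that $V$ is infinite dimensional (the double-dual step lives entirely in the finite-dimensional space $\lozenge^{n-1}W$); what it does not buy is the explicit kernel statement of Corollary \ref{C:KerKer}, which in the paper's development is extracted from the inductive proof and reused afterwards, so within the paper's architecture one would still need a separate (easy) argument for that corollary.
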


\proof Assume \Bn is generic. Let $t\in \lozenge^{n-1} V$ be non-zero and $k\in K\setminus \{0\}$ be arbitrary. Then we can find $w\in V$ such that $\langle t,w \rangle_2=k \neq 0$, hence \Bn is non-degenerate.

For the other direction, assume that the form is non-degenerate, and we show by induction on $l \in \mathbb{N}$, that for any linearly independent elements $t_1, \dots, t_l$  in $\lozenge^{n-1} V$ we can find $u_1, \dots, u_l$ in $V$ such that $\langle t_i,u_j \rangle_2= \delta_{ij}$. This implies that  \Bn is generic by Lemma \ref{L:Non-DegGenChar}(2).

 For $l=1$, by non-degeneracy we can find $w \in V$ such that $ \langle t_1, w \rangle \neq 0$. Letting $u_1 := \frac{1}{\langle t_1, w \rangle }\, w$, we obtain the claim.
 
 Now assume the claim holds for $l\geq 1$. Given linearly independent elements $t_1, \dots, t_{l+1}$  in $\lozenge^{n-1} V$, we need to find $u_1, \dots, u_{l+1}$ in $V$ such that $\langle t_i,u_j \rangle_2= \delta_{ij}$. For $i \in \set{l+1}$, we consider the map $f_i: V \rightarrow K, \ v \mapsto \langle t_i,v \rangle_2$. If  for every $j\in \set{l+1}$ there exists some $v_j \in  \left(  \bigcap_{i\neq j} \ker 
 (f_i) \right) \setminus \ker(f_j)$, we finish the proof by setting 
 $u_j := \frac{1}{f_j(v_j)} v_j$. Thus it is enough to show that
$\bigcap_{i\neq j} \ker (f_i) \not \subseteq \ker(f_j)$ for all $j \in \set{l+1}$.
Assume towards a contradiction (and without loss of generality) that $$\bigcap_{i\leq l} \ker (f_i) \subseteq \ker(f_{l+1}).$$  Let $u_1, \dots, u_l$ be given by the induction hypothesis for $l$.  Note that $\ker(f_i)$ has codimension one in $V$. Hence $V= \Span(u_i) + \ker(f_i)$. More generally, 
$$V= \Span(u_1, \dots, u_l) + \bigcap_{i=1}^l \ker (f_i),$$
and, as $V$ is infinite dimensional,  $\bigcap_{i=1}^l \ker (f_i) \neq \{0\}$.
Fix an arbitrary $v\in V$. Let $w\in \bigcap_{i\leq l} \ker (f_i) \subseteq \ker(f_{l+1})$ and $\lambda_i \in K$ be such that $v=  \sum_{i=1}^l \lambda_i u_i + w $. Then 
\begin{align*}
	\left \langle \sum_{i=1}^l \left \langle t_{l+1}, u_i \right \rangle_2 t_i\, -\,t_{l+1} , v  \right \rangle_2
	&=\left \langle \sum_{i=1}^l \langle t_{l+1}, u_i \rangle_2 \, t_i\, -\,t_{l+1} , \sum_{i=1}^l \lambda_i u_i + w  \right \rangle_2\\
	&= \sum_{i=1}^l\sum_{j=1}^l \left \langle \langle t_{l+1}, u_i \rangle_2 \, t_i\ ,  \lambda_j u_j + w  \right \rangle_2- \sum_{j=1}^l \left \langle t_{l+1},\lambda_j  u_j+w \right \rangle_2\\
		&= \sum_{i,j=1}^l\lambda_j \left \langle t_{l+1}, u_i \right \rangle_2  \underbrace{\left \langle t_i\ ,   u_j \right \rangle_2}_{= \delta_{i,j}}- \sum_{j=1}^l \lambda_j   \left \langle t_{l+1}, u_j \right \rangle_2\\
	&= 0.
\end{align*}
Thus $\Psi\left (\sum_{i=1}^l \left \langle t_{l+1}, u_i \right \rangle_2  t_i\, -\,t_{l+1}\right ) \in V^\ast$ is the zero map. By non-degeneracy $\Psi$ is injective (Lemma \ref{L:Non-DegGenChar}), so $\sum_{i=1}^l \left \langle t_{l+1}, u_i \right \rangle_2  t_i\, -\,t_{l+1}=0$. But $t_1, \ldots, t_{l+1}$ are linearly independent in $\lozenge^{n-1} V$ by assumption ---  a contradiction. \qed

As a byproduct of the proof we have the following:

\begin{cor}\label{C:KerKer}
Let $V$ be infinite dimensional, $\left(V,\Bln \right)$ non-degenerate,  $m \in \mathbb{N}$,  $t_1, \dots, t_m \in \lozenge^{n-1} V$ linearly independent and  $f_i: V \rightarrow K, \ v \mapsto \langle t_i,v \rangle_2$. Then
$$\left(  \bigcap_{i\neq j} \ker 
 (f_i) \right) \setminus \ker(f_j) \neq \{0\}.$$
\end{cor}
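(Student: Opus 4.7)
The plan is to derive this as an immediate consequence of Lemma \ref{L:NonDegiffGeneric} (which identifies non-degeneracy with genericity in infinite dimension) together with the characterization of genericity via dual tuples in Lemma \ref{L:Non-DegGenChar}(2c). The observation is that the non-containment $\bigcap_{i\neq j}\ker(f_i)\not\subseteq\ker(f_j)$ is exactly what was proved by contradiction inside the inductive step of Lemma \ref{L:NonDegiffGeneric}, so one can simply extract it; but it is cleaner to invoke the conclusion of that lemma directly.

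First, since $\left(V,\Bln\right)$ is non-degenerate and $V$ is infinite dimensional, Lemma \ref{L:NonDegiffGeneric} gives that the form is generic. Next, applying Lemma \ref{L:Non-DegGenChar}(2) to the linearly independent tuple $t_1,\dots,t_m \in \lozenge^{n-1} V$, I would extend it (if necessary) to a basis of its $K$-span and then obtain dual vectors $u_1,\dots,u_m \in V$ satisfying
\[
\langle t_i, u_k\rangle_2 \;=\; \delta_{i,k} \quad\text{for all } i,k \in [m].
\]

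Now fix the index $j$ and consider the vector $u_j$. By construction, $f_i(u_j) = \langle t_i, u_j\rangle_2 = \delta_{i,j}$, so $u_j \in \ker(f_i)$ for every $i \neq j$, while $f_j(u_j) = 1 \neq 0$, so $u_j \notin \ker(f_j)$. In particular $u_j \neq 0$, which witnesses
\[
\left(\bigcap_{i\neq j}\ker(f_i)\right)\setminus \ker(f_j) \;\neq\; \{0\},
\]
completing the proof. There is no real obstacle here: all of the work has already been done in establishing Lemma \ref{L:NonDegiffGeneric}, and the corollary is just the explicit statement of the intermediate non-containment used there.
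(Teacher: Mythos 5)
Your proof is correct, and it is essentially a repackaging of what the paper does, but routed slightly differently: the paper obtains the corollary ``as a byproduct of the proof'' of Lemma \ref{L:NonDegiffGeneric}, i.e.~it points at the non-containment $\bigcap_{i\neq j}\ker(f_i)\not\subseteq\ker(f_j)$ established by contradiction inside the inductive step, whereas you deduce it from the \emph{conclusion} of that lemma: non-degeneracy plus infinite dimension gives genericity, and then one produces the witness directly. Your route is self-contained and arguably cleaner (no need to re-open the earlier proof), at the cost of being a consequence of the full lemma rather than of the weaker intermediate fact actually used there; both are logically fine and non-circular, since Lemma \ref{L:NonDegiffGeneric} is already established.

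One small imprecision: Lemma \ref{L:Non-DegGenChar}(2c) as stated only asserts that for each finite dimensional subspace $W\subseteq V$ \emph{some} basis of $\lozenge^{n-1}W$ admits a dual tuple; it does not directly hand you dual vectors for your prescribed linearly independent $t_1,\dots,t_m$, whose span need not be of the form $\lozenge^{n-1}W$. The cleanest fix is to skip (2c) altogether and invoke the definition of genericity (Definition \ref{def: non-degen}(3)) for the fixed index $j$ with scalars $k_i:=\delta_{ij}$: genericity yields $w\in V$ with $\langle t_i,w\rangle_2=\delta_{ij}$, so $w\in\bigcap_{i\neq j}\ker(f_i)\setminus\ker(f_j)$, as required. (Alternatively one can argue via Lemma \ref{L:Non-DegGenChar}(2b): choose finite dimensional $W$ with all $t_i\in\lozenge^{n-1}W$, pick $f\in(\lozenge^{n-1}W)^\ast$ with $f(t_i)=\delta_{ij}$, and use surjectivity of $\Phi_W$ — this is exactly the argument of Lemma \ref{L:findingw}.) With that adjustment your argument goes through verbatim.
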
 
This allows us to give a characterization of non-degeneracy using pure tensors:
\begin{lemma}
Let $V$ be infinite dimensional. The space $\left(V,\Bln \right)$ is non-degenerate if and only if for all $\overline{v_1 \ten \dots \ten v_{n-1}} \in \left( \lozenge^{n-1} V \right) \setminus \{0\}$ there is $w \in V$ such that $$\langle\overline{v_1 \ten \dots \ten v_{n-1}}, w\rangle_2 = \langle v_1 , \dots, v_{n-1} , w\rangle_n \neq 0. $$\end{lemma}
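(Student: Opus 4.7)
The forward direction is immediate: non-degeneracy is precisely the injectivity of $\Psi\colon \lozenge^{n-1}V\to V^\ast$ (Lemma~\ref{L:Non-DegGenChar}(\ref{Enu:Non-Deg})), so $\Psi(\overline{v_1\otimes\ldots\otimes v_{n-1}})\neq 0\in V^\ast$ for any nonzero pure tensor, and some $w\in V$ witnesses $\langle v_1,\ldots,v_{n-1},w\rangle_n\neq 0$.

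For the backward direction, I would argue the contrapositive. Assume the pure tensor property and suppose toward a contradiction that some $t\in\ker\Psi$ is nonzero. Write $t=\sum_{i=1}^m k_i t_i$ with $k_i\in K\setminus\{0\}$ and $t_i=\overline{v_{i,1}\otimes\ldots\otimes v_{i,n-1}}$ pure tensors, choosing $m$ minimal; by minimality the $t_i$ are linearly independent in $\lozenge^{n-1}V$. If $m=1$, the hypothesis is directly contradicted. For $m\geq 2$, the relation $\sum_{i=1}^m k_i\Psi(t_i)=0$ is a nontrivial dependence among the nonzero functionals $\Psi(t_i)\in V^\ast$; take any nontrivial relation $\sum_{i\in S}c_i\Psi(t_i)=0$ of minimal support. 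If $|S|<m$, then $s=\sum_{i\in S}c_i t_i$ is a nonzero element of $\ker\Psi$ with a strictly shorter pure-tensor representation, contradicting the minimality of $m$.

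It thus remains to rule out the case $|S|=m$ (full support). Here I would use the infinite-dimensionality of $V$: choose $w'\in V$ linearly independent from the finite collection $\{v_{i,j}\}_{i,j}$, and perturb each $t_i$ by replacing one of its component vectors, say $v_{i,j_i}$, by $v_{i,j_i}+w'$. This yields new pure tensors $t_i^{\prime}=t_i+r_i$, where each $r_i$ is a nonzero pure tensor, hence $\Psi(r_i)\neq 0$ by hypothesis. The combined collection $\{\Psi(t_i),\Psi(r_i)\}_{i=1}^{m}$ in $V^\ast$ satisfies the old relation $\sum k_i\Psi(t_i)=0$ together with the new relations extracted from applying the pure tensor hypothesis to the perturbed tensors $t_i^{\prime}$. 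A rank/dimension count on these dependencies then produces an element of $\ker\Psi$ whose pure-tensor length is strictly less than $m$, again contradicting minimality.

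The main obstacle is carrying out the rank computation in the final step rigorously. The intuition is that the finite-dimensional ambient subspace $U=\Span(v_{i,j})$ is too small to accommodate a full-support dependency among the $\Psi(t_i)$'s once we inject independent directions from $V\setminus U$ via the fresh vector $w'$; the perturbation must break the dependency in at least one coordinate, yielding the desired reduction in pure-tensor length.
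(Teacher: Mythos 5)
Your forward direction is fine, and so is the reduction to a nonzero element $t=\sum_{i=1}^m k_i t_i$ of $\ker\Psi$ of minimal pure-tensor length $m$ with $t_1,\dots,t_m$ linearly independent, as well as the elimination of a dependency of proper support among the $\Psi(t_i)$. The genuine gap is the remaining full-support case, which is exactly where the content of the lemma lies, and your sketch does not resolve it. The perturbation $t_i\mapsto t_i'=t_i+r_i$ interacts with the hypothesis only through the statements $\Psi(t_i')\neq 0$ and $\Psi(r_i)\neq 0$: the pure-tensor hypothesis yields no linear relations among the functionals $\Psi(t_i),\Psi(r_i)$, so there are no ``new dependencies'' to feed into a rank count, and no mechanism is exhibited that forces an element of $\ker\Psi$ of pure-tensor length strictly less than $m$ to appear. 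The sentence ``a rank/dimension count on these dependencies then produces an element of $\ker\Psi$ whose pure-tensor length is strictly less than $m$'' is an unproven assertion (as you acknowledge), so the backward direction is incomplete.

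What is missing is a separation device: one wants $w\in V$ with $\langle t_i,w\rangle_2=0$ for $i<m$ but $\langle t_m,w\rangle_2\neq 0$, since then $\langle t,w\rangle_2=k_m\langle t_m,w\rangle_2\neq 0$ and no induction on tensor length is needed at all. This is precisely the paper's route: it applies Corollary \ref{C:KerKer} to the functionals $f_i=\langle t_i,-\rangle_2$ attached to the pure-tensor components of $t$, where that corollary is extracted from the proof of Lemma \ref{L:NonDegiffGeneric}; there, infinite-dimensionality is used to build dual families $u_1,\dots,u_l$ with $\langle t_i,u_j\rangle_2=\delta_{ij}$ and to show $\bigcap_{i\neq j}\ker f_i\not\subseteq\ker f_j$, via the injectivity of $\Psi$ applied to $\sum_{i}\langle t_{l+1},u_i\rangle_2\, t_i - t_{l+1}$. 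Note that this last element is in general not a pure tensor, so the kernel-separation statement does not follow from the pure-tensor hypothesis by a routine adaptation; completing your approach would require proving some version of this separation from that hypothesis (or reducing to it), which is substantially more than the perturbation heuristic you describe.
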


\begin{proof}
Non-degeneracy means that we have the given property for all non-zero  vectors, so we need to show the reverse implication.

Let  $t \in \lozenge^{n-1} V \setminus \{0\}$. We can choose $(\overline{v_{i,1} \ten \dots \ten v_{i,n-1}})_{i\in \set{m}}\in (\lozenge^{n-1} V)^m$ linearly independent and $k_i \in K\setminus \{0\}$ such that
 $t= \sum_{i=1}^m k_i\, \overline{(v_{i,1} \ten \dots \ten v_{i,n-1}) } $. Consider the maps $f_i: V \rightarrow K, \ v \mapsto \langle \overline{ v_{i,1} \ten \dots \ten v_{i,n-1}},v \rangle_2$. By Corollary \ref{C:KerKer} we can choose $ w \in  \left(  \bigcap_{i=1}^{m-1} \ker 
 (f_i) \right) \setminus \ker(f_m)$. Then
 \begin{align*}
 	\langle t, w \rangle_2 =&  \left\langle \sum_{i=1}^m k_i\, \overline{(v_{i,1} \ten \dots \ten v_{i,n-1}) }, w \right \rangle_2 \\
 	&=  \sum_{i=1}^m k_i \langle  \overline{v_{i,1} \ten \dots \ten v_{i,n-1} }, w \rangle_2 \\
  	&= k_m \langle \overline{ v_{m,1} \ten \dots \ten v_{m,n-1}} , w \rangle_2\\
  	& \neq 0. \qedhere
 \end{align*}  
  \end{proof}

This gives a characterization of non-degeneracy for alternating forms without passing to $\lozenge^{n-1} V$ (and simplifies the axiomatization in the next section).

\begin{cor}\label{C:NonDegSimplyfied} \ 
Let $\left(V,\Bln \right)$ be an infinite dimensional alternating $n$-linear space. Then $\left(V,\Bln \right)$ is non-degenerate if and only if for all linearly independent  $v_1, \dots, v_{n-1} \in V$ there is $w \in V$ such that $\langle v_1, \dots, v_{n-1}, w\rangle_n \neq 0$.

\end{cor}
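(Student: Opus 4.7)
The plan is to deduce this corollary directly from the preceding lemma (which characterizes non-degeneracy via the values on pure tensors $\overline{v_1 \otimes \dots \otimes v_{n-1}}$) by identifying, in the alternating case, exactly when such a pure tensor is non-zero in $\wedge^{n-1} V$. The key linear-algebra input is the classical fact that in the exterior power, $\overline{v_1 \otimes \dots \otimes v_{n-1}} \neq 0$ in $\wedge^{n-1} V$ if and only if $v_1, \dots, v_{n-1} \in V$ are linearly independent.

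For the forward direction I would argue as follows. Assume $\left(V, \langle -, \dots, - \rangle_n\right)$ is non-degenerate, and let $v_1, \dots, v_{n-1} \in V$ be linearly independent. By the fact above, $\overline{v_1 \otimes \dots \otimes v_{n-1}} \neq 0$ in $\wedge^{n-1} V$, so the preceding lemma supplies some $w \in V$ with $\langle v_1, \dots, v_{n-1}, w \rangle_n = \langle \overline{v_1 \otimes \dots \otimes v_{n-1}}, w \rangle_2 \neq 0$, which is what we want. For the reverse direction, suppose the hypothesis of the corollary holds, and let $\overline{v_1 \otimes \dots \otimes v_{n-1}}$ be a non-zero pure tensor in $\wedge^{n-1} V$. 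By the definition of $\Alt$ given before Definition \ref{def: non-degen}, the $v_i$ cannot be linearly dependent, so they must be linearly independent; hence by assumption there is $w$ with $\langle v_1, \dots, v_{n-1}, w\rangle_n \neq 0$, i.e.~$\langle \overline{v_1 \otimes \dots \otimes v_{n-1}}, w\rangle_2 \neq 0$. Applying the preceding lemma then yields that $\left(V, \langle -, \dots, -\rangle_n\right)$ is non-degenerate.

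The only non-routine ingredient is the claim that $\overline{v_1 \otimes \dots \otimes v_{n-1}} \neq 0$ in $\wedge^{n-1} V$ whenever $v_1, \dots, v_{n-1}$ are linearly independent. Since the paper has already noted that for a subspace $W \subseteq V$ the induced map $\lozenge^{n-1} W \hookrightarrow \lozenge^{n-1} V$ is injective, it suffices to verify this inside $W := \Span(v_1, \dots, v_{n-1})$, a vector space of dimension exactly $n-1$; there $\wedge^{n-1} W$ is one-dimensional with basis $\overline{v_1 \otimes \dots \otimes v_{n-1}}$ by the standard description of exterior powers, so this pure tensor is non-zero. I do not expect any real obstacle here: the corollary is essentially a ``cleaner restatement'' of the previous lemma, specialized to the alternating case, using the classical identification of vanishing of simple wedges with linear dependence.
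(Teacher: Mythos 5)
Your proposal is correct and follows essentially the same route as the paper: both directions reduce to the preceding pure-tensor lemma together with the identification of non-vanishing simple wedges in $\bigwedge^{n-1}V$ with linear independence. Your extra verification that $\overline{v_1 \otimes \dots \otimes v_{n-1}} \neq 0$ for linearly independent $v_i$ (via restriction to $W = \Span(v_1,\dots,v_{n-1})$ and the embedding $\lozenge^{n-1}W \hookrightarrow \lozenge^{n-1}V$) is a detail the paper simply asserts, and it is fine.
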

\begin{proof}
If for some linearly independent  $v_1, \dots, v_{n-1} \in V$ we have that $\langle v_1, \dots, v_{n-1}, w\rangle_n = 0$ for all $w \in V$, then the form is degenerate as $\overline{v_1 \ten \dots \ten v_{n-1}} \neq  0$ in $ \bigwedge^{n-1} V$.

For the converse, by Lemma 2.6, we only have to check non-degeneracy on pure tensors. So let $\overline{v_1 \ten \dots \ten v_{n-1}} \in \left( \bigwedge^{n-1} V \right)\setminus  \{0\}$. Then $v_1, \dots, v_{n-1} \in V$ are linearly independent, and by the assumption we can find $w \in V$ such that 
$$\langle\overline{v_1 \ten \dots \ten v_{n-1}}, w\rangle_2 = \langle v_1 , \dots, v_{n-1} , w\rangle_n \neq 0. \qedhere$$
\end{proof}

To work out a back--and--forth argument in vector spaces with a non-degenerate form in the next section, we need the following finer version of genericity:
\begin{lemma}\label{L:findingw}
	Let $\left(V, \Bln \right)$ be an infinite dimensional non-degenerate alternating/symmetric $n$-linear space, and let $U$ be a finite dimensional subspace of $V$. Then for any linearly independent elements $t_1, \dots, t_m \in \lozenge^{n-1} V$ and $k_1, \dots, k_m \in K$ there is \emph{$w \in V$ linearly independent from  $U$} and such that $ \langle t_i, w \rangle_2=k_i$ for all $i \in \set{m}$.	
\end{lemma}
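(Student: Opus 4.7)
The plan is to reduce the statement to the ordinary genericity of the form (Lemma \ref{L:NonDegiffGeneric}) and then adjust the resulting vector by an element of a suitable kernel to escape $U$. First, since $V$ is infinite dimensional and non-degenerate, Lemma \ref{L:NonDegiffGeneric} gives that $\left(V,\Bln \right)$ is generic, so there is some $w_0 \in V$ with $\langle t_i, w_0\rangle_2 = k_i$ for every $i \in \set{m}$. For each $i$, set $f_i : V \to K$, $f_i(v) := \langle t_i, v \rangle_2$; these are $K$-linear functionals on $V$, and the full solution set of our system of $m$ equations is the affine translate $w_0 + W$, where $W := \bigcap_{i=1}^{m} \ker(f_i)$.

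Now I use a dimension count to locate a solution outside $U$. Each $\ker(f_i)$ has codimension at most one in $V$, so $W$ has codimension at most $m$ in $V$; since $V$ is infinite dimensional, $W$ is infinite dimensional as well, while $U$ is finite dimensional by hypothesis. Suppose for contradiction that $w_0 + W \subseteq U$. Taking $0 \in W$ gives $w_0 \in U$, and then for every $w' \in W$ the vector $w_0 + w'$ lies in $U$, forcing $w' \in U$ since $U$ is a subspace. This yields $W \subseteq U$, contradicting that $W$ is infinite dimensional and $U$ is finite dimensional. Therefore one can choose $w \in (w_0 + W) \setminus U$; by construction $\langle t_i, w \rangle_2 = k_i$ for all $i \in \set{m}$, and $w \notin U$ means exactly that $w$ is linearly independent from $U$.

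There is no real obstacle in this argument: the lemma is a quantitative refinement of genericity, and the only point beyond Lemma \ref{L:NonDegiffGeneric} is the observation that the solution set is an affine subspace whose direction has finite codimension in $V$, and hence cannot be swallowed by the finite dimensional subspace $U$. The argument is independent of the choice $\lozenge \in \{\wedge, \vee\}$, since we use only that the induced bilinear form is defined on $\big(\lozenge^{n-1} V\big) \times V$ and that genericity for non-degenerate forms holds in both cases by Lemma \ref{L:NonDegiffGeneric}.
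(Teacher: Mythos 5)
Your proof is correct and takes essentially the same route as the paper's: both reduce to genericity via Lemma \ref{L:NonDegiffGeneric} and then use a dimension count, namely that the solution set is a coset of a subspace of finite codimension in $V$ (hence infinite dimensional), which therefore cannot be contained in the finite dimensional $U$. The only cosmetic difference is that the paper phrases the count through the surjection $\Phi_W$ (for a finite dimensional $W$ with $t_1,\dots,t_m \in \lozenge^{n-1}W$) and its infinite dimensional kernel, whereas you work directly with the functionals $\langle t_i, - \rangle_2$ on $V$.
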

\proof Let $\left(v_j : j \in \set{l} \right)$ be linearly independent  vectors in $V$ such that for all $i\in \set{m}$  we have
$$t_i \in \lozenge^{n-1} \underbrace{\Span (  v_j : j \in \set{l} )}_{=:W}.$$
%
Let $f\in \left( \lozenge^{n-1} W\right)^\ast$ be such that
\[f \left( t_i \right) = k_i\]
for all $i \in \set{m}$. As \Bn is generic by Lemma \ref{L:NonDegiffGeneric}, $\Phi_W$ is surjective by Lemma \ref{L:Non-DegGenChar}(\ref{Enu:Generic}). Moreover,
since $\left( \lozenge W^{n-1}\right)^\ast$ is finite dimensional, the kernel of $\Phi_W$ has infinite dimension. Thus, one can find $w$ linearly independent from $U$ such that $f= \langle -,w \rangle_2$ and we can conclude.
\qed

\subsection{Quantifier elimination and completeness}\label{sec: QE}
In this section we generalize (and correct) a quantifier elimination result of Granger \cite{granger1999stability} from non-degenerate bilinear forms to  non-degenerate multilinear forms.

\begin{defn}\label{def: language of bilin forms}
	We consider $n$-linear spaces as structures in the language $\mathcal{L}$ consisting of two sorts $V$ and $K$, the field  language $\mathcal{L}_{\text{field}}=\{+_K, \cdot_K, -_K, ^{-1}_k, 0_K, 1_K\}$ on $K$, the group language $\mathcal{L}_{G}= \{+_V, 0_V\}$ on $V$, scalar multiplication function $\cdot_V: K \times V \to V$ and a function symbol $\langle - , \dots, - \rangle_n$ for an $n$-linear form $V^n \rightarrow K$. The language $\mathcal{L}_{\theta, f}$ is obtained from $\mathcal{L}$ by adding the following ($\mathcal{L}(\emptyset)$-definable, with quantifiers) relations and function symbols:
\begin{itemize}
	\item for each $p \in \omega$, a  $p$-ary predicate $\theta_p(v_1, \dots, v_p)$ which holds if and only if $v_1, \dots, v_p \in V$ are linearly independent over $K$;
	\item for each $p \in \omega$ and $i \in \set{ p}$, a  $(p+1)$-ary function symbol $f_i^p: V^{p+1} \rightarrow K$  interpreted as
	
	$$ f_i^{p}(v; v_1, \dots, v_p) = \begin{cases}
                        \lambda_i \mbox{ if } \models 
				   \theta_p( v_1, \dots, v_p) \mbox{ and } v= \sum_{i=1}^p \lambda_iv_i  \mbox{ for some } \lambda_1, \dots , \lambda_p \in K, \\
                        0 \mbox{ otherwise.}
                    \end{cases}$$
\end{itemize} 

Let $\mathcal{L}^K$ be an expansion of the language of fields by relations on $K^p, p \in \omega$ (so no new constant or function symbols), which are $\emptyset$-definable in the language of fields such that $K$ eliminates quantifiers in $\mathcal{L}^K $ (e.g.~we can always take Morleyzation of $K$). Let $\mathcal{L}_{\theta,f}^K := \mathcal{L}_{\theta,f} \cup \mathcal{L}^K $. Finally, let $\mathcal{L}^K_{\VS} := \mathcal{L}_{\theta,f}^K \setminus \left\{ \Bln \right\}$ be the language of two-sorted vector spaces.
\end{defn}

\begin{remark}
	Note that the functions $f_i^p$ are similar to the language introduced by Delon to study pairs of algebraically closed fields in \cite{delon2012elimination}.
\end{remark}

\begin{prop}\label{prop: non-degen axiom}
	The class of all infinite dimensional non-degenerate alternating $n$-linear spaces (with the field sort a model of $\Th(K)$) in the language $\mathcal{L}^{K}_{\theta,f}$ is elementary. We will denote its theory by  $T := \prescript{}{\Alt}T^K_n$ (adapting the notation in Granger \cite{granger1999stability}).
\end{prop}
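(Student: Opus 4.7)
The plan is simply to exhibit an explicit axiomatization. The only subtle point is non-degeneracy, which a priori involves quantification over $\lozenge^{n-1} V$ (not a sort in our language), but Corollary \ref{C:NonDegSimplyfied} reduces it to a first-order statement over $V$ alone, which is exactly the reason this proposition is placed after that corollary.

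Concretely, I would collect axioms of the following five types. First, the theory $\Th(K)$ of the field sort expressed in the language $\mathcal{L}^{K}$, relativized to the sort $K$. Second, the two-sorted vector space axioms in $\mathcal{L}^{K}_{\VS}$: abelian group axioms for $(V, +_V, 0_V)$, the scalar multiplication axioms making $V$ into a $K$-module, together with the defining axioms for the auxiliary symbols, i.e.~for each $p$,
\[ \forall v_1 \ldots v_p \left[ \theta_p(v_1,\ldots,v_p) \leftrightarrow \bigwedge_{(\lambda_1, \ldots, \lambda_p) \neq 0} \sum_{i=1}^p \lambda_i v_i \neq 0 \right], \]
and for each $p$ and $i \in \set{p}$ the axiom stating that if $\theta_p(v_1,\ldots,v_p)$ and $v=\sum_j \lambda_j v_j$ then $f_i^p(v;v_1,\ldots,v_p)=\lambda_i$, and otherwise $f_i^p=0$ (note $\theta_p$ is an infinite disjunction only in appearance: it is equivalent, using the $f_j^p$'s, to the statement $\forall v\,\bigl(v=\sum f_j^p(v;\bar v)v_j \leftrightarrow \ldots\bigr)$, or more simply to an existential statement using Gaussian elimination --- either way this gets traded via the usual Morleyzation-style trick, see also \cite{delon2012elimination}). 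Third, the scheme $\exists v_1 \ldots v_p \, \theta_p(v_1, \ldots, v_p)$ for each $p \in \omega$ expressing that $V$ is infinite dimensional over $K$.

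Fourth, the axioms stating that $\langle -, \ldots, -\rangle_n$ is an alternating $n$-linear form: multilinearity is a finite scheme
\[ \forall \ldots \langle v_1, \ldots, \lambda u + \mu u', \ldots, v_n \rangle_n = \lambda \langle v_1, \ldots, u, \ldots, v_n \rangle_n + \mu \langle v_1, \ldots, u', \ldots, v_n \rangle_n \]
in each coordinate $1 \leq j \leq n$, and alternation is
\[ \forall v_1 \ldots v_n \left( \bigvee_{i<j} v_i = v_j \right) \to \langle v_1, \ldots, v_n \rangle_n = 0. \]
Fifth and finally, non-degeneracy. By Corollary \ref{C:NonDegSimplyfied}, in the infinite dimensional alternating setting this is equivalent to the single first-order sentence
\[ \forall v_1 \ldots v_{n-1} \left( \theta_{n-1}(v_1, \ldots, v_{n-1}) \to \exists w\ \langle v_1, \ldots, v_{n-1}, w \rangle_n \neq 0 \right). \]

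Calling the resulting set of $\mathcal{L}^{K}_{\theta,f}$-sentences $T = \prescript{}{\Alt}T^{K}_n$, every infinite dimensional non-degenerate alternating $n$-linear space with field sort elementarily equivalent to $K$ is a model (the converse direction of Corollary \ref{C:NonDegSimplyfied} giving the non-degeneracy part), and every model of $T$ has these properties (the forward direction of Corollary \ref{C:NonDegSimplyfied} together with the other axioms). Thus the class is elementary. There is no serious obstacle here: the content of the proposition is exactly the observation that Corollary \ref{C:NonDegSimplyfied} expresses non-degeneracy without reference to the non-definable sort $\lozenge^{n-1} V$; everything else is a routine translation of the structure into first-order axioms.
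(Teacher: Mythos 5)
Your axiomatization is correct and is essentially the paper's own proof: the paper likewise just records the alternation axiom (phrased there as $\forall \bar v\,(\neg\theta_n(v_1,\ldots,v_n)\to \langle v_1,\ldots,v_n\rangle_n=0)$, equivalent to your repeated-entry version in the presence of multilinearity) and the non-degeneracy sentence supplied by Corollary \ref{C:NonDegSimplyfied}, treating the field, vector-space, $\theta_p$/$f_i^p$ and infinite-dimensionality axioms as routine. One cosmetic remark: your displayed ``definition'' of $\theta_p$ as a conjunction over all nonzero scalar tuples is not literally a first-order formula when $K$ is infinite; one should simply use the quantified form $\forall \lambda_1\ldots\lambda_p\,\bigl(\sum_{i=1}^p\lambda_i v_i=0_V\to\bigwedge_{i=1}^p\lambda_i=0_K\bigr)$, as your own parenthetical already indicates.
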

\begin{proof}
Being alternating is expressed by the axiom
$$\forall v_1, \dots, v_{n}\ (\neg \theta_{n}(v_1, \dots, v_{n}) \rightarrow \langle v_1, \dots, v_{n} \rangle_n = 0), $$
and using Corollary \ref{C:NonDegSimplyfied}, modulo the infinite dimension axiom schema, we can express non-degeneracy by the sentence
$$ \forall v_1, \dots, v_{n-1}\ \bigg(\theta_{n-1}(v_1, \dots, v_{n-1}) \rightarrow \exists w \ \Big(\neg \big(\langle v_1, \dots, v_{n-1},w \rangle_n = 0\big)\Big )\bigg). 
\qedhere$$
\end{proof}

The following is the main result of the section:
\begin{prop}\label{prop: backandforth} The set of partial $\mathcal{L}_{\theta,f}^K$-isomorphisms between two  $\omega$-saturated alternating non-degenerate $n$-linear spaces (over elementarily equivalent fields) has the back-and-forth property and is non-empty.
\end{prop}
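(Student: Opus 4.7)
The plan is to verify the back-and-forth property for partial $\mathcal{L}^K_{\theta,f}$-isomorphisms $\sigma \colon A \to A'$ between finitely generated substructures $A \subseteq (V, K)$ and $A' \subseteq (V', K')$. Any such $A$ decomposes as a finitely generated subfield $\bar{A} := A \cap K$ of $K$ and a finite-dimensional $\bar{A}$-subspace $\hat{A} := A \cap V$ of $V$; any $\bar{A}$-basis $e_1, \ldots, e_k$ of $\hat{A}$ is automatically $K$-linearly independent in $V$, since closure of $\hat{A}$ under $f_i^p$ would otherwise force some $K$-coordinate outside $\bar{A}$ to lie in $\bar{A}$. For non-emptiness, $K \equiv K'$ in $\mathcal{L}^K$ gives the same characteristic, and combined with quantifier elimination of $\mathcal{L}^K$ on $K$ this canonically identifies the prime subfields; together with $0_V \mapsto 0_{V'}$ one obtains an initial partial isomorphism.

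For the extension step, given $\sigma \colon A \to A'$ and an element $a$ of the domain to be adjoined, I split into cases. If $a \in K$, then by quantifier elimination of $\mathcal{L}^K$ on $K$ the restriction $\sigma|_{\bar{A}}$ is partial elementary on the field sort, so by $\omega$-saturation of $K'$ there is $a' \in K'$ realizing the $\sigma$-image of $\tp^{\mathcal{L}^K}(a / \bar{A})$; closing under operations extends the vector part to the $\bar{A}(a)$-span of $\hat{A}$, with the basis $e_1, \ldots, e_k$ unchanged. If $a \in V \cap \Span_K(\hat{A})$, first run the $K$-case on the finitely many $K$-coordinates of $a$ relative to $e_1, \ldots, e_k$, after which $a$ already belongs to the extended vector part.

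The essential case is $a \in V$ with $a, e_1, \ldots, e_k$ $K$-linearly independent. By multilinearity and the alternating identities, the only genuinely new form values arising from adjoining $a$ are the scalars $k_{\vec{\imath}} := \langle e_{i_1}, \ldots, e_{i_{n-1}}, a \rangle_n$ indexed by $\{i_1 < \ldots < i_{n-1}\} \subseteq [k]$ (every evaluation in which $a$ appears at least twice vanishes by the alternating condition). Apply the $K$-case finitely many times to extend $\sigma$ to a partial isomorphism over $\bar{A}(k_{\vec{\imath}})$, producing matching values $k'_{\vec{\imath}} \in K'$. The tensors $t_{\vec{\imath}} := \overline{\sigma(e_{i_1}) \otimes \ldots \otimes \sigma(e_{i_{n-1}})}$ form a linearly independent family in $\bigwedge^{n-1} V'$ since $\sigma(e_1), \ldots, \sigma(e_k)$ are $K$-linearly independent. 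Lemma \ref{L:findingw} applied to the infinite-dimensional non-degenerate alternating $n$-linear space $V'$, with $U := \sigma(\hat{A})$ and data $(t_{\vec{\imath}}, k'_{\vec{\imath}})$, then yields $a' \in V'$ linearly independent from $\sigma(\hat{A})$ such that $\langle t_{\vec{\imath}}, a' \rangle_2 = k'_{\vec{\imath}}$ for all $\vec{\imath}$; setting $\sigma(a) := a'$ and closing under operations gives the desired extension.

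The main obstacle, and the place where non-degeneracy is essential, is the simultaneous realization of prescribed form values together with $K$-linear independence of $a'$ from the finite-dimensional subspace $\sigma(\hat{A})$, which is exactly the content of Lemma \ref{L:findingw} (obtained from Lemma \ref{L:NonDegiffGeneric}). Granted this, checking that the extended map preserves every symbol of $\mathcal{L}^K_{\theta,f}$ (the form, $\theta_p$, $f_i^p$, and field and vector operations) reduces to a routine calculation using multilinearity and the alternating identities, and the whole argument is symmetric in the two structures.
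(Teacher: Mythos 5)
Your proposal is correct and follows essentially the same route as the paper's proof: decompose finitely generated substructures using closure under the $f_i^p$ (the paper's Lemmas \ref{lem QE1}--\ref{lem QE3}), extend on the field sort via quantifier elimination in $\mathcal{L}^K$ plus $\omega$-saturation, reduce vectors in the span to the field case, and in the independent-vector case first adjoin the new form values $\langle e_{i_1},\ldots,e_{i_{n-1}},a\rangle_n$ and then invoke Lemma \ref{L:findingw} to find a matching image linearly independent from the image of the vector part. The only difference is that you defer the preservation checks (form, $\theta_p$, $f_i^p$) as routine multilinearity/alternation computations, which is exactly what the paper's Claims and Lemma \ref{lem QE3} carry out in detail.
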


Before proving it, we first show some auxiliary lemmas for a model  $\mathcal V =(V,K)$  of $T$ 	and a $(\mathcal{L}_{\theta,f}^K$-)substructure $\mathcal A = (A_V,A_K)$  of $\mathcal V$  which are needed in the proof of the proposition. 	Note that by our choice of using the field language (rather than the language of rings) on the $K$-sort, for any substructure the $K$-sort will indeed be a field.

\begin{lemma}\label{lem QE1}
Let   $S$ be a  maximal $K$-linearly independent subset of $A_V$. Then every $v \in A_V$ is an $A_K$-linear combination of elements in $S$.
\end{lemma}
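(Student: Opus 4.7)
The plan is to use the key feature of the expanded language $\mathcal{L}^{K}_{\theta,f}$: the coordinate function symbols $f_i^p$ were introduced precisely so that substructures are closed under extracting scalar coefficients in a linear combination. This is exactly what makes the lemma fall out almost immediately.

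First I would fix an arbitrary $v \in A_V$; the case $v \in S$ is trivial. Otherwise, by maximality of $S$ as a $K$-linearly independent subset of $A_V$, the set $S \cup \{v\}$ is $K$-linearly dependent, so one can choose finitely many $v_1, \ldots, v_p \in S$ such that $v_1, \ldots, v_p, v$ are $K$-linearly dependent in $V$. Since $v_1, \ldots, v_p$ themselves are $K$-linearly independent (being a subset of $S$), this forces $v \in \Span_K(v_1, \ldots, v_p)$, say $v = \sum_{i=1}^{p} \lambda_i v_i$ with $\lambda_i \in K$ (a priori, only in $K$, not in $A_K$).

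The last step is to promote the scalars $\lambda_i$ to $A_K$. Since $v_1, \ldots, v_p$ are $K$-linearly independent, $\mathcal{V} \models \theta_p(v_1, \ldots, v_p)$, so by the interpretation of the function symbols from Definition \ref{def: language of bilin forms} we have $f_i^p(v; v_1, \ldots, v_p) = \lambda_i$ in $\mathcal{V}$ for each $i \in \set{p}$. But $\mathcal{A}$ is an $\mathcal{L}^{K}_{\theta,f}$-substructure of $\mathcal{V}$, and in particular $A_V$ and $A_K$ are closed under the function symbols $f_i^p$; hence $\lambda_i = f_i^p(v; v_1, \ldots, v_p) \in A_K$. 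This yields the desired expression $v = \sum_{i=1}^{p} \lambda_i v_i$ with $v_i \in S$ and $\lambda_i \in A_K$.

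There is no real obstacle here: the entire content is the observation that the language was set up so that coefficients of linear combinations are recovered by function symbols, and hence lie in any substructure's $K$-sort. The only thing that needs a moment of attention is verifying that $\theta_p$ is indeed satisfied by the chosen subset of $S$ (so that $f_i^p$ returns $\lambda_i$ rather than the default value $0$), but this is automatic from $S$ being $K$-linearly independent.
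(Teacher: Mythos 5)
Your proof is correct and follows essentially the same route as the paper's: both deduce $v \in \Span_K(S)$ from maximality, then use $\theta_p$ and the coordinate functions $f_i^p$ together with closure of the substructure $\mathcal{A}$ to conclude that the coefficients lie in $A_K$. The only difference is that you spell out the maximality argument in slightly more detail, which is fine.
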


\proof
Let $v \in A_V$, then $v \in \Span_K(S)$ by maximality. Thus 
$$ v = \lambda_1 v_1 + \dots +\lambda_m v_m$$
for some $\lambda_1, \dots, \lambda_m \in K$ and $v_1, \ldots, v_m \in S$. Since $\mathcal V \models  \theta_m( v_1, \dots, v_m)$, we have that $f_i^m (v,v_1, \dots, v_m) = \lambda_i$ for all $i \in \set{m}$. As $\mathcal A$ is a substructure, we can conclude that $\lambda_i \in A_K$.
\qed

\begin{lemma}\label{lem QE2} 
	Let $U \subseteq  V$ be a $K$-linearly independent set, say $U = \{v_i : i < \kappa \}$, and let  $L\subseteq K$ be a  subfield containing \[\left\{\left \langle v_{i_1}, \dots, v_{i_n} \right \rangle_n : 0\leq i_1 <  \dots <  i_{n} < \kappa \right\}.\]  
	Then  the substructure generated by $U \cup L $ is equal to $(\Span_{L}(U) , L)$.
\end{lemma}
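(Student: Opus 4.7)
The plan is to prove the equality by showing two inclusions: that $(\Span_L(U),L)$ is contained in every substructure of $\mathcal V$ that contains $U\cup L$, and that $(\Span_L(U),L)$ itself is such a substructure. The first inclusion is easy: any substructure $(B_V,B_K)\supseteq U\cup L$ has $B_K\supseteq L$ (with $L$ a field) and then $B_V\supseteq \Span_{B_K}(U)\supseteq \Span_L(U)$ by closure of the $V$-sort under $+_V$ and scalar multiplication by elements of the $K$-sort. So the work is in verifying closure of $(\Span_L(U),L)$ under all the function symbols of $\mathcal L^K_{\theta,f}$.

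The field operations and the $\mathcal L^K$-symbols (which are relations on $K$ by the setup of Definition \ref{def: language of bilin forms}) stay inside $L$, and $\Span_L(U)$ is trivially closed under $+_V$, $0_V$, and scalar multiplication $L\times \Span_L(U)\to \Span_L(U)$. For the $n$-linear form I would expand multilinearly: if $w_j=\sum_i \lambda_{j,i}v_i\in \Span_L(U)$ with $\lambda_{j,i}\in L$, then
\[
\langle w_1,\dots,w_n\rangle_n=\sum_{i_1,\dots,i_n}\lambda_{1,i_1}\cdots\lambda_{n,i_n}\langle v_{i_1},\dots,v_{i_n}\rangle_n.
\]
Alternation kills all terms with a repeated index and identifies the remaining ones (up to sign) with strictly increasing-index terms, which lie in $L$ by hypothesis. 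Hence $\langle w_1,\dots,w_n\rangle_n\in L$, and the relation symbols $\theta_p$ contribute no closure condition.

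The subtle step, which I expect to be the main obstacle, is closure under the coordinate functions $f_i^p$. The key linear-algebra observation is that for any $w_1,\dots,w_p\in \Span_L(U)$, $K$-linear dependence coincides with $L$-linear dependence: writing $w_j=\sum_i\lambda_{j,i}v_i$ with $\lambda_{j,i}\in L$, a dependence $\sum_j\mu_jw_j=0$ translates via $K$-linear independence of $U$ into a homogeneous $L$-linear system in the $\mu_j$'s, and a homogeneous system over $L$ has a nonzero solution in $K$ iff it has one in $L$. Granted this, suppose $f_i^p(v;v_1,\dots,v_p)$ is nonzero on input from $\Span_L(U)$: then $v_1,\dots,v_p$ are $K$-linearly independent (hence $L$-linearly independent), and the unique $\lambda_1,\dots,\lambda_p\in K$ with $v=\sum_j\lambda_jv_j$ are forced into $L$, since an $L$-solution of the affine system $v=\sum_j x_jv_j$ exists (express $v$ in the $L$-basis containing $v_1,\dots,v_p$ of the relevant $L$-subspace) and uniqueness over $K$ then pins it down. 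Otherwise $f_i^p$ returns $0\in L$. Putting these pieces together shows $(\Span_L(U),L)$ is closed, completing the proof.
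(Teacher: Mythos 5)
Your proof is correct and follows essentially the same route as the paper: two inclusions, multilinear expansion plus alternation to reduce the form to increasing-index values lying in $L$, and for the coordinate functions $f_i^p$ the fact that a linear system with coefficients in $L$ whose (unique) solution exists over $K$ already has that solution in $L$ — the paper packages this as inverting a submatrix $N$ over $L$, you as rank/dependence transfer between $L$ and $K$. The only spot worth tightening is your parenthetical existence of an $L$-solution to $v=\sum_j x_j v_j$: writing $v$ in an extended $L$-basis containing $v_1,\dots,v_p$ only yields this after one more application of your $K$/$L$-dependence transfer (to the extended basis, so that the extra coordinates vanish), equivalently after noting that solvability of an inhomogeneous system over $L$ is unchanged by passing to $K$.
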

\proof
First note that $(\Span_{L}(U), L)$ is clearly contained in the substructure generated by $U \cup L $.

For the other inclusion,  clearly $(\Span_{ L}(U) , L)$ contains $U \cup L $ and is closed under $+_V$,$\cdot_V$, $+_K$, $-_K$, $\cdot_K$, $^{-1}_K$. Hence it remains to show that $(\Span_{ L}(U) , L)$ is closed under the maps $f_i^p$ for any $i \leq p \in \omega$, as well as  under the map $  \langle - , \dots, - \rangle_n$.

\noindent \underline{Closure under $\Bln$.}  Let $w_1, \ldots, w_n \in \Span_{ L}( v_{t_0}, \dots, v_{t_m})$ for some $m \in \omega$ and $t_0 < \ldots < t_m < \kappa$ and $\mu_{i,j} \in L$  be such that $$w_j = \sum_{i=0}^m \mu_{i,j} v_{t_i}. $$
Now
\begin{gather*}\langle w_1,\dots, w_n \rangle_n = \left\langle \sum_{i=0}^m \mu_{i,1} v_{t_i}, \dots , \sum_{i=0}^m \mu_{i,n} v_{t_i} \right\rangle_n\\
	= \sum_{0 \leq i_1< \dots < i_n\leq m}  \left( \Bigg(\sum_{\sigma \in \Sym(\set{n})} \sign (\sigma) \mu_{i_1, \sigma(1)} \dots \mu_{i_n, \sigma(n)}\Bigg)\langle v_{t_{i_1}}, \dots ,v_{t_{i_n}} \rangle_n  \right).
\end{gather*}
As  $ \langle v_{t_{i_1}}, \dots ,v_{t_{i_n}} \rangle_n  \in L$ for all $0 \leq i_1< \dots < i_n \leq m$ by assumption, we can conclude that $\langle w_1,\dots, w_n \rangle_n \in L$.
\vspace{12pt}

\noindent \underline{Closure under $f_i^p$'s.}  Let $w, w_1, \dots, w_p \in \Span_{ L}( v_{t_0}, \dots, v_{t_m})$ for some $t_0 < \ldots < t_m < \kappa$ and $\lambda_i := f_i^p(w, w_1, \dots, w_p )$. There exist 
  $\mu_0, \dots, \mu_m, \nu_{0,1}, \dots, \nu_{m,1}, \dots,  \nu_{0,p}, \dots, \nu_{m,p} \in L$ such that 
  \begin{equation}\label{eqn:defww1...wp}
  w = \sum_{i=0}^m \mu_i v_{t_i}  \quad \mbox{and}\quad  w_j = \sum_{i=0}^m \nu_{i,j}v_{t_i}.
  \end{equation}	
  
We may assume that $\mathcal{V} \models \theta_p(w_1, \dots, w_p)$ and $w \in \Span_K(w_1, \dots, w_p)$, as otherwise all $f_i^p$ give zero. In particular, we have that $p\leq m+1$.

By definition of $f_i^p$ we have that
  \begin{equation}\label{eqn:ww1...wp}
  	w= \lambda_1 w_1+ \dots +\lambda_p w_p,
  \end{equation}
and moreover these $\lambda_i$'s are the unique solution in $K$ to this equation. Replacing $w$ and the $w_i$'s with the corresponding expressions in \eqref{eqn:defww1...wp}, \eqref{eqn:ww1...wp} is equivalent to
$$ \left(-\mu_0+ \sum_{i=1}^p \lambda_{i} \nu_{0,i} \right) v_{t_0} + \dots +  \left(-\mu_m+ \sum_{i=1}^p \lambda_{i} \nu_{m,i} \right) v_{t_m} =0.$$
Since the tuple $(v_{t_0}, \dots, v_{t_m} )$ is linearly independent, this holds if and only if
$$-\mu_j+ \sum_{i=1}^p \lambda_{i} \nu_{j,i}=0 \ \  \mbox{ for all } 0\leq j\leq m, $$
equivalently if 
$$M  \begin{pmatrix} \lambda_1 \\ \vdots \\ \lambda_p \end{pmatrix} =  \begin{pmatrix} \mu_0 \\ \vdots \\ \mu_m \end{pmatrix} \mbox{, where }(M)_{ji} = \nu_{j,i}.
$$
As $p\leq m+1$, we may get rid of the linearly dependent rows and obtain that 
$$N  \begin{pmatrix} x_1 \\ \vdots \\ x_p \end{pmatrix} =  \begin{pmatrix} \mu_{l_1} \\ \vdots \\ \mu_{l_p} \end{pmatrix} \mbox{, where }(N)_{ji} = \nu_{j,l_i}
$$
only has $(\lambda_1, \dots, \lambda_p)$ as a solution. Thus $N$ is invertible. As a consequence, given that all entries of $N$ and all $\mu_i$'s are elements of the field $ L$, we obtain that also $\lambda_i \in L$ for all $i\in \set{p}$.
%
\qed

 Let  additionally  $\mathcal W =(W,L)$ be another model of $T$ 	and $\mathcal B = (B_V,B_K)$  be a substructure of $\mathcal W$.
\begin{lemma}\label{lem QE3}
Let $\eta:( A_V,A_K) \rightarrow (B_V,B_K)$ be a bijective map preserving $+_V$ and scalar multiplication and such that $\eta \upharpoonright A_K : A_K \to B_K$ is a field isomorphism. Then $\eta$ preserves $\theta_p$ and $f^p_i$ for all $p \in \omega$ and $i\leq p$.
\end{lemma}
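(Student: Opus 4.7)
The idea is to reduce preservation of the ambient-field predicate $\theta_p$ and coordinate functions $f_i^p$ to the intrinsic $A_K$-vector space structure of $A_V$, which $\eta$ manifestly respects. Since $\eta \upharpoonright A_K$ is a field isomorphism onto $B_K$ and $\eta$ preserves $+_V$ and scalar multiplication, $\eta \upharpoonright A_V : A_V \to B_V$ is an $(A_K, B_K)$-semi-linear bijection; in particular it preserves $A_K$-linear independence and takes $A_K$-expansions to $B_K$-expansions.

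The key step is the following claim: for any $v_1, \dots, v_p \in A_V$, one has $\mathcal V \models \theta_p(v_1, \dots, v_p)$ if and only if $v_1, \dots, v_p$ are $A_K$-linearly independent in $A_V$. One direction is trivial since $A_K \subseteq K$. For the other, pick by Zorn a maximal $K$-linearly independent subset $S$ of $A_V$ containing $\{v_1, \dots, v_p\}$. By Lemma \ref{lem QE1}, every element of $A_V$ is an $A_K$-linear combination of elements of $S$, so $S$ generates $A_V$ over $A_K$; it is also $A_K$-linearly independent as $A_K \subseteq K$. Hence $S$ is an $A_K$-basis of $A_V$ that is simultaneously $K$-linearly independent in $V$. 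Writing each $v_i = \sum_{s \in S} \mu_{i,s}\, s$ with $\mu_{i,s} \in A_K$, the matrix $(\mu_{i,s})$ has the same rank over $A_K$ as over $K$ (matrix rank is invariant under field extension); the latter equals $p$ iff $v_1, \dots, v_p$ are $K$-linearly independent (using $K$-independence of $S$), while the former equals $p$ iff they are $A_K$-linearly independent. A corollary is: if in addition $v \in A_V \cap \Span_K(v_1, \dots, v_p)$, then the unique coefficients $\lambda_i \in K$ with $v = \sum \lambda_i v_i$ all lie in $A_K$ --- expanding $v$ in the $A_K$-basis $S$ and using $K$-linear independence of $S$, the coefficients of the basis vectors outside $\{v_1, \dots, v_p\}$ vanish and the remaining $A_K$-coefficients agree with the $\lambda_i$ by uniqueness over $K$.

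Preservation of $\theta_p$ by $\eta$ is then immediate from the key equivalence applied in both $\mathcal V$ and $\mathcal W$, since the intermediate notion of $A_K$-linear independence is preserved by the $(A_K, B_K)$-semi-linear $\eta$. For $f_i^p$ one splits on whether $\mathcal V \models \theta_p(v_1, \dots, v_p)$ and whether $v \in \Span_K(v_1, \dots, v_p)$. If one of these fails then $f_i^p$ returns $0$ on both sides (the second case uses preservation of $\theta_{p+1}$ applied to $(v, v_1, \dots, v_p)$, just established). Otherwise $v = \sum \lambda_i v_i$ with $\lambda_i \in A_K$ by the corollary above, so $\eta v = \sum \eta(\lambda_i)\, \eta(v_i)$, and $\eta(v_1), \dots, \eta(v_p)$ are $L$-linearly independent by preservation of $\theta_p$, yielding $f_i^p(\eta v;\, \eta v_1, \dots, \eta v_p) = \eta(\lambda_i) = \eta(f_i^p(v;\, v_1, \dots, v_p))$. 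The only real obstacle is the key equivalence; once Lemma \ref{lem QE1} is combined with rank invariance under field extension to tie the ambient $\theta_p$ to its $A_K$-avatar, the remainder is bookkeeping.
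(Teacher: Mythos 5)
Your argument is correct in substance, but it takes a genuinely different route from the paper's. The paper works directly with the map and argues ``downstairs'': for $\theta_p$ it supposes a dependence among the $\eta(w_i)$ in $\mathcal{W}$, uses that $(B_V,B_K)$ is closed under the coordinate functions $f_i^p$ to see that the witnessing coefficients lie in $B_K$, and pulls them back through $(\eta\upharpoonright A_K)^{-1}$ to contradict independence of the $w_i$; for $f_i^p$ it likewise reads off the coefficients $\nu_i$ in $\mathcal{W}$, notes $\nu_i\in B_K$ because $\mathcal{B}$ is a substructure, and transports them back. You instead prove an intrinsic characterization --- for tuples from $A_V$ the ambient predicate $\theta_p$ coincides with $A_K$-linear independence, and the $K$-coordinates of an element of $A_V$ over a $K$-independent tuple from $A_V$ already lie in $A_K$ --- using Lemma \ref{lem QE1} together with invariance of matrix rank under field extension, after which preservation is formal semi-linear algebra. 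Your route buys a cleaner conceptual statement (on a substructure, independence over $K$ and over the small field agree, which is really the content of closure under the $f_i^p$) and makes the two preservation claims purely bookkeeping; the paper's route is shorter but leans on closure of $\mathcal{B}$ under the $f_i^p$ at the key step, whereas you concentrate all the substructure input in Lemma \ref{lem QE1} (applied to both $\mathcal{A}$ and $\mathcal{B}$, so you do implicitly use that $\mathcal{B}$ is a substructure when invoking the equivalence in $\mathcal{W}$).

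One local slip: in the proof of your key equivalence you choose, by Zorn, a maximal $K$-linearly independent subset $S$ of $A_V$ \emph{containing} $\{v_1,\dots,v_p\}$. At that point the $v_i$ are only assumed $A_K$-independent, so you are not yet entitled to extend them to a $K$-independent set --- that is exactly the implication being proved. The repair is immediate: take $S$ to be any maximal $K$-linearly independent subset of $A_V$; your rank computation never uses $v_i\in S$. The containment is only needed (and is then legitimate) in the corollary, where $\theta_p(v_1,\dots,v_p)$ is already known.
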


\begin{proof}\ By preserving $+_V$ and scalar multiplication, we have that $\eta(0_V) = 0_W$.

\noindent \underline{Preservation of $\theta_p$.} Let $w_1, \dots, w_p  \in A_V $ be such that 
 $\mathcal{V} \models \theta_p (w_1, \dots, w_p).$ We want to show that $\mathcal{W} \models \theta_p(\eta(w_1), \dots, \eta(w_p))$, in other words that $ (\eta(w_1), \dots, \eta(w_p))$ is $L$-linearly independent. Assume otherwise and let $m \in \set{p-1}$ be minimal such that $\eta(w_{m+1}) \in \Span_L \left( \left\{ \eta(w_i): i \in \set{m} \right\}\right)$. Then 
 \[\eta(w_{m+1}) = \sum_{i=1}^{m}\underbrace{ f_i^p(\eta(w_{m+1}),\eta(w_1), \dots, \eta(w_m) )}_{=: \mu_i}\eta(w_{i}).   \]
As $\mathcal{B}$ is closed under $f_i^p$'s, we have that  that $\mu_1, \dots, \mu_m \in B_K$.
As by assumption $\eta\upharpoonright A_K$ is bijective, there are $\lambda_1, \dots, \lambda_m \in  A_K$ such that $ \mu_i= \eta(\lambda_i)$. Then by preservation of $+_V$ and  of scalar multiplication, and $\eta \upharpoonright$ being a field isomorphism,  we get:
\begin{align*}
	& \mu_1 \eta(w_1)+ \dots+ \mu_m \eta(w_m) - \eta(w_{m+1})= 0_W \\
\Leftrightarrow \ \ \	 &  \eta(\lambda_1) \eta(w_1)+ \dots+ \eta(\lambda_m)\eta(w_m)- \eta(w_{m+1}) = 0_W\\
	\Leftrightarrow \ \ \ &   \eta \left( \lambda_1 w_1 + \dots +\lambda_m w_m - w_{m+1}\right)=0_W  \\
	\Leftrightarrow \ \ \ &   \lambda_1 w_1 + \dots +\lambda_m w_m - w_{m+1}=0_V  \ \ \ (\mbox{as } \eta^{-1}(0_W)= 0_V)
	\end{align*}
--- a contradiction to $\mathcal{V} \models \theta_p (w_1, \dots, w_p).$
\vspace{12pt}

\noindent \underline{Preservation of $f_i^p$.} Let $w, w_1, \dots, w_p \in  A_V$.
We want to show that
$$ \eta( f_i^p( w, w_1, \dots, w_p)) = f_i^p( \eta(w),\eta( w_1), \dots, \eta(w_p)).$$
By the first part of the proof we already know that $(w_1, \dots, w_p )$  is $K$-linearly independent if and only if $ (\eta( w_1), \dots, \eta(w_p)) $  is $L$-linearly  independent, and similarly for  $(w, w_1, \dots, w_p )$ and $ (\eta(w),\eta( w_1), \dots, \eta(w_p)) $.  Hence we may assume that $(w_1, \dots, w_p )$ and  $ (\eta( w_1), \dots, \eta(w_p)) $ are linearly independent as well as $w \in \Span_K( w_1, \dots, w_p)$ and $\eta(w) \in \Span_L(\eta( w_1), \dots, \eta(w_p))$, as otherwise the values of the $f_i^p$ are all $0$.

So suppose that $\nu_1, \dots, \nu_p \in L$ are such that 
$$\eta(w)= \nu_1 \eta(w_1)+ \dots+ \nu_p \eta(w_p),  $$
or in other words $\mathcal{W} \models  f_i^p( \eta(w),\eta( w_1), \dots, \eta(w_p))= \nu_i$. Since $(B_V, B_K)$ is a substructure of $\mathcal{W}$, we have that $\nu_i \in B_K $ for all $i\leq p$. As $\eta$ is bijective and   preserves $+_V$ and scalar multiplication,  we can conclude that
$$w= \mu_1w_1+ \dots+ \mu_pw_p $$
for $\mu_1, \dots, \mu_p \in A_K$ such that $\eta(\mu_i)=\nu_i$. In other words, 
$$\eta( f_i^p( w,w_1, \dots, w_p))=\eta( \mu_i) = \nu_i= f_i^p( \eta(w),\eta( w_1), \dots, \eta(w_p))$$
for all $i \in \set{p}$.
\end{proof}

We are now ready to prove the main proposition:

\proof[Proof of Proposition \ref{prop: backandforth}] Let $\mathcal V =(V, K )$ and $\mathcal W=(W,L)$ be two $\omega$-saturated models of $T$. 

Let $P$ and $Q$ be the prime fields of $K$ and  $L$, respectively. Then $\left(\{0_V\}, P \right)$ and $\left(\{0_W\}, Q \right)$ are isomorphic substructures of  $\mathcal V =(V, K )$ and $\mathcal W=(W,L)$ respectively, and therefore the set of partial isomorphisms is nonempty. 

Now, consider an  $\mathcal{L}^{K}_{\theta,f}$-isomorphism  $g : \left(\bar a_V, \bar a_K \right)  \rightarrow \left(\bar b_V, \bar b_K \right)$ between two finitely generated substructures  $\mathcal A = (\bar a_V, \bar a_K )$ of $\mathcal V$ and $\mathcal B =(\bar b_V, \bar b_K)$ of $\mathcal W$. Note that, as we work in the field language, $\bar a_K$ and $\bar b_K$ are fields. Moreover, $\bar a_V$ and $\bar b_V$ are finite dimensional vector spaces over $\bar a_K$ and $\bar b_K$, respectively. Indeed, if $\mathcal A$ is finitely generated by  $v_1, \dots, v_m \in  V$ (we may assume these are linearly independent) and $k_0, \dots, k_l \in K$, as $\mathcal A$ is an $\mathcal{L}^{K}_{\theta,f}$-substructure, the subfield $\bar a_K$ contains \[\left\{\left \langle v_{i_1}, \dots, v_{i_n} \right \rangle_n : 0\leq i_1 <  \dots <  i_{n}\leq m \right\},\] 
	so Lemma \ref{lem QE2}  ensures that $\mathcal A =(\Span_{L}(v_0,\dots, v_m) , \bar a_K)$, so $\bar a_V$ is a finite dimensional vector space over $\bar a_K$.

For the rest of the proof we fix a  maximal $K$-linearly independent subset $S= \{v_1, \dots, v_m\}$ of $\bar a_V$, then $ g(S) := \{g(v_1), \dots, g(v_m)\}$ is a maximal $L$-linearly independent subset of $\bar{b}_{V}$ (as $g$ preserves $\theta_m$ and $\theta_{m+1}$ in both directions).  
Fix an arbitrary $x \in (V,K)$, we need to extend $g$ to a partial isomorphism between finitely generated substructures of $\mathcal{V}$ and $\mathcal{W}$ whose domain contains $x$.

\textbf{Suppose first that $x \in K$.} 

By quantifier elimination for $K$ in the language $\mathcal{L}^K$ (see the definition of $\mathcal{L}^{K}_{\theta,f}$), we can choose finitely generated $\mathcal{L}^K$-substructures $\bar{a}_K \cup \{x \} \subseteq \bar{a}'_K\leq K$
and  $\bar{b}_K \leq \bar{b}'_K\leq L$ (so in particular these are finitely generated subfields) and an $\mathcal{L}^K$-isomorphism $h: \bar a_K'  \rightarrow \bar b_K'$ such that $ g \upharpoonright \bar a_K= h\upharpoonright \bar a_K$. Using Lemma \ref{lem QE1} we have
 \[ \left(\Span_{\bar a_K'}(\bar a_V) ,\bar a_K' \right) = \left(\Span_{\bar a_K'}(v_1, \dots, v_m) , \bar a_K' \right), \] 
and similarly
\[ \left( \Span_{\bar b_K'} \left(\bar b_V \right) , \bar b_K' \right) = \left(\Span_{\bar b_K'} \left(g(v_1), \dots, g(v_m) \right) , \bar b_K' \right). \] 
As $\mathcal A$ and $\mathcal B$ are substructures, we have 
\begin{gather*}
	\left\{ \langle v_{i_1}, \dots, v_{i_n} \rangle_n : 1\leq i_1 <  \dots <  i_{n}\leq m \right\}\subseteq \bar a_K \subseteq \bar a_K'  \textrm{ and}\\
	\left\{ \left \langle g(v_{i_1}), \dots, g(v_{i_n}) \right \rangle_n : 1\leq i_1 <  \dots <  i_{n}\leq m \right\}\subseteq \bar b_K \subseteq \bar b_K'.
\end{gather*}
Thus Lemma \ref{lem QE2} ensures that $ \left(\Span_{\bar a_K'}(\bar a_V) , \bar a_K' \right) $ is a substructure of $\mathcal V$ generated by $ \bar a_V,\bar a_K'$, and $\left(\Span_{\bar b_K'} \left(\bar b_V \right), \bar b_K' \right)$  is a substructure of $\mathcal W$ generated by $\bar b_V, \bar b_K' $.
 So we want to extend the partial $\mathcal{L}^{K}_{\theta,f}$-isomorphism $g$ to these larger finitely generated substructures.
 \begin{claim}\label{cla: multilin QE proof 1}
The map $\eta: \left(\Span_{\bar a_K'}(S), \bar a_K' \right)  \rightarrow \left(\Span_{\bar b_K'}(g(S)), \bar b_K' \right)$ that maps any
\begin{gather*}
	v =\lambda_1 v_1 + \dots + \lambda_m v_m \in \Span_{\bar a_K'}(S)  \mbox{ with } \lambda_1, \dots, \lambda_m \in \bar a_K'\\
	\textrm{ to } h(\lambda_1)  g(v_1) +  \dots + h(\lambda_m) g(v_m)
\end{gather*}
and any $\lambda\in \bar a_K'$ to $ h(\lambda)$ is an isomorphism of $\mathcal{L}_{\theta,f}^K $-structures.
\end{claim}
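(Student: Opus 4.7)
The plan is to verify directly that the explicitly defined map $\eta$ is a well-defined bijection and then check, symbol by symbol, that it respects the full language $\mathcal{L}^{K}_{\theta,f}$. For well-definedness, note that $S = \{v_1,\dots,v_m\}$ is $K$-linearly independent and hence also $\bar a_K'$-linearly independent, so every element of $\Span_{\bar a_K'}(S)$ has a \emph{unique} expansion $\sum_i \lambda_i v_i$ with $\lambda_i \in \bar a_K'$. Since $g$ is an $\mathcal{L}^{K}_{\theta,f}$-isomorphism it preserves $\theta_m$ in both directions, so $g(S)$ is $L$-linearly independent and the same unique-expansion property holds on the target side. Combined with bijectivity of $h$, this makes $\eta$ a bijection onto $(\Span_{\bar b_K'}(g(S)), \bar b_K')$.

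On the $K$-sort, $\eta$ is by definition equal to $h$, which is an $\mathcal{L}^K$-isomorphism, so the field language together with all the added $\mathcal{L}^K$-relations is preserved for free. Preservation of $+_V$ and of the scalar multiplication $\cdot_V : K \times V \to V$ on the $V$-sort is immediate from the formula defining $\eta$ and the fact that $h$ is a field homomorphism extending $g\upharpoonright \bar a_K$. At this point Lemma \ref{lem QE3} applies verbatim to $\eta$, yielding that $\eta$ automatically preserves every predicate $\theta_p$ and every function $f_i^{p}$.

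The only remaining and non-cosmetic step is preservation of the form $\Bln$. Given $w_1,\dots,w_n \in \Span_{\bar a_K'}(S)$, write $w_j = \sum_{i=1}^m \mu_{i,j} v_i$ with $\mu_{i,j} \in \bar a_K'$. Expanding by multilinearity gives
\begin{gather*}
\langle w_1, \dots, w_n\rangle_n = \sum_{i_1,\dots,i_n} \mu_{i_1,1}\cdots \mu_{i_n,n}\, \langle v_{i_1},\dots,v_{i_n}\rangle_n, \\
\langle \eta(w_1),\dots,\eta(w_n)\rangle_n = \sum_{i_1,\dots,i_n} h(\mu_{i_1,1})\cdots h(\mu_{i_n,n})\, \langle g(v_{i_1}),\dots,g(v_{i_n})\rangle_n.
\end{gather*}
Because $\mathcal A$ is an $\mathcal{L}^{K}_{\theta,f}$-substructure, each $\langle v_{i_1},\dots,v_{i_n}\rangle_n$ already lies in $\bar a_K \subseteq \bar a_K'$; since $h$ extends $g \upharpoonright \bar a_K$ and $g$ preserves the form, we get $h(\langle v_{i_1},\dots,v_{i_n}\rangle_n) = g(\langle v_{i_1},\dots,v_{i_n}\rangle_n) = \langle g(v_{i_1}),\dots,g(v_{i_n})\rangle_n$. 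Applying the ring homomorphism $h$ term-by-term to the first sum and matching against the second sum yields $h(\langle w_1,\dots,w_n\rangle_n) = \langle \eta(w_1),\dots,\eta(w_n)\rangle_n$, as required.

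The main (though mild) obstacle is precisely this last step: one must ensure that $h$ and $g$ agree on all form values of tuples drawn from $S$, and this is guaranteed only because the substructure requirement on $\mathcal A$ was engineered to put those scalars into $\bar a_K$ in the first place. The rest is bookkeeping with multilinearity and Lemmas \ref{lem QE2}--\ref{lem QE3}.
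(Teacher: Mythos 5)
Your proposal is correct and follows essentially the same route as the paper: unique expansion over the linearly independent set $S$ for well-definedness and bijectivity, direct verification of $+_V$ and $\cdot_V$, an appeal to Lemma \ref{lem QE3} for $\theta_p$ and $f_i^p$, and a multilinear expansion of the form using that $\langle v_{i_1},\dots,v_{i_n}\rangle_n \in \bar a_K$ and that $h$ extends $g\upharpoonright \bar a_K$. No substantive differences.
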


\proof
Note first that any element of $\Span_{\bar a_K'}(S)$ is a unique linear combination of elements in the $K$-linearly independent set $S$. Thus the map is well defined. Moreover, by $L$-linear independence of $g(S)$ and bijectivity of $g$ and $h$, we have that $\eta$ is bijective. In particular,  only $0_V$ gets mapped to $0_W$ by $\eta$.

\noindent \underline{$\eta$ preserves $+_V$ and $\cdot_V$.} Let $l \in \bar a_K'$ and
$\lambda_1 v_1 + \dots + \lambda_m v_m$, $\mu_1 v_1  + \dots + \mu_m v_m \in  \Span_{\bar a_K'}(\bar a_V)$. 
Then we have 
\begin{gather*}
	\eta \big(l( \lambda_1 v_1 +  \dots + \lambda_m v_m) + \left(\mu_1 v_1 + \dots + \mu_m v_m \right) \big)
	  =\\
	   \  \eta \big( (l \lambda_1 +\mu_1) v_1  + \dots + (l \lambda_m + \mu_m )v_m) \big)
	= \  h(l \lambda_1 +\mu_1)  g(v_1)  + \dots + h(l \lambda_m +\mu_m) g(v_m) \\
	= \  h(l) \big(h(\lambda_1)  g(v_1) + \dots + h(\lambda_m) g(v_m) \big) + \big(h(\mu_1)  g(v_1) + \dots + h(\mu_m) g(v_m) \big)\\
	=\  \eta(l) \eta \big( \lambda_1 v_1  + \dots + \lambda_m v_m  \big) + \eta \big(\mu_1 v + \dots + \mu_m v_m \big).
\end{gather*}
\noindent \underline{$\eta $ preserves $\theta_p$ and $f^i_p$.} By Lemma \ref{lem QE3}, as $h$ is a field isomorphism and $\eta$ preserves $+_V$ and scalar multiplication.

\noindent \underline{$\eta $ preserves $  \langle -, \dots, - \rangle_n$.} 
Let $w_1, \ldots, w_n \in \Span_{\bar a_K'}(\bar a_V)$ by given. Fix $\mu_{i,j} \in \bar a_K'$  such that $w_j = \sum_{i=1}^m \mu_{i,j} v_i.$
Then
\begin{align*} \left \langle \eta(w_1), \dots, \eta( w_n) \right \rangle_n&= \left\langle \eta\left( \sum_{i=1}^m \mu_{i,1} v_i\right), \dots,  \eta\left(  \sum_{i=1}^m \mu_{i,n} v_i\right)\right\rangle_n\\
	&= \left\langle \sum_{i=1}^m h(\mu_{i,1})g( v_i), \dots,  \sum_{i=1}^m h( \mu_{i,n}) g(v_i) \right\rangle_n\\
	&= \sum_{i_1=1}^m \dots  \sum_{i_n=1}^m h(\mu_{i_1,1})  \dots h( \mu_{i_n,n}) \left\langle g(v_{i_1}), \dots, g(v_{i_n}) \right  \rangle_n\\
	&= \sum_{i_1=1}^m \dots  \sum_{i_n=1}^m h(\mu_{i_1,1})  \dots h ( \mu_{i_n,n})g (\langle v_{i_1}, \dots, v_{i_n}  \rangle_n \big)\\
	&= \sum_{i_1=1}^m \dots  \sum_{i_n=1}^m h \big(\mu_{i_1,1})  \dots h ( \mu_{i_n,n}) h \big(\langle v_{i_1}, \dots, v_{i_n}   \rangle_n \big)\\
	&= h\left(\sum_{i_1=1}^m \dots \sum_{i_n=1}^m \mu_{i_1,1} \dots   \mu_{i_n,n} \langle v_{i_1}, \dots, v_{i_n}  \rangle_n\right)\\
	&= \eta\left(\left \langle \sum_{i=1}^m  \mu_{i,1}v_i, \dots, \sum_{i=1}^m   \mu_{i,n} v_i \right \rangle_n \,\right) 
		= \eta\left(\langle w_1, \dots,w_n  \rangle_n\right). \qedhere
\end{align*}
\noindent This finishes the case for which the element $x$ we wanted to add to the domain of our partial isomorphism belongs to the field.

\textbf{Now suppose that $x=v \in V$.}

\noindent \textbf{Case 1: }$v \in \Span_K(\bar a_V)$.

\noindent Let $w_1, \dots, w_n \in \bar a_V$ and $\lambda_1, \dots, \lambda_n \in K$ be such that
$ v = \lambda_1 w_1 + \dots + \lambda_n w_n.$
Applying Claim \ref{cla: multilin QE proof 1} repeatedly, we can add $\lambda_1, \dots, \lambda_n$ to $\bar a_K$ and obtain an isomorphism between the substructures generated by $\bar a_V, \bar a_K'$ and $ \bar b_V, \bar b_K'$ so that $v \in \Span_{\bar a_K'}(\bar a_V)$ witnessed by $\lambda_1, \dots, \lambda_n$. Then the element $v$ will be an element of the generated substructure and we have obtained the desired map.

\noindent \textbf{Case 2: } $v \not\in \Span_K(S)$.

\noindent  Applying Claim \ref{cla: multilin QE proof 1} repeatedly, we may assume that the elements 
\[\left\{ \langle v_{i_1}, \dots, v_{i_{n-1}},v \rangle_n : 1\leq  i_1 < \dots < i_{n} \leq m \right\} \]
 belong to $\bar a_K$.
  Since $\left( W, \Bln \right)$ is non-degenerate and the set
 \[\left\{t_{i_1, \dots, i_{n-1}} := g (v_{i_1}) \ten \dots \ten g(v_{i_{n-1}}): 1\leq i_1 <\dots <i_{n-1}\leq m \right\}\] 
 is linearly independent in $\bigwedge^{n-1}W$, by Lemma \ref{L:findingw} we can find $w \in W$ which is $L$-linearly independent from $   \Span_L \left(\bar b_V \right)$ and such that 
$$ \langle g(v_{i_1}), \dots, g(v_{i_{n-1}}), w \rangle_n = \left \langle t_{i_1, \dots, i_{n-1}}, w \right \rangle_2 = g \left(\langle v_{i_1}, \dots, v_{i_{n-1}}, v \rangle_n \right)$$
for all $1 \leq i_1 < \ldots < i_{n-1} \leq m$. Now, using that \Bn is alternating and $g$ is a partial isomorphism, it is easy to check that for any $u_1, \dots, u_{n-1} \in \bar a_V$,
\begin{equation}\label{E:preservingNlinform}g\left(\langle u_1, \dots, u_{i}, v, u_{i+1}, \dots, u_{n-1} \rangle_n \right) = \langle g(u_1), \dots,g(u_{i}), w, g(u_{i+1}), \dots, g(u_{n-1}) \rangle_n. 	
\end{equation}
Indeed, for any $ i_1, \dots , i_{n-1}  \in \set{n} $ we have 
$\langle g(v_{i_1}), \dots, g(v_{i_{n-1}}), w \rangle_n = g \left(\langle v_{i_1}, \dots, v_{i_{n-1}}, v \rangle_n \right)$
(if two coincide then both sides are $0$, and otherwise we can use the permutation that orders the $v_i$'s in ascending order). Then for any $u_1= \sum_{i=1}^m \mu_{i,1}v_i\,,\ \dots,\ u_{n-1}=\sum_{i=1}^m \mu_{n-1,1}v_i$ in $\bar{a}_V$ we have 

\begin{gather*}
g\left(\langle u_1, \dots, u_{i}, v, u_{i+1}, \dots, u_{n-1} \rangle_n \right) = \text{sign}(\sigma) g\left(\langle u_1, \dots, u_{n-1},v \rangle_n \right)\\
= \text{sign}(\sigma) g\left(\left \langle \sum_{i=1}^m \mu_{i,1}v_i, \dots, \sum_{i=1}^m \mu_{i,n-1}v_i,v \right\rangle_n \right)\\
= \text{sign}(\sigma) g\left(\sum_{i_1=1}^m \dots \sum_{i_{n-1}=1}^m \prod_{j=1}^{n-1} \mu_{j,i_j}  \left\langle v_{i_1}, \dots, v_{i_{n-1}},v \right\rangle_n \right)\\
= \text{sign}(\sigma) \left(\sum_{i_1=1}^m \dots \sum_{i_{n-1}=1}^m \prod_{j=1}^{n-1} g(\mu_{j,i_j}) g \left( \left \langle v_{i_1}, \dots, v_{i_{n-1}},v \right\rangle_n \right)\right)\\
= \text{sign}(\sigma) \left(\sum_{i_1=1}^m \dots \sum_{i_{n-1}=1}^m \prod_{j=1}^{n-1} g(\mu_{j,i_j})  \left \langle g(v_{i_1}), \dots, g(v_{i_{n-1}}),w \right\rangle_n \right)\\
= \text{sign}(\sigma) \left \langle \sum_{i=1}^m g(\mu_{i,1}) g(v_i), \dots, \sum_{i=1}^m g(\mu_{i,n-1}) g(v_i),w \right\rangle_n \\
= \text{sign}(\sigma) \left \langle g\left (\sum_{i=1}^m \mu_{i,1} v_i\right ), \dots, g\left (\sum_{i=1}^m \mu_{i,n-1}v_i\right),w \right\rangle_n \\
 =\text{sign}(\sigma) \langle g(u_1), \dots, g(u_{n-1}),w \rangle_n 
 = \langle g(u_1), \dots,g(u_{i}), w, g(u_{i+1}), \dots, g(u_{n-1}) \rangle_n,
\end{gather*}
where $\sigma$ is the permutation that sends $( u_1, \dots, u_{i}, v, u_{i+1}, \dots, u_{n-1})$ to $(u_1, \dots, u_{n-1},v)$.

Similarly as in the first part of this proof, Lemmas \ref{lem QE1} and  \ref{lem QE2} yield:
\begin{itemize}
	\item the substructure generated by $ v,  \bar  a_V, \bar a_K$ is equal to 
	 \[\left(\Span_{\bar a_K}(S,v) , \bar a_K \right) = \left(\bar a_V \oplus \Span_{\bar a_K}(v), \bar a_K \right);\]
	\item the substructure generated by $w,  \bar  b_V, \bar b_K$ is equal to
	\[\left(\Span_{\bar b_K}(g(S),w) , \bar b_K \right) = \left(\bar b_V \oplus \Span_{\bar b_K}(w), \bar b_K \right).\]
\end{itemize}

\begin{claim}
	The map 
	\begin{align*}
		\eta: \left(\bar a_V \oplus \Span_{\bar a_K}(v), \bar a_K \right) &\rightarrow \left(\bar b_V \oplus \Span_{\bar b_K}(w) , \bar b_K \right)\\[1ex]
		(u  + \lambda v, \mu) &\mapsto  ( g( u)+ g( \lambda) w, g(\mu))
	\end{align*} 
 is an isomorphism of $\mathcal{L}_{\theta,f}^K $-structures.
\end{claim}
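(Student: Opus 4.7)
The plan is to verify in turn that $\eta$ is well-defined and bijective, that it preserves the vector space operations and restricts to a field isomorphism on the $K$-sort, that it preserves the $n$-linear form, and finally that it preserves $\theta_p$ and $f_i^p$.

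First I would note that well-definedness is free from the direct sum decomposition: each element of $\bar a_V \oplus \Span_{\bar a_K}(v)$ has a unique representation $u + \lambda v$ with $u \in \bar a_V$ and $\lambda \in \bar a_K$, because $v \notin \Span_K(S) \supseteq \bar a_V$. Bijectivity follows the same way on the target side, since $w$ was chosen $L$-linearly independent from $\Span_L(\bar b_V)$ and $g$ is a bijection on both sorts. Preservation of $+_V$ and of scalar multiplication by elements of $\bar a_K$ is a direct calculation on the components, using that $g$ already preserves these operations on $(\bar a_V, \bar a_K)$. Since $\eta \upharpoonright \bar a_K = g \upharpoonright \bar a_K$ is a field isomorphism, Lemma \ref{lem QE3} then immediately yields preservation of $\theta_p$ and $f_i^p$ for every $p$ and every $i \leq p$.

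The main step, and the only one requiring the specific choice of $w$, is preservation of $\langle -, \ldots, - \rangle_n$. Given inputs $u_j + \lambda_j v$ with $u_j \in \bar a_V$ and $\lambda_j \in \bar a_K$ for $j = 1, \ldots, n$, I would expand $\langle u_1 + \lambda_1 v, \ldots, u_n + \lambda_n v \rangle_n$ by $n$-linearity into $2^n$ terms. Because the form is alternating, every term containing $v$ in two or more slots vanishes. What remains is the purely ``$\bar a_V$-term'' $\langle u_1, \ldots, u_n \rangle_n$, handled by $g$ being a partial isomorphism on $\mathcal A$, together with $n$ ``mixed terms'' in which $v$ appears in exactly one slot, each of the form $\lambda_i \langle u_1, \ldots, u_{i-1}, v, u_{i+1}, \ldots, u_n \rangle_n$. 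The latter is handled precisely by equation \eqref{E:preservingNlinform} applied to $u_1, \ldots, u_{i-1}, u_{i+1}, \ldots, u_n$, combined with the fact that $g$ is a field isomorphism on $\bar a_K$. Summing these contributions and comparing with the analogous expansion of $\langle \eta(u_1 + \lambda_1 v), \ldots, \eta(u_n + \lambda_n v) \rangle_n$ on the $\mathcal W$-side yields the desired equality.

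The only conceptual obstacle is confirming that the mixed terms really are captured by \eqref{E:preservingNlinform} for \emph{arbitrary} $u_j \in \bar a_V$ (not just basis elements), but this has already been done in the paragraph preceding the claim, where \eqref{E:preservingNlinform} was shown by expanding the $u_j$'s in the basis $S$ and applying the defining property $\langle g(v_{i_1}), \ldots, g(v_{i_{n-1}}), w \rangle_n = g(\langle v_{i_1}, \ldots, v_{i_{n-1}}, v \rangle_n)$. Everything else is a routine bookkeeping computation exploiting alternation and multilinearity, so the proof of the claim is essentially an assembly of the preceding lemmas.
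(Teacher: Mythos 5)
Your proposal is correct and follows essentially the same route as the paper: direct computation for $+_V$ and scalar multiplication, Lemma \ref{lem QE3} for $\theta_p$ and $f_i^p$, and for the form the multilinear expansion in which alternation kills all terms with $v$ (resp.\ $w$) in two or more slots, the mixed terms being handled by \eqref{E:preservingNlinform} and the pure term by $g$ being a partial isomorphism, with $g\upharpoonright \bar a_K$ a field isomorphism to pull the coefficients through. The only cosmetic difference is that you expand on the source side while the paper expands $\langle \eta(w_1),\dots,\eta(w_n)\rangle_n$ on the target side and pulls back through $g$; the computation is the same.
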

\proof
Since $g$ is bijective, we have that $\eta$ is bijective as well. In particular,  only $0_V$ gets mapped to $0_W$ by $\eta$. 

\noindent \underline{$\eta$ preserves $+_V$ and $\cdot_V$.} Let $\ell \in \bar a_K$, $ u_1 + \lambda_1 v, u_2 + \lambda_2 v\in  \bar a_V \oplus \Span_{\bar a_K}(v)$. Then  
\begin{gather*}
	\eta(\ell(u_1 + \lambda_1 v  ) + (u_2 + \lambda_2 v) )
	  = \  \eta((\ell u_1+u_2)+ (\ell \lambda_1 +\lambda_2) v  )\\
	= \   g(\ell u_1+u_2 ) +g(\ell \lambda_1 +\lambda_2)  w 
	= \ g(\ell)g(u_1)+ g(u_2)+ g(\ell) g(\lambda_1)  w +   g(\lambda_2)  w  \\
	=\  g(\ell) \left(g(u_1)+ g(\lambda_1) w  \right) + (g( u_2) +g(\lambda_2) w )
	=\  \eta(\ell) \eta(u_1+ \lambda_1 v  ) + \eta( u_2 +\lambda_2 v ).
\end{gather*}

\noindent \underline{$\eta$ preserves $\theta_p$ and $f^i_p$.} The map $\eta $ preserves $\theta_p$ and $f^i_p$  by ``linearity'' and Lemma \ref{lem QE3}.

\noindent \underline{$\eta$ preserves $  \langle - , \dots, -\rangle_n$.} Let $w_1, \ldots, w_n \in \bar a_V \oplus \Span_{\bar a_K}(v)$. Let $\mu_1, \dots, \mu_n \in \bar a_K$ and $u_1, \dots, u_n \in  \bar a_V$ be such that $w_j =   u_j + \mu_j v $
for all $j$. Then
\begin{gather*}
	\langle \eta(w_1), \dots, \eta( w_n) \rangle_n 
	= \ \left\langle \eta( u_1+ \mu_1 v ), \dots,  \eta( u_n +\mu_n v )\right\rangle_n\\
	= \ \left\langle  g(u_1)+ g(\mu_1)w , \dots,  g(u_n) + g(\mu_n)w \right\rangle_n\\
	= \  \sum_{i=1}^n  g(\mu_i)\Big \langle g(u_1),\dots, g(u_{i-1}) , w , g(u_{i+1}), \dots,  g(u_n) \Big\rangle_n 
	 +\langle g(u_1),\dots g(u_n) \rangle_n \quad \\\textrm{(as $\Bln$  is alternating)} \\
	= \  \sum_{i=1}^n  g(\mu_i)g\Big( \Big \langle u_1,\dots, u_{i-1} , v , u_{i+1}, \dots, u_n \Big\rangle_n\Big)  +g(\langle u_1,\dots u_n \rangle_n) \\
	\textrm{(as $g$ preserves $\langle-, \ldots, - \rangle_n$ and (\ref{E:preservingNlinform})) }\\
	= \ g\Big( \sum_{i=1}^n  \mu_i \Big \langle u_1,\dots, u_{i-1} , v , u_{i+1}, \dots, u_n \Big\rangle_n + \langle u_1,\dots u_n \rangle_n \Big) \\
	\textrm{(as $g\upharpoonright \bar{a}_K$ is a field isomorphism)}\\
	= \ g( \langle u_1 +\mu_1 v, \dots, u_n + \mu_n v \rangle_n) 
	\textrm{ (as $\Bln$  is alternating)} \\
	= \ g( \langle w_1, \dots, w_n \rangle_n) 
	= \ \eta( \langle w_1, \dots, w_n \rangle_n). \qedhere
\end{gather*}
This concludes the proof of Proposition \ref{prop: backandforth}.
\qed$_{\ref{prop: backandforth}}$

Proposition \ref{prop: backandforth} implies immediately:
\begin{theorem}\label{thm: QE for multilinear forms}
	For any $n \geq 1$ and field $K$, the theory $\prescript{}{\Alt}T^K_n$ of infinite dimensional alternating non-degenerate $n$-linear spaces over $K$ has quantifier elimination (in the language $\mathcal{L}^{K}_{\theta,f}$) and is complete.
\end{theorem}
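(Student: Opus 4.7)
The plan is to derive both assertions as immediate consequences of Proposition~\ref{prop: backandforth}, via the standard transfer from a back-and-forth system of partial isomorphisms to quantifier elimination and completeness. Essentially all the geometric work has already been absorbed into that proposition, so the task here is purely formal.

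For quantifier elimination, I would apply the following standard criterion: a theory $T$ in a language $\mathcal{L}'$ admits QE if and only if for any two $\omega$-saturated models $\mathcal{M}, \mathcal{N} \models T$, every $\mathcal{L}'$-isomorphism $g : \mathcal{A} \to \mathcal{B}$ between finitely generated substructures $\mathcal{A} \subseteq \mathcal{M}$, $\mathcal{B} \subseteq \mathcal{N}$, and every $x \in M$, extends to an $\mathcal{L}'$-isomorphism defined on a finitely generated substructure of $\mathcal{M}$ containing $x$ (and symmetrically). Proposition~\ref{prop: backandforth} supplies exactly this: non-emptiness of the partial isomorphism system is witnessed by the isomorphism between prime field substructures together with $\{0_V\}, \{0_W\}$, and the forth step is carried out explicitly. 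The back step follows by symmetry, applying the same proposition to $g^{-1}$ with the roles of $\mathcal{V}$ and $\mathcal{W}$ exchanged, which is legitimate since both models satisfy the axioms of $\prescript{}{\Alt}T^K_n$.

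For completeness, let $\mathcal{V}_1, \mathcal{V}_2 \models \prescript{}{\Alt}T^K_n$. By construction of $\prescript{}{\Alt}T^K_n$, both $K$-sorts are models of $\Th(K)$ and hence elementarily equivalent. Pass to saturated elementary extensions $\mathcal{V}'_i \succeq \mathcal{V}_i$ of a common cardinality $\kappa$ larger than $|\mathcal{L}^K_{\theta,f}|$; these remain models of $\prescript{}{\Alt}T^K_n$ with elementarily equivalent field sorts. Proposition~\ref{prop: backandforth} then yields a non-empty back-and-forth system between finitely generated substructures of $\mathcal{V}'_1$ and $\mathcal{V}'_2$, and a standard transfinite enumeration of length $\kappa$ alternating back and forth produces a full $\mathcal{L}^K_{\theta,f}$-isomorphism $\mathcal{V}'_1 \cong \mathcal{V}'_2$. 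Therefore $\mathcal{V}_1 \equiv \mathcal{V}'_1 \equiv \mathcal{V}'_2 \equiv \mathcal{V}_2$.

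There is no substantive obstacle at this stage: the nontrivial extension steps (adding a field element via quantifier elimination for $K$ in $\mathcal{L}^K$ and Claim~\ref{cla: multilin QE proof 1}, and adding a new vector via Lemma~\ref{L:findingw} to realize the prescribed values of the $n$-linear form against a fixed basis) have already been handled inside the proof of Proposition~\ref{prop: backandforth}. The only minor point to note is that the proposition is stated for substructures of \emph{arbitrary} $\omega$-saturated models, so applying it to both $g$ and $g^{-1}$ immediately supplies both halves of the back-and-forth without any additional argument.
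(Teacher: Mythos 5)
Your proposal is correct and follows the paper's own route: the theorem is deduced immediately from Proposition~\ref{prop: backandforth} by the standard transfer from a non-empty back-and-forth system of partial isomorphisms between $\omega$-saturated models to quantifier elimination and completeness. The only caution concerns your completeness step: saturated elementary extensions of a prescribed cardinality need not exist in ZFC, but this is harmless because a non-empty back-and-forth system already yields elementary equivalence directly (partial isomorphisms in such a system preserve all formulas by induction on quantifier depth, or one can substitute special models), so the transfinite construction of a full isomorphism can be avoided.
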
 
\begin{remark}\label{rem: VS complete and omega-cat}
\begin{enumerate}
\item Let us denote by $T^K_{\VS, \infty}$ the two-sorted theory of infinite dimensional vector spaces over fields elementarily equivalent to $K$, in the language $\mathcal{L}^K_{\VS} := \mathcal{L}_{\theta,f}^K \setminus \left\{ \Bln \right\}$. Our proof of Theorem \ref{thm: QE for multilinear forms} demonstrates in particular that the theory $T^K_{\VS, \infty}$ is complete and has quantifier elimination (in $\mathcal{L}^K_{\VS}$).
	\item The proof also demonstrates that if the filed $K$ is finite, then the theory $\prescript{}{\Alt}T^K_n$ is $\omega$-categorical. Indeed, the proof builds an isomorphism between any two countable models of $\prescript{}{\Alt}T^K_n$ by back--and--forth, starting with an isomorphism of their corresponding finite fields and using that in this case finitely generated structures are finite.
\end{enumerate}
\end{remark}
%

%
%

\begin{problem}
	Here we focused on alternating multilinear forms. It remains interesting to extend this quantifier elimination/completeness result to other kinds of multilinear forms (and multilinear maps between different $K$-vector spaces), in particular in the symmetric case (some additional assumptions on the field $K$ are necessary, even in the bilinear case). See also \cite{harman2024ultrahomogeneous, bik2022isogeny, harman2024tensor, neretin2023oligomorphic}.
\end{problem}

\section{Composition Lemma for functions of arbitrary arity and NIP relations}\label{sec: Composition lemma}

\subsection{NIP formulas on indiscernible sequences and arrays}\label{sec: array shrink NIP}
Let $T$ be a complete theory in a language $\mathcal{L}$. We recall some standard definitions and facts.
\begin{defn}
	Given a formula $\varphi(x_1, \ldots, x_k)$ we say that it is NIP if every formula obtained from it by partitioning the variables into two groups is NIP.
\end{defn}

\begin{defn}\label{def: indiscernible array}
Given $k \in \mathbb{N}$, let $I_1, \ldots, I_k$ be linear orders, and let $\Delta$ be a set of formulas. We say that a $k$-dimensional array of tuples $\left( a_{\bar{i}} : \bar{i} \in I_1 \times \ldots \times I_k \right)$ from $\mathbb{M}$ is \emph{$\Delta$-indiscernible} if for any $\varphi(x_1, \ldots, x_n) \in \Delta$ and any $ \bar{i}^{\alpha}_t \in I_1 \times \ldots \times I_k$ for $\alpha \in \{0,1\}$ and $1 \leq t \leq n$, with $\bar{i}^\alpha_t = (i^\alpha_{t,1}, \ldots, i^\alpha_{t,k})$, we have 
$$\bigwedge_{s= 1}^{k}\qftp_{<}\left( i^0_{1,s}, \ldots, i^0_{n,s} \right) = \qftp_{<} \left(  i^1_{1,s}, \ldots, i^1_{n,s} \right) \implies \models \varphi \left(a_{\bar{i}^0_1}, \ldots,  a_{\bar{i}^0_n}\right) \leftrightarrow  \varphi\left( a_{\bar{i}^1_1}, \ldots,  a_{\bar{i}^1_n}\right).$$
If $\Delta$ consists of all $\mathcal{L}$-formulas, then we simply say \emph{indiscernible}.
\end{defn}

\begin{remark}
	Indiscernible arrays were studied in various contexts, including Zilber's trichotomy in Zariski geometries \cite{hrushovski1996zariski, levi2015indiscernible}, properties of dividing in simple \cite{kim1996simple} and NTP$_2$ \cite{yaacov2014independence} theories, simplicity in compact abstract theories \cite{ben2003simplicity}, study of thorn-forking in  \cite{adler2009thorn}.
\end{remark}

The following fact is the so-called UDTFS property of NIP formulas (i.e.~\emph{Uniform Definability of Types over Finite Sets}), established for an arbitrary formula in an NIP structure in \cite{chernikov2015externally}, and more recently for an NIP formula in an arbitrary structure in \cite{eshel2021uniform}.

\begin{fact}\label{fac: UDTFS}
	Let $\varphi(x,y) \in \mathcal{L}$ be an NIP formula. Then there exists a formula $\theta(y;\bar{y}) \in \mathcal{L}$, with $\bar{y} = (y_1, \ldots, y_d)$ for some $d \in \mathbb{N}$, satisfying the following: for every finite tuple $A \subseteq \mathbb{M}^y$ with $|A| \geq 2$ and every $b \in \mathbb{M}^x$, there exists some tuple $c  \in A^d$ such that for every $a \in A$ we have $\models \varphi(b,a) \iff \models \theta(a, c)$.
\end{fact}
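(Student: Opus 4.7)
The plan is to prove this as a standard consequence of NIP combined with compactness and honest definitions, following the approach of \cite{chernikov2015externally}. Since the statement is cited as a black box in the excerpt, I would only sketch the main line rather than reproving every ingredient.

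First, by NIP of $\varphi(x,y)$ the dual family $\mathcal{F} = \{\varphi(b,-) : b \in \mathbb{M}\}$ has finite VC-dimension $k$. Sauer-Shelah then gives that for every finite $A \subseteq \mathbb{M}_y$ of size $n$, the number of distinct traces $\varphi(b, \mathbb{M}) \cap A$ is bounded polynomially by $n^k$. This counting bound is a necessary but not sufficient step: to get UDTFS we need a \emph{single} formula $\theta(y;\bar{y})$ with a fixed arity $d$ encoding all these traces uniformly.

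The core step is to prove an honest definition first: there exists $\psi(y;\bar{z}) \in \mathcal{L}$ and $d \in \mathbb{N}$ such that for every $b$ and every finite $A$ there is $\bar{c} \in A^d$ with $\psi(\mathbb{M},\bar{c}) \cap A \subseteq \varphi(b,\mathbb{M})$ and $\psi(a,\bar{c})$ holding for every $a \in A \cap \varphi(b,\mathbb{M})$. This is established by a compactness/alternation argument: NIP gives a uniform alternation bound $N$ on indiscernible sequences in the $y$-sort, which allows one to pick $\bar{c}$ as the ``boundary points'' of the trace of $\varphi(b,-)$ along a well-chosen approximate indiscernible built from $A$. To upgrade an honest definition to a genuine uniform definition (i.e.\ equivalence on $A$, not just inclusion), one applies the honest-definition procedure to both $\varphi$ and $\neg \varphi$, combines the two parameter tuples into a single tuple $\bar{c}$, and takes $\theta$ to be a boolean combination of the two honest definitions, ensuring that for every $a \in A$ either the positive or the negative side witnesses the correct value.

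The main obstacle is the passage from ``locally definable by some formula chosen per situation'' to ``a single $\theta$ uniform in $\varphi$''. The standard way around this, and the step I would expect to be most delicate, is the compactness argument that turns the alternation bound $N$ into a finite list of candidate honest-definition formulas $\psi_1,\dots,\psi_m$, which can then be combined into one formula $\theta$ by appending finitely many extra parameter variables (interpreted as ``flags'' that select which $\psi_j$ to use). The assumption $|A| \geq 2$ is needed precisely to have enough room in $A$ to encode these flags as tuples from $A$ itself rather than as external constants.
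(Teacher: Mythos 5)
First, a point of reference: the paper does not prove this statement at all --- it is quoted as a black-box Fact, cited to \cite{chernikov2015externally} (NIP theories) and \cite{eshel2021uniform} (NIP formulas in arbitrary theories). So there is no internal proof to compare against; the question is whether your sketch would actually yield the theorem, and I do not think it would.

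The genuine gap is in your ``core step''. You propose to obtain the (uniform, bounded-arity) honest definition over an arbitrary finite set $A$ from the NIP alternation bound, by taking $\bar{c}$ to be ``boundary points'' of the trace of $\varphi(b,-)$ along ``a well-chosen approximate indiscernible built from $A$''. This mechanism works when $A$ actually is an indiscernible sequence, because there the trace is a union of boundedly many convex pieces; but for an arbitrary finite set there is no definable linear order and no canonical indiscernible arrangement of $A$ (Ramsey-type extraction destroys most of $A$ and gives no definability over parameters from $A$), so ``boundary points'' neither exist in a definable sense nor determine the trace. This is exactly why UDTFS was a nontrivial open problem (Laskowski's question): the known proofs do not go through alternation on indiscernibles, but through uniform honest definitions obtained from the $(p,q)$-theorem of Alon--Kleitman--Matou\v{s}ek applied to the dual $\varphi$-family (Chernikov--Simon), respectively the later localization of that argument by Eshel--Kaplan. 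Your sketch omits this combinatorial input entirely, and the compactness/flag step you describe cannot substitute for it: compactness only packages finitely many candidate formulas once the bounded-size parameter tuples are already known to exist. (Two smaller remarks: the version of the ``honest definition'' you state --- covering $\varphi(b,A)$ while being globally contained in $\varphi(b,\mathbb{M})$ --- already forces equivalence on $A$, so the subsequent $\varphi/\neg\varphi$ combination step is redundant; and your explanation of $|A|\geq 2$, namely coding the choice among finitely many formulas by equality patterns of extra parameters from $A$, is indeed the standard and correct use of that hypothesis.)
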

This implies that every subset of an indiscernible array relatively definable by an instance of an NIP formula is a union of boundedly many boxes (this is a  generalization of the finitary version of the ``shrinking of indiscernibles'' (\cite{baldwin2000stability, shelah59classification}, see also \cite[Section 3]{adler2008introduction}) from sequences to arrays, the case $k=1$ appears as \cite[Lemma 3.1]{chernikov2021ramsey}).

\begin{lemma}\label{lem: fin shrinking}
	Let $\varphi(x;y_1, \ldots, y_r) \in \mathcal{L}$ be an NIP formula. Then there exist some $d' = d'(\varphi) \in \mathbb{N}$ and a finite set of $\mathcal{L}$-formulas $\Delta$ such that the following holds.  
	Let $k \in \mathbb{N}$, $b \in \mathbb{M}^x$ and a $k$-dimensional $\Delta$-indiscernible array $\left( a_{\bar{i}} : \bar{i} \in I_1 \times \ldots \times I_k\right)$, with $I_s$ finite for $1 \leq s \leq k$, be arbitrary. Then there exist some tuples $\bar{j}_1, \ldots, \bar{j}_{d'} \in I_1 \times \ldots \times I_k$ satisfying the following: for any $\bar{i}^\alpha_1, \ldots, \bar{i}^\alpha_r \in I_1 \times \ldots \times I_k$ and  $\alpha \in \{0,1\}$ we have 
$$\bigwedge_{s= 1}^{k}\qftp_{<}\left( i^0_{1,s}, \ldots, i^0_{r,s} / j_{1,s}, \ldots, j_{d',s}\right) = \qftp_{<} \left( i^1_{1,s}, \ldots, i^1_{r,s} / j_{1,s}, \ldots, j_{d',s}\right)$$
$$ \implies \models \varphi(b; a_{\bar{i}^0_1}, \ldots, a_{\bar{i}^0_r}) \leftrightarrow \varphi(b; a_{\bar{i}^1_1}, \ldots, a_{\bar{i}^1_r}).$$
	
\end{lemma}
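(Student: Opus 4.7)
The plan is to derive the lemma directly from the UDTFS property (Fact~\ref{fac: UDTFS}) applied to $\varphi$ after concatenating its parameter variables. View $\varphi(x; y_1, \ldots, y_r)$ as $\varphi(x; \bar{y})$ with $\bar{y}$ of length $\sum_{t=1}^r |y_t|$. Fact~\ref{fac: UDTFS} then yields an $\mathcal{L}$-formula $\theta(\bar{y}; \bar{y}^{(1)}, \ldots, \bar{y}^{(d)})$ with some $d = d(\varphi) \in \mathbb{N}$ and each $\bar{y}^{(l)}$ of length $|\bar{y}|$, such that for every $b \in \mathbb{M}_x$ and every finite $A \subseteq \mathbb{M}_{\bar{y}}$ there exists $(c_1, \ldots, c_d) \in A^d$ with $\models \varphi(b, a) \leftrightarrow \theta(a; c_1, \ldots, c_d)$ for every $a \in A$.

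Next, reparse each $\bar{y}^{(l)}$ as $r$ sub-tuples $(z_{l,1}, \ldots, z_{l,r})$ of lengths $(|y_1|, \ldots, |y_r|)$. Under this parsing $\theta$ becomes a formula in $r + rd$ tuple-variables, each matching the length of a single array entry $a_{\bar{i}}$. Set $d' := rd$ and $\Delta := \{\theta\}$ (with this parsing of the variables).

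Now fix any $b \in \mathbb{M}_x$ and any finite $\Delta$-indiscernible array $(a_{\bar{i}} : \bar{i} \in I_1 \times \ldots \times I_k)$, and let $A$ be the (finite) set of all concatenations $(a_{\bar{i}_1}, \ldots, a_{\bar{i}_r})$ with $\bar{i}_t \in I_1 \times \ldots \times I_k$. Apply UDTFS to $A$ to obtain $(c_1, \ldots, c_d) \in A^d$; each $c_l$ has the form $(a_{\bar{i}^l_1}, \ldots, a_{\bar{i}^l_r})$, and enumerating these $rd$ indices gives the required tuples $\bar{j}_1, \ldots, \bar{j}_{d'} \in I_1 \times \ldots \times I_k$. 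By the UDTFS equivalence, for every choice of $\bar{i}^\alpha_1, \ldots, \bar{i}^\alpha_r$ in $I_1 \times \ldots \times I_k$,
\[
\models \varphi(b; a_{\bar{i}^\alpha_1}, \ldots, a_{\bar{i}^\alpha_r}) \iff \models \theta(a_{\bar{i}^\alpha_1}, \ldots, a_{\bar{i}^\alpha_r}; a_{\bar{j}_1}, \ldots, a_{\bar{j}_{d'}}).
\]
If moreover $\qftp_<(i^0_{1,s}, \ldots, i^0_{r,s} / j_{1,s}, \ldots, j_{d',s}) = \qftp_<(i^1_{1,s}, \ldots, i^1_{r,s} / j_{1,s}, \ldots, j_{d',s})$ for each $s \in \{1, \ldots, k\}$, then $\Delta$-indiscernibility of the array applied to the single formula $\theta \in \Delta$ forces the two right-hand sides (for $\alpha = 0$ and $\alpha = 1$) to agree, which is exactly the desired conclusion.

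The main obstacle, as far as I can see, is purely bookkeeping: one must carefully identify the single concatenated variable $\bar{y}$ of UDTFS with an $r$-fold product of array-entry slots, and verify that the $d$ UDTFS witnesses $c_l$ — each itself a choice of $r$ array entries — can be re-enumerated as $d' = rd$ genuine indices in $I_1 \times \ldots \times I_k$. There is no additional combinatorial difficulty: the finitary ``shrinking'' statement is packaged directly into UDTFS, and the array structure enters only through the hypothesis of $\Delta$-indiscernibility applied to $\theta$.
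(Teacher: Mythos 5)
Your proposal is correct and follows essentially the same route as the paper's proof: apply UDTFS to $\varphi(x;y')$ with $y'$ the concatenated parameter tuple, take $\Delta = \{\theta\}$ and $d' = rd$, apply the uniform definition to the set of $r$-fold concatenations of array entries, and finish by $\theta$-indiscernibility of the array. The only cosmetic difference is that the paper notes the harmless side condition $|A|\geq 2$ from Fact \ref{fac: UDTFS}, which your argument omits but which is trivially handled.
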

\begin{proof}

Let $y' := (y_1, \ldots, y_r)$, and let $\theta(y',\bar{y}') \in \mathcal{L}$ with $\bar{y}' = (y'_1, \ldots, y'_d)$ be as given by Fact \ref{fac: UDTFS} for $\varphi(x,y')$. Let $\Delta :=\{ \theta \}$. Let $b$ and $A := (a_{\bar{i}}: \bar{i} \in I_1 \times \ldots \times I_k)$ be given, without loss of generality $|I_1 \times \ldots \times I_k| \geq 2$. Applying Fact \ref{fac: UDTFS} to $b$ and $A' := A^r$, 
there are some $(\bar{j}^1_1, \ldots, \bar{j}^1_r), \ldots, (\bar{j}^d_1, \ldots, \bar{j}^d_r) \in (I_1 \times \ldots \times I_k)^r$ such that for all $\bar{i}_1, \ldots, \bar{i}_r \in I_1\times \ldots \times I_k$ we have
	\begin{equation}\label{theta iff phi}
		\models \varphi(b; a_{\bar{i}_1}, \ldots, a_{\bar{i}_r}) \iff \models \theta\left(a_{\bar{i}_1}, \ldots, a_{\bar{i}_r}; \left( a_{\bar{j}_t^q} : 1 \leq q \leq d, 1 \leq t \leq r \right) \right).
	\end{equation}
	
	Let $\bar{j}_1, \ldots, \bar{j}_{d'}$ with $d' := rd$ enumerate the list $ \left( \bar{j}_t^q : 1 \leq q \leq d, 1 \leq t \leq r \right)$. Now given any $\bar{i}^\alpha_1, \ldots, \bar{i}^\alpha_r \in I_1 \times \ldots \times I_k$, $\alpha \in \{0,1\}$ with 
	$$\qftp_{<}\left( i^0_{1,s}, \ldots, i^0_{r,s} / j_{1,s}, \ldots, j_{d',s}\right) = \qftp_{<} \left( i^1_{1,s}, \ldots, i^1_{r,s} / j_{1,s}, \ldots, j_{d',s}\right)$$
	for all $1 \leq s \leq k$, by indiscernibility of the array (with respect to $\theta$) we have 
	$$\models \theta\left(a_{\bar{i}^0_1}, \ldots, a_{\bar{i}^0_r}; \left( a_{\bar{j}_t^q} : 1 \leq q \leq d, 1 \leq t \leq r \right) \right)$$
	$$ \iff \models \theta\left(a_{\bar{i}^1_1}, \ldots, a_{\bar{i}^1_r}; \left( a_{\bar{j}_t^q} : 1 \leq q \leq d, 1 \leq t \leq r \right) \right),$$
	hence, by \eqref{theta iff phi},
	$$\models \varphi(b; a_{\bar{i}^0_1}, \ldots, a_{\bar{i}^0_r}) \iff \models \varphi(b; a_{\bar{i}^1_1}, \ldots, a_{\bar{i}^1_r}).\qedhere$$
\end{proof}
We will also use the following lemma on the existence of indiscernible subarrays in sufficiently large arrays.
\begin{lemma}\label{lem: indisc subarray}
	For every $k,n \in \mathbb{N}$ and a finite set of formulas $\Delta$ there exists some $r = r(k,\Delta,n)$ satisfying the following. If $\left(a_{\bar{i}} : \bar{i} \in [r]^k \right)$ is an arbitrary array of finite tuples of the same length, then there exists some $\Delta$-indiscernible subarray $(a_{\bar{i}} : \bar{i} \in I_1 \times \ldots \times I_k)$ with $I_t \subseteq [r]$ and $|I_t| \geq n$ for all $1 \leq t \leq k$.
\end{lemma}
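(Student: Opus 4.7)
The statement is a finite Ramsey-type assertion about extracting indiscernible sub-arrays, and the natural approach is an iterated application of the finite Ramsey theorem, one coordinate direction at a time. Alternatively, a routine compactness argument reduces the finitary statement to its infinite counterpart, where $r$ is replaced by $\omega$ and each $I_s$ is infinite; this infinite version is classical and follows from an iteration of the infinite Ramsey theorem. If no finite $r$ works, one may extract an $\omega^k$-indexed counterexample by an ultraproduct / compactness construction (after fixing the common tuple length), contradicting the infinite version.

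For a direct argument giving an explicit (tower-type) bound, I would proceed as follows. Let $N$ be the maximum arity of a formula in $\Delta$. Since $\Delta$ is finite and all $a_{\bar{i}}$ have a common finite length, there are only finitely many quantifier-free $\Delta$-types of an $N$-tuple of such tuples, and only finitely many order types of $N$ indices in a linear order. Define the Ramsey numbers downward: set $r_k := n$, and for $s = k, k-1, \ldots, 1$ let $r_{s-1}$ be large enough that every coloring of $N$-element subsets of $[r_{s-1}]$ by a finite color space $\Gamma_s$ (defined below) admits a monochromatic subset of size $\geq r_s$. Put $r := r_0$.

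Given an array $(a_{\bar{i}} : \bar{i} \in [r_0]^k)$, I would iterate upward on $s = 1, \ldots, k$. At step $s$, suppose $I_1, \ldots, I_{s-1}$ have been extracted. View the restriction $(a_{\bar{i}} : \bar{i} \in I_1 \times \ldots \times I_{s-1} \times [r_0]^{k-s+1})$ as a family indexed by the $s$-th coordinate. Color each $N$-element subset $\{j_1 < \ldots < j_N\} \subseteq [r_0]$ by the function that, to each choice of index profile for the remaining coordinates — coordinates $< s$ ranging over $I_1 \times \ldots \times I_{s-1}$ and coordinates $> s$ ranging over $[r_0]^{k-s}$, together with a specified order type on each side — associates the $\Delta$-type of the resulting $N$-tuple $(a_{\bar{i}_1}, \ldots, a_{\bar{i}_N})$. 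This color space $\Gamma_s$ is finite, so by the choice of $r_0$ we obtain $I_s \subseteq [r_0]$ of size $\geq r_s$ on which this coloring is constant. After step $k$ we verify that the $k$ monochromaticity conditions combine to give exactly the requirement of Definition \ref{def: indiscernible array}: the $\Delta$-type of $(a_{\bar{i}_1}, \ldots, a_{\bar{i}_N})$ on $I_1 \times \ldots \times I_k$ depends only on $\bigwedge_{s=1}^k \qftp_<(i_{1,s}, \ldots, i_{N,s})$.

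The main technical obstacle is the bookkeeping of the color spaces and the order in which Ramsey numbers must be set up: since $\Gamma_s$ depends on $r_0$ and on $|I_1|, \ldots, |I_{s-1}|$, these numbers must be defined downward in the correct order to avoid circularity. A secondary point is that the indiscernibility condition in Definition \ref{def: indiscernible array} allows tuples of indices that are not strictly increasing in each coordinate (so the profile must record the full finite collection of possible order types per coordinate, with multiplicities up to $N$); this only multiplies each color space by a fixed finite factor and does not affect the structure of the proof. Once the bookkeeping is set up, the iterated monochromaticity property gives $\Delta$-indiscernibility on $I_1 \times \ldots \times I_k$ by construction.
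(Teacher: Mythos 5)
Your proposal is correct and takes essentially the same route as the paper, whose entire proof is ``by a repeated application of the finite Ramsey theorem'' (i.e.\ the finite product Ramsey theorem); the compactness reduction to the infinite version you mention would also work. One bookkeeping caveat: with a single common ground set $[r_0]$ for all coordinates, the color space $\Gamma_s$ at each step depends on $r_0$ itself, so to genuinely break the circularity you should give each coordinate its own ground-set size, defined in the order reverse to the order in which the coordinates are processed, and then take $r$ to be the maximum of these sizes --- which is what your ``defined downward in the correct order'' remark amounts to once made precise.
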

\begin{proof}
	By a repeated application of the finite Ramsey theorem.
\end{proof}

\subsection{$N$-dependence and generalized indiscernibles}\label{sec: n-dep and gen indisc}

We recall some basic properties of $n$-dependent formulas and theories.

\begin{fact}\label{fac: props of n-dependent formulas} 
\begin{enumerate}
	\item Let $\varphi(x,y_1, \ldots, y_n)$ and $\psi(x,y_1, \ldots, y_n)$ be $n$-dependent formulas. Then $\neg \varphi$, $\varphi \land \psi$ and $\varphi \lor \psi$ are $n$-dependent. See \cite[Proposition 6.5(1)]{chernikov2019n}.
	\item Let $\varphi(x,y_1, \ldots, y_n)$ be a formula. Suppose that $(w, z_1, \ldots, z_n)$ is any permutation of the tuple $(x,y_1, \ldots, y_n)$. Then $\psi(w, z_1, \ldots, z_n) := \varphi(x,y_1, \ldots, y_n)$ is $n$-dependent if and only if $\varphi(x,y_1, \ldots, y_n)$ is $n$-dependent. See \cite[Proposition 6.5(2)]{chernikov2019n}.
	\item A theory $T$ is $n$-dependent if and only if every formula $\varphi(x,y_1, \ldots, y_n)$ such that all but at most one of the tuples $x, y_1, \ldots, y_n$ are singletons is $n$-dependent. See \cite[Theorem 2.12]{chernikov2021n}.
	\end{enumerate}
	\end{fact}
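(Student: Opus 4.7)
The plan is to address the three parts of the Fact in turn, leveraging the equivalent characterization of $n$-dependence in terms of coding subsets of $\omega^n$ by a ``last coordinate'' parameter together with Ramsey-type extractions. Throughout, I will use that a formula $\varphi(x, y_1, \ldots, y_n)$ has IP$_n$ iff there exist sequences $(a^j_i : i < \omega)$ for $1 \leq j \leq n$ and tuples $\left( b_s : s \subseteq \omega^n \right)$ such that $\models \varphi \left( b_s, a^1_{i_1}, \ldots, a^n_{i_n} \right)$ holds precisely when $(i_1, \ldots, i_n) \in s$.

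For part (1), the negation case is immediate: if $\varphi$ is witnessed to have IP$_n$ by $(b_s)_s$, then $\neg \varphi$ is witnessed by $(b_{\omega^n \setminus s})_s$. For conjunction (and dually disjunction by De Morgan plus negation), I would argue by contraposition: suppose $\varphi \wedge \psi$ has IP$_n$, witnessed by sequences $(a^j_i)$ and parameters $(b_s)$. By a standard Ramsey extraction applied to the array $(a^j_i)$ and the values of $\varphi, \psi$ separately, I can assume the witnessing sequences are mutually indiscernible over $(b_s)_s$ for both formulas simultaneously. Then for each coloring $\eta : \omega^n \to \{0,1\}^2$ recording the $(\varphi, \psi)$-values, one of the two projections must realize all possible patterns in $\omega^n$, and standard choice of parameters $b_s$ shows that the formula corresponding to that projection has IP$_n$, contradicting the hypothesis.

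For part (2), the proof is essentially by inspection. The definition of IP$_n$ is symmetric under permutations of $(y_1, \ldots, y_n)$ because the hypergraph coding is invariant under permutations of the $n$ vertex classes. For the symmetry swapping $x$ with some $y_j$, note that the binary relation ``$\models \varphi(b, a^1_{i_1}, \ldots, a^n_{i_n})$'' transposes by viewing $b$ as one of the coordinates: choosing appropriate indexings, a witness for IP$_n$ in the original formula yields a witness for IP$_n$ after the permutation by re-labeling the role of the ``coding'' parameter.

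Part (3) is the main obstacle and requires genuine work. The plan is to induct on the total length $\ell := |x| + |y_1| + \ldots + |y_n|$, with the base case being all singletons. For the inductive step, suppose some $y_j$ (WLOG $y_n$) has length at least $2$, and split $y_n = (y_n', y_n'')$ as a pair of shorter tuples. Viewing $\varphi(x, y_1, \ldots, y_{n-1}, y_n', y_n'')$ as a formula with $n+1$ tuple-groups and applying parts (1) and (2) together with the Ramsey extraction argument, the plan is to show that if $\varphi$ has IP$_n$ with respect to the original partition, then $\varphi$ must also have IP$_n$ with respect to at least one of the merged partitions $(x, y_1, \ldots, y_{n-1}, y_n')$ (with $y_n''$ absorbed into one of $x, y_1, \ldots, y_{n-1}$) --- each of which has strictly smaller total length. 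The key technical point, which is the hardest part, is the ``unfolding'' argument: from an array of parameters indexed by $\omega^n$ coding all subsets, one produces by a Ramsey extraction an indiscernible witness array for some merged partition; this uses the characterization of $n$-dependence via generalized indiscernibles recalled in Section \ref{sec: n-dep and gen indisc}. The induction then closes, reducing any formula to one where at most one of the tuples $(x, y_1, \ldots, y_n)$ is non-singleton.
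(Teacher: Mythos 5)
This Fact is not proved in the paper at all: all three items are quoted, with citations to \cite[Proposition 6.5]{chernikov2019n} and \cite[Theorem 2.12]{chernikov2021n}, so your attempt has to stand on its own, and as written it has genuine gaps in (1) and (3). In part (1) the negation case is fine, but the conjunction argument does not work as stated. You cannot ``assume the witnessing sequences are mutually indiscernible over $(b_s)_{s\subseteq\omega^n}$'': indiscernibility over parameters that code \emph{every} subset of $\omega^n$ is incompatible with the coding itself, since after such an extraction the truth value of $(\varphi\wedge\psi)(b_s,a^1_{i_1},\ldots,a^n_{i_n})$ would depend only on the order type of $\bar i$, whereas it must vary with $\bar i$ for a fixed nontrivial $s$. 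Moreover, without some uniformization the claim that ``one of the two projections must realize all possible patterns'' is unjustified: at a tuple outside $s$ only one of the conjuncts need fail, and which one may depend both on the tuple and on $s$. The standard repair is the single-parameter witness of Fact \ref{fac: indisc witness to IPn}: an array which is $O_{n,p}$-indiscernible and $G_{n,p}$-indiscernible over one tuple $b$, with $\varphi\wedge\psi$ holding exactly on edges; then each conjunct is constant on edges and constant on non-edges, so one of $\varphi,\psi$ alone separates edges from non-edges and has $\IP_n$. (Alternatively, count traces on an $N$-box: a $(\varphi\wedge\psi)$-trace is the intersection of a $\varphi$-trace with a $\psi$-trace, so by Fact \ref{fac: shatter} there are fewer than $2^{N^n}$ of them for $N$ large and no large box is shattered.) Part (2) is correct in substance, but exchanging $x$ with some $y_j$ is not pure inspection: one needs the explicit re-indexing (take the new ``row'' elements to be $b_{s_i}$ for suitably chosen sets $s_i$ encoding the columns of the target pattern, and conclude by compactness), which your sketch does not actually give.

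The main gap is in part (3). First, your induction measure is wrong: regrouping variables does not change the total length $|x|+\sum_i|y_i|$, so none of the ``merged partitions'' has strictly smaller total length; the correct measure is $\sum_i|y_i|$, and consequently the split-off piece must be absorbed into $x$ specifically --- absorbing $y_n''$ into another $y_i$ makes no progress towards all $y_i$ being singletons. Second, and more seriously, the asserted dichotomy --- that $\IP_n$ of $\varphi(x;y_1,\ldots,y_{n-1},y_n'y_n'')$ forces $\IP_n$ of one of the regroupings in which $y_n'$ or $y_n''$ is moved into the parameter tuple --- is precisely the technical heart of \cite[Theorem 2.12]{chernikov2021n}, and your text only announces it (``the plan is to show\ldots''), with ``Ramsey extraction'' and ``generalized indiscernibles'' serving as placeholders. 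The difficulty is real: the witnessing parameters $b_s$ depend on the entire subset $s$, so after splitting $y_n$ one must construct new witnessing arrays for the coarser partition rather than relabel the old ones, and nothing in the sketch indicates how the $G_{n,p}$-indiscernible witness is to be exploited to do this. As it stands, part (3) is unproved; a self-contained argument would have to reproduce the proof of \cite[Theorem 2.12]{chernikov2021n} (or argue via a counting criterion in the spirit of Proposition \ref{prop: type count}).
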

We recall a characterization of $n$-dependence in terms of generalized indiscernible sequences from \cite{chernikov2019n}.

\begin{defn}
Fix a language $\mathcal{L}_{\opg}^n=
\{R_n(x_1,\ldots ,x_{n}),<, P_1(x),\ldots , P_{n}(x)\}$. 
An \emph{ordered $n$-partite hypergraph} is an $\mathcal{L}^{n}_{\opg}$-structure $ \mathcal{A} = \left(A; <, R_n, P_1, \ldots , P_{n} \right)$ such that:
\begin{enumerate}
\item
$A$ is the disjoint union $P^{\mathcal{A}}_1 \sqcup\ldots \sqcup P^{\mathcal{A}}_{n}$,
\item $R_n^{\mathcal{A}}$ is a symmetric relation so that if $(a_1,\ldots ,a_{n})\in R_n^{\mathcal{A}}$ then $P^{\mathcal{A}}_i\cap \{a_1, \ldots, a_{n}\}$ is a singleton for every $1 \leq i \leq n$,
\item
$<^{\mathcal{A}}$ is a linear ordering on $A$ with $P^{\mathcal{A}}_1 <\ldots <P^{\mathcal{A}}_{n}$.
\end{enumerate}

\end{defn}

\begin{fact} \label{fac: Gnp} \cite[Fact 4.4 + Remark 4.5]{chernikov2019n}
Let $\mathcal{K}$ be the class of all finite ordered $n$-partite hypergraphs. Then $\mathcal{K}$ is a Fra\"{i}ss\'e class, and its limit is called the \emph{generic ordered $n$-partite hypergraph}, denoted by $G_{n,p}$.  An ordered $n$-partite hypergraph 
$\mathcal{A}$ is a model of $\Th(G_{n,p})$ if and only if:
\begin{itemize}
\item
$(P^{\mathcal{A}}_i, <)$ is a dense linear order without endpoints for each $1 \leq i \leq n$,
\item
for every $1 \leq j \leq n$, finite disjoint sets 
$A_0,A_1\subset {\prod_{1 \leq i \leq n, i\neq j}P^{\mathcal{A}}_i}$
 and
$b_0<b_1\in P^{\mathcal{A}}_j$, there is some $b \in P^{\mathcal{A}}_j$ such that $b_0<b<b_1$ and: $R_n(b,\bar{a})$ holds for every $\bar{a} \in A_0$ and $\neg R_n(b, \bar{a})$ holds for every $\bar{a} \in A_1$.
\end{itemize}
\end{fact}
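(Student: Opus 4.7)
The plan is to present Fact \ref{fac: Gnp} as a standard Fra\"iss\'e construction together with an $\aleph_0$-categoricity argument for the axiomatization, so it splits naturally into two halves. First I would verify that the class $\mathcal{K}$ of finite ordered $n$-partite hypergraphs is a Fra\"iss\'e class in the language $\mathcal{L}^n_{\opg}$: it is countable up to isomorphism (finite structures in a finite relational language) and contains arbitrarily large members. The hereditary property is immediate since substructures of finite ordered $n$-partite hypergraphs clearly satisfy all three defining conditions. For joint embedding, given $\mathcal{A}, \mathcal{B} \in \mathcal{K}$, take the disjoint union with $P_i^{\mathcal{A}}$ placed just below $P_i^{\mathcal{B}}$ inside each new part $P_i$, and keep the original hyperedges; the overall order $P_1 < \dots < P_n$ is preserved. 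For amalgamation, given $\mathcal{C} \hookrightarrow \mathcal{A}$ and $\mathcal{C} \hookrightarrow \mathcal{B}$, I would form the free amalgam: identify the images of $\mathcal{C}$, put no new hyperedges between $\mathcal{A}\setminus \mathcal{C}$ and $\mathcal{B}\setminus \mathcal{C}$, and extend the orderings on each part $P_i$ to any linear order compatible with the given ones (shuffling elements is fine, since the language does not restrict order further within a part). This yields a Fra\"iss\'e limit $G_{n,p}$ that is ultrahomogeneous, countable, and has age $\mathcal{K}$.

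Next I would show that $G_{n,p}$ satisfies the two listed axiom schemes, and that every model of them is elementarily equivalent to $G_{n,p}$. Density and no endpoints on each $(P_i, <)$ follow from ultrahomogeneity: given any two points in $P_i$, the two-element and three-element ordered $n$-partite hypergraphs with the required position of an extra vertex lie in $\mathcal{K}$, so by the extension property of the Fra\"iss\'e limit such a point can be realized. The richness axiom is precisely the one-point extension property: for fixed $j$, the structure obtained from $A_0 \cup A_1 \cup \{b_0, b_1\}$ by adding a single vertex $b \in P_j$ in the prescribed order and with the prescribed hyperedge pattern is a finite ordered $n$-partite hypergraph containing the given one as a substructure, so it embeds into $G_{n,p}$ over the existing configuration.

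Finally, I would use a back-and-forth argument to show that any two countable models $\mathcal{M}, \mathcal{N}$ of the axioms are isomorphic, giving $\aleph_0$-categoricity and hence completeness of the theory. Enumerate the universes and at each step extend a finite partial isomorphism by one element: if the new element $a \in P_j^{\mathcal{M}}$ has prescribed order type with respect to the already-matched points and a prescribed hyperedge pattern with respect to the tuples in $\prod_{i \ne j} P_i$, apply the richness axiom in $\mathcal{N}$ to find a matching element (using density and no endpoints to handle the extremal cases where no $b_0$ or $b_1$ bounds are present, by restating the axiom with $\pm\infty$ bounds). This preserves the partition, the ordering, and $R_n$, so the union of the chain of partial isomorphisms is an isomorphism $\mathcal{M} \cong \mathcal{N}$. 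The main technical point to get right is the bookkeeping for the extension step: one has to simultaneously match the order type, the partition, and the full hyperedge pattern of the new point against all previously enumerated finite configurations of compatible tuples, which is precisely what the quantified form of the richness axiom (with arbitrary finite $A_0, A_1$ and bounds $b_0 < b_1$) is designed to provide.
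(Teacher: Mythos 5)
Your proof is correct and is the standard argument: free amalgamation of ordered partite hypergraphs to get the Fra\"iss\'e class, the one-point extension property of the limit to verify the two axiom schemes, and back-and-forth plus \L{}o\'s--Vaught to get $\aleph_0$-categoricity and hence that the axioms axiomatize $\Th(G_{n,p})$. The paper itself gives no proof (it quotes this as Fact 4.4 and Remark 4.5 of the cited reference), and your route is essentially the one used there, so there is nothing to correct beyond routine bookkeeping you already flag (order amalgamation over a common subchain, and handling the unbounded cases of the extension axiom via density and lack of endpoints).
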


\begin{remark}\label{rem : induced copy}
	It is easy to see from the axiomatization that given $G_{n,p}$ and any non-empty intervals $I_t \subseteq P_t$ for $t= 1, \ldots, n$, the set $I_1 \times \ldots \times I_n$ contains an induced copy of $G_{n,p}$.
\end{remark}

\noindent We denote by $O_{n,p}$ the reduct of $G_{n,p}$ to the language $\mathcal{L}^{n}_{\op} = \{<, P_1(x),\ldots , P_{n}(x)\}$.

\begin{defn}
Let $T$ be a theory in a language $\mathcal{L}$, and $\C$ a monster
model of $T$.
\begin{enumerate}
\item Let $I$ be a structure in the language $\mathcal{L}_0$.
We say that $\bar{a}=\left(a_{i}\right)_{i\in I}$, 
with $a_{i}$ a tuple in $\C$, is \emph{$I$-indiscernible} over a set of parameters $C \subseteq \C$ if for
all $n\in\omega$ and all $i_{0},\ldots,i_{n}$ and $j_{0},\ldots,j_{n}$
from $I$ we have:
$$
\qftp_{\mathcal{L}_0}\left(i_{0}, \ldots, i_{n}\right)=\qftp_{\mathcal{L}_0}\left(j_{0},\ldots,j_{n}\right)
\Rightarrow $$
$$\tp_{\mathcal{L}}\left(a_{i_{0}}, \ldots, a_{i_{n}} / C\right)=\tp_{\mathcal{L}}\left(a_{j_{0}}, \ldots, a_{j_{n}} / C \right)
.$$
\item For $\mathcal{L}_0$-structures $I$ and $J$, we say that $\left(b_{i}\right)_{i\in J}$
is \emph{based on} $\left(a_{i}\right)_{i\in I}$ over a set of parameters $C \subseteq \C$ if for any finite
set $\Delta$ of $\mathcal{L}(C)$-formulas,  and for any finite tuple $\left(j_{0},\ldots,j_{n}\right)$
from $J$ there is a tuple $\left(i_{0},\ldots,i_{n}\right)$ from
$I$ such that:

\begin{itemize}
\item $\qftp_{\mathcal{L}_0}\left(j_{0},\ldots,j_{n}\right)=\qftp_{\mathcal{L}_0}\left(i_{0},\ldots,i_{n}\right)$ and
\item $\tp_{\Delta}\left(b_{j_{0}},\ldots,b_{j_{n}}\right)=\tp_{\Delta}\left(a_{i_{0}},\ldots,a_{i_{n}}\right)$.
\end{itemize}
\item Let $I$ be an $\mathcal{L}_0$-structure. We say that $I$ has the \emph{modeling property} if given any $\bar{a} = (a_i)_{i \in I}$ there exists an $I$-indiscernible $\bar{b} = (b_i)_{i \in I}$ based on $\bar{a}$.
\end{enumerate}
\end{defn}

The following is a standard method for finding generalized indiscernibles using structural Ramsey theory  \cite{scow2015indiscernibles} (see also \cite[Fact 4.7]{chernikov2019n}):
\begin{fact}
	Let $\mathcal{K}$ be a class of finite $\mathcal{L}_0$-structures and let $I$ be a countable universal $\mathcal{L}_0$-structure (i.e.~for every $A \in \mathcal{K}$ there is some substructure $A'$ of $I$ such that $A \cong A'$) such that $A \in \mathcal{K}$ for every finite substructure $A \subseteq I$. Then $\mathcal{K}$ is a Ramsey class if and only if $I$ has the modeling property.
\end{fact}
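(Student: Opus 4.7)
My plan is to prove the two implications separately, using the standard duality between Ramsey statements for $\mathcal{K}$ and the existence of $I$-indexed indiscernibles based on a given sequence.

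For ``Ramsey $\Rightarrow$ modeling'', fix $\bar a=(a_i)_{i\in I}$ in some monster model $\C$ and consider the partial type $\Sigma((x_i)_{i\in I})$ asserting both $I$-indiscernibility of $(x_i)_{i\in I}$ and that $(x_i)_{i\in I}$ is based on $\bar a$. By compactness it suffices to realize each finite fragment, which involves a finite substructure $A_0\subseteq I$ and a finite set $\Delta$ of $\mathcal{L}(C)$-formulas. Such a fragment is realized by any copy of $A_0$ inside $I$ on which the corresponding subtuple of $\bar a$ is $\Delta$-indiscernible (with respect to every qf-type occurring in $A_0$). To produce one, list the finitely many qf-types $q_1,\ldots,q_k$ of tuples appearing in $A_0$, let $r_l\le 2^{|\Delta|}$ bound the number of $\Delta$-types realized in $\bar a$ on $q_l$-tuples, and iterate Ramsey in $\mathcal{K}$ starting from $B_0:=A_0$: pick $B_l\in\mathcal{K}$ with $B_l\to (B_{l-1})^{q_l}_{r_l}$ for $l=1,\ldots,k$. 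Embed $B_k$ into $I$ by universality and color each $q_l$-substructure of $B_k$ by the $\Delta$-type of the corresponding subtuple of $\bar a$. Peeling the partition relations from the outermost inward yields a copy of $A_0$ in $B_k\subseteq I$ that is simultaneously monochromatic for every $q_l$-coloring, which is exactly the required witness.

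For ``modeling $\Rightarrow$ Ramsey'', argue by contradiction. If $\mathcal{K}$ is not Ramsey, some $A,B\in\mathcal{K}$ and $r\in\mathbb{N}$ exhibit it: for every finite $C\in\mathcal{K}$ there is an $r$-coloring of $A$-copies in $C$ without a monochromatic $B$-copy. A compactness/K\"onig argument over the tree of such colorings on finite substructures exhausting $I$ yields a single $r$-coloring $\chi$ of $A$-copies in $I$ under which no $B$-copy in $I$ is monochromatic. Expand $\mathcal{L}_0$ by fresh $|A|$-ary relations $R_1,\ldots,R_r$ and let $I^*$ interpret $R_c$ as the set of $A$-copies of color $c$. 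Apply the modeling property in $\Th(I^*)$ to the sequence of singletons $(a_i:=i)_{i\in I}$ to obtain an $I$-indiscernible $\bar b=(b_i)_{i\in I}$ based on $\bar a$. By $I$-indiscernibility, all subtuples of $\bar b$ indexed by an $A$-copy in $I$ lie in a single $R_c$. Fix an enumeration $(j_0,\ldots,j_n)$ of a $B$-copy in $I$ and let $\Delta$ be the finite set of $R_c$-atomic formulas applied to its $A$-subcopies; by basing, there exists $(i_0,\ldots,i_n)$ in $I$ of the same qf-type---hence itself a $B$-copy---whose $\Delta$-type in $I^*$ matches that of $(b_{j_0},\ldots,b_{j_n})$, so the $\chi$-pattern on its $A$-subcopies is monochromatic, contradicting the choice of $\chi$.

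The main obstacle is the iterated Ramsey step in the first direction: a single $A_0\in\mathcal{K}$ usually carries substructures of several isomorphism types, so handling them all simultaneously demands the nested construction above, and one must carefully verify that a copy obtained at an outer step retains its monochromaticity property as we refine inside it---this uses only the fact that a subcopy inherits a coloring of the ambient copies, but needs to be spelled out to ensure the final $A_0$-copy is monochromatic for every $q_l$-coloring at once.
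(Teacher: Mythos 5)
The paper does not actually prove this statement: it is quoted as a Fact with a citation to Scow, so there is no in-paper argument to compare against. Measured against the standard proof (which is essentially Scow's), your proposal follows the same route and both directions are in essence correct. For ``Ramsey $\Rightarrow$ modeling'' the compactness set-up is right (local basedness is expressible by finite disjunctions over the $\Delta$-types realized in $\bar a$ on index tuples of the given quantifier-free type), and the nested application of the partition relations $B_l \to (B_{l-1})^{q_l}_{r_l}$, peeled from the outside in, is the standard way to get a copy of $A_0$ that is simultaneously homogeneous for all the finitely many colorings; your closing remark correctly identifies that monochromaticity is inherited by subcopies. For ``modeling $\Rightarrow$ Ramsey'', coding a fiberwise bad coloring of $I$ (obtained by K\"onig's lemma, using that $I$ is countable and locally finite) into an expansion $I^*$ by color predicates and then applying the modeling property in $\operatorname{Th}(I^*)$ to the tautological sequence is exactly the intended argument; note only that to see that the $\bar b$-subtuples indexed by $A$-copies satisfy \emph{some} $R_c$ you need the basedness clause (indiscernibility alone only gives that the color, if any, is the same), so ``by basing'' is doing work already in that sentence, not only in the final transfer step.

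The one point that genuinely needs attention is the copies-versus-enumerations issue in the first direction: your coloring ``of each $q_l$-substructure of $B_k$ by the $\Delta$-type of the corresponding subtuple of $\bar a$'' is only well defined if each such substructure has a canonical enumeration, i.e.\ if the members of $\mathcal{K}$ are rigid (equivalently, if the Ramsey property is taken for colorings of embeddings/enumerated copies rather than of unenumerated substructures). This is not cosmetic: for the substructure version of the Ramsey property without rigidity the equivalence is false (finite pure sets form a copies-Ramsey class, yet a pure-set-indexed sequence enumerating an increasing sequence in a dense linear order has no totally indiscernible sequence based on it). So you should either state the Ramsey property in the embedding form, or add the rigidity hypothesis; the statement as quoted carries the same implicit assumption, and in the paper's applications ($O_{n,p}$ and $G_{n,p}$) all finite substructures are linearly ordered, hence rigid, so the argument as you wrote it goes through there.
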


\begin{fact}\cite[Corollary 4.8]{chernikov2019n}\label{fac: random hypergraph indiscernibles exist}
Let $C \subseteq \C$ be a small set of parameters.
\begin{enumerate}
\item For any $n \in \omega$ and $\bar{a}=\left(a_{g}\right)_{g\in O_{n,p}}$, there is some $\left(b_{g}\right)_{g\in O_{n,p}}$ which is $O_{n,p}$-indiscernible over $C$ and is based on $\bar{a}$ over $C$.
\item For any $n \in \omega$ and $\bar{a}=\left(a_{g}\right)_{g\in G_{n,p}}$, there is some $\left(b_{g}\right)_{g\in G_{n,p}}$ which is $G_{n,p}$-indiscernible over $C$ and is based on $\bar{a}$ over $C$.
\end{enumerate}
\end{fact}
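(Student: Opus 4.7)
The plan is to reduce this to the general principle stated immediately before in the excerpt: a countable universal $\mathcal{L}_0$-structure has the modeling property if and only if its age is a Ramsey class. Granted this equivalence, the task splits into (a) verifying the Ramsey property of the ages of $O_{n,p}$ and $G_{n,p}$, and (b) handling the parameter set $C$ and the ``based on'' clause via a routine compactness argument. For (b), the standard trick is to absorb $C$ into the language as new constants, so all $\mathcal{L}(C)$-formulas become formulas of the expanded language $\mathcal{L}'$; finite $\Delta \subseteq \mathcal{L}(C)$ then gives a coloring of finite substructures of the generalized indiscernible's index structure by their $\Delta$-type in $\bar a$, and a monochromatic substructure supplies a finite fragment of the desired $\bar b$. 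The $\bar b$ itself is then produced by a compactness argument, where consistency of the type ``$\bar b$ is indiscernible and based on $\bar a$'' is witnessed by Ramsey on each finite fragment.

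For part (2), the age of $G_{n,p}$ is the class $\mathcal{K}$ of finite ordered $n$-partite hypergraphs defined earlier. Its Ramsey property is an instance of the Ne\v{s}et\v{r}il--R\"odl structural Ramsey theorem: for any $A \subseteq B$ in $\mathcal{K}$ and any $r \in \omega$, there is $D \in \mathcal{K}$ with the property that every $r$-coloring of the induced copies of $A$ in $D$ has a monochromatic induced copy of $B$. Since the Fra\"iss\'e limit $G_{n,p}$ is countable and realizes every element of $\mathcal{K}$, Fact 3.7 applies verbatim and yields the $G_{n,p}$-indiscernible $\bar b$ based on $\bar a$ over $C$.

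For part (1), the reduct $O_{n,p}$ lives in the language $\{<, P_1, \ldots, P_n\}$, and its age is the class of finite linearly ordered sets partitioned into $n$ convex pieces $P_1 < \ldots < P_n$. This is a Ramsey class too: a coloring of $m$-element sub-orders with a fixed profile $(m_1, \ldots, m_n)$ across the $P_i$'s is handled by iterating the classical finite Ramsey theorem once inside each $P_i$ (or directly by the product Ramsey theorem). Applying Fact 3.7 (or the compactness recipe of (b)) to $O_{n,p}$ yields the $O_{n,p}$-indiscernible $\bar b$ based on $\bar a$ over $C$.

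The only genuinely delicate point is the verification of the Ramsey property of $\mathcal{K}$ in part (2), for which we rely on the Ne\v{s}et\v{r}il--R\"odl theorem; everything else (passage from Ramsey to modeling, absorbing $C$, extracting $\bar b$ by compactness) is standard and uniform between the two cases.
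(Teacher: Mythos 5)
The paper does not prove this statement itself---it is quoted verbatim as a fact from \cite[Corollary 4.8]{chernikov2019n}, whose proof runs exactly along the lines you describe: Scow's equivalence between the Ramsey property of the age and the modeling property of the index structure, the Ne\v{s}et\v{r}il--R\"odl theorem for the class of finite ordered $n$-partite hypergraphs (and the product/finite Ramsey theorem for the reduct $O_{n,p}$), with the parameter set $C$ absorbed into the language as constants and the ``based on'' clause extracted by compactness. So your proposal is correct and is essentially the same argument as the one behind the cited fact.
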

%

%

\begin{fact}\cite[Proposition 2.8]{chernikov2021n}\label{fac: indisc witness to IPn} The following are equivalent, in any theory $T$.
\begin{enumerate}
	\item $\varphi(x;y_1, \ldots, y_n)$ is not $n$-dependent.
	\item There are tuples $b$ and $(a_g)_{g \in G_{n,p}}$ such that
	\begin{enumerate}
	\item $(a_g)_{g \in G_{n,p}}$ is $O_{n,p}$-indiscernible over $\emptyset$ and $G_{n,p}$-indiscernible over $b$;
	\item $\models \varphi(b;a_{g_1}, \ldots, a_{g_{n}}) \iff G_{n,p}\models R_n(g_1, \ldots, g_n)$, for all $g_i 
	\in P_i$.
	\end{enumerate}
	\item Item (2) holds for any small $G'_{n,p} \equiv G_{n,p}$ in the place of $G_{n,p}$.
	\end{enumerate}	
\end{fact}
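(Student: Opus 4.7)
My plan splits the equivalence into three implications. The step $(3) \Rightarrow (2)$ is immediate by taking $G'_{n,p} := G_{n,p}$; for $(2) \Rightarrow (1)$, I fix $b$ and $(a_g)_{g \in G_{n,p}}$ as in (2) and exhibit, for each $k \in \omega$, a length-$k$ IP$_n$-pattern for $\varphi(x; y_1, \ldots, y_n)$, so that IP$_n$ itself follows by compactness. Choose $g_i^j \in P_i$ for $1 \leq i \leq n$, $1 \leq j \leq k$, with $g_i^1 < \ldots < g_i^k$ in each part. For each target $S \subseteq [k]^n$, the genericity axioms of $\Th(G_{n,p})$ (Fact \ref{fac: Gnp}) supply a second configuration $h_i^j \in P_i$ with the same $\mathcal{L}^n_{\op}$-qftp as the $g_i^j$ but encoding $S$ via $R_n$. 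The $O_{n,p}$-indiscernibility of $(a_g)$ over $\emptyset$ then provides $\sigma \in \Aut(\mathbb{M})$ sending $(a_{h_i^j})$ to $(a_{g_i^j})$; setting $b_S := \sigma(b)$ and invoking (b) at the $h_i^j$ yields $\models \varphi(b_S; a_{g_1^{j_1}}, \ldots, a_{g_n^{j_n}}) \iff (j_1, \ldots, j_n) \in S$, the required pattern.

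For $(1) \Rightarrow (3)$, fix any small $G'_{n,p} \equiv G_{n,p}$. First, by compactness IP$_n$ produces an array $(c_g)_{g \in G'_{n,p}}$ and a tuple $b$ satisfying condition (b): the $R_n$-pattern on every finite subset of $G'_{n,p}$ is realized by some finite IP$_n$-configuration, and compactness over $\Th(G'_{n,p})$ globalizes this. Next I extract indiscernibility in two Ramsey-theoretic stages using the modeling property (Fact \ref{fac: random hypergraph indiscernibles exist}). Applying Fact \ref{fac: random hypergraph indiscernibles exist}(1) to the $O_{n,p}$-reduct of the index yields $(c'_g)$ that is $O_{n,p}$-indiscernible over $\emptyset$ and based on $(c_g)$ over $\emptyset$; a further compactness argument then produces a tuple $b'$ such that $(c'_g)$ together with $b'$ still satisfies (b). Applying Fact \ref{fac: random hypergraph indiscernibles exist}(2) with $C = \{b'\}$ to $(c'_g)$ (viewed as indexed by $G'_{n,p}$) gives $(a_g)_{g \in G'_{n,p}}$ that is $G'_{n,p}$-indiscernible over $b'$ and based on $(c'_g)$ over $b'$. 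Basedness preserves (b), and also preserves $O_{n,p}$-indiscernibility over $\emptyset$: any two target tuples with equal $O_{n,p}$-qftp are matched by basedness to source tuples of equal $G'_{n,p}$-qftp, hence of equal $O_{n,p}$-qftp, and the $O_{n,p}$-indiscernibility of $(c'_g)$ then supplies the common $\mathcal{L}$-type which basedness transports back.

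The main obstacle I anticipate is harmonizing the two distinct indiscernibility conditions ($O_{n,p}$-indiscernibility over $\emptyset$, and $G'_{n,p}$-indiscernibility over $b'$) on a single array. Note that $O_{n,p}$-indiscernibility over $\emptyset$ is compatible with (b), but $O_{n,p}$-indiscernibility over $b'$ is not, since the latter would force $\varphi(b'; a_{g_1}, \ldots, a_{g_n})$ to depend only on the $O_{n,p}$-qftp of $(g_1, \ldots, g_n)$ and so ignore $R_n$. This dictates the order of the two extractions, and the resolution is the standard observation that the basedness clause of the modeling property transfers reduct indiscernibility from source to target, so that the finer Ramsey extraction in the second step preserves the coarser indiscernibility established in the first.
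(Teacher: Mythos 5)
You are working against a citation here — the paper quotes this as a Fact from \cite{chernikov2021n} and gives no proof — so I can only assess your argument on its own terms. Your $(3)\Rightarrow(2)$ and $(2)\Rightarrow(1)$ steps are correct (the automorphism trick using $O_{n,p}$-indiscernibility over $\emptyset$ plus universality of $G_{n,p}$ is exactly right), and your observation about the forced order of the two extractions is sound. The genuine gap is in $(1)\Rightarrow(3)$, at the point where, after extracting $(c'_g)$ $O_{n,p}$-indiscernible and based on $(c_g)$ over $\emptyset$, you claim ``a further compactness argument'' recovers a tuple $b'$ satisfying (b). Basedness in Fact \ref{fac: random hypergraph indiscernibles exist}(1) matches only $O_{n,p}$-quantifier-free types, which do not record $R_n$. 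To produce $b'$ you need, for every finite $F$, consistency of the set of conditions $\varphi(x;c'_{g_1},\ldots,c'_{g_n})$ for $R_n(\bar g)$ and $\neg\varphi(x;c'_{g_1},\ldots,c'_{g_n})$ for $\neg R_n(\bar g)$, $g_i\in F\cap P_i$. Basedness only says that the existential pattern-properties of the box $c'_F$ are reflected in \emph{some} box $c_{F'}$ with the same $O_{n,p}$-qftp; that $F'$ may carry a completely different $R_n$-pattern, and all you retained about the original array is that each of its boxes realizes (via $b$) \emph{its own} $R_n$-pattern, not all patterns. It is consistent with everything you have constructed that after the extraction each box shape realizes only a single $\varphi$-pattern, in which case no $b'$ exists; so this step is not justified as stated.

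The repair is to carry a stronger, $R_n$-free invariant through the first extraction: from $\IP_n$, compactness gives an array in which \emph{every} finite box is fully $\varphi$-shattered (all $2^{|F_1|\cdots|F_n|}$ patterns realized by witnesses $x$), a condition expressed by existential formulas about the array alone and hence preserved under $O_{n,p}$-basedness; extract the $O_{n,p}$-indiscernible array from that, and only then choose $b'$ realizing the specific $R_n$-pattern by compactness. With that change, your second extraction ($G$-indiscernibility over $b'$) and your two preservation arguments — of condition (b), and of $O_{n,p}$-indiscernibility over $\emptyset$ via basedness applied to concatenated tuples — are correct. A secondary point: Fact \ref{fac: random hypergraph indiscernibles exist} is stated for the countable $O_{n,p}$ and $G_{n,p}$, so applying it directly to families indexed by an arbitrary small $G'_{n,p}\equiv G_{n,p}$ needs a word of justification (same age plus the Ramsey property, or a compactness transfer); the cleaner organization is to prove (2) for $G_{n,p}$ itself and then deduce (3) by compactness, using that every finite substructure of $G'_{n,p}$ embeds into $G_{n,p}$ and that embeddings preserve quantifier-free types.
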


We will need the following easy lemma.

\begin{lemma}
	Let $b$ and $(a_g)_{g \in G_{n,p}}$ be such that
$(a_g)_{g \in G_{n,p}}$ is $O_{n,p}$-indiscernible over $\emptyset$ and $G_{n,p}$-indiscernible over $b$ and $\models \varphi(b;a_{g_1}, \ldots, a_{g_{n}}) \iff G_{n,p}\models R_n(g_1, \ldots, g_n)$, for all $g_i 
	\in P_i$. Then the following hold:
	\begin{enumerate}
	\item for any $1 \leq m \leq n$, the sequences 
	$$(a_g)_{g \in P_1}, \ldots,(a_g)_{g \in P_{m-1}},  (a_g)_{g \in P_{m+1}}, \ldots, (a_g)_{g \in P_{n}}$$
	 are mutually indiscernible over $b$; in particular, 
	 $$(\gamma_{\bar{g}} : \bar{g} \in P_1 \times \ldots \times P_{m-1} \times P_{m+1} \times \ldots \times P_n)$$
	  with $\gamma_{\bar{g}} := (b, a_{g_1}, \ldots, a_{g_{m-1}}, a_{g_{m+1}}, a_{g_n})$ is an indiscernible array;
		\item the sequences $(a_{g})_{g \in P_1}, \ldots, (a_{g})_{g \in P_n}$ are mutually indiscernible; in particular, $(\gamma_{\bar{g}} : \bar{g} \in P_1 \times \ldots \times P_n)$ with $\gamma_{\bar{g}} := (a_{g_1}, \ldots, a_{g_n})$ is an indiscernible array.
	\end{enumerate}
\end{lemma}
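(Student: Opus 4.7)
The key observation is purely combinatorial about the structure $G_{n,p}$: the relation $R_n$ is $n$-ary and, by definition of an ordered $n$-partite hypergraph, can only hold on tuples containing exactly one element from each of $P_1, \ldots, P_n$. In particular, if $\bar g$ is a tuple entirely contained in $\bigcup_{i \neq m} P_i$ for some $m$, then no subtuple of $\bar g$ can lie in $R_n$. Hence for any two tuples $\bar g^{(0)}, \bar g^{(1)}$ from $\prod_{i \neq m} P_i$, their quantifier-free type in $G_{n,p}$ is completely determined by their quantifier-free type in the order reduct $O_{n,p}$. This is the only non-trivial ingredient, and there is no real obstacle.

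\textbf{Proof of (2).} The statement that $(a_g)_{g \in P_1}, \ldots, (a_g)_{g \in P_n}$ are mutually indiscernible is precisely the statement that the associated array $(\gamma_{\bar g} : \bar g \in P_1 \times \ldots \times P_n)$ is indiscernible in the sense of Definition \ref{def: indiscernible array}, i.e.~the $\mathcal{L}$-type of any finite subtuple of the array depends only on the componentwise order-type of its index tuple in $O_{n,p}$. But this is exactly the assumption that $(a_g)_{g \in G_{n,p}}$ is $O_{n,p}$-indiscernible over $\emptyset$, applied to index tuples of the form $(g_{1,1}, \ldots, g_{1,n}, g_{2,1}, \ldots, g_{2,n}, \ldots)$ with $g_{s,t} \in P_t$.

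\textbf{Proof of (1).} Fix $m$ and let $\bar g^{(0)}, \bar g^{(1)}$ be tuples in $\prod_{i \neq m} P_i$ with the same $O_{n,p}$-quantifier-free type, in the sense that for each $t \neq m$ the restrictions to $P_t$ have the same order-type. By the key observation above, since neither tuple meets $P_m$, $R_n$ is vacuous on all their subtuples, so $\bar g^{(0)}$ and $\bar g^{(1)}$ have the same $G_{n,p}$-quantifier-free type. By the hypothesis that $(a_g)_{g \in G_{n,p}}$ is $G_{n,p}$-indiscernible over $b$, we conclude
\[
\tp\left(a_{g^{(0)}_1}, \ldots, a_{g^{(0)}_{m-1}}, a_{g^{(0)}_{m+1}}, \ldots, a_{g^{(0)}_n}\,/\,b\right) = \tp\left(a_{g^{(1)}_1}, \ldots, a_{g^{(1)}_{m-1}}, a_{g^{(1)}_{m+1}}, \ldots, a_{g^{(1)}_n}\,/\,b\right).
\]
Since $b$ is fixed in both sides, this is exactly the indiscernibility of the array $(\gamma_{\bar g} : \bar g \in \prod_{i \neq m} P_i)$ over $\emptyset$ in the sense of Definition \ref{def: indiscernible array}, and therefore (equivalently) the mutual indiscernibility over $b$ of the sequences $(a_g)_{g \in P_i}$ for $i \neq m$. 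This completes the proof.
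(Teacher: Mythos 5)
Your proof is correct and follows essentially the same route as the paper: part (2) is immediate from $O_{n,p}$-indiscernibility, and part (1) rests on exactly the paper's observation that tuples avoiding $P_m$ can never satisfy $R_n$, so equality of $\mathcal{L}^n_{\op}$-quantifier-free types forces equality of $\mathcal{L}^n_{\opg}$-quantifier-free types, and then $G_{n,p}$-indiscernibility over $b$ applies. No gaps worth noting.
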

\begin{proof}
	Part (1) is immediate from $G_{n,p}$-indiscernibility over $b$, as for any tuples $\bar{g}, \bar{g}' \subseteq P_1 \times \ldots \times P_{m-1} \times P_{m+1} \times \ldots \times P_n$ we have 
	$$\qftp_{\mathcal{L}^{n}_{\op}}(\bar{g}) = \qftp_{\mathcal{L}^{n}_{\op}}(\bar{g}') \implies \qftp_{\mathcal{L}^{n}_{\opg}}(\bar{g}) = \qftp_{\mathcal{L}^{n}_{\opg}}(\bar{g}')$$
	 (as the edge relation $R$ can never hold).
	And (2) is immediate by $O_{n,p}$-indiscernibility.	
\end{proof}

\subsection{Finitary type-counting characterization of $n$-dependence}\label{sec: fin type count}
We recall a generalization of Sauer-Shelah lemma to higher arity VC-dimension from \cite{chernikov2019n}.

\begin{defn}
	Let $X_1, \ldots, X_k$ be arbitrary sets, and let $X := \prod_{i=1}^k X_i$. Let $\mathcal{F} \subseteq \mathcal{P}(X)$ be a family of subsets of $X$. 
	\begin{enumerate}
	\item Given $A \subseteq X$, we write $\mathcal{F} \cap A := \{ B \subseteq A : B = A \cap S \textrm{ for some } S \in \mathcal{F} \}$.
		\item We say that $\mathcal{F}$ \emph{shatters} a set $A \subseteq X$ if for every $B \subseteq A$ there is some $S \in \mathcal{F}$ such that $B = A \cap S$.
		\item By an \emph{$n$-box} we mean a set $A = \prod_{i=1}^{k} A_i$ with $|A_i| = n$.
		\item The \emph{$\VC_k$-dimension} of $\mathcal{F}$, denoted $\VC_k(\mathcal{F})$, is the largest $n \in \mathbb{N}$ so that $\mathcal{F}$ shatters an $n$-box, or $\infty$ if there is no such $n$.
	\end{enumerate} 
\end{defn}

\begin{fact} \cite[Proposition 3.9]{chernikov2019n}\label{fac: shatter} For every $k, d \in \mathbb{N}$ there exists some $\varepsilon \in \mathbb{R}_{>0}$ and $n_0 \geq \mathbb{N}$ such that the following holds for all $n>n_0$.
	If $X_1, \ldots, X_k$ are finite sets with $|X_i| = n$, and $\mathcal{F} \subseteq \mathcal{P}(X)$ with $|\mathcal{F}| \geq 2^{n^{k - \varepsilon}}$, then $\mathcal{F}$ shatters some $d$-box $A \subseteq  \prod_{i=1}^k X_i$.
\end{fact}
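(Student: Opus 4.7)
The plan is to proceed by induction on $k$. The base case $k=1$ is the classical Sauer–Shelah lemma: any family of subsets of an $n$-set not shattering a $d$-element set has cardinality at most $\sum_{i<d}\binom{n}{i}=O(n^{d-1})$, which is bounded by $2^{n^{1-\varepsilon}}$ for any $\varepsilon\in(0,1)$ and all $n$ sufficiently large, so any $\varepsilon_{1}(d)\in(0,1)$ with an appropriate $n_{0}$ suffices.

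For the inductive step, fix $k\geq 2$ and $d\geq 1$ and assume the statement holds for $k-1$ with some $\varepsilon_{k-1}(d')>0$ for every $d'\geq 1$. Given $\mathcal{F}\subseteq\mathcal{P}(X_{1}\times\cdots\times X_{k})$ with $|X_{i}|=n$ and not shattering any $d$-box, I identify each $S\in\mathcal{F}$ with its sequence of slices $(S^{x_{k}})_{x_{k}\in X_{k}}$, where $S^{x_{k}}:=\{\bar{x}\in X_{1}\times\cdots\times X_{k-1}:(\bar{x},x_{k})\in S\}$, and form the slice family $\mathcal{F}':=\{S^{x_{k}}:S\in\mathcal{F},\,x_{k}\in X_{k}\}\subseteq\mathcal{P}(X_{1}\times\cdots\times X_{k-1})$. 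Since each $S$ is recovered from its $n$ slices, one has the crude bound $|\mathcal{F}|\leq|\mathcal{F}'|^{n}$.

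The core combinatorial step is to pick $d'=d'(d,k)\in\mathbb{N}$ so large that the non-shattering of $d$-boxes by $\mathcal{F}$ forces the non-shattering of $(k-1)$-dimensional $d'$-boxes by $\mathcal{F}'$. Granting this, the inductive hypothesis applied to $\mathcal{F}'$ with parameter $d'$ gives $|\mathcal{F}'|\leq 2^{n^{k-1-\varepsilon'}}$ where $\varepsilon':=\varepsilon_{k-1}(d')$, hence $|\mathcal{F}|\leq 2^{n\cdot n^{k-1-\varepsilon'}}=2^{n^{k-\varepsilon'}}$, so taking (say) $\varepsilon_{k}(d):=\varepsilon'/2$ yields the required strict bound for all $n$ large enough. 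For the core step, suppose $\mathcal{F}'$ shatters some box $A_{1}\times\cdots\times A_{k-1}$ with $|A_{i}|=d'$: each pattern $B\subseteq A_{1}\times\cdots\times A_{k-1}$ is then witnessed by a pair $(S_{B},x_{B})\in\mathcal{F}\times X_{k}$. A sequence of pigeonhole/Ramsey-style extractions — first concentrating many $B$'s onto $S$-values coming from the same $S$, then distributing their $x_{B}$'s across $d$ distinct values in $X_{k}$, and finally passing to a sub-$d$-box $A'_{1}\times\cdots\times A'_{k-1}$ — is designed to produce a single $S\in\mathcal{F}$, a set $A_{k}\subseteq X_{k}$ of size $d$, and the sub-$d$-box such that the $d$ slices of this single $S$ indexed by $A_{k}$ realize all $2^{d^{k}}$ patterns on $A'_{1}\times\cdots\times A'_{k-1}\times A_{k}$, contradicting the non-shattering hypothesis on $\mathcal{F}$.

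The main obstacle is precisely this last Ramsey-style extraction: different patterns $B$ in the shattering of $\mathcal{F}'$ are \emph{a priori} realized by different $(S_{B},x_{B})$ pairs, and coordinating enough of them so that a \emph{single} $S$ contributes $d$ slices varying over all $2^{d^{k}}$ joint patterns on some sub-$d$-box requires $d'$ to grow as an iterated (hypergraph) Ramsey-type function of $d$ and $k$, using the classical Sauer–Shelah bound and product pigeonhole at each level. The quantitative bookkeeping — verifying that $\varepsilon_{k}(d)$ stays strictly positive as $d'$ blows up, and that the threshold $n_{0}(k,d)$ stays finite — is the technical heart of the argument.
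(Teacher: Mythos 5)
This statement is not proved in the paper at all: it is quoted as a Fact from \cite{chernikov2019n} (Proposition 3.9), so your argument can only be measured against that cited proof, and as it stands it has a genuine gap at its core combinatorial step. The inductive step rests on the claim that if $\mathcal{F}$ shatters no $d$-box, then the slice family $\mathcal{F}' = \{S^{x_k} : S \in \mathcal{F},\, x_k \in X_k\}$ shatters no $(k-1)$-dimensional $d'$-box for some constant $d' = d'(d,k)$; and your induction in fact needs the (weaker) estimate $|\mathcal{F}'| \leq 2^{n^{k-1-\varepsilon'}}$. Both are false. Take $k=2$, $d=2$, $X_1 = X_2 = [n]$ and $\mathcal{F} := \{ B \times \{1\} : B \subseteq X_1 \}$. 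On any $2\times 2$ box $\{i_1,i_2\}\times\{j_1,j_2\}$ with $j_1 \neq j_2$, every trace of a member of $\mathcal{F}$ lies in a single column (at most one of $j_1,j_2$ equals $1$), so the pattern $\{(i_1,j_1),(i_1,j_2)\}$ never occurs and no $2$-box (hence no $d$-box, $d \geq 2$) is shattered; yet the slice family is all of $\mathcal{P}(X_1)$, which shatters sets of every size and has cardinality $2^{n} = 2^{n^{k-1}}$. The same construction $\mathcal{F} = \{ B \times \{1\} : B \subseteq X_1 \times \ldots \times X_{k-1}\}$ works for every $k$. The point is that passing to the unstructured slice family discards exactly the information that the hypothesis constrains: $k$-dimensional shattering restricts how the slices of a \emph{single} set at \emph{different} values of $x_k$ are coupled, not which subsets of $X_1\times\ldots\times X_{k-1}$ occur as slices. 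Accordingly the crude bound $|\mathcal{F}| \leq |\mathcal{F}'|^{n}$ is useless in this example (it gives only $2^{n^{k}}$), while $|\mathcal{F}| = 2^{n^{k-1}}$ is of course compatible with the Fact --- the failure is in your intermediate claim, not in the statement.

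A secondary but telling symptom is the target you set for the Ramsey extraction: a single $S \in \mathcal{F}$ whose $d$ slices over $A_k$ ``realize all $2^{d^k}$ patterns'' on a sub-$d$-box is impossible on its face, since one set has exactly one trace on a given box; shattering a $d$-box requires $2^{d^k}$ distinct traces realized by (in general) distinct members of $\mathcal{F}$. Even after repairing this, the counterexample shows that shattering of a large box by $\mathcal{F}'$ simply yields no shattering configuration for $\mathcal{F}$, no matter how large $d'$ is chosen, so no pigeonhole or quantitative bookkeeping can rescue the reduction; letting $d'$ grow with $n$ does not help either, because then the inductive hypothesis (whose $\varepsilon$ and $n_0$ depend on $d'$) no longer applies. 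A correct argument must retain, for each $S$, the whole array of its slices --- or induct in a Sauer--Shelah-like fashion inside the product structure --- as is done in the proof of \cite{chernikov2019n}, Proposition 3.9, which this paper cites rather than reproves.
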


\begin{remark}
	Given a formula $\varphi(x;y_1, \ldots, y_k)$ and $a \in \C_x$, we consider:
	\begin{enumerate}
		\item $\varphi(a; \C) := \left\{(b_1, \ldots, b_k) \in \C_{y_1} \times \ldots \times \C_{y_k} : \models \varphi(a;b_1, \ldots, b_k) \right\}$,
		\item  $\mathcal{F}_{\varphi} := \left\{ \varphi(a; \C): a \in \C_x \right\} \subseteq \mathcal{P}\left( \C_{y_1} \times \ldots \times \C_{y_k} \right)$,
		\item $\VC_k(\varphi) := \VC_k(\mathcal{F}_{\varphi})$.
	\end{enumerate}
	 \end{remark}

We now establish a finitary version of the type-counting criterion for $k$-dependence from \cite[Section 5.1]{chernikov2021n}. This provides a partial answer to \cite[Problem 5.11]{chernikov2021n}.
\begin{defn}
Given a partitioned formula $\varphi(x; y_1, \ldots, y_k)$ and sets $A_0 \subseteq \mathbb{M}^x$,  $A_i \subseteq \mathbb{M}^{y_i}$ for $i \in \set{k-1}$ and  $B \subseteq \mathbb{M}^{y_k}$, we denote by $S_{\varphi, B}\left(A_0, A_1, \ldots, A_{k-1}  \right)$ 	the set of all $\varphi$-types over $A_0 \times A_1 \times \ldots \times A_{k-1}$ in the variable $y_k$ that are realized in $B$; where by a $\varphi$-type over $A_0 \times A_1 \times \ldots \times A_{k-1}$ in the variable $y_k$ we mean a maximal consistent collection of formulas of the form $\varphi(a_0, a_1, \ldots, a_{k-1}, y_k)$ or $\neg \varphi(a_0, a_1, \ldots, a_{k-1}, y_k)$ with $(a_0, a_1, \ldots, a_{k-1}) \in A_0 \times A_1 \times \ldots \times A_{k-1}$. If $A_0 = \{b \}$ is a singleton, we will simply write $S_{\varphi, B}\left(b, A_1, \ldots, A_{k-1}  \right)$.
\end{defn}

\begin{defn}
	Given a formula $\varphi(x;y_1, \ldots, y_k)$, $\varepsilon \in \mathbb{R}_{>0}$ and a function $f: \mathbb{N} \to \mathbb{N}$, we consider the following condition.
\begin{itemize}
	\item[$(\dagger)_{f,\varepsilon}$] There exists some $n^* \in \mathbb{N}$ such that the following holds for all $n^* \leq n \leq m \in \mathbb{N}$: for any mutually indiscernible sequences $I_1, \ldots, I_k$  of finite length, with $I_i \subseteq \mathbb{M}^{y_i}$, $n = |I_1| = \ldots = |I_{k-1}|$, $m =|I_k|$, and $b \in \mathbb{M}^x$ an arbitrary tuple there exists an interval $J \subseteq I_k$ with $|J| \geq \frac{m}{f(n)}-1$
		satisfying $|S_{\varphi,J}(b, I_1, \ldots, I_{k-1})| < 2^{n^{k-1-\varepsilon}}$.
\end{itemize}
\end{defn}

\begin{prop}\label{prop: type count}
	The following are equivalent for a formula $\varphi(x; y_1, \ldots, y_k)$, with $k \geq 2$:
	\begin{enumerate}
		\item $\varphi(x; y_1, \ldots, y_k)$ is $k$-dependent.
		\item There exist some $\varepsilon > 0$ and $d \in \mathbb{N}$  such that $\varphi$ satisfies $(\dagger)_{f,\varepsilon}$ with respect to the function $f(n) = n^d$. 
				\item There exist some $\varepsilon>0$ and some function $f: \mathbb{N} \to \mathbb{N}$ such that $\varphi$ satisfies $(\dagger)_{f,\varepsilon}$.
	\end{enumerate}
	\end{prop}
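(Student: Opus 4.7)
The implication $(2) \Rightarrow (3)$ is immediate, as $f(n) := n^d$ is one choice of a function $\mathbb{N} \to \mathbb{N}$.

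For $(3) \Rightarrow (1)$, I argue contrapositively. Suppose $\varphi$ fails to be $k$-dependent; by Fact \ref{fac: indisc witness to IPn}(3), applied to a sufficiently large $G'_{k,p} \equiv G_{k,p}$, choose a tuple $b$ and an $O_{k,p}$-indiscernible array $(a_g)_{g \in G'_{k,p}}$ which is moreover $G'_{k,p}$-indiscernible over $b$ and satisfies $\models \varphi(b; a_{g_1}, \ldots, a_{g_k}) \iff R_k(g_1, \ldots, g_k)$ for $g_i \in P_i$. Fix any $\varepsilon > 0$ and $f : \mathbb{N} \to \mathbb{N}$. Choose $n$ large and sub-sequences $I_i \subseteq P_i$ of length $n$ for $i < k$; the $N := 2^{n^{k-1}}$ subsets of $I_1 \times \ldots \times I_{k-1}$ correspond exactly to the possible $\varphi$-types in $y_k$ over $b, I_1, \ldots, I_{k-1}$. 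By the genericity axioms of $G_{k,p}$ (Fact \ref{fac: Gnp}(2)), every such type is realized in every non-trivial sub-interval of $P_k$. Setting $m := (f(n) + 1) \cdot N$, pick $c_1 < \ldots < c_m$ in $P_k$ whose $R_k$-types over $I_1 \times \ldots \times I_{k-1}$ cycle through all $N$ subsets in a fixed order. Then $I_k := (c_1, \ldots, c_m)$, being a sub-sequence of the indiscernible $(a_g)_{g \in P_k}$, is mutually indiscernible with $I_1, \ldots, I_{k-1}$, and every interval $J \subseteq I_k$ of length $\geq m/f(n) - 1 \geq N$ contains a complete cycle and hence realizes all $N > 2^{n^{k-1-\varepsilon}}$ distinct $\varphi$-types, falsifying $(\dagger)_{f,\varepsilon}$.

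For $(1) \Rightarrow (2)$, assume $\varphi$ is $k$-dependent. The higher-arity Sauer--Shelah lemma (Fact \ref{fac: shatter}) cannot be applied directly to the full family $\{\varphi(b', \cdot) \cap X : b' \in \mathbb{M}_x\}$ on a $k$-box $X$ of side $n$, since the resulting bound of $2^{n^{k-\varepsilon}}$ is already \emph{weaker} than the trivial upper bound $2^{n^{k-1}}$ on $|S_{\varphi, J}|$. Instead, following the template of the $2$-dependent infinitary type-counting criterion of \cite[Section 5.1]{chernikov2021n}, the plan is to (i) subdivide $I_k$ into $\lceil n^d \rceil$ consecutive sub-intervals of roughly equal length for a polynomial $n^d$ to be chosen, (ii) inside a typical sub-interval $J$ (with $m$ taken sufficiently large by enlarging $n^*$ in $(\dagger)$), apply Ramsey (Lemma \ref{lem: indisc subarray}) to extract a sub-sequence $J' \subseteq J$ of length $n$ over which the augmented array $(b, I_1, \ldots, I_{k-1}, J')$ is $\Delta$-indiscernible for a pre-chosen finite set $\Delta$ of $\mathcal{L}$-formulas, (iii) use this $\Delta$-indiscernibility together with $k$-dependence of $\varphi$ to bound $|S_{\varphi, J'}(b, I_1, \ldots, I_{k-1})|$ by $2^{n^{k-1-\varepsilon'}}$ for a slightly smaller $\varepsilon'$---essentially by a coding argument that translates distinct $\varphi$-types realized in $J'$ into distinct $\varphi$-cuts of the $(k-1)$-box $I_1 \times \ldots \times I_{k-1}$ under variation of the $x$-parameter, to which Fact \ref{fac: shatter} in arity $k-1$ then applies---and finally (iv) pigeonhole over the $\lceil n^d \rceil$ sub-intervals to promote $J'$ to an interval $J$ of length $\geq m/n^d - 1$ with the same type bound.

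The main obstacle is step (iii): the naive $\VC_{k-1}$-dimension of the family $\{\varphi(b, \cdot, c) : c \in \mathbb{M}_{y_k}\}$ need not be finite for merely $k$-dependent $\varphi$, since $k$-dependence is strictly weaker than $(k-1)$-dependence. One must restrict carefully to indiscernible parameters and exploit the augmented indiscernibility over $b$ to recast each $\varphi$-type realized in $J'$ as a distinct $(k-1)$-dimensional cut parameterized by an element of $\mathbb{M}_x$ rather than by $c \in J'$. This coding, a finitary analogue of the $\ded(\kappa)$-style argument for $k=2$ in \cite[Section 5.1]{chernikov2021n}, is the technical core of the implication and the reason the resulting slack $m/n^d$ in the interval length is polynomial rather than constant.
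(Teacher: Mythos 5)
Your implications $(2)\Rightarrow(3)$ and $(3)\Rightarrow(1)$ are essentially correct and follow the same route as the paper: for $(3)\Rightarrow(1)$ the paper embeds an explicit finite ordered $k$-partite hypergraph (whose $k$-th part consists of $2f(n)$ blocks, each enumerating all subsets of $[n]^{k-1}$) into $G_{k,p}$, which is the same cycling idea you implement directly via the extension axioms; your inequality ``$m/f(n)-1\geq N$'' is literally false when $f(n)>N$, but since $m/f(n)-1>N-1$ every qualifying interval still has integer length $\geq N$, so the conclusion stands.

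The problem is $(1)\Rightarrow(2)$: what you give is a plan, not a proof, and you yourself flag step (iii) as an unresolved ``main obstacle.'' Moreover the plan is off-track in two ways. First, the worry about the $\VC_{k-1}$-dimension of $\{\varphi(b,\cdot,c):c\in\mathbb{M}_{y_k}\}$ is a red herring: Fact \ref{fac: shatter} is a purely combinatorial statement about \emph{any} family of subsets of an $n$-box, so no finiteness of a $(k-1)$-ary VC-dimension is needed. The actual argument (the paper's) runs by contradiction: if every interval $J_i$ in a partition of $I_k$ into $n^d$ pieces realizes $\geq 2^{n^{k-1-\varepsilon}}$ types, then for each $i$ the family of subsets of $I_1\times\cdots\times I_{k-1}$ cut out by $\varphi(b,\bar y,a)$, $a\in J_i$, is large enough that Fact \ref{fac: shatter} (in arity $k-1$, with $d_0$ the relevant $\VC_k$ bound for $\varphi$) yields a shattered $d_0$-box $A_i$; since there are only $\binom{n}{d_0}^{k-1}$ candidate boxes and $n^d\geq d_0 n^{d_0(k-1)}$ intervals, pigeonhole gives one box $A'$ shattered by $d_0$ \emph{distinct} intervals $J_{i_1},\ldots,J_{i_{d_0}}$; picking one suitable point from each $J_{i_t}$ realizes any prescribed pattern on the $k$-dimensional $d_0$-box $A'_1\times\cdots\times A'_{k-1}\times\{a_1,\ldots,a_{d_0}\}$, and mutual indiscernibility of $I_1,\ldots,I_k$ transports this to a fixed $d_0$-box shattered by instances of $\varphi$, contradicting $k$-dependence. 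This is where the bounds $d=d_0(k-1)+1$ and $\varepsilon$ come from, and it needs neither indiscernibility over $b$ nor any Ramsey extraction. Second, your steps (ii)--(iv) would not work as stated even if (iii) were filled in: bounding the number of types realized in a thin Ramsey-extracted subsequence $J'\subseteq J$ does not bound the types realized in the interval $J$ itself, and the proposed ``promotion'' of the $J'$-bound to an interval of length $\geq m/n^d-1$ by pigeonhole has no justification; the condition $(\dagger)_{f,\varepsilon}$ requires the small count on a genuine interval of $I_k$. So the core implication of the proposition is missing.
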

\begin{proof}
(1) implies (2). Assume that $\varphi(x;y_1, \ldots, y_k)$ is $k$-dependent, say of $\VC_k$-dimension $d_0$. Let $\varepsilon >0$ and $n_0$ be as given by Fact \ref{fac: shatter} for $k-1$ and $d_0$. Let $d := d_0 (k-1) + 1$ and $n^* := \max\{n_0,d_0\}$. Assume that $n^* \leq n \leq  m$, $b$,  $I_1, \ldots, I_k$ witness that (2) fails with respect to $d$ and $\varepsilon$ (in particular $m > f(n)$ as otherwise (2) would hold using $J := \emptyset$). We can partition $I_k$ into $n' := f(n) = n^d \geq d_0 n^{d_0(k-1)}$ disjoint intervals $J_1 < \ldots < J_{n'}$, with $|J_i| \geq \frac{m}{f(n)}-1$ for all $1 \leq i \leq n'$.  Then by assumption we have $|S_{\varphi,J_i}(b, I_1, \ldots, I_{k-1})| \geq   2^{n^{k-1-\varepsilon}}$ for each $i$. Hence, by Fact \ref{fac: shatter}, for each $i$ there exists some $d_0$-box $A_i \subseteq I_{1} \times \ldots \times I_{k-1}$ shattered by the family $\{ \varphi(b, y_1, \ldots, y_{k-1}, a) : a \in J_i \}$. As there are at most ${n \choose d_0 }^{k-1}$-choices for $A_i$, by pigeonhole there exist some $d_0$-box $A' = A'_1 \times \ldots \times A'_{k-1} \subseteq I_1 \times \ldots \times I_{k-1}$ and some $1 \leq i_1 < \ldots < i_{d_0} \leq n'$ such that $A'$ is shattered by each of the families $\{ \varphi(b, y_1, \ldots, y_{k-1}, a) : a \in J_{i_t} \}$, for $1 \leq t \leq d_0$. 

For each $1 \leq i \leq k$, let $I'_i$ be the initial segment of $I_i$ of length $d_0$, and let $R \subseteq I'_1 \times \ldots \times I'_{k}$ be arbitrary. By the choice of $A'$ we can select points $a_{t} \in J_{i_t}$ such that the ordered $k$-partite hypergraph $(R; I'_1, \ldots, I'_k)$ is isomorphic to the ordered $k$-partite hypergraph $(\varphi(b; y_1, \ldots, y_k); A'_1, \ldots, A'_{k-1}, \{ a_1, \ldots, a_{d_0}\})$. By mutual indiscernibility of $I_1, \ldots, I_k$ there is an automorphism sending $A'_i$ to $I'_i$ for $1 \leq i \leq k-1$ and $\{a_1, \ldots, a_{d_0} \}$ to $I'_k$; then the image $b_R$ of $b$ under this isomorphism satisfies 
$$\models \varphi(b_R; a_1, \ldots, a_k) \iff (a_1, \ldots, a_k) \in R$$
for all $a_i \in I'_i$, $1 \leq i \leq k$. That is, the $d_0$-box $I'_1 \times \ldots \times I'_k$ is shattered by the instances of $\varphi$, contradicting the assumption on the $\VC_k$-dimension of $\varphi$.

(2) implies (3) is obvious.

	(3) implies (1). 	
	Let $k\geq 2,d,n \in \mathbb{N}$ be arbitrary. We define a $k$-partite ordered hypergraph $(R^{k-1}_{d,n}; V_1, \ldots, V_{k})$ as follows.  Let $d' := 2d$, and let $s_1, \ldots, s_{2^{n^{k-1}}}$ be an arbitrary enumeration of $\mathcal{P}([n]^{k-1})$. We take $V_i := [n]$ for $1 \leq i \leq k-1$ with the natural ordering, and we let $V_k := [d'] \times [2^{n^{k-1}}]$, with the order given by the lexicographic product of the natural orders on $[d']$ and $[2^{n^{k-1}}]$. And for any $(a_1, \ldots, a_{k-1}) \in V_1 \times \ldots \times V_{k-1}$ and $(t,a) \in V_k$ we define
$$(a_1, \ldots, a_{k-1}, (t,a)) \in R^{k-1}_{d,n} :\iff (a_1, \ldots, a_{k-1}) \in s_{a}.$$
\begin{claim} \label{cla: bad hypergraph} The $k$-partite ordered hypergraph $(R^{k-1}_{d,n}; V_1, \ldots, V_{k})$ satisfies the following.
\begin{enumerate}
	\item[(a)] $|V_i| = n$ for $1 \leq i \leq k-1$ and $|V_k| = 2d 2^{n^{k-1}}$.
	\item[(b)] For any interval $J \subseteq V_k$ with $|J| \geq \frac{|V_k|}{d}-1$, the family of sets 
	$$\mathcal{F}_J := \left\{ R^{k-1}_{d,n}(y_1, \ldots, y_{k-1},c) \cap V_1 \times \ldots \times V_{k-1} : c\in J \right\}$$
	 shatters $V_1 \times \ldots \times V_{k-1}$. In particular, $|\mathcal{F}_J| = 2^{n^{k-1}}$.
\end{enumerate}	
\end{claim}
\begin{proof}
(a) is obvious by construction. For (b), let $J$ be an interval in $V_k$ of length $\geq \frac{|V_k|}{d}$. As $d'=2d$, $J$ must contain entirely the whole set $\{ (t, a) : a \in [2^{n^{k-1}}] \}$ for some $1 \leq t \leq d'$, hence (b) holds by construction.
\end{proof}

	Now assume that  $\varphi(x;y_1, \ldots, y_k)$ is not $k$-dependent. Then, by Fact \ref{fac: indisc witness to IPn}, there are tuples $b$ and $(a_g)_{g \in G_{k,p}}$ such that $(a_g)_{g \in G_{n,p}}$ is $O_{n,p}$-indiscernible and $\models \varphi(b;a_{g_1}, \ldots, a_{g_{k}}) \iff G_{k,p}\models R_k(g_1, \ldots, g_k)$, for all $g_i 
	\in P_i$. In particular, taking $I_i := (a_g : g \in P_i)$, the sequences $I_1, \ldots, I_k$ are mutually indiscernible (with the ordering given by the ordering on $P_i$).

We fix an arbitrary $f: \mathbb{N}\to \mathbb{N}$, $\varepsilon > 0$ and $n^* \in \mathbb{N}$. 
	As $G_{k,p}$ contains any finite ordered $k$-partite hypergraph as an induced substructure, restricting $I_1, \ldots, I_k$ to the corresponding finite subsequences we find finite mutually indiscernible sequences $I'_1, \ldots, I'_k$ such that the $k$-partite ordered hypergraph $\left( \varphi(b, y_1, \ldots, y_k); I'_1, \ldots, I'_k \right)$ is isomorphic to $(R^{k-1}_{f(n^*),n^*}; V_1, \ldots, V_{k})$. Then, by Claim \ref{cla: bad hypergraph}, $|I'_i| = n^*$ for $1 \leq i \leq k-1$, $|I'_k| = m := 2 f(n^*)  2^{(n^*)^{k-1}} $; and if $J$ is an arbitrary interval in $I'_k$ with $|J| \geq \frac{m}{f(n^*)}-1$, then $I'_1 \times \ldots \times I'_{k-1}$ is shattered by the family $\{\varphi(b, y_1, \ldots, y_{k-1}, c) :c \in J \}$, so $S_{\varphi, J}(b, I'_1, \ldots, I'_{k-1}) = 2^{(n^*)^{k-1}}$. But this contradicts $(\dagger)_{f,\varepsilon}$.
	\end{proof}

\subsection{Array shattering lemma in NIP structures}\label{sec: array shattering}

The following is our key technical lemma. In this section, given a tuple $\bar{i} = (i_1, \ldots, i_k)$ and $s \subseteq [k]$, we write $\bar{i}_{s}$ to denote the subtuple $(i_t : t \in s)$. And if $\bar{i}' = (i'_1, \ldots, i'_{k'})$ is another tuple, we write $\bar{i}+\bar{i}'$ to denote the concatenated tuple $(i_1, \ldots, i_k, i'_1, \ldots, i'_{k'})$.
\begin{lemma}[Array shattering lemma]\label{lem: comp lemma induction}
	Assume that every $\mathcal{L}$-formula is NIP. Then $(1)_k$ below holds for all $k \geq 2$ and $(2)_k$ holds for all $k \geq 1$ (below all of the $y_i$'s are tuples of variables of arbitrary finite length).
	
	\begin{enumerate}
		\item[$(1)_k$] 	For every $\varphi(y_0, \ldots, y_{k}) \in \mathcal{L}$ there exist some $f_\varphi: \mathbb{N} \to \mathbb{N}$, $n_\varphi \in \mathbb{N}$, a sequence $(\Delta_{\varphi,n} : n \in \mathbb{N})$ with each $\Delta_{\varphi,n}$ a finite set of $\mathcal{L}$-formulas,  and $\varepsilon_{\varphi} > 0$ satisfying the following:
		
		Let $n_\varphi \leq n \leq m \in \mathbb{N}$ be arbitrary. Let		
		$\bar{\gamma} = (\gamma_{\bar{i}} : \bar{i}  \in [n]^{k-1} \times [m])$ be a $\Delta_{\varphi,n}$-indiscernible array with $\gamma_{\bar{i}} \in \mathbb{M}^{y_k}$. Let $\bar{\alpha}= (\alpha_{\bar{i}} : \bar{i} \in [n]^{k-1})$ with $\alpha_{\bar{i}} \in  \mathbb{M}^{y_0}$ and $\bar{\beta}^t=(\beta^t_{\bar{i}} : \bar{i} \in [n]^{k-2} \times [m])$ with $\beta^t_{\bar{i}} \in \mathbb{M}^{y_t}$ for $1 \leq t \leq k-1$ be arbitrary arrays.
		 For $j \in [m]$, let 
		 $$S^{\varphi}_{j} := \left\{\bar{i} \in [n]^{k-1}: \models \varphi \left(\alpha_{\bar{i}}, \beta^1_{\bar{i}_{[k-1]\setminus \{1\}} + (j)}, \ldots, \beta^{k-1}_{\bar{i}_{[k-1]\setminus \{k-1\}} + (j)}, \gamma_{\bar{i} + (j)} \right) \right\}.$$
		  Then there exists some interval $J  \subseteq [m]$ with $|J| \geq \frac{m}{f_\varphi(n)}-1$ such that the family of sets $\mathcal{F}^\varphi_J := \left\{S^\varphi_j : j \in J \right\}$
		 has cardinality at most $2^{n^{k-1-\varepsilon_\varphi}}$.
		
		\item[$(2)_k$] For every $\varphi(y_1, \ldots, y_{k+1}) \in \mathcal{L}$ there exist some $n'_\varphi \in \mathbb{N}$ and $\varepsilon'_{\varphi} > 0$ satisfying the following:
		
		Let $n'_\varphi \leq n \in \mathbb{N}$ and
		$\bar{\delta}=(\delta_{\bar{i}} : \bar{i}  \in [n]^{k})$ be an array with $\delta_{\bar{i}} \in \mathbb{M}^{y_{k+1}}$. For a sequence of  arrays $\left(\bar{\zeta}^t=(\zeta^t_{\bar{i}} : \bar{i} \in [n]^{k-1})\right)_{1 \leq t \leq k}$ with $\zeta^t_{\bar{i}} \in \mathbb{M}^{y_t}$, we define 
		 $$S^{\varphi,\bar{\delta}}_{\bar{\zeta}^1, \ldots, \bar{\zeta}^k} := \left\{\bar{i} \in [n]^{k}: \models \varphi \left(\zeta^1_{\bar{i}_{[k]\setminus \{1\}}}, \ldots, \zeta^k_{\bar{i}_{[k]\setminus \{k\}}}, \delta_{\bar{i}} \right) \right\}.$$
		  Then the family 
		  $$\mathcal{F}^{\varphi,\bar{\delta}} = \left\{S^{\varphi,\bar{\delta}}_{\bar{\zeta}^1, \ldots, \bar{\zeta}^k} \subseteq [n]^{k} : \bar{\zeta}^1, \ldots, \bar{\zeta}^k \textrm{ arbitrary arrays} \right\}$$  has cardinality $<2^{n^{k - \varepsilon'_{\varphi}}}$.

	\end{enumerate}
\end{lemma}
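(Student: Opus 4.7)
My plan is to prove $(1)_k$ and $(2)_k$ by joint induction on $k$. The base case $(2)_1$ is immediate from the classical Sauer-Shelah lemma applied to the NIP formula $\varphi(y_1;y_2)$: the family $\mathcal{F}^{\varphi,\bar{\delta}} = \{\{i\in [n] : \models \varphi(\zeta,\delta_i)\} : \zeta \in \mathbb{M}^{y_1}\}$ has $\VC$-dimension $\leq \VC(\varphi(y_1;y_2))$, so its cardinality is $O(n^{\VC(\varphi)})$, which is $\leq 2^{n^{1-\varepsilon'_\varphi}}$ for any $\varepsilon'_\varphi < 1$ and $n$ sufficiently large.

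For the step $(2)_{k-1}\Rightarrow (2)_k$, I use a slicing argument. Fix $i_1\in [n]$ and set $\bar{i}':=(i_2,\ldots,i_k)\in [n]^{k-1}$. The $i_1$-slice of $S^{\varphi,\bar{\delta}}_{\bar{\zeta}^1,\ldots,\bar{\zeta}^k}$ is a $(2)_{k-1}$-set $S^{\psi,\bar{\delta}'}_{\widetilde{\zeta}^1,\ldots,\widetilde{\zeta}^{k-1}}$ on $[n]^{k-1}$, where the new base array $\bar{\delta}'_{\bar{i}'}:=(\zeta^1_{\bar{i}'},\delta_{(i_1,\bar{i}')})$ absorbs $\bar{\zeta}^1$ into $\bar{\delta}$, the auxiliaries $\widetilde{\zeta}^t_{\bar{i}'_{[k-1]\setminus\{t\}}}:=\zeta^{t+1}_{(i_1,\bar{i}'_{[k-1]\setminus\{t\}})}$ are obtained from $\bar{\zeta}^{t+1}$ by freezing the first coordinate to $i_1$, and $\psi(y_1,\ldots,y_{k-1},w):=\varphi(w_1,y_1,\ldots,y_{k-1},w_2)$ with the packed argument $w=(w_1,w_2)$. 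Applying the inductive $(2)_{k-1}$ to $\psi$ gives at most $2^{n^{(k-1)-\varepsilon'_\psi}}$ possibilities per slice, and multiplying independently over the $n$ choices of $i_1$ yields the bound $2^{n^{k-\varepsilon'_\psi}}$ for $(2)_k$, with $\varepsilon'_\varphi:=\varepsilon'_\psi$.

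For $(1)_k$, the same absorption exhibits, for each fixed $j\in [m]$, the set $S^\varphi_j\subseteq [n]^{k-1}$ as an element of a $(2)_{k-1}$-family $\mathcal{F}^j$ of size $\leq 2^{n^{(k-1)-\varepsilon'_\psi}}$, built from the $j$-dependent base array $\bar{\delta}^j_{\bar{i}}:=(\alpha_{\bar{i}},\gamma_{\bar{i}+(j)})$ and the auxiliaries $\bar{\zeta}^{j,t}_{\bar{i}_{[k-1]\setminus\{t\}}}:=\beta^t_{\bar{i}_{[k-1]\setminus\{t\}}+(j)}$. To find an interval $J\subseteq [m]$ of the required length on which only few distinct $S^\varphi_j$'s occur, I combine this bound with the shrinking of indiscernibles (Lemma~\ref{lem: fin shrinking}) applied to the $\Delta_{\varphi,n}$-indiscernible array $\bar{\gamma}$: for each $\bar{i}\in [n]^{k-1}$ the boolean function $\chi_{\bar{i}}(j):=\mathbf{1}[\bar{i}\in S^\varphi_j]$ is the trace in $j$ of an instance of the NIP formula $\varphi$ at the single indiscernible entry $\gamma_{\bar{i}+(j)}$. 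After enlarging $\Delta_{\varphi,n}$, the shrinking lemma should produce $d'(\varphi)$ reference positions in the $[m]$-coordinate such that $\chi_{\bar{i}}(j)$ depends on $j$ only through $\qftp_{<}(j/\mathrm{refs})$, uniformly in $\bar{i}$ once the references are pooled over all $\bar{i}$. The resulting partition of $[m]$ into $\leq d' n^{k-1}+1$ intervals then yields, by pigeonhole, an interval $J$ with $|J|\geq m/f_\varphi(n)-1$ for $f_\varphi(n):=d' n^{k-1}+2$, on which every $\chi_{\bar{i}}$ is constant; hence $S^\varphi_j$ is itself constant on $J$ and $|\{S^\varphi_j:j\in J\}|=1$, well within the required $2^{n^{(k-1)-\varepsilon_\varphi}}$.

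The main obstacle will be making this application of Lemma~\ref{lem: fin shrinking} to $\chi_{\bar{i}}$ rigorous in the presence of the arbitrary moving parameters $\bar{\beta}^t$: the shrinking lemma requires a single fixed parameter $b$ of bounded arity, whereas here the relevant parameter tuple $\beta^t_{\bar{i}_{[k-1]\setminus\{t\}}+(j)}$ varies with $j$. I plan to resolve this by viewing, for fixed $\bar{i}$, the tuple $\alpha_{\bar{i}}$ as the genuine parameter $b$ of the shrinking application and treating the $\bar{\beta}^t$'s as additional external data, which can be absorbed into an augmented array on $[n]^{k-1}\times [m]$ by first passing, via a preliminary finite-Ramsey extraction in the $[m]$-direction (Lemma~\ref{lem: indisc subarray}), to a sub-interval on which the augmented array is indiscernible, and then invoking Lemma~\ref{lem: fin shrinking} on the augmented array. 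The key bookkeeping to verify is that the number of reference positions, and hence the cell-count $f_\varphi(n)$, remains polynomial in $n$ independent of $m$, so that the exponent $\varepsilon_\varphi>0$ can be chosen equal to $\varepsilon'_\psi$ (inherited from $(2)_{k-1}$); this constant tracking, together with the uniform-in-$\bar{i}$ pooling of references, is the technical crux of the induction.
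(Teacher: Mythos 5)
Your base case and the observation that each $S^\varphi_j$ lies in a $(2)_{k-1}$-family built from the base array $\bar{\delta}^j_{\bar{i}}=(\alpha_{\bar{i}},\gamma_{\bar{i}+(j)})$ are fine and agree with the paper, but both of your inductive steps have genuine gaps. First, the slicing argument for $(2)_{k-1}\Rightarrow(2)_k$ breaks at the absorption of $\bar{\zeta}^1$ into the base array: $(2)_{k-1}$ bounds the family of cut-out sets only for a \emph{fixed} base array, whereas your $\bar{\delta}'_{\bar{i}'}=(\zeta^1_{\bar{i}'},\delta_{(i_1,\bar{i}')})$ varies with the full-arity array $\bar{\zeta}^1$, so the inductive bound does not apply to the union over all choices. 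In fact the per-slice bound is simply false: take $\varphi(y_1,y_2,y_3):=(y_1=y_3)$ and $\delta_{i_1,i_2}$ pairwise distinct; for fixed $i_1$, choosing $\zeta^1_{i_2}$ equal to $\delta_{i_1,i_2}$ or not realizes every subset of $[n]$ as a slice, giving $2^n$ slices while $(2)_1$ would predict at most $2^{n^{1-\varepsilon}}$. The truth of $(2)_k$ rests on correlations \emph{between} slices that the product-over-$i_1$ bound discards; this is why the paper instead derives $(2)_k$ from $(1)_k$, arguing by contradiction via the higher-arity Sauer--Shelah lemma (Fact \ref{fac: shatter}): a too-large family shatters a big box, inside which one extracts a $\Delta$-indiscernible subarray (Lemma \ref{lem: indisc subarray}) and encodes the hypergraph $R^{k-1}_{d,n}$ of Claim \ref{cla: bad hypergraph}, contradicting $(1)_k$.

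Second, in your argument for $(1)_k$ the intended conclusion is too strong and the proposed repair cannot work. You aim to make every $\chi_{\bar{i}}(j)$ constant on $J$, hence $|\{S^\varphi_j:j\in J\}|=1$; but the parameters $\beta^t_{\bar{i}_{[k-1]\setminus\{t\}}+(j)}$ are arbitrary and genuinely vary with $j$, so even for $k=2$ with $\bar{\gamma}$ constant in $j$ the sets $S^\varphi_j$ take polynomially many values on any long interval --- the correct statement is only that they all lie in one small family, not that they coincide. Moreover, your fix --- Ramsey-extracting (Lemma \ref{lem: indisc subarray}) an indiscernible augmented array containing the $\bar{\beta}^t$'s in the $[m]$-direction --- produces a sparse subset whose size is governed by Ramsey numbers, not an \emph{interval} of length $\geq m/f_\varphi(n)-1$ with $f_\varphi$ depending only on $n$, which is what $(1)_k$ demands; and nothing forces the arbitrary $\bar{\beta}^t$'s to be indiscernible at all. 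The paper's mechanism is different: quantify the auxiliary arrays out existentially, forming for each pattern $\eta\in 2^{n^{k-1}}$ the formula $\xi_{\varphi,n,\eta}(\bar{x},\bar{y})$ asserting the existence of auxiliaries realizing $\eta$ against $(\bar{\alpha},\bar{\gamma}^j)$. These finitely many formulas depend only on $\bar{\alpha}$ and $\bar{\gamma}^j$, are NIP because the whole theory is NIP (this is exactly where the global NIP hypothesis is used, and it is a step your sketch has no substitute for), and applying the UDTFS-based shrinking lemma (Lemma \ref{lem: fin shrinking}) to them along $j$ yields an interval $J$ on which all $\xi_{\varphi,n,\eta}(\bar{\alpha},\bar{\gamma}^j)$ have constant truth value; hence the families $\mathcal{F}^{\psi,\bar{\delta}^j}$ coincide for $j\in J$, and the inductive $(2)_{k-1}$ bound on a single such family bounds $|\mathcal{F}^\varphi_J|$ without any constancy of the $S^\varphi_j$ themselves.
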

\begin{proof}
We already saw an illustration for the properties $(2)_2$ and $(2)_3$ in the introduction.
The following is an illustration for the properties $(1)_2$ and $(1)_3$:

\tikzset{every picture/.style={line width=0.75pt}} 

\begin{tikzpicture}[x=0.75pt,y=0.75pt,yscale=-1,xscale=1]

\draw   (268.17,161.17) -- (318.17,161.17) -- (318.17,211.17) -- (268.17,211.17) -- cycle ;
\draw   (368.17,41.67) -- (418.17,41.67) -- (418.17,91.67) -- (368.17,91.67) -- cycle ;
\draw    (368.17,41.67) -- (268.17,161.17) ;
\draw    (418.17,41.67) -- (318.17,161.17) ;
\draw    (368.17,91.67) -- (268.17,211.17) ;
\draw    (418.17,91.67) -- (318.17,211.17) ;
\draw    (368.17,162.17) -- (268.17,281.67) ;
\draw    (418.17,162.17) -- (318.17,281.67) ;
\draw    (268.17,281.67) -- (318.17,281.67) ;
\draw    (368.17,162.17) -- (418.17,162.17) ;
\draw   (212.17,217.17) -- (262.17,217.17) -- (262.17,267.17) -- (212.17,267.17) -- cycle ;
\draw  [line width=1.5]  (297.17,127.17) -- (347.17,127.17) -- (347.17,177.17) -- (297.17,177.17) -- cycle ;
\draw  [line width=1.5]  (352.17,61.17) -- (402.17,61.17) -- (402.17,111.17) -- (352.17,111.17) -- cycle ;
\draw [line width=1.5]    (298.17,246.42) -- (347.67,245.92) ;
\draw    (523.17,40.67) -- (423.17,160.17) ;
\draw    (523.17,90.67) -- (423.17,210.17) ;
\draw    (423.17,160.17) -- (423.17,210.17) ;
\draw    (523.17,40.67) -- (523.17,90.67) ;
\draw [line width=1.5]    (449.67,127.92) -- (450.17,177.92) ;
\draw  [fill={rgb, 255:red, 0; green, 0; blue, 0 }  ,fill opacity=0.07 ] (379.92,72.67) -- (384.99,80.18) -- (396.32,81.39) -- (388.12,87.24) -- (390.06,95.51) -- (379.92,91.6) -- (369.78,95.51) -- (371.71,87.24) -- (363.51,81.39) -- (374.85,80.18) -- cycle ;
\draw  [fill={rgb, 255:red, 0; green, 0; blue, 0 }  ,fill opacity=1 ] (315.17,144.54) .. controls (315.17,143.51) and (314.33,142.67) .. (313.29,142.67) .. controls (312.26,142.67) and (311.42,143.51) .. (311.42,144.54) .. controls (311.42,145.58) and (312.26,146.42) .. (313.29,146.42) .. controls (314.33,146.42) and (315.17,145.58) .. (315.17,144.54) -- cycle ;
\draw  [dash pattern={on 0.84pt off 2.51pt}]  (313.29,144.54) -- (449.67,144.67) ;
\draw  [dash pattern={on 0.84pt off 2.51pt}]  (313.17,145.17) -- (312.67,247.17) ;
\draw  [fill={rgb, 255:red, 0; green, 0; blue, 0 }  ,fill opacity=0.09 ] (344.42,152.17) -- (329.04,173.33) -- (304.17,165.24) -- (304.17,139.09) -- (329.04,131.01) -- cycle ;
\draw    (281.17,121.17) -- (311.67,143.36) ;
\draw [shift={(313.29,144.54)}, rotate = 216.04] [color={rgb, 255:red, 0; green, 0; blue, 0 }  ][line width=0.75]    (10.93,-3.29) .. controls (6.95,-1.4) and (3.31,-0.3) .. (0,0) .. controls (3.31,0.3) and (6.95,1.4) .. (10.93,3.29)   ;
\draw    (296.17,105.17) -- (318.54,135.37) ;
\draw [shift={(319.73,136.98)}, rotate = 233.47] [color={rgb, 255:red, 0; green, 0; blue, 0 }  ][line width=0.75]    (10.93,-3.29) .. controls (6.95,-1.4) and (3.31,-0.3) .. (0,0) .. controls (3.31,0.3) and (6.95,1.4) .. (10.93,3.29)   ;
\draw    (326.67,75.17) -- (369.7,83.06) ;
\draw [shift={(371.67,83.42)}, rotate = 190.39] [color={rgb, 255:red, 0; green, 0; blue, 0 }  ][line width=0.75]    (10.93,-3.29) .. controls (6.95,-1.4) and (3.31,-0.3) .. (0,0) .. controls (3.31,0.3) and (6.95,1.4) .. (10.93,3.29)   ;
\draw  [dash pattern={on 0.84pt off 2.51pt}]  (313.29,142.67) -- (230.17,237.17) ;
\draw  [fill={rgb, 255:red, 0; green, 0; blue, 0 }  ,fill opacity=1 ] (228.54,238.79) .. controls (228.54,237.89) and (229.27,237.17) .. (230.17,237.17) .. controls (231.06,237.17) and (231.79,237.89) .. (231.79,238.79) .. controls (231.79,239.69) and (231.06,240.42) .. (230.17,240.42) .. controls (229.27,240.42) and (228.54,239.69) .. (228.54,238.79) -- cycle ;
\draw    (374.17,28.17) -- (383.15,43.44) ;
\draw [shift={(384.17,45.17)}, rotate = 239.53] [color={rgb, 255:red, 0; green, 0; blue, 0 }  ][line width=0.75]    (10.93,-3.29) .. controls (6.95,-1.4) and (3.31,-0.3) .. (0,0) .. controls (3.31,0.3) and (6.95,1.4) .. (10.93,3.29)   ;
\draw  [color={rgb, 255:red, 0; green, 0; blue, 0 }  ,draw opacity=0.34 ] (423.17,212.42) .. controls (426.72,215.45) and (430.01,215.2) .. (433.04,211.65) -- (472.98,164.94) .. controls (477.31,159.87) and (481.25,158.86) .. (484.8,161.89) .. controls (481.25,158.86) and (481.65,154.81) .. (485.98,149.74)(484.03,152.02) -- (525.93,103.03) .. controls (528.96,99.48) and (528.71,96.19) .. (525.16,93.16) ;
\draw  [color={rgb, 255:red, 0; green, 0; blue, 0 }  ,draw opacity=0.34 ] (212.17,269.42) .. controls (212.14,274.09) and (214.46,276.43) .. (219.13,276.45) -- (226.88,276.49) .. controls (233.55,276.52) and (236.87,278.87) .. (236.85,283.54) .. controls (236.87,278.87) and (240.21,276.56) .. (246.88,276.59)(243.88,276.58) -- (254.63,276.63) .. controls (259.3,276.66) and (261.64,274.34) .. (261.67,269.67) ;
\draw  [color={rgb, 255:red, 0; green, 0; blue, 0 }  ,draw opacity=0.34 ] (211.17,216.67) .. controls (206.5,216.58) and (204.12,218.86) .. (204.03,223.53) -- (203.87,231.65) .. controls (203.74,238.32) and (201.34,241.6) .. (196.67,241.51) .. controls (201.34,241.6) and (203.6,244.98) .. (203.47,251.65)(203.53,248.65) -- (203.31,259.78) .. controls (203.22,264.45) and (205.5,266.83) .. (210.17,266.92) ;
\draw  [fill={rgb, 255:red, 0; green, 0; blue, 0 }  ,fill opacity=1 ] (448.04,144.67) .. controls (448.04,143.77) and (448.77,143.04) .. (449.67,143.04) .. controls (450.56,143.04) and (451.29,143.77) .. (451.29,144.67) .. controls (451.29,145.56) and (450.56,146.29) .. (449.67,146.29) .. controls (448.77,146.29) and (448.04,145.56) .. (448.04,144.67) -- cycle ;
\draw  [fill={rgb, 255:red, 0; green, 0; blue, 0 }  ,fill opacity=1 ] (311.04,245.54) .. controls (311.04,244.64) and (311.77,243.92) .. (312.67,243.92) .. controls (313.56,243.92) and (314.29,244.64) .. (314.29,245.54) .. controls (314.29,246.44) and (313.56,247.17) .. (312.67,247.17) .. controls (311.77,247.17) and (311.04,246.44) .. (311.04,245.54) -- cycle ;
\draw [line width=1.5]    (505.67,61.67) -- (505.67,111) ;
\draw [line width=1.5]    (354.33,180.33) -- (402.33,181) ;
\draw    (516.33,33) -- (518.73,50.35) ;
\draw [shift={(519,52.33)}, rotate = 262.15] [color={rgb, 255:red, 0; green, 0; blue, 0 }  ][line width=0.75]    (10.93,-3.29) .. controls (6.95,-1.4) and (3.31,-0.3) .. (0,0) .. controls (3.31,0.3) and (6.95,1.4) .. (10.93,3.29)   ;
\draw    (214.33,209) -- (216.64,221.7) ;
\draw [shift={(217,223.67)}, rotate = 259.7] [color={rgb, 255:red, 0; green, 0; blue, 0 }  ][line width=0.75]    (10.93,-3.29) .. controls (6.95,-1.4) and (3.31,-0.3) .. (0,0) .. controls (3.31,0.3) and (6.95,1.4) .. (10.93,3.29)   ;
\draw    (325.67,289) -- (322.13,274.28) ;
\draw [shift={(321.67,272.33)}, rotate = 76.5] [color={rgb, 255:red, 0; green, 0; blue, 0 }  ][line width=0.75]    (10.93,-3.29) .. controls (6.95,-1.4) and (3.31,-0.3) .. (0,0) .. controls (3.31,0.3) and (6.95,1.4) .. (10.93,3.29)   ;
\draw    (208.67,86.67) -- (208.67,48) ;
\draw [shift={(208.67,46)}, rotate = 90] [color={rgb, 255:red, 0; green, 0; blue, 0 }  ][line width=0.75]    (10.93,-3.29) .. controls (6.95,-1.4) and (3.31,-0.3) .. (0,0) .. controls (3.31,0.3) and (6.95,1.4) .. (10.93,3.29)   ;
\draw    (208.67,86.67) -- (246,86.67) ;
\draw [shift={(248,86.67)}, rotate = 180] [color={rgb, 255:red, 0; green, 0; blue, 0 }  ][line width=0.75]    (10.93,-3.29) .. controls (6.95,-1.4) and (3.31,-0.3) .. (0,0) .. controls (3.31,0.3) and (6.95,1.4) .. (10.93,3.29)   ;
\draw    (208.67,86.67) -- (235.27,59.43) ;
\draw [shift={(236.67,58)}, rotate = 134.33] [color={rgb, 255:red, 0; green, 0; blue, 0 }  ][line width=0.75]    (10.93,-3.29) .. controls (6.95,-1.4) and (3.31,-0.3) .. (0,0) .. controls (3.31,0.3) and (6.95,1.4) .. (10.93,3.29)   ;
\draw  [fill={rgb, 255:red, 0; green, 0; blue, 0 }  ,fill opacity=1 ] (388.67,103.67) .. controls (388.67,102.77) and (389.39,102.04) .. (390.29,102.04) .. controls (391.19,102.04) and (391.92,102.77) .. (391.92,103.67) .. controls (391.92,104.56) and (391.19,105.29) .. (390.29,105.29) .. controls (389.39,105.29) and (388.67,104.56) .. (388.67,103.67) -- cycle ;
\draw  [dash pattern={on 0.84pt off 2.51pt}]  (389.67,181) -- (389.67,103.67) ;
\draw  [dash pattern={on 0.84pt off 2.51pt}]  (505.67,103.67) -- (391.92,103.67) ;
\draw  [fill={rgb, 255:red, 0; green, 0; blue, 0 }  ,fill opacity=1 ] (503.42,103.67) .. controls (503.42,102.77) and (504.14,102.04) .. (505.04,102.04) .. controls (505.94,102.04) and (506.67,102.77) .. (506.67,103.67) .. controls (506.67,104.56) and (505.94,105.29) .. (505.04,105.29) .. controls (504.14,105.29) and (503.42,104.56) .. (503.42,103.67) -- cycle ;
\draw  [fill={rgb, 255:red, 0; green, 0; blue, 0 }  ,fill opacity=1 ] (387.42,181) .. controls (387.42,180.1) and (388.14,179.38) .. (389.04,179.38) .. controls (389.94,179.38) and (390.67,180.1) .. (390.67,181) .. controls (390.67,181.9) and (389.94,182.63) .. (389.04,182.63) .. controls (388.14,182.63) and (387.42,181.9) .. (387.42,181) -- cycle ;
\draw    (403.67,117) -- (391.71,105.08) ;
\draw [shift={(390.29,103.67)}, rotate = 44.91] [color={rgb, 255:red, 0; green, 0; blue, 0 }  ][line width=0.75]    (10.93,-3.29) .. controls (6.95,-1.4) and (3.31,-0.3) .. (0,0) .. controls (3.31,0.3) and (6.95,1.4) .. (10.93,3.29)   ;
\draw  [dash pattern={on 0.84pt off 2.51pt}]  (254.33,261.67) -- (390.29,103.29) ;
\draw  [fill={rgb, 255:red, 0; green, 0; blue, 0 }  ,fill opacity=1 ] (250.54,261.79) .. controls (250.54,260.89) and (251.27,260.17) .. (252.17,260.17) .. controls (253.06,260.17) and (253.79,260.89) .. (253.79,261.79) .. controls (253.79,262.69) and (253.06,263.42) .. (252.17,263.42) .. controls (251.27,263.42) and (250.54,262.69) .. (250.54,261.79) -- cycle ;
\draw   (40.33,90) -- (170,90) -- (170,156) -- (40.33,156) -- cycle ;
\draw    (7.33,90) -- (7.33,156) ;
\draw    (39.17,192) -- (170.2,191.6) ;
\draw    (168.07,77.93) -- (165.11,93.3) ;
\draw [shift={(164.73,95.27)}, rotate = 280.89] [color={rgb, 255:red, 0; green, 0; blue, 0 }  ][line width=0.75]    (10.93,-3.29) .. controls (6.95,-1.4) and (3.31,-0.3) .. (0,0) .. controls (3.31,0.3) and (6.95,1.4) .. (10.93,3.29)   ;
\draw  [dash pattern={on 0.84pt off 2.51pt}]  (70.33,88.46) -- (70.33,191.79) ;
\draw  [dash pattern={on 0.84pt off 2.51pt}]  (128.67,90.67) -- (129.33,192.67) ;
\draw [line width=1.5]    (70.33,100.67) -- (70.33,132.67) ;
\draw [line width=1.5]    (128.67,92.67) -- (128.86,119.57) ;
\draw  [fill={rgb, 255:red, 0; green, 0; blue, 0 }  ,fill opacity=1 ] (68.3,113.37) .. controls (68.3,112.34) and (69.14,111.5) .. (70.18,111.5) .. controls (71.21,111.5) and (72.05,112.34) .. (72.05,113.37) .. controls (72.05,114.41) and (71.21,115.25) .. (70.18,115.25) .. controls (69.14,115.25) and (68.3,114.41) .. (68.3,113.37) -- cycle ;
\draw  [fill={rgb, 255:red, 0; green, 0; blue, 0 }  ,fill opacity=1 ] (68.46,191.79) .. controls (68.46,190.76) and (69.3,189.92) .. (70.33,189.92) .. controls (71.37,189.92) and (72.21,190.76) .. (72.21,191.79) .. controls (72.21,192.83) and (71.37,193.67) .. (70.33,193.67) .. controls (69.3,193.67) and (68.46,192.83) .. (68.46,191.79) -- cycle ;
\draw  [dash pattern={on 0.84pt off 2.51pt}]  (70.18,113.37) -- (7.29,113.86) ;
\draw  [fill={rgb, 255:red, 0; green, 0; blue, 0 }  ,fill opacity=1 ] (5.41,113.86) .. controls (5.41,112.82) and (6.25,111.98) .. (7.29,111.98) .. controls (8.32,111.98) and (9.16,112.82) .. (9.16,113.86) .. controls (9.16,114.89) and (8.32,115.73) .. (7.29,115.73) .. controls (6.25,115.73) and (5.41,114.89) .. (5.41,113.86) -- cycle ;
\draw    (52,83.5) -- (68.73,103.95) ;
\draw [shift={(70,105.5)}, rotate = 230.71] [color={rgb, 255:red, 0; green, 0; blue, 0 }  ][line width=0.75]    (10.93,-3.29) .. controls (6.95,-1.4) and (3.31,-0.3) .. (0,0) .. controls (3.31,0.3) and (6.95,1.4) .. (10.93,3.29)   ;
\draw    (110,77) -- (126.73,97.45) ;
\draw [shift={(128,99)}, rotate = 230.71] [color={rgb, 255:red, 0; green, 0; blue, 0 }  ][line width=0.75]    (10.93,-3.29) .. controls (6.95,-1.4) and (3.31,-0.3) .. (0,0) .. controls (3.31,0.3) and (6.95,1.4) .. (10.93,3.29)   ;
\draw  [fill={rgb, 255:red, 0; green, 0; blue, 0 }  ,fill opacity=1 ] (127.3,149.37) .. controls (127.3,148.34) and (128.14,147.5) .. (129.18,147.5) .. controls (130.21,147.5) and (131.05,148.34) .. (131.05,149.37) .. controls (131.05,150.41) and (130.21,151.25) .. (129.18,151.25) .. controls (128.14,151.25) and (127.3,150.41) .. (127.3,149.37) -- cycle ;
\draw  [dash pattern={on 0.84pt off 2.51pt}]  (129.18,149.37) -- (7.5,150.38) ;
\draw  [fill={rgb, 255:red, 0; green, 0; blue, 0 }  ,fill opacity=1 ] (5.62,150.38) .. controls (5.62,149.34) and (6.46,148.5) .. (7.5,148.5) .. controls (8.54,148.5) and (9.38,149.34) .. (9.38,150.38) .. controls (9.38,151.41) and (8.54,152.25) .. (7.5,152.25) .. controls (6.46,152.25) and (5.62,151.41) .. (5.62,150.38) -- cycle ;
\draw  [fill={rgb, 255:red, 0; green, 0; blue, 0 }  ,fill opacity=1 ] (127.46,191.67) .. controls (127.46,190.63) and (128.3,189.79) .. (129.33,189.79) .. controls (130.37,189.79) and (131.21,190.63) .. (131.21,191.67) .. controls (131.21,192.7) and (130.37,193.54) .. (129.33,193.54) .. controls (128.3,193.54) and (127.46,192.7) .. (127.46,191.67) -- cycle ;
\draw    (163.33,179) -- (159.91,190.42) ;
\draw [shift={(159.33,192.33)}, rotate = 286.7] [color={rgb, 255:red, 0; green, 0; blue, 0 }  ][line width=0.75]    (10.93,-3.29) .. controls (6.95,-1.4) and (3.31,-0.3) .. (0,0) .. controls (3.31,0.3) and (6.95,1.4) .. (10.93,3.29)   ;
\draw    (486.67,75.67) -- (502.39,101.95) ;
\draw [shift={(503.42,103.67)}, rotate = 239.11] [color={rgb, 255:red, 0; green, 0; blue, 0 }  ][line width=0.75]    (10.93,-3.29) .. controls (6.95,-1.4) and (3.31,-0.3) .. (0,0) .. controls (3.31,0.3) and (6.95,1.4) .. (10.93,3.29)   ;

\draw (252.67,106.75) node [anchor=north west][inner sep=0.75pt]  [font=\scriptsize] [align=left] {$\displaystyle \gamma _{i_{1} ,i_{2} ,j}$};
\draw (282.67,87.42) node [anchor=north west][inner sep=0.75pt]  [font=\footnotesize] [align=left] {$\displaystyle S_{j}^{\varphi }$};
\draw (313.17,58.42) node [anchor=north west][inner sep=0.75pt]  [font=\footnotesize] [align=left] {$\displaystyle S_{j'}^{\varphi }$};
\draw (362.17,13.67) node [anchor=north west][inner sep=0.75pt]  [font=\footnotesize] [align=left] {$\displaystyle \overline{\gamma }$};
\draw (485.17,158.92) node [anchor=north west][inner sep=0.75pt]  [font=\footnotesize] [align=left] {$\displaystyle m$};
\draw (234.17,282.42) node [anchor=north west][inner sep=0.75pt]  [font=\footnotesize] [align=left] {$\displaystyle n$};
\draw (187.67,233.67) node [anchor=north west][inner sep=0.75pt]  [font=\footnotesize] [align=left] {$\displaystyle n$};
\draw (453.33,126.67) node [anchor=north west][inner sep=0.75pt]  [font=\footnotesize] [align=left] {$\displaystyle \beta _{i_{2} ,j}^{1}$};
\draw (503.5,19.33) node [anchor=north west][inner sep=0.75pt]  [font=\footnotesize] [align=left] {$\displaystyle \overline{\beta }^{1}$};
\draw (201.5,197.33) node [anchor=north west][inner sep=0.75pt]  [font=\footnotesize] [align=left] {$\displaystyle \overline{\alpha }$};
\draw (328.17,279.33) node [anchor=north west][inner sep=0.75pt]  [font=\footnotesize] [align=left] {$\displaystyle \overline{\beta }^{2}$};
\draw (296,248.67) node [anchor=north west][inner sep=0.75pt]  [font=\footnotesize] [align=left] {$\displaystyle \beta _{i_{1} ,j}^{2}$};
\draw (232.54,231.79) node [anchor=north west][inner sep=0.75pt]  [font=\scriptsize] [align=left] {$\displaystyle \alpha _{i_{1} ,i_{2}}$};
\draw (246,88.07) node [anchor=north west][inner sep=0.75pt]  [font=\footnotesize]  {$i_{1}$};
\draw (196.67,33.4) node [anchor=north west][inner sep=0.75pt]  [font=\footnotesize]  {$i_{2}$};
\draw (237.67,51.4) node [anchor=north west][inner sep=0.75pt]  [font=\footnotesize]  {$j$};
\draw (354.67,238.4) node [anchor=north west][inner sep=0.75pt]  [font=\tiny]  {$ \begin{array}{l}
\models \varphi \left( \alpha _{i_{1} ,i_{2}} ,\beta _{i_{2} ,j}^{1} ,\beta _{i_{1} ,j}^{2} ,\gamma _{i_{1} ,i_{2} ,j}\right)\\
\land \neg \varphi \left( \alpha _{i'_{1} ,i'_{2}} ,\beta _{i'_{2} ,j'}^{1} ,\beta _{i'_{1} ,j'}^{2} ,\gamma _{i'_{1} ,i'_{2} ,j'}\right)
\end{array}$};
\draw (358.33,183.33) node [anchor=north west][inner sep=0.75pt]  [font=\scriptsize] [align=left] {$\displaystyle \beta _{i'_{1} ,j'}^{2}$};
\draw (461.67,59) node [anchor=north west][inner sep=0.75pt]  [font=\scriptsize] [align=left] {$\displaystyle \beta _{i'_{2} ,j'}^{1}$};
\draw (405.67,116.42) node [anchor=north west][inner sep=0.75pt]  [font=\scriptsize] [align=left] {$\displaystyle \gamma _{i'_{1} ,i'_{2} ,j'}$};
\draw (220.54,247.79) node [anchor=north west][inner sep=0.75pt]  [font=\scriptsize] [align=left] {$\displaystyle \alpha _{i'_{1} ,i'_{2}}$};
\draw    (196,1) -- (277,1) -- (277,22) -- (196,22) -- cycle  ;
\draw (199,5) node [anchor=north west][inner sep=0.75pt]  [font=\footnotesize] [align=left] {Property $\displaystyle ( 1)_{3}$ };
\draw (157.53,161.67) node [anchor=north west][inner sep=0.75pt]  [font=\footnotesize]  {$\overline{\beta }^{1}$};
\draw (6.67,74.73) node [anchor=north west][inner sep=0.75pt]  [font=\footnotesize]  {$\overline{\alpha }$};
\draw (167.73,63.67) node [anchor=north west][inner sep=0.75pt]  [font=\footnotesize]  {$\overline{\gamma }$};
\draw (73.43,101.26) node [anchor=north west][inner sep=0.75pt]  [font=\scriptsize]  {$\gamma _{i_{1} ,j}$};
\draw (70.46,195.19) node [anchor=north west][inner sep=0.75pt]  [font=\footnotesize]  {$\beta _{j}^{1}$};
\draw (8.29,100.11) node [anchor=north west][inner sep=0.75pt]  [font=\scriptsize]  {$\alpha _{i_{1}}$};
\draw (48.17,64.92) node [anchor=north west][inner sep=0.75pt]  [font=\footnotesize] [align=left] {$\displaystyle S_{j}^{\varphi }$};
\draw (106.17,58.42) node [anchor=north west][inner sep=0.75pt]  [font=\footnotesize] [align=left] {$\displaystyle S_{j'}^{\varphi }$};
\draw (131.93,137.26) node [anchor=north west][inner sep=0.75pt]  [font=\scriptsize]  {$\gamma _{i'_{1} ,j'}$};
\draw (9,135.11) node [anchor=north west][inner sep=0.75pt]  [font=\scriptsize]  {$\alpha _{i'_{1}}$};
\draw (131.33,194.07) node [anchor=north west][inner sep=0.75pt]  [font=\footnotesize]  {$\beta _{j'}^{1}$};
\draw (7,220.4) node [anchor=north west][inner sep=0.75pt]  [font=\scriptsize]  {$ \begin{array}{l}
\models \varphi \left( \alpha _{i_{1}} ,\beta _{j}^{1} ,\gamma _{i_{1} ,j}\right)\\
\land \neg \varphi \left( \alpha _{i'_{1}} ,\beta _{j'}^{1} ,\gamma _{i'_{1} ,j'}\right)
\end{array}$};
\draw    (1.33,1.67) -- (82.33,1.67) -- (82.33,22.67) -- (1.33,22.67) -- cycle  ;
\draw (4.33,5.67) node [anchor=north west][inner sep=0.75pt]  [font=\footnotesize] [align=left] {Property $\displaystyle ( 1)_{2}$ };

\end{tikzpicture}

Note that $(2)_1$ holds by Sauer-Shelah lemma: In this case given an NIP formula $\varphi(y_1,y_2)$ and an arbitrary $\bar{\delta} = \left(\delta_i : i \in [n]\right)$, for any $\bar{\zeta}^1 = (\zeta), \zeta \in \mathbb{M}^{y_1}$, the set 
$$S^{\varphi,\bar{\delta}}_{\bar{\zeta}^1} := \left\{i \in [n]: \models \varphi \left(\zeta, \delta_{i} \right) \right\}$$
 is determined by the $\varphi$-type of $\zeta$ over $\bar{\delta}$. By Sauer-Shelah there exist some $c\in \mathbb{N}$ and $n_0 \in \mathbb{N}$ such that the number of $\varphi$-types over every set of size $n \geq n_0$ is bounded by $n^c$. And for an arbitrary $0<\varepsilon_\varphi<1$ there exists some $n_\varphi \geq n_0$ and such that $n^c < 2^{n-\varepsilon_\varphi}$ for all $n \geq n_\varphi$, so $(2)_1$ holds.

 We will show that $(1)_k$ implies $(2)_k$ for all $k \geq 2$, and that $(2)_k$ implies $(1)_{k+1}$ for all $k \geq 1$, which is sufficient.

\textbf{$(1)_k$ implies $(2)_k$.} 
Let $\varepsilon_{\psi} > 0, n_{\psi}, \left( \Delta_{\psi,n} \right)_{n \in \mathbb{N}}$ and $f_{\psi}$ be as given by $(1)_k$ for 
$$\psi(y_0, \ldots, y_k) := \varphi(y_1, \ldots, y_{k-1}, y_0, y_k).$$
 We consider the $k$-partite ordered hypergraph $(R^{k-1}_{f_{\psi}(n_{\psi}),n_{\psi}}; V_1, \ldots, V_{k})$ defined in the proof of Proposition \ref{prop: type count}. By Claim \ref{cla: bad hypergraph}(a) we have $|V_i| = n_{\psi
}$ for $1 \leq i \leq k-1$ and $|V_k| =m := 2 f_{\psi}(n_{\psi}) 2^{n_{\psi}^{k-1}}$.
Let $r = r(k,\Delta_{\psi,n_{\psi}},m) \in \mathbb{N}$ be as given by Lemma \ref{lem: indisc subarray}. Finally, let $\varepsilon'_\varphi >0$ and $n'_\varphi \in \mathbb{N}$ be as given by Fact \ref{fac: shatter} with $d := r$.

We claim that $(2)_k$ holds for $\varphi$ with respect to $n'_\varphi, \varepsilon'_{\varphi}$ (note that they were chosen only depending on $\varphi$). If not, then there exists some $n \geq n'_\varphi$ and an array $\bar{\delta}=(\delta_{\bar{i}} : \bar{i}  \in [n]^{k})$ such that the family of sets $\mathcal{F}^{\varphi,\bar{\delta}} = \left\{S^{\bar{\delta}}_{\bar{\zeta}^1, \ldots, \bar{\zeta}^k} \right\} \subseteq \mathcal{P}([n]^k)$ has cardinality $\geq 2^{n^{k - \varepsilon'_{\varphi}}}$. Note that $n > n_\psi$ and $n >m$.
Then, by Fact \ref{fac: shatter} and the choice of $n'_\varphi, \varepsilon'_\varphi$, there exists some $I = I_1 \times \ldots \times I_k \subseteq [n]^k$ with $|I_1| = \ldots = |I_k| = r$ shattered by $\mathcal{F}^{\varphi,\bar{\delta}}$. By Lemma \ref{lem: indisc subarray} and the choice of $r$, there exist some $I'_t \subseteq I_t$ for $1 \leq t \leq k$, with $|I'_t| =n_{\psi}$ for $1 \leq t \leq k-1$ and $|I'_k| = m$, and such that the array $\{\delta_{\bar{i}} : \bar{i} \in I' \}$ is $\Delta_{\psi,n_{\psi}}$-indiscernible, where $I' = I'_1 \times \ldots \times I'_k$. Note that $I'$ is still shattered by $\mathcal{F}^{\varphi,\bar{\delta}}$. In particular, there exist some arrays $\bar{\zeta}^t=(\zeta^t_{\bar{i}} : \bar{i} \in [n]^{k-1})$ for $1 \leq t \leq k$ such that, identifying $V_t$ with $I'_t$ in an order-preserving way for $1 \leq t \leq k$, for every $\bar{i} \in I'_1 \times \ldots \times I'_k$ we have
\begin{equation}\label{eq: shattering bad graph}
	\models \varphi \left(\zeta^1_{\bar{i}_{[k]\setminus \{1\}}}, \ldots, \zeta^k_{\bar{i}_{[k]\setminus \{k\}}}, \delta_{\bar{i}} \right) \iff \bar{i} \in R^{k-1}_{f_{\psi}(n_{\psi}),n_{\psi}}.
\end{equation}

Identifying $I'_t$ with $[n_{\psi}]$ in an order-preserving way for $1 \leq t \leq k-1$ and $I'_k$ with $[m]$, we define:

\begin{itemize}
	\item $\alpha_{\bar{i}} := \zeta^k_{\bar{i}}$ for $\bar{i} \in [n_{\psi}]^{k-1}$;
	\item for each $1 \leq t \leq k-1$, $\beta^t_{\bar{i}} := \zeta^t_{\bar{i}}$ for $\bar{i} \in [n_{\psi}]^{k-2} \times [m]$;
	\item $\gamma_{\bar{i}} := \delta_{\bar i}$ for all $\bar{i} \in [n_{\psi}]^{k-1} \times [m]$.
\end{itemize} 
Then, by \eqref{eq: shattering bad graph}, for any tuple $\bar{i} \in [n_{\psi}]^{k-1}$ and $j \in [m]$ we have
\begin{equation}
	\models \psi \left(\alpha_{\bar{i}}, \beta^1_{\bar{i}_{[k-1]\setminus \{1\}} + (j)}, \ldots, \beta^{k-1}_{\bar{i}_{[k-1]\setminus \{k-1\}} + (j)}, \gamma_{\bar{i} + (j)} \right)  \iff \bar{i}+(j) \in R^{k-1}_{f_{\psi}(n_{\psi}),n_{\psi}}.
\end{equation}

But then by Claim \ref{cla: bad hypergraph}(b), for any interval $J \subseteq [m]$ with $|J| \geq \frac{m}{f_{\psi}(n_{\psi})}-1$, the family of sets $S_j^{\psi} = \left\{ \bar{i} \in [n_{\psi}]^{k-1} : \bar{i}+(j) \in R^{k-1}_{f_{\psi}(n_{\psi}),n_{\psi}} \right\}$
	 with $j$ varying over $J$ shatters $[n_{\psi}]^{k-1}$, so in particular has cardinality $2^{n_{\psi}^{k-1}}$.
This is a contradiction to the choice of $\varepsilon_{\psi}, n_{\psi}, \Delta_{\psi, n_{\psi}}, f_{\psi}$, i.e.~to $(1)_k$.

\textbf{$(2)_k$ implies $(1)_{k+1}$.}
Let $\varphi(y_0, \ldots, y_{k+1}) \in \mathcal{L}$ be arbitrary.
For each $n \in \mathbb{N}$ and $\eta \in 2^{n^{k}}$, we consider the  partitioned $\mathcal{L}$-formula 
$$\xi_{\varphi, n, \eta}\left((x_{\bar{i}} : \bar{i} \in [n]^{k}); (y_{\bar{i}} : \bar{i} \in [n]^{k}) \right) := $$
$$ \exists (z^1_{\bar{i}} : \bar{i} \in [n]^{k-1}) \ldots \exists (z^{k}_{\bar{i}} : \bar{i} \in [n]^{k-1}) \bigwedge_{\bar{i} \in [n]^{k}} \varphi \left(x_{\bar{i}}, z^1_{\bar{i}_{[k]\setminus \{1\}}}, \ldots, z^{k}_{\bar{i}_{[k]\setminus \{k\}}}, y_{\bar{i}} \right)^{\eta(\bar{i})}.$$
By assumption each $\xi_{\varphi, n, \eta}$ is NIP, hence let $d'_{\varphi, n, \eta} \in \mathbb{N}$ and a finite set of formulas $\Delta_{\varphi,n,\eta}$ be as given for it by Lemma \ref{lem: fin shrinking} with $k=1$ and $r=1$. 

Let $\psi(y_1, \ldots, y_{k}, y'_{k+1}) := \varphi(y_0, \ldots, y_{k+1})$, where $y'_{k+1} := y_0 y_{k+1}$, by assumption it satisfies $(2)_k$ with respect to some $n'_{\psi}$ and $\varepsilon'_{\psi} > 0$.

We will show that $(1)_{k+1}$ holds for $\varphi$ with respect to:
\begin{itemize}
\item the function $f_\varphi: \mathbb{N} \to \mathbb{N}$ defined by	$f_{\varphi}(n) := \sum_{\eta \in 2^{n^k}} d'_{\varphi,n,\eta}$ for all $n \in \mathbb{N}$;
\item $\Delta_{\varphi, n} := \bigcup_{\eta \in 2^{n^k}} \Delta_{\varphi, n, \eta}$ for each $n \in \mathbb{N}$;
\item $\varepsilon_{\varphi} := \varepsilon'_{\psi}$,
\item $n_{\varphi} := n'_{\psi}$.
\end{itemize}

Assume that $n_\varphi \leq n \leq m$ are arbitrary. Let		
		$\bar{\gamma} = (\gamma_{\bar{i}} : \bar{i}  \in [n]^{k} \times [m])$ be a $\Delta_{\varphi,n}$-indiscernible array. Let $\bar{\alpha}= \left(\alpha_{\bar{i}} : \bar{i} \in [n]^{k} \right)$ and $\bar{\beta}^t=\left(\beta^t_{\bar{i}} : \bar{i} \in [n]^{k-1} \times [m] \right)$ for $1 \leq t \leq k$ be arbitrary arrays.
		For each $j \in [m]$ we let
		\begin{itemize}
		\item $\bar{\gamma}^j := \left( \gamma_{\bar{i} + (j)} : \bar{i} \in [n]^k \right)$;
			\item $\bar{\delta}^j := \left( \alpha_{\bar{i}}, \gamma_{\bar{i} + (j)}: \bar{i} \in [n]^k \right)$.
		\end{itemize}
 Then we have 
 \begin{equation}\label{eq: arr ind 1}
 	S^{\varphi}_j \in \mathcal{F}^{\psi, \bar{\delta}^j} \textrm{ for all } j \in [m],
 \end{equation}
 where $S^{\varphi}_j$ is as in the definition of $(1)_{k+1}$, and $\mathcal{F}^{\psi,\bar{\delta}^j}$ as in the definition of $(2)_k$.
		
		By the choice of $f_\varphi, \Delta_{\varphi,n}$ and Lemma \ref{lem: fin shrinking}, there exists some set $J_0 \subseteq [m]$ with $|J_0| \leq f_\varphi(n)$ such that  for every $\eta \in 2^{n^k}$ and $j,j' \in [m]$ we have:
\begin{equation}\nonumber
	\qftp_{<}(j/J_0)=\qftp_<(j'/J_0) \implies \models \xi_{\varphi,n,\eta}(\bar{\alpha}, \bar{\gamma}^j)\leftrightarrow \xi_{\varphi,n,\eta}(\bar{\alpha}, \bar{\gamma}^{j'}).
\end{equation}
But then taking $J$ to be the longest interval in $[m]$ between two points in $J_0$ without any points of $J_0$ between them, we have that $|J| \geq \frac{m}{f_\varphi(n)}-1$, and
\begin{equation}\label{eq: arr ind 2}
 \models \xi_{\varphi,n,\eta}(\bar{\alpha}, \bar{\gamma}^j)\leftrightarrow \xi_{\varphi,n,\eta}(\bar{\alpha}, \bar{\gamma}^{j'}) \textrm{ for all } \eta \in 2^{n^k} \textrm{ and } j,j' \in J.
\end{equation}

By the choice of the formulas $\xi_{\varphi,n,\eta}$, \eqref{eq: arr ind 2} implies 
\begin{equation}\label{eq: arr ind 3}
	\mathcal{F}^{\psi, \bar{\delta}^j} =  \mathcal{F}^{\psi, \bar{\delta}^{j'}} \textrm{ for all } j,j' \in J.
\end{equation}

And for an arbitrary fixed $j_0 \in J$, we have $|\mathcal{F}^{\psi, \bar{\delta}^{j_0}}| < 2^{n^{k - \varepsilon'_{\psi}}} = 2^{n^{k - \varepsilon_\varphi}}$ by $(2)_k$ and the choice of $\varepsilon_\varphi$ and $n_\varphi$. Hence, combining \eqref{eq: arr ind 1} and \eqref{eq: arr ind 3}, we conclude that 
$$|\mathcal{F}^{\varphi}_J| = \left|\left\{ S^{\varphi}_j : j \in J \right\}\right| \leq \left| \mathcal{F}^{\psi,\bar{\delta}^{j_0}} \right| < 2^{n^{k-\varepsilon_\varphi}},$$
as wanted.
\end{proof}

\subsection{Composition Lemma}\label{sec: comp lemma proof}
We are ready to prove the main result of this section (generalizing the binary case $k=2$ established in \cite{chernikov2021n}): composing a relation definable in an NIP structure with \emph{arbitrary} functions of arity at most $k$ produces a $k$-dependent relation.
As before, all the variables below are allowed to be tuples of arbitrary finite length.
\begin{theorem}[Composition Lemma]\label{Composition Lemma}
	Let $\mathcal{M} = (M, \ldots)$ be an $\mathcal{L}'$-structure such that its reduct to a language $\mathcal{L} \subseteq \mathcal{L}'$ is NIP. Let $d,k \in \mathbb{N}$, $\varphi(x_1, \ldots, x_d)$  be an $\mathcal{L}$-formula, and $(y_0, \ldots, y_{k})$ be arbitrary $k+1$ tuples of variables. For each $1 \leq t \leq d$, let $0 \leq i^{t}_{1}, \ldots, i^{t}_k \leq k$ be arbitrary, and let $f_t: M^{y_{i^t_{1}}} \times \ldots \times M^{y_{i^{t}_{k}}} \to M^{x_t}$ be an arbitrary $\mathcal{L}'$-definable $k$-ary function. Then the formula 
	$$\psi\left( y_0; y_1, \ldots, y_k \right) := \varphi \left(f_1 \left(y_{i^1_1}, \ldots, y_{i^1_k} \right), \ldots, f_d \left(y_{i^d_1}, \ldots, y_{i^d_k} \right) \right)$$
	 is $k$-dependent.
\end{theorem}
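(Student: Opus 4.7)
The plan is to apply the finitary type-counting criterion (Proposition \ref{prop: type count}) and reduce the $k$-dependence of $\psi$ to an instance of the Array Shattering Lemma (Lemma \ref{lem: comp lemma induction}$(1)_k$) applied to a suitable $\mathcal{L}$-formula $\tilde{\varphi}$ built from $\varphi$. The case $k=1$ is classical (compositions of NIP formulas with arbitrary unary functions remain NIP), so assume $k \geq 2$; then by Proposition \ref{prop: type count} it suffices to establish $(\dagger)_{f,\varepsilon}$ for $\psi$ for some $\varepsilon>0$ and $f\colon\mathbb{N}\to\mathbb{N}$.

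For each $t \in [d]$, set $T_t := \{i^t_1, \ldots, i^t_k\} \subseteq \{0, 1, \ldots, k\}$ and partition $[d]$ into $A := \{t : k \notin T_t\}$, $G := \{t : [k] \subseteq T_t\}$, and classes $B_s$ for $s \in [k-1]$, assigning each remaining $t$ (those with $k \in T_t$ and $T_t \cap [k-1] \neq [k-1]$) to the smallest $s \in [k-1] \setminus T_t$. Let $\tilde{\varphi}(x^A, x^{B_1}, \ldots, x^{B_{k-1}}, x^G)$ be the $\mathcal{L}$-formula obtained from $\varphi(x_1, \ldots, x_d)$ by substituting, for each $t$, the coordinate of the vector $x^A$, $x^{B_s}$, or $x^G$ that corresponds to the class of $t$. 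Given any $b \in \mathbb{M}_{y_0}$ and mutually indiscernible $I_t = (a^{(t)}_i)_{i \in [n]}$ for $t \in [k-1]$ and $I_k = (a^{(k)}_j)_{j \in [m]}$, I would define
\begin{align*}
\alpha_{\bar{i}} &:= \bigl(f_t(b, a^{(1)}_{i_1}, \ldots, a^{(k-1)}_{i_{k-1}})\bigr)_{t \in A}, \\
\beta^s_{\bar{i}_{[k-1] \setminus \{s\}} + (j)} &:= \bigl(f_t(b, \ldots, a^{(k)}_j)\bigr)_{t \in B_s}, \\
\gamma_{\bar{i} + (j)} &:= \bigl(f_t(b, a^{(1)}_{i_1}, \ldots, a^{(k-1)}_{i_{k-1}}, a^{(k)}_j)\bigr)_{t \in G},
\end{align*}
with each $f_t$ evaluated only at its real arguments. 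By design, $\psi(b; a^{(1)}_{i_1}, \ldots, a^{(k-1)}_{i_{k-1}}, a^{(k)}_j)$ is equivalent to $\tilde{\varphi}(\alpha_{\bar{i}}, \beta^1_{\ldots}, \ldots, \gamma_{\bar{i}+(j)})$, so the $\psi$-type sets $S^\psi_j$ coincide with the sets $S^{\tilde{\varphi}}_j$ from Lemma \ref{lem: comp lemma induction}$(1)_k$, and hence $|S_{\psi, J}(b, I_1, \ldots, I_{k-1})| = |\{S^{\tilde{\varphi}}_j : j \in J\}|$ for every interval $J \subseteq [m]$.

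Since $\tilde{\varphi} \in \mathcal{L}$ is NIP, Lemma \ref{lem: comp lemma induction}$(1)_k$ supplies $f_{\tilde{\varphi}}$, $n_{\tilde{\varphi}}$, $\varepsilon_{\tilde{\varphi}}$, and a finite set $\Delta_{\tilde{\varphi}, n}$ of $\mathcal{L}$-formulas such that, provided $\bar{\gamma}$ is $\Delta_{\tilde{\varphi}, n}$-indiscernible, some interval $J \subseteq [m]$ of length $\geq m / f_{\tilde{\varphi}}(n) - 1$ satisfies $|\{S^{\tilde{\varphi}}_j : j \in J\}| \leq 2^{n^{k-1-\varepsilon_{\tilde{\varphi}}}}$. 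Combined with the identification in the previous paragraph, this verifies $(\dagger)_{f, \varepsilon}$ for $\psi$ with $f := f_{\tilde{\varphi}}$ and $\varepsilon := \varepsilon_{\tilde{\varphi}}$ (after a routine strengthening of $\varepsilon$ to convert $\leq$ into $<$), completing the proof.

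The main obstacle is arranging the $\Delta_{\tilde{\varphi}, n}$-indiscernibility of $\bar{\gamma}$. Since $\bar{\gamma}$ is obtained from the $a$-array via the $\mathcal{L}'$-definable functions $f_t$, the required $\mathcal{L}$-indiscernibility of $\bar{\gamma}$ follows from $\mathcal{L}'$-indiscernibility of the $a$-array over $b$ together with a finite parameter set $C$ defining the $f_t$'s, in the finitely many $\mathcal{L}'$-formulas obtained as pullbacks of $\Delta_{\tilde{\varphi}, n}$ through the $f_t$'s. The sequences $I_t$ need not be mutually indiscernible over $b \cup C$ a priori, so I would first apply finite Ramsey (via Lemma \ref{lem: indisc subarray}) to extract sub-sequences $I'_t \subseteq I_t$ mutually indiscernible over $b \cup C$ in those finitely many formulas, keeping $I'_k$ a block of consecutive elements so that it remains an interval; the required sizes depend only on $\psi$ and are absorbed into the threshold of $(\dagger)$. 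To recover a bound on the number of $\psi$-types over the \emph{full} $I_1, \ldots, I_{k-1}$ from the bound on the extracted sub-arrays, I would apply shrinking of indiscernibles (Lemma \ref{lem: fin shrinking}) to instances of $\tilde{\varphi}$: for any fixed $c = a^{(k)}_j$, the $\tilde{\varphi}$-pattern on the full array is determined, up to a bounded blow-up, by the pattern on a bounded number of anchor indices chosen from $I'_1 \times \ldots \times I'_{k-1}$, and this blow-up can be absorbed into the choice of $f$ and $\varepsilon$.
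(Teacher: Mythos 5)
Your core construction is exactly the paper's proof: you regroup the coordinate functions according to which of the variables $y_0,\ldots,y_k$ they omit, rewrite $\psi$ as an $\mathcal{L}$-formula $\tilde{\varphi}$ applied to an $\alpha$-block (omitting $y_k$), $\beta^s$-blocks (omitting $y_s$) and a $\gamma$-block (omitting $y_0$), and then verify the criterion of Proposition \ref{prop: type count} by feeding these arrays into Lemma \ref{lem: comp lemma induction}$(1)_k$; the paper does the same with the (overlapping) groups $S_r$, and your disjoint partition $A, B_1,\ldots,B_{k-1}, G$ is a harmless variant.

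Where you go astray is the final paragraph: the ``main obstacle'' you set out to overcome is not actually there, and the machinery you propose to overcome it would not work. First, by your own definition every $t\in G$ has $T_t=[k]$, so $b$ is never a real argument of the functions producing $\bar{\gamma}$; moreover the mutual indiscernibility in $(\dagger)_{f,\varepsilon}$ is indiscernibility in the full language $\mathcal{L}'$ (that is the ambient structure in which $\psi$ lives), and after naming the defining parameters of the $f_t$'s as constants (which changes neither NIP of the $\mathcal{L}$-reduct nor $k$-dependence of $\psi$) the array $\bar{\gamma}$ is the image of the $\mathcal{L}'$-indiscernible array $\left(a^{(1)}_{i_1},\ldots,a^{(k)}_{i_k}\right)$ under a parameter-free $\mathcal{L}'$-definable map, hence is itself an $\mathcal{L}'$-indiscernible array and in particular $\Delta_{\tilde{\varphi},n}$-indiscernible --- no Ramsey extraction is needed. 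Second, the extraction-plus-recovery scheme, taken at face value, breaks the proof: $(\dagger)_{f,\varepsilon}$ is a statement about the \emph{given} sequences of lengths $n$ and $m$, whereas after Lemma \ref{lem: indisc subarray} you only control traces on a much smaller sub-array, and the proposed recovery via Lemma \ref{lem: fin shrinking} applied to ``instances of $\tilde{\varphi}$'' is unavailable: the arguments $\alpha_{\bar{i}}$ and $\beta^s$ vary with $\bar{i}$ through functions involving $b$, so they are not entries of the $\Delta$-indiscernible array to which shrinking applies, and $\psi$ itself is only $\mathcal{L}'$-definable, where NIP/UDTFS fails --- controlling exactly this dependence is what the induction behind Lemma \ref{lem: comp lemma induction} is for. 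The fix is simply to delete that paragraph and observe the indiscernibility of $\bar{\gamma}$ directly, after which your argument coincides with the paper's.
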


\begin{proof}

Let $\varphi(x_1, \ldots, x_d)$ and $(f_t)_{1 \leq t \leq d}$ be given. For each $0 \leq r \leq k$, consider the set
$$S_r := \left\{ 1 \leq t \leq d : \{i^t_1, \ldots, i^t_k \} \subseteq \{0,1, \ldots, k \} \setminus \{ r \} \right\}.$$

We let $x'_r := (x_t : t \in S_r)$ and let $f'_r: \prod_{0 \leq s \leq k, s \neq r} M^{y_s} \to M^{x'_r}$ be the $k$-ary $\mathcal{L}'$-definable function given by 
$$f'_r \left( (y_s : s \in \{0,\ldots,k\} \setminus \{r\}) \right) := \left(f_t(y_{i^t_1}, \ldots, y_{i^t_k}) : t \in S_r \right).$$

Consider the $\mathcal{L}$-formula $\varphi'(x'_0, \ldots, x'_k) := \varphi(x_1, \ldots, x_d)$. Then 
\begin{equation}\label{eq: comp lem 1}
	\models \psi(y_0; y_1, \ldots, y_k) \leftrightarrow \varphi'\left( f'_0 \left((y_s)_{ s \in \{0,\ldots,k\} \setminus \{0\}} \right),  \ldots, f'_k \left((y_s)_{ s \in \{0,\ldots,k\} \setminus \{k\}} \right)\right).
\end{equation}

In order to show that $\psi(y_0;y_1, \ldots, y_k)$ is $k$-dependent we check that it satisfies the criterion in Proposition \ref{prop: type count}(3). 
Consider the $\mathcal{L}$-formula 
$$\varphi''(x'_k, x'_1, \ldots, x'_{k-1} , x'_0) := \varphi'(x'_0, x'_1, \ldots, x'_{k-1}, x'_k).$$

Let $f_{\varphi''}: \mathbb{N} \to \mathbb{N}$, $n_{\varphi''} \in \mathbb{N}, (\Delta_{\varphi'',n})_{ n \in \mathbb{N}}$ with each $\Delta_{\varphi'',n}$ a finite set of $\mathcal{L}$-formulas,  and $\varepsilon_{\varphi''} > 0$ be as given for $\varphi''$ by $(1)_k$ in Lemma \ref{lem: comp lemma induction}. We claim that $\psi(y_0;y_1, \ldots, y_k)$ satisfies $(\dagger)_{f_{\varphi''}, \varepsilon_{\varphi''}}$ with respect to $n^* := n_{\varphi''}$.

Indeed, assume that $n^* \leq n \leq m \in \mathbb{N}$ and $I_1, \ldots, I_k$ are mutually $\mathcal{L}'$-indiscernible sequences with $I_t \subseteq M^{y_t}$ for $1 \leq t \leq k$, $I_t = \left(a^t_i : i \in [n] \right)$ for $1 \leq t \leq k-1$, $I_k = \left(a^k_i : i \in [m] \right)$, and $b \in M^{y_0}$ is an arbitrary tuple. We define:
\begin{itemize}
	\item $\alpha_{\bar{i}} := f'_k(b, a^1_{i_1}, \ldots, a^{k-1}_{i_{k-1}})$ for every $\bar{i}=(i_1, \ldots, i_{k-1}) \in [n]^{k-1}$;
	\item $\gamma_{\bar{i}} := f'_0(a^1_{i_1}, \ldots, a^k_{i_k})$ for every $\bar{i} = (i_1, \ldots, i_k) \in [n]^{k-1} \times [m]$;
	\item $\beta^t_{\bar{i}} := f'_t(b, a^1_{i_1}, \ldots, a^{t-1}_{i_{t-1}}, a^{t+1}_{i_{t+1}}, \ldots, a^{k}_{i_{k}})$ for every $1 \leq t \leq k-1$ and every tuple $\bar{i} = (i_1, \ldots, i_{t-1}, i_{t+1}, \ldots, i_k) \in [n]^{k-2} \times [m]$.
\end{itemize}

Note that then the array $\bar{\gamma} = (\gamma_{\bar{i}} : \in [n]^{k-1} \times [m])$ is $\mathcal{L}'$-indiscernible by mutual $\mathcal{L}'$-indiscernibility of the $I_t$'s, so in particular $\bar{\gamma}$ is $\Delta_{\varphi'', n}$-indiscernible. And, by \eqref{eq: comp lem 1} and definition of $\varphi''$, for every $\bar{i}=(i_1, \ldots, i_{k-1}) \in [n]^{k-1}$ and $j \in [m]$ we have
$$\models \psi\left(b;a^1_{i_1}, \ldots, a^{k-1}_{i_{k-1}}, a^{k}_{j} \right) \iff \models \varphi'' \left(\alpha_{\bar{i}}, \beta^1_{\bar{i}_{[k-1]\setminus \{1\}} + (j)}, \ldots, \beta^{k-1}_{\bar{i}_{[k-1]\setminus \{k-1\}} + (j)}, \gamma_{\bar{i} + (j)} \right).$$

In particular, for every $j \in [m]$, the type $\tp_{\psi}(a^k_j /b, I_1, \ldots, I_{k-1})$ is determined by the set $S^{\varphi''}_{j}$ as in the definition of Lemma \ref{lem: comp lemma induction}, $(1)_k$. Hence there exists some interval $J  \subseteq [m]$ with $|J| \geq \frac{m}{f_{\varphi''}(n)}$ such that the family of sets $\mathcal{F}^{\varphi''}_J := \left\{S^{\varphi''}_j : j \in J \right\}$
		 has cardinality at most $2^{n^{k-1-\varepsilon_{\varphi''}}}$. Thus $|S_{\psi,J}\left(b, I_1, \ldots, I_{k-1} \right)| < 2^{n^{k-1-\varepsilon_{\varphi''}}}$, as wanted.
\end{proof}

\subsection{Discussion}\label{sec: comp lem discussion}

	In our proof of Lemma \ref{lem: comp lemma induction} we did not try to optimize any of the bounds, and we expect that they are much stronger than our argument provides.

We show how to obtain a polynomial bound for both $f_\varphi$ and $|\mathcal{F}^\varphi_J|$ in $(1)_2$ of Lemma \ref{lem: comp lemma induction} with a different argument (but do not pursue the general case here).

\begin{lemma}\label{lem: array shatter poly bound}
	Let $n \leq m$, $(\alpha_i : 1\leq i \leq  n), (\beta_j : 1 \leq j \leq m)$ be arbitrary sequences and $(\gamma_{i,j} : 1 \leq i \leq n, 1 \leq j \leq m)$ a (sufficiently) indiscernible array. Let $\varphi(x,y,z) \in \mathcal{L}$, and assume that every $\mathcal{L}$-formula is NIP. For $j \leq m$, let $S_{j} := \left\{1 \leq i \leq n : \models \varphi \left(\alpha_i, \beta_j, \gamma_{i,j} \right) \right\}$. Then there exists some $d = d (\varphi) \in \mathbb{N}$ and an interval $J  \subseteq [m]$ of length $\geq \frac{m}{n^d}$ such that, assuming $n$ is large enough, $\left| \left\{S_j : j \in J \right\} \right| \leq n^d$.
\end{lemma}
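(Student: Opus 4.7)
The strategy is to apply UDTFS (Fact~\ref{fac: UDTFS}) to $\varphi$ in order to express $S_j$ via a formula $\theta$ with only $O(n)$ bound-parameter choices, then exploit sufficient indiscernibility of $\bar\gamma$ over the $\alpha$'s and $\beta$'s to eliminate the $\gamma_{i,j}$-dependence on $j$, reducing the counting of distinct $S_j$ on a long interval to a Sauer--Shelah bound for the NIP formula $\theta$ with a single moving parameter $\beta_j$.

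First, apply Fact~\ref{fac: UDTFS} to the NIP formula $\varphi(x;y,z)$ (variable side $x$, parameter side $(y,z)$) to obtain some $\mathcal{L}$-formula $\theta(y,z;u_1,\dots,u_{d_0})$, where each $u_k$ is a $(y,z)$-tuple, that uniformly defines $\varphi$-types over finite sets using $d_0$ parameters. For each $i \in [n]$ separately, apply this UDTFS instance with $\alpha = \alpha_i$ and $A_i := \{(\beta_j,\gamma_{i,j}) : j \in [m]\}$, producing indices $j^i_1,\dots,j^i_{d_0} \in [m]$ such that, with $\bar c^i := (\beta_{j^i_k},\gamma_{i,j^i_k})_{k\leq d_0}$,
\[
\varphi(\alpha_i,\beta_j,\gamma_{i,j}) \iff \theta(\beta_j,\gamma_{i,j};\bar c^i) \quad \text{for all } j \in [m].
\]
The set $U := \{j^i_k : i\in[n],\, k \leq d_0\}$ has size at most $n d_0$, partitioning $[m]$ into at most $n d_0 + 1$ intervals disjoint from $U$. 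By pigeonhole, some such interval $J$ satisfies $|J| \geq m/(n d_0 + 1)$, which is at least $m/n^{d_0+1}$ for $n$ sufficiently large.

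Next, fix any $j^* \in J$. For every $i \in [n]$ and $j \in J$, the tuples $(j,j^i_1,\dots,j^i_{d_0})$ and $(j^*,j^i_1,\dots,j^i_{d_0})$ have the same order type, since $j$ and $j^*$ belong to the same component of $[m]\setminus U$. Assuming the ``sufficient'' indiscernibility of $\bar\gamma$ means indiscernibility over $\{\alpha_{i'}\}_{i' \in [n]} \cup \{\beta_{j'}\}_{j' \in [m]}$, the $i$-th row $(\gamma_{i,j})_j$ gives
\[
(\gamma_{i,j},\gamma_{i,j^i_1},\dots,\gamma_{i,j^i_{d_0}}) \equiv_{\{\alpha_{i'}\}\cup\{\beta_{j'}\}} (\gamma_{i,j^*},\gamma_{i,j^i_1},\dots,\gamma_{i,j^i_{d_0}}),
\]
and hence $\theta(\beta_j,\gamma_{i,j};\bar c^i) \iff \theta(\beta_j,\gamma_{i,j^*};\bar c^i)$. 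Consequently, for every $j \in J$,
\[
S_j = \bigl\{i \in [n] : \models \theta(\beta_j,\gamma_{i,j^*};\bar c^i)\bigr\},
\]
in which $j$ enters only through the single parameter $\beta_j$.

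Finally, view $\theta$ as a partitioned formula $\tilde\theta(y;z,\bar u)$ with variable side $y$: since the reduct is NIP, $\tilde\theta$ has finite VC-dimension $d_1 = d_1(\varphi)$. Applying Sauer--Shelah to the family of $n$ parameter tuples $\{(\gamma_{i,j^*},\bar c^i) : i \in [n]\}$ bounds the number of patterns $(\tilde\theta(\beta;\gamma_{i,j^*},\bar c^i))_{i\in[n]} \in \{0,1\}^{[n]}$ realized by any $\beta \in \mathbb{M}_y$ by $O(n^{d_1})$. Since each $S_j$ (for $j\in J$) is exactly such a pattern with $\beta = \beta_j$, we get $|\{S_j : j \in J\}| \leq n^{d_1+1}$ for $n$ large. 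Setting $d := \max(d_0+1, d_1+1)$ yields both $|J|\geq m/n^d$ and $|\{S_j : j\in J\}| \leq n^d$. The main subtlety is precisely how much ambient indiscernibility is packed into ``sufficiently indiscernible'': the substitution $\gamma_{i,j} \rightsquigarrow \gamma_{i,j^*}$ inside $\theta(\beta_j,\cdot;\bar c^i)$ requires row-indiscernibility of $\bar\gamma$ not just over $\emptyset$ but over the external parameters $\{\alpha_i\}\cup\{\beta_j\}$ together with the finitely many fixed $\gamma_{i,j^i_k}$'s.
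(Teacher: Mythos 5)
Your argument is not the paper's: you apply UDTFS to $\varphi$ with variable side $x$ and parameter side $(y,z)$, sampling along $j$ for each fixed row $i$, whereas the paper applies UDTFS to $\varphi'(y;xz)$, sampling along $i$ for each fixed column $j$, and then removes the residual $j$-dependence with a second UDTFS-based shrinking step (Lemma \ref{lem: fin shrinking}) applied to the row-sequences $(\gamma_{i,j},\gamma_{i'_1,j},\dots,\gamma_{i'_{d_1},j} : j\leq m)$, using only indiscernibility of the array over $\emptyset$. This difference is not cosmetic, and it is exactly where your proof has a gap.

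The problematic step is the substitution $\gamma_{i,j}\rightsquigarrow\gamma_{i,j^*}$ inside $\theta(\beta_j,-;\bar c^i)$, which you justify by assuming the array $\bar\gamma$ is indiscernible \emph{over} $\{\alpha_{i'}\}\cup\{\beta_{j'}\}$ (you flag this yourself). But in the lemma the sequences $(\alpha_i)$ and $(\beta_j)$ are explicitly \emph{arbitrary}; ``(sufficiently) indiscernible'' refers to the degree of indiscernibility of the array itself (as in the $\Delta_{\varphi,n}$-indiscernibility of $(1)_k$ in Lemma \ref{lem: comp lemma induction}), not indiscernibility over the external data. Indeed, in the intended use (cf.\ the proof of Theorem \ref{Composition Lemma}) the tuples $\alpha_i$, $\beta_j$, $\gamma_{i,j}$ are all built from the same mutually indiscernible sequences together with an external tuple $b$, so indiscernibility of $\bar\gamma$ over all the $\beta_j$'s is simply not available; assuming it defeats the purpose of the lemma, whose whole point is to control instances of an NIP formula with arbitrary external parameters over an indiscernible array. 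Without that assumption your reduction fails, and it cannot be repaired by another round of UDTFS/shrinking in your setup: the external parameter $\beta_j$ varies with the very index $j$ you are trying to collapse, so any shrinking tuple would depend on $j$ and the interval argument breaks down. The paper's choice of partition avoids this: after the first UDTFS step the moving parameter $\beta_j$ is gone, the only external parameters are finitely many $\alpha$'s (indexed by one of at most $n^{2d_1}$ choices), and these are handled uniformly by shrinking along the columns, giving the interval $J$ and the bound $n^{2d_1}$ on $|\{S_j:j\in J\}|$. Under your stronger hypothesis the remaining steps (order-type matching of $(j,j^i_1,\dots,j^i_{d_0})$ and $(j^*,j^i_1,\dots,j^i_{d_0})$, and the Sauer--Shelah count of $\theta$-patterns in the single parameter $\beta_j$) are fine, but as a proof of the lemma as stated and used, the argument does not go through.
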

\begin{proof}
 As the partitioned formula $\varphi'(y;xz) := \varphi(x,y,z)$ is NIP, by Fact \ref{fac: UDTFS} there exists some formula $\theta(x,z;\bar{x}, \bar{z}) \in \mathcal{L}$ with $\bar{x} = (x_1, \ldots, x_{d_1}), \bar{z} = (z_1, \ldots, z_{d_1})$ such that: for each $1 \leq j \leq m$ there exist some $1 \leq i_{j,1}, \ldots, i_{j,d_1} \leq n$ and $1 \leq i'_{j,1}, \ldots, i'_{j,d_1} \leq n$ such that: for all $1 \leq i \leq n$, 
 $$i \in S_j \iff\models \varphi(\alpha_i, \beta_j, \gamma_{i,j}) \iff \models \theta \left(\alpha_i, \gamma_{i,j}; \alpha_{i_{j,1}}, \ldots, \alpha_{i_{j,d_1}}, \gamma_{i'_{j,1},j}, \ldots, \gamma_{i'_{j,d_1},j} \right).$$
 
 By assumption, $\theta$ is NIP under any partition of its variables into two groups. Let $d_2 = d_2(\theta)$ be as given by Lemma \ref{lem: fin shrinking}
 . Let $i \in [n]$ and $\bar{i}=(i_1, \dots, i_{d_1}), \bar{i}'=(i'_1, \dots, i'_{d_1}) \in [n]^{d_1}$ be arbitrary. Note that the sequence of tuples
 $$(\gamma_{i,j}, \gamma_{i'_1,j}, \ldots, \gamma_{i'_{d_1},j} : 1 \leq j \leq m)$$
 is indiscernible by indiscernibility of the array $(\gamma_{i,j})$. Hence there exist some $1 \leq j^{i,\bar{i},\bar{i}'}_1 \leq \ldots \leq j^{i,\bar{i},\bar{i}'}_{d_2} \leq m$ such that, letting  $\bar{j}^{i,\bar{i},\bar{i}'} := (j^{i,\bar{i},\bar{i}'}_1, \ldots, j^{i,\bar{i},\bar{i}'}_{d_2})$, for any $1 \leq j,j' \leq m$,
 $$\qftp_{<}(j / \bar{j}^{i,\bar{i},\bar{i}'}) = \qftp_{<}(j' / \bar{j}^{i,\bar{i},\bar{i}'}) \implies $$
 $$\models \theta \left(\alpha_i, \gamma_{i,j}; \alpha_{i_{1}}, \ldots, \alpha_{i_{d_1}}, \gamma_{i'_{1},j}, \ldots, \gamma_{i'_{d_1},j} \right) \leftrightarrow   \theta \left(\alpha_i, \gamma_{i,j'}; \alpha_{i_{1}}, \ldots, \alpha_{i_{d_1}}, \gamma_{i'_{1},j'}, \ldots, \gamma_{i'_{d_1},j'} \right).$$
 Note that there are $n^{2d_1 + 1}$ possible choices for the tuple $(i, \bar{i}, \bar{i}')$. We let $\bar{j}$ be a tuple of strictly increasing elements in $[m]$ consisting of all elements appearing in any of the tuples  $\left( \bar{j}^{i,\bar{i},\bar{i}'} : i \in [n], \bar{i}, \bar{i}' \in [n]^{d_1} \right)$, then $\bar{j}$ consists of at most $d_2 n^{2d_1+1}$ elements and satisfies: for any $i \in [m], \bar{i}, \bar{i}' \in [n]^{d_1}$ and any $1 \leq j,j' \leq m$, 
  $$\qftp_{<}(j / \bar{j}) = \qftp_{<}(j' / \bar{j}) \implies $$
 $$\models \theta \left(\alpha_i, \gamma_{i,j}; \alpha_{i_{1}}, \ldots, \alpha_{i_{d_1}}, \gamma_{i'_{1},j}, \ldots, \gamma_{i'_{d_1},j} \right) \leftrightarrow   \theta \left(\alpha_i, \gamma_{i,j'}; \alpha_{i_{1}}, \ldots, \alpha_{i_{d_1}}, \gamma_{i'_{1},j'}, \ldots, \gamma_{i'_{d_1},j'} \right).$$
Hence there is an interval $J \subseteq [m]$ of length $\geq \frac{m}{ d_2 n^{2d_1 + 1} + 1}$ such that 
for any $i \in [m], \bar{i}, \bar{i}' \in [n]^{d_1}$ and any $ j,j' \in J$ we have
 $$\models \theta \left(\alpha_i, \gamma_{i,j}; \alpha_{i_{1}}, \ldots, \alpha_{i_{d_1}}, \gamma_{i'_{1},j}, \ldots, \gamma_{i'_{d_1},j} \right) \leftrightarrow   \theta \left(\alpha_i, \gamma_{i,j'}; \alpha_{i_{1}}, \ldots, \alpha_{i_{d_1}}, \gamma_{i'_{1},j'}, \ldots, \gamma_{i'_{d_1},j'} \right).$$
 By the above, this implies that for any $j,j' \in J$,
 $$(i_{j_1,1}, \ldots, i_{j,d_1}) = (i_{j',1}, \ldots, i_{j'_{d_1}}) \land (i'_{j_1,1}, \ldots, i'_{j,d_1}) = (i'_{j',1}, \ldots, i'_{j'_{d_1}}) $$
 $$\implies S_j = S_{j'}.$$
 
 Note that there are at most $n^{2d_1}$ choices for the tuple $(i_{j,1}, \ldots, i_{j,d_1}, i'_{j,1}, \ldots, i'_{j,d_1})$. Hence taking $d := 2d_1 +2$, the Lemma is satisfied assuming $n$ is large enough.
 \end{proof}
 
 \begin{problem}
 
 \begin{enumerate}
 	\item Obtain better bounds in $(1)_k$ for a general $k \in \mathbb{N}$.
 	\item In the Composition Lemma, is it sufficient to assume that the formula $\varphi$ is NIP? (As opposed to our assumption that the complete theory of the $\mathcal{L}$-reduct of $\mathcal{M}$ is NIP.)
 \end{enumerate}
  \end{problem}
 
 \noindent We conjecture a generalization of Theorem \ref{Composition Lemma} (which corresponds to the case $n = 1$; the bound $kn$ is clearly optimal): 
 \begin{conj}
 Let $\mathcal{M}$ be an $\mathcal{L}'$-structure such that its reduct to a language $\mathcal{L} \subseteq \mathcal{L}'$ is $n$-dependent. Let $d,k \in \mathbb{N}$, $\varphi(x_1, \ldots, x_d)$  be an $\mathcal{L}$-formula, and $(y_0, \ldots, y_{k n})$ be arbitrary $kn+1$ tuples of variables. For each $1 \leq t \leq d$, let $0 \leq i^{t}_{1}, \ldots, i^{t}_k \leq kn$ be arbitrary, and let $f_t: M^{y_{i^t_{1}}} \times \ldots \times M^{y_{i^{t}_{k}}} \to M^{x_t}$ be an arbitrary $\mathcal{L}'$-definable $k$-ary function. Then the formula 
	$$\psi\left( y_0; y_1, \ldots, y_{kn} \right) := \varphi \left(f_1 \left(y_{i^1_1}, \ldots, y_{i^1_k} \right), \ldots, f_d \left(y_{i^d_1}, \ldots, y_{i^d_k} \right) \right)$$
	 is $kn$-dependent.
 \end{conj}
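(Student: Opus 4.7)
The plan is to extend the inductive proof of Lemma \ref{lem: comp lemma induction} from NIP to $n$-dependent formulas, using the higher-arity Sauer-Shelah lemma for $\VC_n$-dimension (Fact \ref{fac: shatter}) as the base case in place of ordinary Sauer-Shelah, and then to assemble the result exactly as in Theorem \ref{Composition Lemma}, now via the type-counting criterion (Proposition \ref{prop: type count}) applied with $kn$ in place of $k$.

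The setup of the composition follows the template of Theorem \ref{Composition Lemma}, but now one groups the functions $f_1, \ldots, f_d$ by which $n$-subset $R \subseteq \{0, 1, \ldots, kn\}$ of the variables they avoid. For each such $R$ set $S_R := \{ t \in [d] : \{i^t_1, \ldots, i^t_k\} \cap R = \emptyset \}$; since $|\{0, \ldots, kn\} \setminus \{i^t_1, \ldots, i^t_k\}| \geq kn + 1 - k \geq n$, every $t$ lies in some $S_R$ (with harmless redundancy, since $t$ may lie in several $S_R$'s). Bundle $\{ f_t : t \in S_R \}$ into a single $\mathcal{L}'$-definable function $f'_R$ of the variables $(y_s : s \notin R)$, and rewrite $\psi$ as an $\mathcal{L}$-formula $\varphi'$ applied to the tuples $(f'_R(\ldots))_R$; the formula $\varphi'$ is still $n$-dependent because it is a renaming/projection of $\varphi$.

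The key step is to establish an appropriate generalization of the Array Shattering Lemma (Lemma \ref{lem: comp lemma induction}) from NIP to $n$-dependent formulas. I would formulate, and prove by simultaneous induction on $k$, statements $(1)^{(n)}_k, (2)^{(n)}_k$ modeled on $(1)_k, (2)_k$ of Lemma \ref{lem: comp lemma induction}: $(2)^{(n)}_k$ would say that, given an $n$-dependent formula $\varphi$, a $kn$-dimensional array $\bar{\delta} = (\delta_{\bar{i}} : \bar{i} \in [N]^{kn})$, and parametric families $\bar{\zeta}^R = (\zeta^R_{\bar{i}'} : \bar{i}' \in [N]^{[kn] \setminus R})$ indexed by the $n$-subsets $R \subseteq [kn]$, the number of distinct subsets
$$\left\{ \bar{i} \in [N]^{kn} : \models \varphi\left( (\zeta^R_{\bar{i}_{[kn] \setminus R}})_R,\, \delta_{\bar{i}} \right) \right\}$$
obtained by varying the $\bar{\zeta}^R$'s is at most $2^{N^{kn - \varepsilon}}$; and $(1)^{(n)}_k$ is the dynamic version with an additional moving index $j \in [m]$. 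The base case $(2)^{(n)}_1$ is exactly Fact \ref{fac: shatter} applied to $\varphi$ (which has finite $\VC_n$-dimension). The implications $(1)^{(n)}_k \Rightarrow (2)^{(n)}_k$ and $(2)^{(n)}_k \Rightarrow (1)^{(n)}_{k+1}$ should parallel the corresponding implications in Lemma \ref{lem: comp lemma induction}, using the indiscernible-subarray extraction (Lemma \ref{lem: indisc subarray}) and an analogue of the finitary shrinking of indiscernibles.

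The hard part will be this last ingredient: an $n$-dependent analogue of Lemma \ref{lem: fin shrinking}. Our NIP proof relies crucially on UDTFS (Fact \ref{fac: UDTFS}), which uniformly defines $\varphi$-types over finite sets, and no equally strong finitary substitute is currently known in the $n$-dependent setting. A natural candidate is a \emph{higher-arity UDTFS} for $n$-dependent formulas: that all $\varphi$-types over finite product sets $A_1 \times \ldots \times A_n$ are uniformly definable from points of the product. If this could be established — or some Ramsey/compactness-based substitute extracted from the characterization of $n$-dependence via $G_{n,p}$-indiscernibles (Fact \ref{fac: indisc witness to IPn}) — the induction would close. Given such a shrinking statement, the final assembly parallels the proof of Theorem \ref{Composition Lemma}: for $b$ and mutually indiscernible sequences $I_1, \ldots, I_{kn}$, the $\psi$-type of $a^{kn}_j$ over $b \cup I_1 \cup \ldots \cup I_{kn-1}$ is determined by the set cut out in $[N]^{kn-1}$ by $\varphi'$ applied to the $f'_R$-images; applying $(1)^{(n)}_k$ to $\varphi'$ yields a long subinterval $J \subseteq I_{kn}$ on which at most $2^{N^{kn-1-\varepsilon}}$ such types occur, verifying Proposition \ref{prop: type count} for $kn$-dependence of $\psi$.
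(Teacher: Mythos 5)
You are attempting to prove a statement that the paper itself leaves as an open conjecture (it is the proposed generalization of Theorem \ref{Composition Lemma}, which is the case $n=1$), so there is no proof in the paper to compare against; your proposal must stand on its own, and as written it is a programme with two genuine gaps — one you acknowledge, and one you do not. The acknowledged gap is real: the inductive engine of Lemma \ref{lem: comp lemma induction} runs on the finitary shrinking Lemma \ref{lem: fin shrinking}, which in turn rests on UDTFS (Fact \ref{fac: UDTFS}), and no analogue of either is currently available for $n$-dependent formulas; gesturing at a ``higher-arity UDTFS'' or a Ramsey substitute extracted from Fact \ref{fac: indisc witness to IPn} does not supply one. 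Your outer reduction (grouping the $f_t$ by an $n$-subset $R$ of indices they avoid, which exists since $kn+1-k\geq n$) is a reasonable adaptation of the proof of Theorem \ref{Composition Lemma}, but it only relocates the difficulty into the array-shattering statements.

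The unacknowledged gap is in your base case. You assert that $(2)^{(n)}_1$ ``is exactly Fact \ref{fac: shatter} applied to $\varphi$''. It is not: Fact \ref{fac: shatter} only says that a family of more than $2^{N^{n-\varepsilon}}$ subsets of $[N]^n$ shatters a $d$-box of \emph{indices}; to derive a contradiction with $n$-dependence of $\varphi$ one would additionally need the entries $\delta_{\bar i}$ lying over that box to have product structure, i.e.\ to be determined coordinatewise by data depending on $i_1,\dots,i_n$ separately, which an arbitrary $n$-dimensional array need not have. Indeed $(2)^{(n)}_1$ as you state it is false: in the random graph (a $2$-dependent structure), take $n=2$, $k=1$, $\varphi(x,y)=E(x,y)$ and let $\delta_{\bar i}$, $\bar i\in[N]^2$, be pairwise distinct vertices; then every subset of $[N]^2$ is of the form $\{\bar i: \models E(\zeta,\delta_{\bar i})\}$ for some vertex $\zeta$, so the family has cardinality $2^{N^2}$, not $\leq 2^{N^{2-\varepsilon}}$. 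The underlying point is that NIP, via Sauer--Shelah, bounds traces on \emph{arbitrary} finite sets — which is what makes the paper's $(2)_1$ and Lemma \ref{lem: fin shrinking} work — whereas $n$-dependence only bounds traces on $n$-boxes respecting the variable partition of $\varphi$. Consequently the induction cannot start in the form you set it up; any correct scheme must build the product structure of the parameter array into the statements $(1)^{(n)}_k$, $(2)^{(n)}_k$ themselves, and it is precisely this interaction between higher-arity shattering and composition that is not understood — which is why the statement remains a conjecture.
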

 
 \begin{remark}
 An analog of the Composition Lemma for $k=2$ was demonstrated when the $\mathcal{L}$-reduct of $\mathcal{M}$ is stable for the property NFOP$_2$ in \cite{abd2023higher}. A preliminary version of this paper also contained an analogous result for the stronger property NOP$_2$ (in the sense of Takeuchi), this will be included in future work.
 \end{remark}

\section{Non-degenerate $n$-linear forms are $n$-dependent and NSOP$_1$}\label{sec: multilin n-dep and NSOP1}
\subsection{$N$-dependence}
In this section we demonstrate $n$-dependence of certain theories of non-degenerate $n$-linear forms over NIP fields, generalizing the $n=2$ case established in  \cite[Theorem 6.3]{chernikov2021n}. 
The proof is similar to the binary case, with the two main new ingredients being the relative quantifier elimination result for non-degenerate $n$-linear forms (Theorem \ref{thm: QE for multilinear forms}, generalizing Granger \cite{granger1999stability} in the case $n=2$) and the $n$-ary Composition Lemma (Theorem \ref{Composition Lemma}, generalizing \cite[Theorem 5.12]{chernikov2021n} in the case $n=2$). Our proof in \cite[Theorem 6.3]{chernikov2021n} utilized additional analysis of generalized indiscernibles in bilinear spaces and a certain simplification of terms procedure in order to reduce the question to an application of the Composition Lemma (see \cite[Section 6.3]{chernikov2021n}), working in a language without the functions $f_i^p$ (which turned out to be insufficient for Granger's quantifier elimination, see the discussion in the introduction).
Later \cite{abd2023higher} proposed a streamlined argument simplifying these last two points using the existing results of pairs of fields/vector spaces. While a direct generalization of our proof can be carried out in the language expanded by the functions $f^p_i$, we use here the same simplification.

 The following is the main result of the section (see Section \ref{sec: QE} for notation).
\begin{theorem}\label{thm: Granger}
Let $n \in \omega$ and let $T$ be a theory of \emph{infinite dimensional}  $n$-linear $K$-spaces (in the language $\mathcal{L}^{K}_{\theta,f}$).
\begin{enumerate}
	\item Assume that $T$ eliminates quantifiers in the language $\mathcal{L}^{K}_{\theta,f}$. If $K$ is NIP, then $T$ is $n$-dependent (and strictly $n$-dependent if the form is generic).  In particular, if $K$ is NIP, the theory of non-degenerate alternating forms $\prescript{}{\Alt}T^K_n$ is $n$-dependent.
	\item If $K$ has $\IP_k$ and the form is generic, then $T$ has $\IP_{nk}$.
\end{enumerate}
\end{theorem}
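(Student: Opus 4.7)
\textbf{Proof plan for Theorem \ref{thm: Granger}.}

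For part (1), I plan to proceed in three stages. First, by the quantifier elimination of Theorem \ref{thm: QE for multilinear forms}, every $\mathcal{L}^K_{\theta,f}$-formula is equivalent modulo $T$ to a quantifier-free formula; by Fact \ref{fac: props of n-dependent formulas}(1) it is enough to show each atomic formula is $n$-dependent, and by Fact \ref{fac: props of n-dependent formulas}(3) we may assume it is partitioned as $\varphi(x; y_1, \ldots, y_n)$ with all but at most one $y_i$ a singleton. Second, I claim each such atomic formula can be written in the shape $\psi(t_1(\bar{z}), \ldots, t_d(\bar{z}))$, where $\psi$ is an $\mathcal{L}^K$-formula on the NIP field sort $K$ and each $t_i$ is an $\mathcal{L}^K_{\theta,f}$-definable function into $K$ depending on at most $n$ of the variables $y_0, \ldots, y_n$ (here we follow the simplification of \cite{abd2023higher}). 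This uses that the only primitive of arity $\geq n$ taking values in $K$ is the $n$-linear form itself; the coordinate functions $f^p_i$, scalar multiplication, field operations, and the predicates $\theta_p$ either land in $V$, have small arity, or can be pulled outside via $\mathcal{L}^K$. Third, with $\mathcal{L}$ the $\mathcal{L}^K$-reduct (which is NIP by assumption on $K$) and $\mathcal{L}' := \mathcal{L}^K_{\theta,f}$, the Composition Lemma (Theorem \ref{Composition Lemma}) applied with $k = n$ yields that $\varphi$ is $n$-dependent.

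For the strict $n$-dependence assertion in (1), assume additionally that the form is generic, and consider the formula
\[
\varphi(x; y_1, \ldots, y_{n-1}) \;:=\; \langle y_1, \ldots, y_{n-1}, x\rangle_n \neq 0.
\]
In an infinite dimensional model, pick, for each $t \in [n-1]$, a sequence $(v^t_i : i \in \omega)$ in $V$ so that the family of pure tensors $\overline{v^1_{i_1} \otimes \dots \otimes v^{n-1}_{i_{n-1}}}$, $(i_1, \ldots, i_{n-1}) \in \omega^{n-1}$, is linearly independent in $\lozenge^{n-1} V$ (this is possible by genericity applied recursively, using the infinite dimension). Then for every subset $S \subseteq \omega^{n-1}$, Lemma \ref{L:findingw} supplies some $w_S \in V$ with $\langle v^1_{i_1}, \ldots, v^{n-1}_{i_{n-1}}, w_S\rangle_n \neq 0$ iff $(i_1, \ldots, i_{n-1}) \in S$. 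This directly witnesses $\IP_{n-1}$ for $\varphi$, proving $T$ is strictly $n$-dependent. The same recipe proves part (2): given a formula $\chi(z; x_1, \ldots, x_k)$ witnessing $\IP_k$ in $K$, consider
\[
\psi(z; (y^s_j)_{s \in [k],\, j \in [n]}) \;:=\; \chi\bigl(z;\, \langle y^1_1, \ldots, y^1_n\rangle_n,\, \ldots,\, \langle y^k_1, \ldots, y^k_n\rangle_n\bigr)
\]
and use genericity iteratively to realize over $nk$ infinite families of vectors any prescribed pattern of $k$-tuples of field values on which $\chi$ is evaluated, giving $\IP_{nk}$.

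The principal obstacle I expect to meet is stage two of part (1): carefully verifying the structural claim that every atomic $\mathcal{L}^K_{\theta,f}$-formula really does fit into the format demanded by the Composition Lemma, in particular handling nested occurrences of $\langle -, \ldots, -\rangle_n$ and iterated $f^p_i$ inside a single term. The intended remedy is to peel off outer $\mathcal{L}^K$-structure, introduce auxiliary definable functions for inner subterms, and observe that under the variable partition with at most one non-singleton $y_i$, every term depending on a vector variable either already involves at most $n$ of the $y_j$'s (since $\langle-, \ldots, -\rangle_n$ is $n$-ary) or lies entirely in a single $y_j$-variable block and can be absorbed into one argument. Once this bookkeeping is done cleanly, the rest of the argument is a direct application of the machinery developed in Sections \ref{sec: multilinear forms} and \ref{sec: Composition lemma}.
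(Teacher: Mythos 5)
Your overall architecture for part (1) — quantifier elimination, reduction to atomic formulas, rewriting each atomic formula as an NIP relation composed with definable functions depending on at most $n$ of the $n+1$ variable groups, and then invoking Theorem \ref{Composition Lemma} — is the paper's, but your stage two does not work as stated, and it is exactly where the real content lies. You require the outer formula $\psi$ to be an $\mathcal{L}^K$-formula on the field sort, with every $t_i$ taking values in $K$. But an atomic $\mathcal{L}^{K}_{\theta,f}$-formula whose outermost symbol is a vector-sort relation — an equality of $V$-valued terms, a predicate $\theta_p$, or an equation involving $f^p_i$ — may have its vector variables spread over all $n+1$ groups, and such a formula does not (at least not by any argument you give) factor as a pure field formula applied to $K$-valued functions each omitting a group. "Lands in $V$" is precisely the case your proposed remedy does not handle: a $V$-valued subterm cannot be fed into a field formula, and a relation such as $\theta_p(w_1,\ldots,w_p)$ with $p>n$ and arguments meeting every variable group is not covered by "involves at most $n$ of the $y_j$'s" nor by "lies in a single block".

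The paper's fix is different in a way that matters: the outer formula is kept in the two-sorted vector-space language $\mathcal{L}^K_{\VS} = \mathcal{L}^{K}_{\theta,f}\setminus\{\Bln\}$, and the term-reduction lemma (Fact \ref{fac: term reduction}) is used to pull out \emph{only} the occurrences of the $n$-linear form as $K$-valued functions of at most $n$ groups, while single variables are functions of one group; all the unbounded-arity vector-sort structure ($\theta_p$, $f^p_i$, linear algebra) stays inside the reduct. The price is that the Composition Lemma then needs the reduct theory $T^K_{\VS,\infty}$ — not merely $\Th(K)$ — to be NIP, and this is a genuinely nontrivial input: Corollary \ref{cor: VS's are NIP}, deduced from the theorem on algebraically closed fields with a predicate for an NIP subfield (Fact \ref{fac: pairs of fields}). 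Your outline omits this ingredient entirely, so as written the proof of (1) has a gap; you would need either to supply it, or to actually prove your field-only factorization claim, which I do not believe holds in general. The remaining parts are fine: your witness for strict $n$-dependence (choosing the $v^t_i$ linearly independent so the wedge monomials are independent in $\bigwedge^{n-1}V$ and applying Lemma \ref{L:findingw}) is correct, and your argument for part (2) is essentially the paper's.
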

\begin{remark}
If  we instead consider $T$ to be a theory of \emph{$m$-dimensional} (for some $m \in \omega$) $n$-linear $K$-spaces $(V,K,\langle - , \dots, - \rangle_n)$, then $K$ is $n$-dependent if and only if $T$ is $n$-dependent, for any $n \geq 1$, via an interpretation of a model of $T$ in $K$ using $K^m\cong V$  as follows. Interpreting the vector space structure is obvious. Now, let $(e_1, \dots, e_m)$ be the standard basis  of $K^m$. The $n$-linear form is completely determined by fixing $k_{i_1, \dots, i_n} := \langle e_{i_1}, \dots , e_{i_{n}} \rangle_n$ for all $1 \leq i_1, \dots, i_n \leq m$. Let $\pi_i: K^m \rightarrow K$ be the projection map onto the $i$-th coordinate. Then for any $v_1, \dots,v_n \in K^m$, we have  $$\langle v_1, \dots, v_n \rangle_n= \sum_{i_1, \dots,i_n=1}^m \pi_{i_1}(v_1) \dots \pi_{i_n}(v_{n})k_{i_1, \dots, i_n} ,$$ which is definable using $\left\{k_{i_1, \dots, i_n}: 1 \leq i_1, \dots, i_n \leq m\right\}$ as parameters.
\end{remark}

We will use a result on tameness of algebraically closed fields with a distinguished subfield named by a predicate from \cite{d2024algebraically}.
\begin{fact}\label{fac: pairs of fields}
Let $T$ be a theory of fields (possibly incomplete), in a language $\mathcal{L}$ expanding the language $\mathcal{L}_{\ring}$ of rings. Let $\mathcal{L}^P := \mathcal{L} \cup \{ P \} $ with $P$ a new unary predicate, and let $\ACF_T$ be an expansion of $\ACF$ in the language $\mathcal{L}^P$ by the axioms expressing that all relations in $\mathcal{L} \setminus \mathcal{L}_{\ring}$ are trivial unless all of the variables are in $P$, that $P$ is a subfield and a model of $T$, and that the degree of the field extension of the universe over $P$ is infinite.
\begin{enumerate}
	\item \cite[Proposition 4.1]{d2024algebraically} If $T$ is complete, then $\ACF_T$ is also complete.
	\item \cite[Theorems 5.24/5.34/5.9/5.13]{d2024algebraically} If $T$ is stable/NIP/simple/NSOP$_1$, then $\ACF_T$ is also stable/NIP/simple/NSOP$_1$.
\end{enumerate}

\end{fact}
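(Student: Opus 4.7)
The plan is to adapt the Delon--Ziegler framework for pairs of algebraically closed fields, suitably generalized to accommodate the extra structure from $T$ on the distinguished subfield. I would first establish a relative quantifier elimination for $\ACF_T$ in a definitional expansion of $\mathcal{L}^P$ by ``coordinate'' function and predicate symbols in the spirit of the $\theta_n, f^p_i$ from Section~\ref{sec: QE}: a predicate $\theta^P_n(x_1, \ldots, x_n)$ expressing $P$-linear independence in the ambient field, and functions $\lambda^n_i(x; x_1, \ldots, x_n)$ returning the $i$-th $P$-coordinate of $x$ when $x \in \Span_P(x_1, \ldots, x_n)$ and the $x_j$ are $P$-linearly independent (and $0$ otherwise). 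Because the ambient field is algebraically closed, algebraic dependence over $P$ is captured by polynomial equations with coefficients extracted via these coordinate maps, so no further predicates are needed.

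For the QE itself, I would run a standard back-and-forth between two $\omega$-saturated models $(K_1, P_1), (K_2, P_2)$ and a partial isomorphism $\sigma : A \to B$ of finitely generated substructures. To extend $\sigma$ by an element $a \in K_1$, three cases arise: (i)~$a \in P_1$ is handled using completeness of $T$ and $\omega$-saturation of $P_2$ (the $\mathcal{L}$-isomorphism $\sigma\upharpoonright(A\cap P_1)$ extends into $P_2$); (ii)~$a \in K_1 \setminus P_1$ algebraic over the field generated by $A$ is handled by ACF quantifier elimination on the $K$-sort; (iii)~$a$ transcendental over this field is handled by infinite transcendence degree of $K_2$ over $P_2 \cup B$ together with saturation. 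The coordinate functions make all these extensions well defined and preserve atomic formulas. Completeness (item (1)) then follows immediately, since the quantifier-free type of $\emptyset$ is pinned down by the characteristic (from ACF) and the complete theory $T$ of $P$.

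For the transfer results in item (2), the relative QE shows that every $\ACF_T$-formula is equivalent to a Boolean combination of polynomial equations over $P$-coefficients extracted by the $\lambda^n_i$'s and $\mathcal{L}$-formulas $\varphi(\bar p_1, \ldots, \bar p_m)$ applied to tuples of such coordinates, all of which lie in $P$. I would prove the four preservation results by contrapositive in parallel: if $\ACF_T$ admits a witness to $\IP$, $\OP$, $\TP_2$, or $\SOP_1$, one extracts from the witnessing sequence (using generalized indiscernibles over $P$ and the Composition Lemma from Section~\ref{sec: Composition lemma} applied to the $\lambda^n_i$'s and polynomial operations) a sequence in the $P$-sort witnessing the same pattern for an $\mathcal{L}$-formula, contradicting the corresponding tameness of $T$. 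The key point is that the ACF component of the QE contributes only stable (hence tame in all four senses) behaviour, so any non-tameness must be inherited from $P$.

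The main obstacle is the NSOP$_1$ case, which cannot be handled by a purely combinatorial reduction. Here I would give a description of Kim-independence over models of $\ACF_T$ as the conjunction of (a) algebraic independence of the ambient field extensions and (b) Kim-independence of the distinguished $P$-parts in $T$, and then verify the Kim--Ramsey independence theorem by a two-step amalgamation: the transcendence part is amalgamated freely using ACF, and the $P$-part by NSOP$_1$ of $T$. The delicate point is checking that the two amalgamations are mutually compatible via the coordinate functions, which requires using the QE to show that any algebraic relation between elements of the amalgam is already visible in one of the two factors. Once this local characterization is in place, NSOP$_1$ of $\ACF_T$ follows by the standard Kim--Ramsey criterion.
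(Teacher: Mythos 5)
First, a point of order: the paper offers no proof of this statement at all --- it is imported verbatim as a Fact, with both items cited to \cite{d2024algebraically}. So there is no ``paper's proof'' to match; what you have written is an attempt to reprove results of d'Elb\'ee--Kaplan--Neuhauser from scratch. Your first half (a Delon-style language with linear-coordinate functions $\lambda^n_i$ and a back-and-forth between $\omega$-saturated models) is indeed the general shape of the argument in the cited source, but it glosses over the genuinely hard case: the new element $a$ may be algebraic over $P_1\cdot A$ without being algebraic over $A$, and some models of $\ACF_T$ (e.g.\ $(\overline{\mathbb{Q}},\mathbb{Q})$) contain \emph{no} elements transcendental over $P$, so your case (iii) cannot be dispatched by ``infinite transcendence degree of $K_2$ over $P_2\cup B$''; matching such elements is exactly where the coordinate functions, the completeness of $T$, and saturation of the $P$-sort must interact, and your trichotomy does not engage with it.

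The genuine gap is in item (2). The Composition Lemma (Theorem \ref{Composition Lemma}) cannot deliver NIP, stability, or simplicity of $\ACF_T$: it says only that an NIP relation composed with $k$-ary functions is $k$-\emph{dependent}, and the functions you need --- the $\lambda^n_i$'s and the field operations relating elements across different variable groups --- are genuinely multi-ary, so the best your reduction could yield is $k$-dependence for some $k\geq 2$, not NIP. Indeed, if the strategy ``extract a witness in the $P$-sort via generalized indiscernibles and the Composition Lemma'' worked, the identical argument applied to the language $\mathcal{L}^K_{\theta,f}$ would prove that non-degenerate alternating bilinear forms over an NIP field are NIP, contradicting Theorem \ref{thm: Granger} (generic forms are \emph{strictly} $n$-dependent). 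The preservation of NIP/stability/simplicity for the pair $(K,P)$ must therefore use something special about this particular expansion that a generic multilinear map lacks --- in the cited source this is a fine analysis of types and forking in the pair (boundedness of formulas, stable embeddedness of $P$ with induced structure exactly $T$, control of algebraic closure), none of which is visible to the purely combinatorial composition argument. Your NSOP$_1$ sketch (characterize Kim-independence as field-theoretic algebraic independence plus Kim-independence of the $P$-parts, then verify the Kaplan--Ramsey criterion) is the right outline and is consistent with how such results are proved, but as written it is a plan rather than a proof, and it inherits the unresolved issues from the quantifier-elimination step.
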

From this we immediately have:
\begin{cor}\label{cor: VS's are NIP}
If $\Th(K)$ is stable/NIP/simple/NSOP$_1$, then the theory $T^K_{\VS, \infty}$ of infinite dimensional vector spaces over fields elementarily equivalent to $K$ (see Remark \ref{rem: VS complete and omega-cat}) is also  stable/NIP/simple/NSOP$_1$.
\end{cor}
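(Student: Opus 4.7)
The plan is to realize $T^K_{\VS, \infty}$ as a theory interpretable in the pair-of-fields theory $\ACF_{\Th(K)}$ supplied by Fact \ref{fac: pairs of fields}, and then invoke preservation of each of the four dividing lines under interpretations between complete theories.

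Concretely, I set $T := \Th(K)$ in the language $\mathcal{L}^K$ and consider the expansion $\ACF_T$ in $\mathcal{L}^P := \mathcal{L}^K \cup \{P\}$ from Fact \ref{fac: pairs of fields}. Given any $L \models \ACF_T$, the axioms ensure that $P(L) \models T$ is a subfield of $L$ with $[L : P(L)] = \infty$, so that $L$ viewed with the additive group $(L,+)$ as the $V$-sort and $P(L)$ as the $K$-sort (with its induced $\mathcal{L}^K$-structure) becomes an infinite dimensional two-sorted vector space over a model of $\Th(K)$. Scalar multiplication is interpreted as the restriction of field multiplication in $L$ to $P(L) \times L$, and the auxiliary symbols $\theta_p$ and $f_i^p$ of $\mathcal{L}^K_{\VS}$ are translated by their obvious defining $\mathcal{L}^P$-formulas, for instance
\[
\theta_p(v_1, \ldots, v_p) \; \leftrightarrow \; \forall \lambda_1, \ldots, \lambda_p \; \Bigl( \bigwedge_{i=1}^{p} P(\lambda_i) \wedge \sum_{i=1}^{p} \lambda_i v_i = 0 \to \bigwedge_{i=1}^{p} \lambda_i = 0 \Bigr),
\]
and similarly for the coordinate functions (read off from the existence/uniqueness of the scalars expressing $v$ as a linear combination of the $v_i$). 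This produces, uniformly in $L$, an $\mathcal{L}^K_{\VS}$-structure which is a model of $T^K_{\VS, \infty}$; by completeness of this theory (Remark \ref{rem: VS complete and omega-cat}(1)), we obtain a (two-sorted) interpretation of $T^K_{\VS, \infty}$ in $\ACF_T$.

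To finish, I invoke that each of stability, NIP, simplicity, and NSOP$_1$ passes from a complete theory to any complete theory interpretable in it (a general fact, since all four properties are formula-by-formula and closed under reducts). Combining this with Fact \ref{fac: pairs of fields}(2) applied to $T = \Th(K)$ in the language $\mathcal{L}^K$ then yields the corollary. I do not anticipate any substantive obstacle: the interpretation is transparent and relies only on the axiomatic content of $\ACF_T$, namely $P \models T$ together with infinite algebraic degree of the extension.
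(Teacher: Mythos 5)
Your proposal is correct and is essentially the paper's own argument: the paper likewise takes an algebraically closed extension $F \supseteq K$ of infinite degree, notes that $(K,F)\models \ACF_{\Th(K)}$ interprets a model of $T^K_{\VS,\infty}$, and concludes via completeness of $T^K_{\VS,\infty}$ (Remark \ref{rem: VS complete and omega-cat}) together with Fact \ref{fac: pairs of fields}(2) and preservation of these properties under reducts/interpretations. Your spelled-out definitions of $\theta_p$ and $f_i^p$ in $\mathcal{L}^P$ just make explicit what the paper calls the ``clear'' interpretation.
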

\begin{proof}
	Given a field $K$, we can take an infinite degree field extension $F$ which is algebraically closed. Then $(K,F) \models \ACF_{\Th(K)}$ (with $\mathcal{L}$ in Fact \ref{fac: pairs of fields} taken to be $\mathcal{L}^{K}$ from Definition \ref{def: language of bilin forms}), and $(K,F)$ clearly interprets a model of  $T^K_{\VS, \infty}$. The claim follows by completeness of $T^K_{\VS, \infty}$ (Remark \ref{rem: VS complete and omega-cat}) and preservation of stability/NIP/simplicity/NSOP$_1$ under reducts.
\end{proof}

The following lemma is an elaboration on the term reduction
argument that we have used in our proof of the bilinear case in \cite[Section 6.3]{chernikov2021n}.

\begin{fact}\cite[Lemma 5.9]{abd2023higher}\label{fac: term reduction}
	For any $\mathcal{L}^{K}_{\theta,f}$-term $t(\bar{x})$, with $\bar{x} = \bar{x}_V^\frown \bar{x}_K$, variables of sorts $V$ and $K$ respectively, $\bar{x}_V = \left(x^V_1, \ldots, x^V_m \right)$ with $x_i^V$ singleton variables, there is some $\mathcal{L}^K_{\VS} = \left( \mathcal{L}^{K}_{\theta,f} \setminus \left\{ \Bln \right\} \right)$-term $t'(\bar{x}, \bar{u})$ with $\bar{u}$ of sort $K$ so that $$T \models \forall \bar{x} \left( t(\bar{x}) = t'\left( \bar{x}, \left\langle \bar{x}_V  \right\rangle_n \right)  \right),$$
	where $\left\langle \bar{x}_V  \right\rangle_n := \left( \left \langle x^V_{i_1}, \ldots, x^V_{i_n}\right \rangle_n : (i_1, \ldots, i_n) \in \set{m}^n \right)$.
\end{fact}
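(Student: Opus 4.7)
The plan is to prove a strengthened statement by induction on the structure of an $\mathcal{L}^K_{\theta,f}$-term, handling $V$-sorted and $K$-sorted terms simultaneously. Write $\bar{u} = (u_{j_1,\ldots,j_n} : (j_1,\ldots,j_n) \in \set{m}^n)$ for the tuple of new $K$-sort variables, understood to be interpreted as $\langle \bar{x}_V \rangle_n$. I will prove by induction on the number of symbols in $s$:
\begin{enumerate}
\item[(i)] if $s(\bar{x})$ is of sort $V$, then there exist $\mathcal{L}^K_{\VS}$-terms $\mu_1(\bar{x},\bar{u}), \ldots, \mu_m(\bar{x},\bar{u})$ of sort $K$ such that $T \models s(\bar{x}) = \sum_{j=1}^m \mu_j\bigl(\bar{x}, \langle\bar{x}_V\rangle_n\bigr) \cdot_V x^V_j$;
\item[(ii)] if $t(\bar{x})$ is of sort $K$, then there exists an $\mathcal{L}^K_{\VS}$-term $t'(\bar{x}, \bar{u})$ such that $T \models t(\bar{x}) = t'\bigl(\bar{x}, \langle\bar{x}_V\rangle_n\bigr)$.
\end{enumerate}
The desired conclusion is exactly (ii) applied to the given term (if $t$ is of sort $V$, (i) together with the observation that $\sum_j \mu_j \cdot_V x^V_j$ is itself an $\mathcal{L}^K_{\VS}$-term suffices).

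The routine cases handle themselves. For the base cases, $x^V_i$ fits (i) with $\mu_j = \delta_{i,j}$, constants/variables of $K$-sort fit (ii) tautologically, and $0_V$ fits (i) with all $\mu_j = 0$. For the inductive step, the symbols $+_V$, $+_K$, $-_K$, $\cdot_K$, $^{-1}_K$ belong to $\mathcal{L}^K_{\VS}$ and are handled by direct substitution of inductive representations; scalar multiplication $k \cdot_V s_1$ distributes into $\sum_j (k \cdot \mu_j) \cdot_V x^V_j$; and $f_i^p(s_0,\ldots, s_p)$ is rewritten by first using (i) to replace each $V$-sorted subterm $s_\ell$ with the $\mathcal{L}^K_{\VS}$-term $\sum_j \mu_{\ell, j}(\bar{x},\bar{u}) \cdot_V x^V_j$ (the coefficients being already in $\mathcal{L}^K_{\VS}$ by (ii) applied to strictly smaller $K$-subterms), after which $f_i^p$ itself is a symbol of $\mathcal{L}^K_{\VS}$.

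The only genuinely delicate case is $t = \langle s_1, \ldots, s_n\rangle_n$ with $s_1,\ldots,s_n$ of sort $V$. Applying (i) to each $s_\ell$, we obtain $\mathcal{L}^K_{\VS}$-terms $\mu_{\ell,j}(\bar{x},\bar{u})$ with $s_\ell = \sum_j \mu_{\ell,j} \cdot_V x^V_j$. Since the theory $T$ includes the $n$-linearity axioms for $\langle -,\ldots,- \rangle_n$, it follows that
\[
t \;=\; \bigl\langle s_1, \ldots, s_n \bigr\rangle_n \;=\; \sum_{(j_1,\ldots,j_n) \in \set{m}^n} \mu_{1,j_1} \cdots \mu_{n,j_n} \cdot \bigl\langle x^V_{j_1}, \ldots, x^V_{j_n}\bigr\rangle_n.
\]
Under the intended substitution, the factor $\langle x^V_{j_1}, \ldots, x^V_{j_n}\rangle_n$ is precisely the coordinate $u_{j_1,\ldots, j_n}$ of $\bar{u}$. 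Thus $t$ equals the $\mathcal{L}^K_{\VS}$-term $\sum_{\vec{j}} \mu_{1,j_1}(\bar{x},\bar{u}) \cdots \mu_{n,j_n}(\bar{x},\bar{u}) \cdot u_{\vec{j}}$, establishing (ii).

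The main conceptual obstacle is simply to set up the simultaneous induction correctly: (i) produces $K$-coefficients that in turn require an inductive application of (ii), and the inductive application of (ii) inside $f_i^p$ requires an inductive application of (i). There is no circularity because both appeals are to strictly smaller subterms. Note that this argument uses nothing beyond the axioms of an $n$-linear space (multilinearity of the form, the trivial fact that vector-valued terms are built only from $0_V$, $+_V$, and $\cdot_V$ on scalars and the variables $\bar{x}_V$, so automatically lie in $\Span_K(\bar{x}_V)$); in particular it does not use non-degeneracy or the alternating axiom, and so holds for arbitrary $n$-linear forms.
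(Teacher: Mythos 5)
The paper does not prove this statement; it is quoted as a Fact with a citation to \cite[Lemma 5.9]{abd2023higher}, so there is no in-paper argument to compare yours against. Judged on its own, your proof is correct and complete. The two structural observations that make it work are both sound: first, since the only function symbols of $\mathcal{L}^{K}_{\theta,f}$ with output sort $V$ are $0_V$, $+_V$ and $\cdot_V$, every $V$-sorted term provably lies in $\Span_K(\bar{x}_V)$ with coefficients given by $K$-sorted terms, which is exactly your clause (i); second, the occurrences of $\langle -,\ldots,-\rangle_n$ and $f_i^p$ both have all arguments of sort $V$, so after applying (i) to those arguments the form collapses by multilinearity onto the tuple $\left\langle \bar{x}_V \right\rangle_n = \bar{u}$, while $f_i^p$ simply stays (it belongs to $\mathcal{L}^K_{\VS}$). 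The mutual recursion between (i) and (ii) is well-founded since each appeal is to a proper subterm, and your closing remark is also accurate: the argument uses only $n$-linearity, not non-degeneracy or the alternating axiom, which is consistent with the paper invoking the fact for a general theory $T$ of infinite dimensional $n$-linear $K$-spaces in the proof of Theorem \ref{thm: Granger}.
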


We can now prove the main result of the section.
\begin{proof}[Proof of Theorem \ref{thm: Granger}]
\textbf{(1)}	By assumption $T$ has quantifier elimination in the language $\mathcal{L}^{K}_{\theta,f}$, and $n$-dependent formulas are closed under Boolean combinations (Fact \ref{fac: props of n-dependent formulas}). Hence it suffices to show that every atomic $\mathcal{L}^{K}_{\theta,f}$-formula $\varphi(y_1, \ldots, y_{n+1})$ with $y_1, 
	\ldots, y_{n+1}$ arbitrary finite tuples of variables, is $n$-dependent in $T$. From the definition of $\mathcal{L}^{K}_{\theta,f}$, $\varphi(y_1, \ldots, y_{n+1})$ has the form $R \left(t_1 \left(\bar{x}\right), \ldots, t_d \left(\bar{x}\right) \right)$ for some  $d \in \omega$, $\mathcal{L}^{K}_{\theta,f}$-relation symbol $R$, $\mathcal{L}^{K}_{\theta,f}$-terms $t_1, \ldots, t_d$ and $\bar{x} := y_1^{\frown} \ldots^{\frown} y_{n+1}$. Permuting the variables if necessary, we have $\bar{x}  = \bar{x}_V^\frown \bar{x}_K$, variables of sorts $V$ and $K$ respectively, and $\bar{x}_V = \left(x^V_1, \ldots, x^V_m \right), \bar{x}_K = \left(x^K_1, \ldots, x^K_{\ell} \right)$ with $m, \ell \in \omega$ and  $x_i^V, x_i^K$ singleton variables. 
By Fact \ref{fac: term reduction} there are $\mathcal{L}^K_{\VS}$-terms $t'_j(\bar{x}, \bar{u})$ for $j \in \set{d}$  so that
\begin{gather*}
	T \models \forall \bar{x} \Big( t_j(\bar{x}) = t'_j\left( \bar{x}, \left\langle \bar{x}_V  \right\rangle_n \right)  \Big) \textrm{ for all } j \in \set{d} \textrm{, hence}\\
T \models \forall \bar{x} \Big( R \left(t_1 \left(\bar{x}\right), \ldots, t_d \left(\bar{x}\right) \right) \leftrightarrow R \left(t'_1\left( \bar{x}, \left\langle \bar{x}_V  \right\rangle_n \right), \ldots, t'_d\left( \bar{x}, \left\langle \bar{x}_V  \right\rangle_n \right) \right) \Big).
\end{gather*}
Let $\theta (\bar{x}, \bar{u})$ be the $\mathcal{L}^K_{\VS}$-formula $R \left(t'_1\left( \bar{x}, \bar{u} \right), \ldots, t'_d\left( \bar{x}, \bar{u} \right) \right)$, then $\varphi(y_1, \ldots, y_{n+1})$ is equivalent to $\theta \left( \bar{x}, \left\langle \bar{x}_V  \right\rangle_n \right)$ in $T$.

Each singleton $x^V_i, i \in \set{m}$ and $x^K_i, i \in \set{\ell}$ appears in some tuple $y_{t_i}, t_i \in \set{n+1}$. Hence for each $(i_1, \ldots, i_n) \in \set{m}^n$, adding dummy variables if necessary, each of the terms $\left \langle x^V_{i_1}, \ldots, x^V_{i_n}\right \rangle _n, x^V_i, x^K_i$ can be viewed as a term $g(y_{t_1}, \ldots, y_{t_n})$ for some $(t_1, \ldots, t_n) \in \set{n+1}^n$. 

As $\varphi(y_1, \ldots, y_{n+1})$ is $T$-equivalent to $\theta \left( \bar{x}, \left\langle \bar{x}_V  \right\rangle_n \right)$, it is then $T$-equivalent to $$\theta' \Big(g_{1}\left( \left( y_i : i \in s_1  \right) \right), \ldots, g_{D}\left( \left( y_i : i \in s_D  \right) \right) \Big),$$
where $\theta' \in \mathcal{L}^K_{\VS}$ is just $\theta$ with an appropriate  repartition of its variables, $D \in \omega$, $s_k \in [n+1]^n$  and $g_k$ is an $\mathcal{L}^{K}_{\theta,f}$-term, for each $k \in \set{D}$. As $T^K_{\VS, \infty}$ is the  $\mathcal{L}^K_{\VS}$-reduct  of $T$, and $T^K_{\VS, \infty}$ is NIP by assumption and Corollary \ref{cor: VS's are NIP}, it follows by Theorem \ref{Composition Lemma} that $\varphi(y_1, \ldots, y_{n+1})$ is $n$-dependent.

\noindent \textbf{(2)}  Fix $k \geq 2$ and assume that $K$ has IP$_k$, then by 
Fact \ref{fac: props of n-dependent formulas}(2),(3) it must be witnessed by some $\mathcal{L}_K$-formula $\varphi(\bar{x};y_1, \ldots, y_k)$ with each $y_i$ a single variable. So for any $p \in  \omega$ and $1 \leq \ell \leq k$  we can find sequences 
$$\left(c^{\ell}_{(i_1^{\ell},\dots, i^{\ell}_n)} : (i_1^{\ell},\dots, i^{\ell}_n) \in (p)^n \right)$$
 with $p$ viewed as an ordinal and $(p)^n$ 
ordered lexicographically, and all $c^{\ell}_{(i_1^{\ell},\dots, i^{\ell}_n)}$  pairwise distinct elements in $K$, such that: for every $I\subseteq ((p)^n)^k$ there is some $\bar{e}_I$ satisfying 
$$\models \varphi \left(\bar{e}_I; c^1_{(i_1^{1},\dots, i^{1}_n)},\ldots, c^k_{(i_1^{k},\dots, i^{k}_n)} \right) \iff \left((i_1^{1},\dots, i^{1}_n), \ldots, (i_1^{k},\dots, i^{k}_n) \right) \in I.$$
As the space $V$ is infinite dimensional,  we can choose  
$$\bar{a} := \left(a^{\ell}_{t,i} : 1 \leq \ell \leq k,  1 \leq t \leq n-1, i \in p \right)$$ 
a finite tuple consisting of linearly independent elements from $V$. 
%
 
 Since the $n$-linear form is  generic, and $\bar{a}$ is linearly independent, for each $1 \leq \ell \leq k$ and $j \in p$ there exists some $b^{\ell}_j \in V$ satisfying
 $$ \left \langle a^{\ell}_{1,i_1}, \dots, a^{\ell}_{n-1,i_{n-1}} , b^{\ell}_j \right \rangle_n = c^{\ell}_{(i_1,\dots, i_{n-1}, j)} $$
 for all $(i_1,\dots, i_{n-1}) \in (p)^{n-1}$. But then, identifying $((p)^n)^k$ with $(p)^{nk}$, for any set $I \subseteq (p)^{nk}$, we have 
\begin{align*}
	&\models \varphi \left(\bar{e}_I, \left \langle a^1_{1,i_{1}^1}, \dots, a^1_{n-1,i^{1}_{n-1}},  b^1_{j_1} \right \rangle_n, \ldots, \left \langle a^k_{1,i^{k}_1}, \dots, a^k_{n-1,i^{k}_{n-1}},  b^n_{j_n} \right \rangle_n \right)\\
	 \iff & \left(i_{1}^1, \dots, i^{1}_{n-1}, j_1 ; i^{2}_1, \dots, i^{2}_{n-1}  ,j_2; \dots; i^{k}_1, \dots, i^{k}_{n-1},j_k \right) \in I.
	 \end{align*}
As $p$ was arbitrary,  we conclude that the formula $$\psi(\bar{x};y_1, \ldots, y_{nk}) := \varphi \left(\bar{x}; \langle y_1,\dots,y_{n} \rangle_{n}, \ldots, \langle y_{n(k-1)+1}, \ldots, y_{nk} \rangle_n \right)$$ has $\IP_{nk}$.
\end{proof}

\subsection{NSOP$_1$}\label{sec: NSOP1 preserved}

In this section we consider another model theoretic tameness property NSOP$_1$, orthogonal to the $n$-dependence hierarchy, in the context of multilinear forms. We refer to  \cite{chernikov2016model, kaplan2020kim, kaplan2021transitivity, kaplan2019local, dobrowolski2022independence, chernikov2023transitivity} for the recently developed basic theory of NSOP$_1$ and Kim-independence.
Preservation of NSOP$_1$ in bilinear forms over NSOP$_1$ fields and related questions were considered in various contexts, starting with \cite{chernikov2016model} where it was demonstrated (relying on the results of Granger \cite{granger1999stability}) that if $K$ is an algebraically closed field, then the theory of non-degenerate symmetric or alternating bilinear forms over $K$ is NSOP$_1$. Following this, \cite{kaplan2020kim}  proposed a description of Kim-independence over models,  followed by \cite{met2023sets} which proposed some corrections to this description and a generalization from models to arbitrary sets. A variant of the argument describing Kim-independence with these corrections incorporated was given in \cite{kruckman2024new} (allowing also real closed fields), and a different argument is given in \cite{bossut2023note}. The proofs in \cite{kaplan2020kim, met2023sets, kruckman2024new} contain gaps:
\begin{remark}\label{rem: errors in proofs}
	\begin{enumerate}
		\item Some issues with the proof of \cite[Proposition 9.37]{kaplan2020kim} are already pointed out in \cite{met2023sets}, which proposed a correct description of Kim-independence.
		\item The issue with the proposed correction in the proof of \cite[Proposition 8.12]{met2023sets} is that, following the notation there, $\textrm{Lin}_{L}(A' \cup B_i)$ need not be a substructure containing $A' \cup B_i$ since it need not be closed under applying the bilinear form to pairs of vectors in $A' \times B_i$, so the claim that $A' B_0 \equiv A' B_i$ is not justified.
		\item In the proof of \cite[Theorem 4.9]{kruckman2024new}, the issue is that, following the notation there, it is possible that e.g.~$[a_{i'}, b_{0,k}] \in (B_1)_K \setminus (B_0)_K$, so assigning both $[a'_{i'}, b_{0,k}]^N := [a_{i'}, b_{0,k}]^{\mathbb{M}}$ and $[a'_{i'}, b_{1,k}]^N := [a_{i'}, b_{0,k}]^{\mathbb{M}}$ as suggested there, we again cannot conclude $A' B_1 \equiv_{M} A' B_0$. 
	\end{enumerate}
\end{remark}
 \noindent Here we correct the approach in \cite{kaplan2020kim, met2023sets, kruckman2024new}, generalizing to multilinear forms (and working over arbitrary NSOP$_1$ fields).

First we recall some basic notions and facts around NSOP$_1$.
\begin{defn} Let $M \models T$.
	\begin{enumerate}
		\item A formula $\varphi(x,a) \in \mathcal{L}(\mathbb{M})$ \emph{Kim-divides over $M$} if there is a global $M$-invariant type $q(y)$ extending $\tp(a/M)$ and a Morley sequence $(a_i)_{i \in \omega}$ in $q$ over $M$ (i.e.~$a_i \models q|_{M a_{<i}}$ for all $i \in \omega$; in particular $(a_i)_{i \in \omega}$ is indiscernible over $M$) so that $\{ \varphi(x,a_i) : i \in \omega \}$ is inconsistent.
		\item A formula $\varphi(x,a) \in \mathcal{L}(\mathbb{M})$ \emph{Kim-forks  over $M$} if there exist finitely many $\varphi_i(x,b_i) \in \mathcal{L}(\mathbb{M}), i < k$ so that each $\varphi_i(x,b_i)$ Kim-divides over $M$ and $\varphi(x,a) \vdash \bigvee_{i < k} \varphi_i(x,a_i)$.
		\item A (partial) type $\pi(x)$ Kim-divides (Kim-forks) over $M$ if it implies a formula that Kim-divides (respectively, Kim-forks) over $M$.
		\item In this section, we will write $a \ind_M b$ to denote that $\tp(a/Mb)$ does not Kim-divide over $M$.
	\end{enumerate}
\end{defn}

\begin{remark}\label{rem: inv implies Kim indep}
	We note that in any theory, for any tuples $A,B,C$, $A \ind^{u}_C B$ (i.e.~$\tp(A/BC)$ is finitely satisfiable in $C$) implies  $A \ind^{i}_C B$ (i.e.~$\tp(A/BC)$ extends to a global type invariant over $C$), which implies $A \ind^{f}_C B$ (i.e.~$\tp(A/BC)$ does not fork over $C$), which implies $A \ind_C B$ (i.e.~$\tp(A/BC)$ does not Kim-fork over $C$).
\end{remark}

\begin{fact}\label{fac: basic props of NSOP1}
	\begin{enumerate}
	\item If $T$ is  NSOP$_1$, then a formula $\varphi(x,a)$ Kim-divides over $M$ if and only if it Kim-forks over $M$, and this is witnessed by a Morley sequence of \emph{any} $M$-invariant global type  extending $\tp(a/M)$ \cite[Proposition 3.19]{kaplan2020kim}.
		\item A theory $T$ is NSOP$_1$ if and only if Kim-dividing is symmetric over models, i.e.~for any $M \models T$ and tuples $a,b$, $\tp(a/bM)$ does not Kim-divide over $M$ if and only if $\tp(b/aM)$ does not Kim-divide over $M$. \cite[Theorem 5.16]{kaplan2020kim}.
		\item A theory $T$ is simple if and only if Kim-independence $\ind$ satisfies base monotonicity over models: whenever $M \preceq N \models T$, if $a \ind_M Nb$, then $a \ind^K_N b$  \cite[Proposition 8.8]{kaplan2020kim}.
	\end{enumerate}
\end{fact}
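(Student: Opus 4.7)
All three items are imported from \cite{kaplan2020kim}, so the ``proof'' in the paper is simply to cite them; nonetheless, the main ideas driving each part can be sketched as follows.

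For part (1), the plan is to establish ``Kim's lemma for NSOP$_1$'': if $\varphi(x,a)$ Kim-divides over $M$ witnessed by a Morley sequence of \emph{some} $M$-invariant global extension of $\tp(a/M)$, then it Kim-divides along a Morley sequence of \emph{every} such extension. The strategy is to compare two candidate global invariant types $q,q'$ by building an indiscernible tree whose branches interleave their Morley sequences; if Kim's lemma fails, one uses compactness and Ramsey to convert the tree into an SOP$_1$ configuration for $\varphi$, contradicting the hypothesis. The upgrade from ``Kim-dividing'' to ``Kim-forking'' then follows formally from Kim's lemma together with the extension property of invariant types (any disjunction that Kim-forks must already have a disjunct that Kim-divides along \emph{any} chosen invariant Morley sequence).

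For part (2), the plan is a zig-zag / tree argument. If Kim-dividing over some $M \models T$ were asymmetric, say $\tp(b/Ma)$ Kim-divides but $\tp(a/Mb)$ does not, one iteratively builds an indiscernible tree indexed by a suitable tree order, at each stage using a coheir of $a$ to preserve non-Kim-dividing on one side and an invariant Morley sequence of $b$ to witness Kim-dividing on the other; reading off the inconsistencies along branches versus antichains produces an instance of SOP$_1$. The converse is more direct: given a formula witnessing SOP$_1$ via an indiscernible tree, one extracts tuples $a,b$ for which dividing behaves differently on the two sides.

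For part (3), the forward direction is essentially classical: in a simple theory Kim-dividing coincides with ordinary dividing, which satisfies base monotonicity. The converse combines base monotonicity of Kim-independence over models with the symmetry and Kim's lemma from parts (1)--(2) to verify the full list of independence axioms (local character, extension, symmetry, transitivity, base monotonicity) for $\ind$; the Kim--Pillay characterization then forces $\ind$ to coincide with forking-independence and $T$ to be simple.

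The hard step in each part is the tree construction that transfers an abstract failure of some property of Kim-independence into a concrete instance of SOP$_1$; this requires careful bookkeeping of indiscernibility along branches versus antichains and of invariance/coheir extensions at each level. These arguments are developed in detail in \cite{kaplan2020kim}, and we take them here as black boxes so that the subsequent analysis of multilinear forms can proceed.
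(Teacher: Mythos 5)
Your proposal matches the paper exactly: the statement is a "Fact" imported verbatim from Kaplan--Ramsey, and the paper gives no proof beyond the citations to \cite[Propositions 3.19, 8.8 and Theorem 5.16]{kaplan2020kim}, which is precisely what you do (your sketches of the tree/SOP$_1$ arguments are accurate summaries of that source but are not needed here). Treating these as black boxes is the intended reading.
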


For the rest of the section, let $T := \prescript{}{\Alt}T^K_n$ be the theory of non-degenerate alternating forms over fields elementarily equivalent to $K$. 
 For a set $A \subseteq \mathbb{M}$ we write $A_K$ for the elements of $A$ of the field sort and $A_V$ for the elements of $A$ of the vector space sort. For 
$A \subseteq \mathbb{M}^V$, we write $\langle A \rangle$ to denote $\Span_{\mathbb{M}_K}(A)$ (so $\langle A \rangle$ can be large even when $A$ is small). For $A,B,C \subseteq \mathbb{M}$, we write 
$A \ind^V_{C} B$ to denote $\langle A_V \rangle \cap \langle B_V \rangle \subseteq \langle C_V \rangle$. And we write $A \ind^{K}_C B$ to denote that $A_{K}$ and $B_K$ are Kim--independent over $C_K$ in the reduct $\mathbb{M}_{K}$ to a model of $\Th(K)$. We will also use the notation from Definition \ref{def: language of bilin forms}.

\begin{fact}\label{fac: KR lemma}
If $\tp(A/MB)$ does not Kim--divide over $M$ in $T$ and $\Th(K)$ is NSOP$_1$, then $A \ind_M^K B$ and $A \ind_M^V B$.

\end{fact}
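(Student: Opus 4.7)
I will establish the two independence statements separately by contrapositive, producing a Kim-dividing witness in $T$ from any failure of $A \ind^K_M B$ or $A \ind^V_M B$. For the former, if $A_K \not\ind^K_{M_K} B_K$ in $\Th(K)$, then some $\mathcal{L}_K$-formula $\varphi(x,b) \in \tp(A_K/M_KB_K)$ Kim-divides over $M_K$; by Fact \ref{fac: basic props of NSOP1}(1) every $M_K$-invariant Morley sequence of $b$ witnesses this. Taking an $M$-invariant (coheir) extension of $\tp(B/M)$ in $T$ and a Morley sequence $(B^{(i)})_{i<\omega}$, the field-sort restriction $(B^{(i)}_K)$ is such an $M_K$-invariant Morley sequence (invariance and the Morley conditions restrict to sublanguages), so $\{\varphi(x,b^{(i)})\}$ is inconsistent. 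As $\varphi$ is also an $\mathcal{L}^K_{\theta,f}$-formula in $\tp(A/MB)$, this witnesses Kim-dividing of $\tp(A/MB)$ in $T$, contradicting the hypothesis.

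For $A \ind^V_M B$, suppose there is $v \in \langle A_V\rangle \cap \langle B_V \rangle \setminus \langle M_V \rangle$, written as $v = \sum_{i=1}^{k} \lambda_i a_i = \sum_{j=1}^{l} \mu_j b_j$ with $\bar{a} \in A_V^k$ linearly independent over $\mathbb{M}_K$ and $\bar{b} \in B_V^l$. I would take a coheir Morley sequence $(B^{(i)})_{i < \omega}$ of $B$ over $M$ in $T$ of length $>k$ and first establish the key linear-algebraic claim: the family $\{\langle B^{(i)}_V \rangle / \langle M_V \rangle\}$ is linearly independent in $V/\langle M_V\rangle$, equivalently $\langle B^{(j)}_V \rangle \cap \langle MB^{(<j)}_V \rangle \subseteq \langle M_V \rangle$ for each $j$. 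A hypothetical witness $w \in \langle B^{(j)}_V \rangle \cap \Span(\bar{c}) \setminus \langle M_V \rangle$ with finite $\bar{c} \subseteq MB^{(<j)}$ yields a contradiction as follows. Pick a finite $\bar{m} \subseteq M_V$ with $\Span(\bar{c}) \cap \langle M_V \rangle \subseteq \Span(\bar{m})$, possible because $\Span(\bar{c})$ is finite-dimensional. Then $w \notin \Span(\bar{m})$, so $B^{(j)}$ satisfies
\[
\exists \bar{\alpha}, \bar{\beta}, z\,\bigl(z = \textstyle\sum \alpha_p y_p = \sum \beta_q c_q \wedge \theta_{|\bar{m}|+1}(\bar{m}, z)\bigr),
\]
and by finite satisfiability of the coheir type some $\bar{y}' \in M_V^{|\bar{y}|}$ realizes the same formula, forcing $z \in \Span(\bar{c}) \cap \langle M_V\rangle \subseteq \Span(\bar{m})$, contradicting $z \notin \Span(\bar{m})$.

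With the key claim in hand, for each finite linearly independent $\bar{m} \subseteq M_V$ the formula
\[
\varphi_{\bar{m}}(\bar{x};\bar{y}) := \exists \bar{\nu}, \bar{\mu}', z\,\bigl(z = \textstyle\sum \nu_i x_i = \sum \mu'_j y_j \wedge \theta_{|\bar{m}|+1}(\bar{m}, z)\bigr)
\]
lies in $\tp(\bar{a}/M\bar{b})$ since $v \notin \Span(\bar{m})$. Non-Kim-dividing of $\tp(A/MB)$ yields $\bar{a}^*$ realizing $\tp(\bar{a}/MB^{(i)})$ for all $i \leq k$ simultaneously, so $\bar{a}^* \models \varphi_{\bar{m}}(\bar{x}; \bar{b}^{(i)})$ for every $i \leq k$ and every finite $\bar{m} \subseteq M_V$. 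Fixing $i$ and varying $\bar{m}$ shows $\langle \bar{a}^* \rangle \cap \langle B^{(i)}_V \rangle$ is not contained in any $\Span(\bar{m}) \subseteq \langle M_V\rangle$; since this intersection is a $\leq k$-dimensional subspace, it cannot be contained in $\langle M_V \rangle$, so its image $W_i$ in $V/\langle M_V\rangle$ is a nonzero subspace of the image of $\langle \bar{a}^* \rangle$. By the key claim the $W_i$ for $i \leq k$ are linearly independent inside a $\leq k$-dimensional ambient space -- impossible. The main obstacle is the key linear-independence claim for the coheir Morley sequence, and the trick is to pick the parameter $\bar{m}$ so that $\Span(\bar{m})$ captures the finite-dimensional intersection $\Span(\bar{c}) \cap \langle M_V\rangle$ before invoking finite satisfiability.
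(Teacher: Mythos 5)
Your proof is correct, and it is essentially the argument the paper relies on: the paper's own ``proof'' of this Fact is a pointer to the proofs of Lemmas 4.7 and 4.8(3) of Kruckman--Ramsey, which run exactly along your lines — a coheir Morley sequence of $B$ over $M$, Kim's lemma in the NSOP$_1$ theory $\Th(K)$ for the field part (your restriction-to-$\mathcal{L}^K$ step is unproblematic precisely because you restrict a \emph{coheir}, so finite satisfiability, hence invariance, passes to the sublanguage without invoking stable embeddedness), and linear independence of the coheir conjugates of $B_V$ modulo $\langle M_V\rangle$ plus a dimension count for the vector part. Two cosmetic points: in the key claim you should pass to a maximal linearly independent subtuple of $\bar m$ before asserting $\theta_{|\bar m|+1}(\bar m, w)$, and ``$\bar a^*$ realizing $\tp(\bar a/MB^{(i)})$'' should read ``realizing the $M$-conjugate copy of $\tp(\bar a/M\bar b)$ over $\bar b^{(i)}$''; neither affects the argument.
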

\begin{proof}
	 By the proof of \cite[Lemma 4.7]{kruckman2024new} (but easier, as $\Th(K)$ is NSOP	$_1$, every coheir sequence witnesses Kim-dividing in $K$ by Fact \ref{fac: basic props of NSOP1}(1)) and the proof of \cite[Lemma 4.8(3)]{kruckman2024new} (the assumption that $K \models \RCF$ stated there is not used).
\end{proof}

\begin{remark}\label{rem: field sort stab emb}
	For any tuple $a$ in $\mathbb{M}_K$ and an ($\mathcal{L}^{K}_{\theta,f}$--)substructure $C$ of $\mathbb{M}$, $\tp(a/C)$ is determined by  $\tp_{\mathcal{L}^K}(a/C_K)$. In particular, in any model of $T := \prescript{}{\Alt}T^{K}_n$, the field sort is stably embedded, and the induced structure is just the $\mathcal{L}^K$--structure of $\Th(K)$.
\end{remark}

\begin{proof}
Follows by quantifier elimination (Theorem \ref{thm: QE for multilinear forms}), as an (iterated application of) Claim \ref{cla: multilin QE proof 1} (applied with $g := \id_{C}$) shows that any partial $\mathcal{L}^K$-isomorphism $h$ fixing $C_K$ and sending $a$ to some $a'$ extends to a partial $\mathcal{L}^{K}_{\theta,f}$-isomorphism extending both $h$ and $g$, hence  fixing $C$.
%
\end{proof}

\begin{prop}\label{prop: char of Kim div}
	Let $M \prec \mathbb{M}$ be a small model and $A, B \supseteq M$ small substructures. Then $\tp(A/B)$ does not Kim-divide over $M$ if and only if $A \ind^{K}_M B$ and $A \ind^{V}_M B$.
\end{prop}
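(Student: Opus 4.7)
The forward direction is immediate from Fact \ref{fac: KR lemma}. For the converse, assume $A \ind^{K}_M B$ and $A \ind^{V}_M B$; the goal is to show $\tp(A/MB)$ does not Kim-divide over $M$. Fix a global coheir $q$ of $\tp(B/M)$ (hence $M$-invariant by finite satisfiability in $M$), and let $I = (B_i)_{i < \omega}$ be a Morley sequence in $q$ over $M$ with $B_0 = B$; for each $i$ choose $\sigma_i \in \Aut(\mathbb{M}/M)$ with $\sigma_i(B_0) = B_i$. Assuming the NSOP$_1$ part of Theorem \ref{thm: NSOP1 and Kim indep} has already been established, Fact \ref{fac: basic props of NSOP1}(1) says that any Morley sequence in an $M$-invariant type witnesses Kim-non-dividing; thus it suffices to realize $\bigcup_{i < \omega} \sigma_i \cdot \tp(A/MB_0)$ in $\mathbb{M}$, for a realization $A^*$ will then satisfy $A^* B_i \equiv_M AB$ for every $i$. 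By quantifier elimination (Theorem \ref{thm: QE for multilinear forms}) this reduces to building a partial $\mathcal{L}^{K}_{\theta,f}$-isomorphism between the appropriate substructures.

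Extract from the coheir property two facts: (i) by stable embeddedness of the field sort (Remark \ref{rem: field sort stab emb}), $(B_{i,K})_{i<\omega}$ is itself a Morley coheir sequence over $M_K$ in $\Th(K)$; (ii) by finite satisfiability applied to the predicates $\theta_p$, one obtains $\langle B_{i,V} \rangle \cap \langle B_{<i,V} \rangle = \langle M_V \rangle$ for every $i$, and hence $\langle B_{i,V} \rangle \cap \langle B_{j,V} \rangle \subseteq \langle M_V \rangle$ for $i \neq j$. Now construct $A^*$ in $\mathbb{M}$ in stages. \emph{Field stage:} since $\Th(K)$ is NSOP$_1$ and $A_K \ind^{K}_{M_K} B_{0,K}$, applying Fact \ref{fac: basic props of NSOP1}(1) in $\Th(K)$ to the Morley coheir sequence $(B_{i,K})$ produces $A^*_K \subseteq \mathbb{M}_K$ realizing $\bigcup_{i} \sigma_i \cdot \tp_{\mathcal{L}^K}(A_K / M_K B_{0,K})$. \emph{Vector stage:} using $A \ind^{V}_M B$ together with (ii), pick $A^*_V \subseteq \mathbb{M}_V$ with the same $K$-linear relations over $M_V$ as $A_V$, chosen $K$-linearly independent from $\bigcup_i \langle B_{i,V} \rangle$ modulo $\langle M_V \rangle$ (possible by infinite-dimensionality).

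\emph{Form-value stage.} For each $i$ the value of $\langle -, \ldots, - \rangle_n$ on any tuple in $A^*_V \cup B_{i,V}$ is prescribed by transfer via $\sigma_i^{-1}$ from the corresponding tuple in $A_V \cup B_{0,V}$. Using that the form on $\mathbb{M}$ is generic (Lemma \ref{L:NonDegiffGeneric}), apply Lemma \ref{L:findingw} inductively to further refine the choice of $A^*_V$ so that all these prescriptions are realized simultaneously inside $\mathbb{M}$. The main obstacle is checking that the prescriptions are mutually consistent across the $\sigma_i$'s: a conflict could occur only on a tuple of vectors whose $B$-part lies simultaneously in $\langle B_{i,V} \rangle$ and $\langle B_{j,V} \rangle$ for distinct $i,j$ while not lying entirely in $\langle M_V \rangle$. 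By observation (ii) no such tuple exists---any common vector component must lie in $\langle M_V \rangle$, where $\sigma_i$ and $\sigma_j$ agree (as both fix $M$) and the transferred form value coincides with the original value from $A \cup M$. On tuples with vector parts drawn from several $B_i$'s outside $\langle M_V \rangle$, no single $\sigma_k$ prescribes a value and we are free to realize arbitrary alternating extensions through Lemma \ref{L:findingw}. Quantifier elimination (Theorem \ref{thm: QE for multilinear forms}) then yields $A^* B_i \equiv_M AB$ for all $i$, completing the proof.
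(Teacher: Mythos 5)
There are two genuine gaps in your converse direction. First, your reduction to a single coheir Morley sequence is circular: you justify "it suffices to realize $\bigcup_i \sigma_i \cdot \tp(A/MB_0)$" by citing Kim's lemma (Fact \ref{fac: basic props of NSOP1}(1)) for $T$, i.e.\ by assuming the NSOP$_1$ part of Theorem \ref{thm: NSOP1 and Kim indep} — but in the paper that theorem is \emph{deduced from} Proposition \ref{prop: char of Kim div} together with the symmetry criterion, so NSOP$_1$ of $T$ is not available at this stage. Non-Kim-dividing requires consistency along Morley sequences of \emph{every} $M$-invariant extension of $\tp(B/M)$, not of one chosen coheir. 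The paper avoids this by running the construction for an arbitrary $M$-invariant $q$; note that your auxiliary claim (ii) then cannot be obtained "by finite satisfiability applied to $\theta_p$" (no coheir available) — instead one gets $\ind^V_M$-independence of the sequence from Fact \ref{fac: KR lemma}, which only needs $\Th(K)$ NSOP$_1$.

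Second, and more seriously, your form-value stage does not address the field-sort coherence that is exactly where the earlier published arguments went wrong (Remark \ref{rem: errors in proofs}). The values $\langle \bar m, \bar a, \bar b \rangle_n$ generate a field $C_K$ which is in general strictly larger than $A_K \cup B_K$, and "transfer via $\sigma_i$" is either (a) transfer by the full automorphism $\sigma_i$, in which case the prescribed values involve $\sigma_i(A_K)$ and are unrelated to the already-chosen $A^*_K$, so the substructure generated by $A^* \cup B_i$ is not isomorphic over $M$ to the one generated by $A \cup B$ and $A^*B_i \equiv_M AB$ fails; or (b) transfer by the intended partial map $A_K \cup B_{0,K} \to A^*_K \cup B_{i,K}$, which must first be checked elementary and then extended to all of $C_K$ — this is precisely the paper's construction of the elementary extensions $\sigma^K_i : C_K \to K_i$, which you never build. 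Your consistency check only rules out conflicts coming from vector-sort overlaps of distinct $B_i$'s, and even there the assertion that $\sigma_i,\sigma_j$ "agree on $\langle M_V\rangle$ since both fix $M$" is not correct as stated: they fix $M$ pointwise but move the $\mathbb{M}_K$-coefficients in $\langle M_V \rangle$, and the transferred values $\langle m_{t_1},\ldots,m_{t_{n-1}}, a\rangle_n$ lie in $A_K$ (or $C_K$), which $\sigma_i$ need not fix. Finally, realizing the infinitely many prescriptions "inductively via Lemma \ref{L:findingw}" inside $\mathbb{M}$ needs a consistency/compactness argument; the paper supplies it by building the abstract structure $N$ on new vectors $\bar a'$, extending it to a non-degenerate model $N' \models T$, and embedding $N'$ into $\mathbb{M}$ over $\widetilde N$ by quantifier elimination, followed by the explicit verification (their Claim on $\widetilde\sigma_i$) that the generated substructures match. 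Without the $\sigma^K_i$ and this verification, the key claim $A^* B_i \equiv_M AB$ is not established.
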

\begin{proof}	
 If $\tp(A/MB)$ does not Kim-divide over $M$, then $A \ind^{K}_{M} B$ and $A \ind^{V}_M B$ by Fact \ref{fac: KR lemma}.
	
	For the other direction, assume $A \ind^{K}_{M} B$ and $A \ind^{V}_M B$.
	Let $q(\bar{y}) \in S_{\bar{y}}(\mathbb{M})$ be a global type extending $\tp(B/M)$ and invariant over $M$ (in the sense of $T$), and let  $(B_i)_{i \in \omega}$ be a Morley sequence in $q$ over $M$.
	 Let $B^* \models q(\bar{y})$ in some bigger monster model $\mathbb{M}' \succeq \mathbb{M}$ of $T$. Let $q_K(\bar{y}_K) := \tp_{\mathcal{L}^K}(B^*_K/\mathbb{M}_K)$. Then $q_K$ is a global type extending $\tp_{\mathcal{L}^K}(B_K/M_K)$, which is moreover $M_K$-invariant in the sense of $\Th_{\mathcal{L}^K}(K)$. Indeed, given any formula $\psi(\bar{y}_K,z) \in \mathcal{L}^K$ and tuples $c,c' \in \mathbb{M}_K$ with $c \equiv_{M_K}^{\mathcal{L}^K} c'$, by Remark \ref{rem: field sort stab emb} we have $c \equiv_{M} c'$. Hence by $M$-invariance of $q$ we have $\psi(\bar{y}_K,c) \in q_K \iff \psi(\bar{y}_K,c) \in q \iff \psi(\bar{y}_K,c') \in q \iff \psi(\bar{y}_K,c') \in q_K$. It follows that $((B_i)_K)_{i \in \omega}$ is a Morley sequence over $M_K$ of the global invariant type $q_K$ in the $\mathcal{L}^K$-reduct to a model of $\Th(K)$.
	Let $p(x;B) := \tp(A_K / B)$, and let $p'(x;B_K) := \tp_{\mathcal{L}^K}(A_K / B_K)$. As $B$ is a substructure of $\mathbb{M}$ by assumption, by Remark \ref{rem: field sort stab emb} we have $p'(x;B_K) \vdash p(x;B)$ in $T$.
	 For each $i$, let $p(x;B_i)$ and $p'(x; (B_i)_K)$ be the images of $p(x;B)$ and $p'(x; B_K)$, respectively, under an $\mathcal{L}_{\theta,f}^K$-isomorphism fixing $M$ and sending $B$ to $B_i$, we still have $p'(x;(B_i)_K) \vdash p(x;B_i)$. And as $A \ind^{K}_M B$, $p'(x; B_K)$ does not Kim-divide over $M_K$ in $\Th(K)$, hence there is some $A'_K$ in $\mathbb{M}_K$ such that $A'_K \models \bigcup_{i \in \omega} p'(x;(B_i)_K) \vdash \bigcup_{i \in \omega} p(x; B_i)$. That is, $A'_K B_i \equiv_M A_K B$ for all $i \in \omega$. 
	For each $i \in \omega$, fix an $\mathcal{L}_{\theta,f}^K$-automorphism
	\begin{gather*}
		\sigma_i : A_K \cup B \to A'_K \cup B_i, \ \sigma_i \restriction_{M} = \id  \restriction_{M}.
	\end{gather*}

	Let $C$ be the  $\mathcal{L}^{K}_{\theta,f}$-substructure of $\mathbb{M}$ generated by $A \cup B$, in particular $A_K \cup B_K \subseteq C_K$. As $\sigma_i \restriction_{A_K \cup B_K}: A_K \cup B_K \to  A'_K \cup (B_i)_K$ is elementary, we can extend it to an elementary map
	\begin{gather}
		\sigma^K_i : C_K \to K_i,  \  	\sigma^K_i \restriction_{A_K \cup B_K} = \sigma_i \restriction_{A_K \cup B_K}, \textrm{ in particular }  \sigma^K_i \restriction_{M_K} = \id \restriction_{M_K}  \label{eq: multilin form NSOP1 0}
	\end{gather}
	for some field $K_i \subseteq \mathbb{M}_K$ with $A'_K \cup (B_i)_K \subseteq K_i$.
	
	Choose some small $\widetilde{M} \preceq \mathbb{M}$ with $(K_i)_{i \in \omega} (B_i)_{i \in \omega} A'_K A_K \subseteq \widetilde{M}$, and let $\widetilde{K} := \widetilde{M}_{K}$, in particular $\widetilde{K} \models \Th(K)$.

	Let $\overline{m} = (m_i)_{i < \alpha}$ be a tuple from $M_V$ which is a basis for $\langle M_V \rangle$ viewed as a vector space over $\mathbb{M}_K$. Similarly, let $\overline{a} = (a_i)_{i < \beta}$ be a tuple from $A_V \supseteq M_V$ such that $\overline{m} \, \overline{a} $ is a basis of $\langle A_V \rangle$ as a vector space over $\mathbb{M}_K$. Note that by Lemma \ref{lem QE1} (as $A$ is a substructure) also
	\begin{gather}
		\overline{m} \, \overline{a} \textrm{ is a basis for } A_V \textrm{ as a vector space over } A_K
	\end{gather} 
	Let $\overline{b} = (b_i)_{i < \gamma}$ be a tuple from $B_V \supseteq M_V$ such that $\overline{b} \overline{m}$ is a basis of $\langle B_V \rangle$ as a vector space over $\mathbb{M}_K$. 
	For each $i,j < \omega$, let $b_{i,j} := \sigma_i(b_j)$, and let $\bar{b}_i := (b_{i,j} : j < \gamma)$, as $\sigma_i$	 is $\mathcal{L}^{K}_{\theta,f}$-elementary and fixes $M$, it follows that $\bar{m} \bar{b}_i$ is a basis for $\langle (B_V)_i \rangle$ as a vector space over $\mathbb{M}_K$.
	 As by assumption $A \ind^V_M B$, it follows that 
	\begin{gather}
		\overline{m} \cup \overline{a} \cup \overline{b} \textrm{ is a basis for } \langle A_V \cup B_V \rangle \textrm{ as a vector space over } \mathbb{M}_K.
	\end{gather}
	
	As $(B_i)_{i \in \omega}$ is a Morley sequence in an $M$-invariant type, it is also a $\ind^V_M$-independent sequence (by Fact \ref{fac: KR lemma} and Remark \ref{rem: inv implies Kim indep}). In particular, the set $\left\{m_i\right\}_{i < \alpha} \cup \left\{ b_{i,j} : i \in \omega, j < \gamma\right\}$ is linearly independent over $\mathbb{M}_K$.
	
	 Let $\widetilde{V} := \Span_{\widetilde{K}}(\overline{m} (\overline{b}_i)_{i \in \omega})$, the vector space over $\widetilde{K}$ spanned by $\overline{m} (\overline{b}_i)_{i \in \omega}$. Let $\widetilde{N} := (\widetilde{V}, \widetilde{K})$. Then $\widetilde{N}$ is a substructure of $\mathbb{M}$ (by Lemma \ref{lem QE2}  and the choice of $\widetilde{K}$) and $B_i \subseteq \widetilde{N}$ for all $i \in \omega$ (as by the choice of $\widetilde{K}$ it contains the values of $f_i^p$ on $((B_i)_V)_{i \in \omega}$). Note  that $\widetilde{N}$ is  small, and that the intended interpretation of $\theta_n$ and $f_i^p$ in $\widetilde{N}$ agrees with the interpretation of these symbols in $\mathbb{M}$ restricted to $\widetilde{N}$.
	
	Let $\bar{a}' = (a'_i)_{i < \beta}$ be a tuple of new vectors (not in $\mathbb{M}_V$) of the same length as $\bar{a}$. Let $W$ be the $\widetilde{K}$-vector space extending $\widetilde{V}$ with ordered basis $\overline{m} \cup \overline{a}' \cup (\overline{b}_i)_{i \in \omega}$ (which we order as written, by $\alpha + \beta + \omega \times \gamma$). We define a new $\mathcal{L}^{K}_{\theta,f}$-structure $N$ extending $\widetilde{N}$ with $N_K = \widetilde{N}_K = \widetilde{K}$ and $N_V = W$. The field structure on $\widetilde{K}$ is determined by $\widetilde{N}$ and the vector space structure has been determined, so it remains to define the  $n$-linear form $\langle -, \ldots, - \rangle_n^{N}$ on $W$ extending the $n$-linear form on $\widetilde{V}$ determined by $\widetilde{N}$.
	
	Note that \emph{any} function on strictly increasing $n$-tuples of vectors in an ordered basis extends uniquely to an alternating form on the whole vector space.	
	We take $\langle -, \ldots, - \rangle_n^{N}$  to agree with $\langle -, \ldots, - \rangle_n^{\mathbb{M}}$ on all such $n$-tuples from $\overline{m} \cup (\overline{b}_i)_{i \in \omega}$.
	
	 For any $q, r, s \in \{0, \ldots, n\}$ with $q+r+s = n$ and $r \geq 1$ (so there is at least one element from $\bar{a}'$ in the tuple) and $\alpha_1 < \ldots < \alpha_{q} < \alpha$, $\beta_{1} < \ldots < \beta_{r} < \beta$, $\gamma_{1} < \ldots < \gamma_{s} < \gamma$ and any $i < \omega$ we define
 \begin{gather}
 	\langle  m_{\alpha_{1}}, \ldots,  m_{\alpha_q}, a'_{\beta_{1}}, \ldots, a'_{\beta_{r}}, b_{i, \gamma_{1}}, \ldots, b_{i, \gamma_{s}} \rangle_n^{N} := \label{eq: multilin form NSOP1 1}\\
 \sigma_i^K \left( 	\langle  m_{\alpha_{1}}, \ldots,  m_{\alpha_q}, a_{\beta_{1}}, \ldots, a_{\beta_{r}}, b_{\gamma_{1}}, \ldots, b_{\gamma_{s}} \rangle_n^{\mathbb{M}} \right) \in K_i \subseteq \widetilde{K}. \nonumber
 \end{gather}
  For all other strictly increasing $n$-tuples from $\overline{m} \cup \overline{a}' \cup (\overline{b}_i)_{i \in \omega}$ (i.e.~containing $b_{i,*}$ and $b_{j,\ast}$ with $i \neq j \in \omega$) with at least one element in $\bar{a}'$,  we define $\langle -, \ldots, - \rangle_n^{N}$ to be an arbitrary element in $\widetilde{K}$. This uniquely determines an alternating $n$-linear form $\langle -, \ldots, - \rangle_n^{N}$ on the vector space $W$ extending $\langle -, \ldots, - \rangle_n^{\widetilde{N}}$. The natural interpretations of $\theta_n, f_i^p$ on $(W,\widetilde{K})$ agree with those on $\widetilde{N}$ (by the choice of $\widetilde{K}$, $\widetilde{N}_{K} = N_K$). So we have:
  \begin{gather}
  \widetilde{N} \leq N \textrm{ (an } \mathcal{L}^{K}_{\theta,f} \textrm{-substructure).}
  \end{gather}

   As $\widetilde{K} \models \Th(K)$, by Lemma \ref{lem: ext multilin to gen} and   completeness of $T$ (Theorem \ref{thm: QE for multilinear forms}) there exists $N' = \left(N',\widetilde{K} \right)$ such that $N' \models T$ (interpreting $f_i^p$ and $\theta_i$ naturally in $N'$), $\dim(N'_V) \leq \dim(N_V) + \aleph_0$ (so $N'$ is still small) and $N \leq N'$ (an $\mathcal{L}^{K}_{\theta,f}$-substructure, using that the field is the same). By quantifier elimination in $T$ there is an $\mathcal{L}^{K}_{\theta,f}$-embedding $\iota: N' \to \mathbb{M}$ over $\widetilde{N}$ (see e.g.~\cite[Proposition 4.3.28]{marker2006model}).

    Let $a''_i := \iota(a'_i)$ for $i < \beta$, and $\bar{a}'' := (a''_i : i < \beta)$. Let $A'_V  := \iota(\Span_{A_K}(\overline{m} \, \overline{a}')) = \Span_{A_K}(\overline{m} \, \overline{a}'')$ and $A' := (A'_K, A'_V)$.
   
   \begin{claim}\label{cla: elem maps in NSOP1}
   We have $A' B_i \equiv_{M} A B$ for all $i \in \omega$.
   \end{claim}
\proof

 Fix $i \in \omega$.

   Note that $\left( \Span_{C_K}(\overline{m} \overline{a} \overline{b}), C_K \right)$ is an $\mathcal{L}^{K}_{\theta,f}$-substructure (by Lemma \ref{lem QE2}, as $\overline{m} \overline{a} \overline{b}$  are $\mathbb{M}_K$-linearly independent and the choice of $C_K$) containing $A B$ (by the choice of $C_K$).
 
 Using \eqref{eq: multilin form NSOP1 1} and that  $\iota$ is an embedding which is an identity on $\widetilde{N}$ 
 we have
  \begin{gather}
 	\langle  m_{\alpha_{1}}, \ldots,  m_{\alpha_q}, a''_{\beta_{1}}, \ldots, a''_{\beta_{r}}, b_{i, \gamma_{1}}, \ldots, b_{i, \gamma_{s}} \rangle_n^{\mathbb{M}} =  \label{eq: multilin form NSOP1 3} \\
	\iota \left( \langle  m_{\alpha_{1}}, \ldots,  m_{\alpha_q}, a'_{\beta_{1}}, \ldots, a'_{\beta_{r}}, b_{i, \gamma_{1}}, \ldots, b_{i, \gamma_{s}} \rangle_n^{N} \right) = \nonumber \\ 
	\langle  m_{\alpha_{1}}, \ldots,  m_{\alpha_q}, a'_{\beta_{1}}, \ldots, a'_{\beta_{r}}, b_{i, \gamma_{1}}, \ldots, b_{i, \gamma_{s}} \rangle_n^{N} \in K_i. \nonumber
 \end{gather}

   We have that $\overline{m} \overline{a}'' \overline{b}_i$ is an $\mathbb{M}_K$-linearly independent set (as $\overline{m} \overline{a}' \overline{b}_i$ were linearly independent over $N'_K$ in $N'$ and the embedding $\iota$ preserves $\theta_i$'s) and the values of $\langle -, \ldots, - \rangle_n$ on $\overline{m} \overline{a}'' \overline{b}_i$ are contained in $K_i$ by the above. Hence, using Lemma \ref{lem QE2} again, $\left( \Span_{K_i}(\overline{m} \overline{a}'' \overline{b}_i), K_i \right)$ is an $\mathcal{L}^{K}_{\theta,f}$-substructure containing $A' B_i$.

By \eqref{eq: multilin form NSOP1 0} $\sigma_i^K: C_K \to K_i$ is an $\mathcal{L}^K$-isomorphism.   We define $\widetilde{\sigma}_i^V$ constant on $\bar{m}$, sending $a_\beta$ to $a''_{\beta}$ and $b_{\gamma}$ to $b_{i, \gamma}$, and extend by linearity to $\sigma^V_i: \Span_{C_K}(\overline{m} \overline{a} \overline{b}) \to \Span_{K_i}(\overline{m} \overline{a}'' \overline{b}_i)$: for any $q,r,s \in \omega$, $\alpha_j < \alpha, \beta_j < \beta, \gamma_j < \gamma$ and $k_j, k'_j, k''_j \in C_K$ we define
\begin{gather*}
	\sigma^V_i\left(\sum_{j \in [q]} k_j m_{\alpha_j} +  \sum_{j \in [r]} k'_j a_{\beta_j}  + \sum_{j \in [s]} k''_j b_{\alpha_j}  \right) := \\
	\sum_{j \in [q]} \sigma_i^K(k_j) m_{\alpha_j} +  \sum_{j \in [r]} \sigma_i^K(k'_j) a''_{\beta_j}  + \sum_{j \in [s]} \sigma_i^K(k''_j) b_{i, \alpha_j}.
\end{gather*}

Let $\widetilde{\sigma}_i := \sigma^V_i \cup \sigma_i^K$, we claim that 
\begin{gather*}
	\widetilde{\sigma}_i: \left( \Span_{C_K}(\overline{m} \overline{a} \overline{b}), C_K \right) \to \left( \Span_{K_i}(\overline{m} \overline{a}'' \overline{b}_i), K_i \right)
\end{gather*}
is an isomorphism of $\mathcal{L}^{K}_{\theta,f}$-structures.
By definition of $\widetilde{\sigma}_i$ and Lemma \ref{lem QE3}, it remains to verify that $\widetilde{\sigma}_i$ preserves $\langle -, \ldots, - \rangle_n$.
So we consider 
$$\langle  m_{\alpha_{1}}, \ldots,  m_{\alpha_q}, a_{\beta_{1}}, \ldots, a_{\beta_{r}}, b_{\gamma_{1}}, \ldots, b_{\gamma_{s}} \rangle_n$$ for $q, r, s \in \{0, \ldots, n\}$ with $q+r+s = n$.

If $r=0$ then $\langle  m_{\alpha_{1}}, \ldots,  m_{\alpha_q}, b_{\gamma_{1}}, \ldots, b_{\gamma_{s}} \rangle_n^{\mathbb{M}} \in B_K$, so using that $\sigma^K_i \restriction _{A_K \cup B_K} = \sigma_i \restriction A_K \cup B_K$ and $	\sigma_i: B \to B$ is an $\mathcal{L}^{K}_{\theta,f}$-isomorphism fixing $M$ we have
\begin{gather*}
 	\widetilde{\sigma}_i \left( \langle  m_{\alpha_{1}}, \ldots,  m_{\alpha_q}, b_{\gamma_{1}}, \ldots, b_{\gamma_{s}} \rangle_n^{\mathbb{M}} \right) = \\
 	\sigma_i  \left( \langle  m_{\alpha_{1}}, \ldots,  m_{\alpha_q}, b_{\gamma_{1}}, \ldots, b_{\gamma_{s}} \rangle_n^{\mathbb{M}} \right) = \\
 \langle  \sigma_i(m_{\alpha_{1}}), \ldots, \sigma_i( m_{\alpha_q}), \sigma_i(b_{\gamma_{1}}), \ldots, \sigma_i(b_{\gamma_{s}}) \rangle_n^{\mathbb{M}} =\\
	 \langle  m_{\alpha_{1}}, \ldots, m_{\alpha_q}, b_{i, \gamma_{1}}, \ldots, b_{i, \gamma_{s}} \rangle_n^{\mathbb{M}}.
 \end{gather*}
 
 And if $r \geq 1$, using \eqref{eq: multilin form NSOP1 3} we have
 \begin{gather*}
 	\widetilde{\sigma}_i \left( \langle  m_{\alpha_{1}}, \ldots,  m_{\alpha_q}, a_{\beta_{1}}, \ldots, a_{\beta_{r}},  b_{\gamma_{1}}, \ldots, b_{\gamma_{s}} \rangle_n^{\mathbb{M}} \right) = \\
 	\sigma^K_i  \left(  \langle  m_{\alpha_{1}}, \ldots,  m_{\alpha_q}, a_{\beta_{1}}, \ldots, a_{\beta_{r}},  b_{\gamma_{1}}, \ldots, b_{\gamma_{s}} \rangle_n^{\mathbb{M}} \right)  \overset{\eqref{eq: multilin form NSOP1 1}}{=} \\
 	\langle  m_{\alpha_{1}}, \ldots,  m_{\alpha_q}, a'_{\beta_{1}}, \ldots, a'_{\beta_{r}}, b_{i, \gamma_{1}}, \ldots, b_{i, \gamma_{s}} \rangle_n^{N} \overset{\eqref{eq: multilin form NSOP1 3}}{=}\\
 		\langle  m_{\alpha_{1}}, \ldots,  m_{\alpha_q}, a''_{\beta_{1}}, \ldots, a''_{\beta_{r}}, b_{i, \gamma_{1}}, \ldots, b_{i, \gamma_{s}} \rangle_n^{\mathbb{M}}.
 \end{gather*}
\qed$_{\textrm{Claim } \ref{cla: elem maps in NSOP1}}$

    It follows from the claim that $\tp(A/BM)$ does not Kim-divide over $M$, as wanted. 
\end{proof}

\begin{theorem}\label{thm: NSOP1 and Kim indep}
	If $\Th(K)$ is NSOP$_1$, then  $\prescript{}{\Alt}T^{K}_n$ is also NSOP$_1$. And for any $M\models T$ and $A, B \supseteq M$ small substructures, $\tp(A/MB)$ does not Kim-divide over $M$ if and only if $A \ind^{K}_M B$ and $A \ind^{V}_M B$.
\end{theorem}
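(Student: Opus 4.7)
The characterization direction of the theorem is already contained in Proposition~\ref{prop: char of Kim div}; accordingly, my plan is to deduce NSOP$_1$ of $T$ from that characterization via the symmetry criterion Fact~\ref{fac: basic props of NSOP1}(2), which asserts that $T$ is NSOP$_1$ if and only if Kim-dividing is symmetric over models.

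To apply the criterion, I need symmetry of non-Kim-dividing for arbitrary parameter tuples $a, b$ over any $M \models T$. I plan to reduce to the substructure case handled in Proposition~\ref{prop: char of Kim div} by replacing $a, b$ with the small $\mathcal{L}^K_{\theta,f}$-substructures $A := \langle Ma \rangle$ and $B := \langle Mb \rangle$; Kim-dividing is unaffected by this enlargement, since an $M$-invariant global type extending $\tp(b/M)$ lifts canonically to one extending $\tp(B/M)$ (the substructure generation being $\Aut(\mathbb{M}/M)$-equivariant), with corresponding Morley sequences producing the same consistency condition for $a$. Once I work with substructures, Proposition~\ref{prop: char of Kim div} reduces the task to checking that the conjunction $A \ind^K_M B \wedge A \ind^V_M B$ is symmetric in the pair $(A, B)$. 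Symmetry of $\ind^V_M$ is immediate from its definition $\langle A_V \rangle \cap \langle B_V \rangle \subseteq \langle M_V \rangle$. For $\ind^K_M$, which by definition asks for Kim-non-dividing of $A_K$ over $B_K$ in the $\mathcal{L}^K$-reduct $\mathbb{M}_K \models \Th(K)$, symmetry follows by applying Fact~\ref{fac: basic props of NSOP1}(2) inside $\Th(K)$ itself, which is NSOP$_1$ by assumption; stable embeddedness of the field sort (Remark~\ref{rem: field sort stab emb}) ensures that this reduct-level Kim-independence coincides with what appears in the characterization of the full theory.

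The substantial technical work has already been done in the proof of Proposition~\ref{prop: char of Kim div}, where, given a Morley sequence $(B_i)_{i \in \omega}$ in an $M$-invariant global type extending $\tp(B/M)$, one manufactures a copy $A'$ of $A$ simultaneously realizing $\tp(A/MB_i)$ for every $i$ --- the delicate point being the extension of $\langle -, \ldots, - \rangle_n$ to the enlarged vector space so that its values on mixed tuples transport correctly under the field isomorphisms $\sigma^K_i$, exploiting the alternating property together with the freedom to prescribe values only on strictly increasing tuples from an ordered basis. Granted that construction and the symmetry argument above, Theorem~\ref{thm: NSOP1 and Kim indep} follows with no further obstacles. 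The main subtlety worth flagging is that Proposition~\ref{prop: char of Kim div} already invokes NSOP$_1$ of $\Th(K)$ (to apply Fact~\ref{fac: basic props of NSOP1}(1) in the field sort, so that coheir Morley sequences witness Kim-dividing there), so all the NSOP$_1$-content of the argument is concentrated in that step, and the present deduction of NSOP$_1$ for $T$ is then purely formal.
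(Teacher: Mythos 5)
Your proposal is correct and follows essentially the same route as the paper, whose proof of Theorem \ref{thm: NSOP1 and Kim indep} is precisely the combination of Proposition \ref{prop: char of Kim div} with the symmetry criterion of Fact \ref{fac: basic props of NSOP1}(2). The details you supply (reduction from arbitrary tuples to generated substructures, symmetry of $\ind^V_M$, and symmetry of $\ind^K_M$ via NSOP$_1$ of $\Th(K)$ together with stable embeddedness of the field sort) are exactly the routine steps the paper leaves implicit.
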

\begin{proof}
	By Proposition \ref{prop: char of Kim div} and Fact \ref{fac: basic props of NSOP1}(2).
\end{proof}

\begin{cor}\label{cor: multilin fin field simple}
	If the field $K$ is finite, then  $\prescript{}{\Alt}T^{K}_n$ is simple.
\end{cor}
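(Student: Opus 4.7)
The plan is to deduce simplicity from the NSOP$_1$ part of Theorem~\ref{thm: NSOP1 and Kim indep} together with the explicit description of Kim-independence given there. Since $K$ is finite it is (totally categorical and) NSOP$_1$, hence $T := \prescript{}{\Alt}T^K_n$ is NSOP$_1$ by Theorem~\ref{thm: NSOP1 and Kim indep}. By Fact~\ref{fac: basic props of NSOP1}(3), in order to upgrade NSOP$_1$ to simplicity it suffices to verify \emph{base monotonicity} of $\ind$ over models: for any $M \preceq N \models T$ and tuples $a, b$, if $a \ind_M Nb$ then $a \ind_N b$.

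To reduce this to the two components $\ind^K$ and $\ind^V$, I would pass to the substructures $A := \langle Ma \rangle$, $B := \langle Nb \rangle$ and $\widetilde A := \langle Na \rangle$ (all substructures contain the appropriate base model), so that by Theorem~\ref{thm: NSOP1 and Kim indep} the condition $a \ind_M Nb$ becomes $A \ind^K_M B \;\wedge\; A \ind^V_M B$, while $a \ind_N b$ becomes $\widetilde A \ind^K_N B \;\wedge\; \widetilde A \ind^V_N B$. I would therefore need to check that each of $\ind^K$ and $\ind^V$ satisfies base monotonicity over models.

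The $\ind^K$ component is trivial under the finite-field hypothesis: since $K$ is finite and the field sort of any model of $T$ is elementarily equivalent to $K$, we have $M_K = N_K = \mathbb{M}_K = K$, so $\widetilde A_K, B_K \subseteq K = N_K$ and $\widetilde A \ind^K_N B$ holds automatically. For the $\ind^V$ component, I would argue directly from the definition $A \ind^V_C B \iff \langle A_V \rangle \cap \langle B_V \rangle \subseteq \langle C_V \rangle$. Writing $\langle \widetilde A_V \rangle = \langle N_V \rangle + \langle a_V \rangle$ and using that $\langle N_V \rangle \subseteq \langle B_V \rangle$ (since $N \subseteq B$), any $v \in \langle \widetilde A_V \rangle \cap \langle B_V \rangle$ decomposes as $v = n + w$ with $n \in \langle N_V \rangle$ and $w \in \langle a_V \rangle \subseteq \langle A_V \rangle$; then $w = v - n \in \langle B_V \rangle$, so $w \in \langle A_V \rangle \cap \langle B_V \rangle \subseteq \langle M_V \rangle \subseteq \langle N_V \rangle$, hence $v \in \langle N_V \rangle$. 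This is just the standard base-monotonicity of the (modular) linear-span pregeometry.

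There is no real obstacle here beyond bookkeeping; the only point that needs a moment of care is invoking Proposition~\ref{prop: char of Kim div} with the right substructures on both sides to rewrite the Kim-independence conditions in the characterization before and after base change from $M$ to $N$. Combining the two components gives base monotonicity for $\ind$, and Fact~\ref{fac: basic props of NSOP1}(3) then yields simplicity of $\prescript{}{\Alt}T^K_n$.
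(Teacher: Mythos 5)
Your proposal is correct and follows essentially the same route as the paper: invoke the characterization of Kim-independence (Proposition \ref{prop: char of Kim div}/Theorem \ref{thm: NSOP1 and Kim indep}), note that the field component is automatic since every substructure has field sort $K$ when $K$ is finite, observe that the remaining $\ind^V$ condition is just independence in the modular span pregeometry and hence satisfies base monotonicity, and conclude simplicity via the Kaplan--Ramsey criterion in Fact \ref{fac: basic props of NSOP1}. Your write-up is in fact slightly more careful than the paper's (you spell out the modularity computation and correctly cite part (3) of Fact \ref{fac: basic props of NSOP1}, where the paper's proof says ``(2)'').
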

\begin{proof}
Take any model $M$ and substructures $A,B \supseteq M$.
	As $K$ is finite, for any substructure $A$ of $\mathbb{M}$ we have $A_K = K$. As obviously $K \ind^u_K K$,  hence $K \ind^K_K K$ (Remark \ref{rem: inv implies Kim indep}) we have in particular $A \ind^K_M B$. Hence, by Proposition \ref{prop: char of Kim div}, $\tp(A/MB)$ does not Kim-divide over $M$ if and only if $\Span_{K}(A_V) \cap \Span_{K}(B_V) \subseteq \Span_{K}(M_V)$; that is if and only $A_V$ and $B_V$ are independent over $M_V$ in the stable theory of infinite dimensional vector spaces over finite fields. This implies in particular that Kim dividing in $\prescript{}{\Alt}T^{K}_n$ satisfies base monotonicity, hence $\prescript{}{\Alt}T^{K}_n$ is simple by Fact \ref{fac: basic props of NSOP1}(2).
\end{proof}

\section{Invariant connected components $G^{\infty}$ in $n$-dependent groups}
\label{sec: inv conn comp}

\subsection{Invariant subgroups of bounded index}\label{sec: inv conn prelims}

Throughout the section, we let $T$ be a complete theory in a language $\mathcal{L}$. Let $G = G(\mathbb{M})$ be a type-definable group (to simplify the notation, over $\emptyset$) and $S \subseteq \mathbb{M}$ a parameter set.

\begin{defn}
	For a small set of parameters $S \subseteq \mathbb{M}$, we let: 
	\begin{enumerate}
		\item $G^0_S$ be the intersection of all $S$-definable subgroups of $G$ of finite index,
		\item $G^{00}_S$ be the intersection of all $S$-type-definable subgroups of $G$ of bounded index,
		\item $G^{\infty}_S$ (sometimes also denoted $G^{000}_S$) be the intersection of all $S$-invariant subgroups of $G$ of bounded index.
	\end{enumerate}
\end{defn}

We have $G^{\infty}_S \subseteq G^{00}_{S} \subseteq G^{0}_S$ (all of these coincide in stable theories, but the inclusions can be proper already in dependent theories \cite{conversano2012connected}); and as $S$ is small, $G^0_S, G^{00}_S$ are $S$-type-definable subgroups of $G$ of bounded index, and $G^{\infty}_S$ is an $S$-invariant subgroup of $G$ of bounded index. A fundamental fact about dependent groups is the ``absoluteness'' of their connected components:
\begin{fact}\label{fac: NIP G00 absolute}
	Let $T$ be dependent. Then for any small set of parameters $S$ we have $G^{00}_S = G^{00}_{\emptyset}$ (Shelah, \cite{shelah2008minimal}) and $G^{\infty}_S = G^{\infty}_{\emptyset}$ (Shelah \cite{MR3666349} in the abelian case, and Gismatullin \cite{Gismatullin2011} in general).
\end{fact}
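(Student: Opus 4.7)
My plan is to treat the two statements separately, following the standard arguments of Shelah and Gismatullin that are being cited. In both cases the trivial inclusion $G^{\bullet}_{S} \subseteq G^{\bullet}_{\emptyset}$ holds (more parameters can only shrink the intersection), so the content is the reverse inclusion: every relevant $S$-invariant/type-definable bounded-index subgroup already contains a $\emptyset$-invariant/type-definable one.

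For $G^{00}_S = G^{00}_{\emptyset}$, the key input is the Baldwin--Saxl lemma in NIP: any uniformly $\mathcal{L}$-definable family of subgroups of $G$ has the property that every finite subintersection equals one of at most $n_\varphi$ of its members, where $n_\varphi$ depends only on the formula. I would take an arbitrary $S$-type-definable bounded-index subgroup $H = \bigcap_{i \in I} H_i$, where each $H_i = \varphi_i(x;a_i)$ is an $a_i$-definable subgroup (one can arrange this by replacing $\varphi_i(x;a_i)$ by the subgroup it generates, which remains definable uniformly in the parameter). For each $i$, form the $\emptyset$-invariant subgroup $\widetilde H_i := \bigcap\{\varphi_i(x;a') : \varphi_i(x;a') \text{ is a subgroup of } G\}$. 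By Baldwin--Saxl this intersection is realized by boundedly many instances, so $\widetilde H_i$ is $\emptyset$-type-definable, and the intersection $\bigcap_i \widetilde H_i$ is a $\emptyset$-type-definable subgroup. A compactness/counting argument using the uniform Baldwin--Saxl bound then shows this intersection still has bounded index in $G$, and it is contained in $H$ by construction; passing to $G^{00}_\emptyset$ gives the desired inclusion $G^{00}_\emptyset \subseteq H$, hence $G^{00}_\emptyset \subseteq G^{00}_S$.

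For $G^{\infty}_S = G^{\infty}_{\emptyset}$, I would use the description (to be recalled in Sections \ref{sec: Lstp and thick}--\ref{sec: Ginfty and Lstp}) of $G^{\infty}_S$ as the subgroup generated by the commutator set $X_S = \{a^{-1}b : a \equiv^{L}_S b\}$, together with the characterization of Lascar equivalence via chains of $S$-indiscernible sequences or, equivalently, via thick formulas over $S$. In the abelian case, Shelah's argument shows that for any $g \in G^{\infty}_S$ one can write $g$ as a product of boundedly many elements of $X_S$, and then NIP (via uniformly thick formulas and a compactness-plus-Ramsey argument) lets one replace the parameters from $S$ by an indiscernible sequence over $\emptyset$, producing the same $g$ in $(X_\emptyset)^n$; hence $g \in G^{\infty}_\emptyset$. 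In the non-abelian case, Gismatullin's argument follows the same template but handles the non-commutative manipulations of the sets $X_S^n$ via a careful tracking of conjugates; the underlying compactness reduction using thick formulas in NIP is unchanged.

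The main obstacle in both parts is precisely the interplay between the parameter set $S$ and the invariance/type-definability over $\emptyset$: there is no a priori reason that an intersection of boundedly many conjugates of an $S$-type-definable group should stabilize. In the $G^{00}$ case this is resolved cleanly by Baldwin--Saxl; in the $G^{\infty}$ case the obstacle is genuinely harder since one cannot directly type-define an $\Aut(\mathbb{M})$-invariant subgroup, and the thick/uniformly thick formula formalism must be developed to serve as a substitute, which is why the $G^{\infty}$ statement was only established later than its $G^{00}$ counterpart.
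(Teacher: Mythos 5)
The paper offers no proof of this statement—it is quoted as a Fact with citations to Shelah and Gismatullin—so your sketch can only be measured against the cited arguments, and measured that way both halves have genuine gaps. For $G^{00}$: your reduction of an arbitrary $S$-type-definable bounded-index subgroup $H$ to an intersection of definable subgroups fails. A type-definable subgroup is an intersection of definable \emph{sets}, not of definable subgroups; the subgroup generated by a definable set is in general only $\bigvee$-definable, and replacing each $\varphi_i(x;a_i)$ by the subgroup it generates typically destroys $H$. The circle group $SO(2)$ in a monster real closed field is the standard counterexample: $G^{00}$ is the infinitesimal neighbourhood of the identity, a proper type-definable subgroup of bounded index, yet every definable set in the defining intersection generates the whole group and $G$ has no proper definable subgroups of finite index, so $G^{00}$ is not an intersection of definable subgroups and your $\bigcap_i \widetilde{H}_i$ would be all of $G$. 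Baldwin--Saxl as you invoke it yields absoluteness of $G^{0}$; Shelah's proof for $G^{00}$ requires a type-definable analogue, using the bounded-index hypothesis and an indiscernible-sequence/alternation argument to show that finite intersections of $\Aut(\mathbb{M})$-conjugates of $H$ have uniformly bounded index before compactness gives that $\bigcap_{\sigma}\sigma(H)$ is $\emptyset$-type-definable of bounded index.

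For $G^{\infty}$ you argue the wrong inclusion. Since any $S$-indiscernible sequence is $\emptyset$-indiscernible (equivalently $\Theta_{\emptyset}\subseteq\Theta_S$), one has $X_S\subseteq X_{\emptyset}$ and hence $G^{\infty}_S=\langle X_S\rangle\subseteq\langle X_{\emptyset}\rangle=G^{\infty}_{\emptyset}$ for free; your paragraph, which starts with $g\in G^{\infty}_S$, writes it as a bounded product over $X_S$ and then ``replaces the parameters from $S$ by an indiscernible sequence over $\emptyset$'' to land in $(X_{\emptyset})^n$, only re-derives this trivial containment. The substance of the Shelah/Gismatullin theorem is the reverse direction $X_{\emptyset}\subseteq\langle X_S\rangle$: given $a^{-1}b$ with $(a,b)$ beginning a $\emptyset$-indiscernible sequence, one must produce a bounded-length product of elements $c^{-1}d$ with $c\equiv^{L}_{S}d$ equal to it, and that is exactly where the NIP alternation argument (and, in the non-abelian case, the conjugation bookkeeping) is needed. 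As written, neither half of the sketch contains the idea that makes the nontrivial inclusion work.
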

\noindent This is no longer true in $2$-dependent groups (see Section \ref{sec: Ginfty in multilin}), however the following ``relative absoluteness'' holds:
\begin{fact}(Shelah, \cite{MR3666349})\label{fac: Sh G00 2-dep}
	Let $T$ be $2$-dependent, $G$ an $S$-type-definable group, $\kappa := \beth_2(|S| + |T|)^{+}$, $\M \supseteq S$ a $\kappa$-saturated model, and $\bar{b}$ an arbitrary finite tuple in $\mathbb{M}$. Then 
	$$G^{00}_{\M \bar{b}} = G^{00}_{\M} \cap G^{00}_{C \bar{b}}$$
	for some $C \subseteq \M$ with $|C| < \kappa$.
\end{fact}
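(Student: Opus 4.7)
The plan is as follows. The inclusion $G^{00}_{\M \bar b} \subseteq G^{00}_\M \cap G^{00}_{C \bar b}$ is automatic for every $C \subseteq \M$, since enlarging the parameter set can only shrink $G^{00}$; so the content lies in producing a small $C$ for the reverse inclusion. I would argue by contradiction: suppose no such $C$ with $|C| < \kappa$ works, so that for each such $C$ there exist $g_C \in G^{00}_\M \cap G^{00}_{C \bar b}$ and a type-definable bounded-index subgroup $H_C \leq G$ over $\M \bar b$ with $g_C \notin H_C$. Since $G^{00}_{\M \bar b}$ is an intersection of such $H$, and each such $H$ is in turn an intersection of conditions coming from single $\mathcal L$-formulas $\phi(x;\bar m, \bar b)$ with $\bar m \in \M$, a pigeonhole on the formula lets me fix one partitioned formula $\phi(x;y,z)$ to be used throughout, together with a choice of tuple $\bar m_C \in \M$ and a canonical presentation of the obstruction (e.g.\ via stabilizers/thick formulas as in Section \ref{sec: Lstp and thick}) so that the bounded-index subgroup attached to $\phi(x;\bar m_C,\bar b)$ does not contain $g_C$.

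Next I would run a two-step Erd\H{o}s--Rado extraction, justified precisely by $\kappa = \beth_2(|S|+|T|)^+$. A first extraction on the family $(\bar m_C, g_C)$ yields a base $C_0 \subseteq \M$ with $|C_0| \leq \beth_1(|S|+|T|)$ and an infinite sequence $(\bar m_i)_{i < \omega}$ in $\M$ indiscernible over $C_0$, with matching $g_i$ witnessing the same failure for the same formula $\phi$. A second Ramsey-theoretic refinement, this time over $\bar b$, adjusts $(\bar m_i)$ to be indiscernible over $C_0\bar b$ as well, while preserving that each $g_i$ lies in $G^{00}_\M \cap G^{00}_{C_0 \bar m_{<i} \bar b}$ but violates the subgroup attached to $\phi(x;\bar m_i,\bar b)$.

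The last step is to translate this configuration into a witness to IP$_2$. Since $g_i \in G^{00}_\M$, for every bounded-index $\M$-type-definable subgroup $K$ we have $g_i \in K$; in particular for every $\bar b' \equiv_\M \bar b$ the bounded-index subgroup attached to $\phi(x;\bar m_j,\bar b')$ contains $g_i$, so the ``$\bar b$ vs.\ $\bar b'$'' comparison detects the obstruction at the diagonal $i=j$. Upgrading the two-dimensional array $(g_i, \bar m_j)$ against $\bar b$ to a $G_{2,p}$-indiscernible configuration via Fact \ref{fac: random hypergraph indiscernibles exist}, and noting that the truth-value pattern of $\phi$ (or of a boolean combination built from $\phi$ and the group law) alternates strictly off-diagonal versus on the diagonal, an application of Fact \ref{fac: indisc witness to IPn} for $n=2$ produces a partitioned formula with IP$_2$, contradicting the 2-dependence of $T$.

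The main obstacle is the honest localization in the first step: $G^{00}_{\M\bar b}$ is type-definable rather than definable, so certifying the non-containment $g \notin G^{00}_{\M\bar b}$ by a single formula whose parameters from $\M$ can be arranged into an indiscernible sequence takes care. A clean way is to work with thick formulas and approximate cosets (in the spirit of Sections \ref{sec: Lstp and thick}--\ref{sec: Ginfty and Lstp} of this paper) or, alternatively, with a Newelski-style presentation of $G^{00}$ as the stabilizer of a finitely satisfiable type; either route then has to confront precisely the 2-dependent combinatorics at the IP$_2$-extraction step, and the bound $\beth_2$ is consumed exactly by the two rounds of Erd\H{o}s--Rado needed to stabilize the formula and then to produce the indiscernible sequence in $\M$.
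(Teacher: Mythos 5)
First, note that the paper does not actually prove this statement: it is quoted as a Fact due to Shelah, and Remark 5.5(2) observes that it is the $k=2$ instance of Fact \ref{fac: G00 for n-dep} (proved in \cite{chernikov2021n}); the closest argument carried out in this paper is the $G^{\infty}$ analogue, Theorem \ref{thm: Ginfty k-dep main}. Your high-level plan (argue by contradiction, localize the failure to a single formula, run Erd\H{o}s--Rado/pigeonhole extractions to get an indiscernible configuration with a triangular membership pattern, and convert it into an IP$_2$ witness) is indeed the shape of that argument, so the strategy is sound in outline. But two steps, as written, are genuinely broken.

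The first is your claim that ``since $g_i \in G^{00}_{\M}$, for every $\bar b' \equiv_{\M} \bar b$ the bounded-index subgroup attached to $\phi(x;\bar m_j,\bar b')$ contains $g_i$.'' This is false: that subgroup is type-definable over $\M\bar b'$, not over $\M$, and the whole point of the theorem is that $G^{00}_{\M\bar b'}$ may be strictly smaller than $G^{00}_{\M}$, so $g_i \in G^{00}_{\M}$ gives no containment. The containment is only automatic when $\bar b'$ lies \emph{inside} $\M$, and this is exactly where the $\kappa$-saturation of $\M$ (which your sketch never invokes, although it is a hypothesis of the statement) must be used: one realizes the relevant small fragments of $\tp(\bar b/\M)$ by tuples in $\M$, and for those copies the attached subgroups are over $\M$ and hence contain $G^{00}_{\M}$. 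The second gap is the final IP$_2$ extraction. Fact \ref{fac: indisc witness to IPn} (or any direct verification of IP$_2$) requires shattering: for every finite subset $I$ of the two-dimensional parameter grid you must exhibit a single element whose $\phi$-pattern against the grid is exactly $I$. A $G_{2,p}$-indiscernible array on which one formula merely alternates between the diagonal and the off-diagonal does not yield this. The missing mechanism is the one used in the proof of Theorem \ref{thm: Ginfty k-dep main} (the elements $d_I$ built as products of the witnesses, together with claims of the form ``$d_I$ lies in the subgroup over $\bar\alpha$ iff $\bar\alpha \notin I$''): the group operation is what lets a single product element encode an arbitrary subset of the grid, and it is also where one must be careful about which side of the grid is coded by the $\bar m$'s, which by the copies of $\bar b$, and which by the group elements. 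As it stands, your sketch fixes one $\bar b$, has only one genuine parameter dimension $(\bar m_j)$, and specifies nothing that varies to realize arbitrary patterns, so the contradiction with $2$-dependence is not reached.
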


In \cite[Section 4]{chernikov2021n} we generalized Fact \ref{fac: Sh G00 2-dep} to $n$-dependent groups for arbitrary $n$. This required us to consider tuples in arbitrary (or $n$-dependent) theories that are sufficiently independent, in the sense of a notion of independence considered in \cite[Section 4]{chernikov2021n}.

\begin{defn}(\emph{$\kappa$-coheirs}) For any cardinal $\kappa$, any model $\M$, and any tuples $a, B$ we write
	$$a \ind_{\M}^{u,\kappa} B$$
if for any set $C \subset B \cup \M$ with $|C|<\kappa$, $\tp(a/C)$ is realized in $\M$. 
\end{defn}
Hence $a\ind^{u, \aleph_0}_{\mathcal{M}}B \iff a\ind^{u}_{\mathcal{M}}B$, i.e.~if and only if $\tp(a/B\M)$ is finitely satisfiable in $\mathcal{M}$. 
Recall that for an infinite cardinal $\kappa$ and $n \in \omega$, the cardinal $\beth_n(\kappa)$ is defined inductively by $\beth_0(\kappa) := \kappa$ and $\beth_{n+1}(\kappa) := 2^{\beth_n(\kappa)}$. Then the Erd\H{o}s-Rado theorem says that $\left( \beth_r(\kappa) \right)^+ \rightarrow (\kappa^+)^{r+1}_{\kappa}$ for all infinite $\kappa$ and $r\in \omega$.

\begin{defn}(\emph{Generic position})\label{def: generic position}
Let $\M$
 be a small model, $A$ a subset of $\M$, and $\bar{b}_1, \ldots, \bar{b}_{k-1}$  finite tuples in $\C$. We say that $(\M, A, \bar{b}_1, \ldots, \bar{b}_{k-1})$ are in a \emph{generic position} if there exist regular cardinals $ \kappa_1 < \kappa_2 < \ldots < \kappa_{k-1}$ and models $\M_0 \preceq \M_1 \preceq \ldots \preceq \M_{k-1} = \M$ such that $A \subseteq \M_0$, $\beth_2(|\M_i|)^+ \leq \kappa_{i+1}$ for $i=0, \ldots, k-2$ and 
$$ \bar b_i \ind_{\M_i}^{u,\kappa_i}  \bar b_{<i} \M$$
for all $1\leq i \leq k-1$. 
\end{defn}

\begin{remark}
	We think about generic position as expressing that the tuples $\bar{b}_1, \ldots, \bar{b}_{k-1}$ form a sufficiently independent set over $\mathcal{M}$ (imitating being realizations of pairwise commuting global invariant types, without being able to choose such types in general;  in particular, while $\ind^{u, \kappa}$ is not symmetric, the assumption implies that it holds somewhat symmetrically between the $\bar{b}_i$'s). See Remark 4.12 and Problem 4.13 in  \cite{chernikov2021n} on the open question of necessity of generic position in our results about connected components of groups; Remark 4.14 of \cite{chernikov2021n} shows that it can always be arranged.
\end{remark}

\begin{fact}\cite[Corollary 4.10]{chernikov2021n}\label{fac: G00 for n-dep}
For any $k \geq 1$,	let $T$ be a $k$-dependent theory, $A \subseteq \mathbb{M} \models T$ a small set of parameters and $G = G(\mathbb{M})$ a type-definable group over $A$. Let $\M \supseteq A$ be a small model and $\bar{b}_1, \ldots, \bar{b}_{k-1}$ finite tuples in $\mathbb{M}$ so that $(\M, A, \bar{b}_1, \ldots, \bar{b}_{k-1})$ are in a generic position. Then there is some $C \subseteq \M$ with $|C| \leq \beth_2(|T| + |A|)$ such that
	\begin{gather*}
		G^{00}_{\M \cup \bar{b}_1 \cup \ldots \cup \bar{b}_{k-1}} = \left( \bigcap_{i = 1, \ldots, k-1} G^{00}_{\M \cup \bar{b}_1 \cup \ldots \cup \bar{b}_{i-1} \cup \bar{b}_{i+1} \cup \ldots \cup \bar{b}_{k-1}} \right) \cap G^{00}_{C \cup \bar{b}_1 \cup \ldots \cup \bar{b}_{k-1}}.
	\end{gather*}
\end{fact}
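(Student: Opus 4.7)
The plan is to extend Shelah's argument in the $2$-dependent case (Fact \ref{fac: Sh G00 2-dep}) to arbitrary arity $k$, using the generic position hypothesis as a stratified coheir condition and the characterization of $k$-dependence via indiscernibles indexed by the generic ordered $k$-partite hypergraph (Fact \ref{fac: indisc witness to IPn}). The inclusion $\subseteq$ is immediate from the monotonicity of $G^{00}_S$ in $S$: enlarging the parameter set can only shrink the type-definable connected component, so the left-hand side is contained in each factor on the right, hence in their intersection.

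For the reverse inclusion, I would reduce to a single formula at a time. For each formula $\varphi(x; \bar{y}, \bar{z}_1, \ldots, \bar{z}_{k-1})$ (over $A$) whose instances $H_{\bar{m}} := \varphi(G; \bar{m}, \bar{b}_1, \ldots, \bar{b}_{k-1})$ are bounded-index subgroups of $G$ as $\bar{m}$ ranges over $\M^{<\omega}$, set $N := \bigcap_{i=1}^{k-1} G^{00}_{\M \cup \bar{b}_1 \cup \ldots \cup \hat{\bar{b}}_i \cup \ldots \cup \bar{b}_{k-1}}$. It suffices to produce $C_\varphi \subseteq \M$ of cardinality at most $\beth_2(|T| + |A|)$ such that $N \cap \bigcap_{\bar{m} \in C_\varphi^{<\omega}} H_{\bar{m}} \subseteq \bigcap_{\bar{m} \in \M^{<\omega}} H_{\bar{m}}$; setting $C := \bigcup_\varphi C_\varphi$ over a collection of at most $|T|+|A|$ formulas then yields the stated equality within the required cardinal bound.

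The key step is a \emph{relative smallness claim}: the family of distinct cosets $\{H_{\bar{m}} \cdot N : \bar{m} \in \M^{<\omega}\}$ has cardinality bounded by a function of $|T| + |A|$ alone. Given this, a pigeonhole/compactness argument selects $C_\varphi \subseteq \M$ meeting every coset class, and any $\bar{m}' \in \M^{<\omega}$ is then equivalent modulo $N$ to some $\bar{m} \in C_\varphi^{<\omega}$, giving $H_{\bar{m}'} \supseteq H_{\bar{m}} \cap N$. I would prove the claim by contradiction: assume there are unboundedly many distinct cosets, so for some $\varphi$ one obtains a long sequence $(\bar{m}_\alpha)_{\alpha < \lambda}$ in $\M^{<\omega}$ with pairwise distinct $H_{\bar{m}_\alpha} \cdot N$, and a corresponding ``separating'' element $g_{\alpha, \beta} \in (H_{\bar{m}_\alpha} \triangle H_{\bar{m}_\beta}) \setminus N$. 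Using the chain $\M_0 \preceq \ldots \preceq \M_{k-1} = \M$ and cardinals $\kappa_1 < \ldots < \kappa_{k-1}$ from the definition of generic position, one applies Erd\H{o}s--Rado iteratively: first thin $(\bar{m}_\alpha)$ to be indiscernible over $\bar{b}_1 \ldots \bar{b}_{k-1} A$, then at each stage use the coheir condition $\bar{b}_i \ind_{\M_i}^{u, \kappa_i} \bar{b}_{<i} \M$ to align the data so that all of $\bar{m}, \bar{b}_1, \ldots, \bar{b}_{k-1}$ become jointly generalized-indiscernible along the $k$ ``sides'' of the configuration. Identifying the sequence extracted from $\M$ with one color class of $G_{k, p}$ and each $\bar{b}_i$ with the remaining $k - 1$ color classes, one reads off the $G_{k, p}$-indiscernible pattern of Fact \ref{fac: indisc witness to IPn} for $\varphi$, witnessing $\IP_k$ and contradicting $k$-dependence of $T$.

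The main obstacle is exactly this last alignment: turning a merely unbounded family of cosets distinguished by elements of $\M$ into a $k$-partite random hypergraph pattern. The generic position is engineered to make this possible: its hierarchy of coheir conditions along the cardinal chain is what enables each $\bar{b}_i$ to play the role of an independent color class relative to the others and to the sequence in $\M$, upgrading a pointwise distinguishing family into a genuine $\IP_k$ witness; the cardinal bound $\beth_2(|T|+|A|)$ then arises naturally from the Erd\H{o}s--Rado partition calculus applied at each extraction stage.
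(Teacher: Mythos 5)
First, note that the paper does not actually prove this statement: it is imported verbatim as a Fact from \cite[Corollary 4.10]{chernikov2021n}, and the only argument of this kind carried out here is the analogue for $G^{\infty}$ of abelian groups (Theorem \ref{thm: Ginfty k-dep main}, proved in Section \ref{sec: proof of Ginf}). Your overall skeleton --- the easy inclusion by monotonicity of $G^{00}_S$ in $S$, then a boundedness claim proved by contradiction via a transfinite construction, Erd\H{o}s--Rado extractions along the chain $\M_0 \preceq \ldots \preceq \M_{k-1}$ using the $\kappa_i$-coheir conditions to duplicate each $\bar{b}_i$ into a sequence inside $\M_i$, and finally an $\IP_k$ configuration contradicting $k$-dependence --- is indeed the shape of both the cited proof and of the $G^{\infty}$ argument in this paper, so the strategy is the right one.

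Two of your steps have genuine gaps, however. First, the reduction to single formulas $\varphi$ whose instances $H_{\bar{m}}$ are bounded-index \emph{definable} subgroups only captures $G^{0}$, not $G^{00}$: a type-definable bounded-index subgroup over $\M \cup \bar{b}_1 \cup \ldots \cup \bar{b}_{k-1}$ is in general not an intersection of definable subgroups, so the left-hand side is not $\bigcap_{\varphi} \bigcap_{\bar{m}} H_{\bar{m}}$. One must instead work with descending chains of definable symmetric generic sets $X_{n+1} \cdot X_{n+1} \subseteq X_n$ and run the counting argument at that approximate level; this is exactly the complication the paper flags for its $G^{\infty}$ proof, where only the sets $X_S^{m}$ are available rather than honest subgroups. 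Second, your invariant and your separating elements point the wrong way. Equality of the product sets $H_{\bar{m}} \cdot N$ does not yield $H_{\bar{m}} \cap N \subseteq H_{\bar{m}'}$ (from $g = h'n$ with $h' \in H_{\bar{m}'}$, $n \in N$ you cannot conclude $g \in H_{\bar{m}'}$); the correct invariant is the trace $H_{\bar{m}} \cap N$. Correspondingly, the distinguishing elements must be chosen \emph{inside} $N = \bigcap_i G^{00}_{\M \cup \bar{b}_1 \cup \ldots \cup \bar{b}_{i-1} \cup \bar{b}_{i+1} \cup \ldots \cup \bar{b}_{k-1}}$ --- membership in $N$ is what allows them to be transported by automorphisms over $\M$ together with all but one $\bar{b}_i$ when the $k$-dimensional array is built --- whereas you place $g_{\alpha,\beta}$ in $(H_{\bar{m}_\alpha} \triangle H_{\bar{m}_\beta}) \setminus N$. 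Relatedly, pairwise separating elements do not assemble into a $k$-partite random hypergraph: what is needed is the diagonal transfinite chain $d_\alpha \in N \cap \bigcap_{\beta < \alpha} H_{\bar{m}_\beta}$ with $d_\alpha \notin H_{\bar{m}_\alpha}$ (compare \eqref{eq: con comp 1}--\eqref{eq: con comp 3} and the alternating products $d_I$ of Claims \ref{C:ddinXek} and \ref{C:ddnotinXek}), which is what converts membership versus non-membership into the $R_k$ versus $\neg R_k$ dichotomy of Fact \ref{fac: indisc witness to IPn}.
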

\begin{remark}
\begin{enumerate}
	\item For $k=1$ the assumptions of generic position in Corollary \ref{fac: G00 for n-dep} are trivially satisfied by any sufficiently large model $\M = \M_0$, and the conclusion gives $G^{00}_{\M} = G^{00}_{C}$ for some small subset $C$ of $\M$ (since the first intersection on the right hand side is over the empty set). This easily implies absoluteness of $G^{00}$ in Fact \ref{fac: NIP G00 absolute}. 
	\item For $k=2$, the assumption $\bar{b}_1 \ind ^{u, \kappa_1}_{\M_1} \M_{1}$ is clearly satisfied by any $\kappa_1$-saturated model $\M_1 \supseteq A$ (taking $A \subseteq \M_0 \preceq \M_1$ arbitrary), and the conclusion gives $G^{00}_{\M_1 \bar{b}_1} = G^{00}_{\M_1} \cap G^{00}_{\bar{b}_1 C}$ --- hence Fact \ref{fac: Sh G00 2-dep} follows from Fact \ref{fac: G00 for n-dep}.
\end{enumerate}
\end{remark}

In this section we prove an analog of Fact \ref{fac: G00 for n-dep} (and in particular of Fact \ref{fac: Sh G00 2-dep}) for $G^{\infty}$ in $k$-dependent \emph{abelian} groups:
\begin{theorem}\label{thm: Ginfty k-dep main}
	For any $k\geq 1$, let $T$ be a $k$-dependent theory, $A \subseteq \mathbb{M} \models T$ a small set of parameters and $G = G(\mathbb{M})$ a type-definable \emph{abelian} group over $A$. Let $\M \supseteq A$ be a small model and $\bar{b}_1, \ldots, \bar{b}_{k-1}$ finite tuples in $\mathbb{M}$ so that $(\M, A, \bar{b}_1, \ldots, \bar{b}_{k-1})$ are in a generic position. Then there is some $C \subseteq \M$ with $|C| \leq \beth_2(|T| + |A|)$ such that
	\begin{gather*}
		G^{\infty}_{\M \cup \bar{b}_1 \cup \ldots \cup \bar{b}_{k-1}} = \left( \bigcap_{i = 1, \ldots, k-1} G^{\infty}_{\M \cup \bar{b}_1 \cup \ldots \cup \bar{b}_{i-1} \cup \bar{b}_{i+1} \cup \ldots \cup \bar{b}_{k-1}} \right) \cap G^{\infty}_{C \cup \bar{b}_1 \cup \ldots \cup \bar{b}_{k-1}}.
		\end{gather*}
\end{theorem}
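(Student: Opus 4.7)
The plan is to mimic the proof of Fact \ref{fac: G00 for n-dep} from \cite{chernikov2021n}, replacing everywhere the type-definable subgroups of bounded index by the approximate subgroups $X_S^n$, i.e.\ finite powers of the commutator sets $X_S := \{a^{-1}b : a, b \in G(\mathbb{M}),\ a \equiv^{L}_{S} b\}$ recalled in Section \ref{sec: Ginfty and Lstp}. The inclusion $\subseteq$ in the conclusion is immediate, since enlarging the parameter set only enlarges the family of $\Aut(\mathbb{M}/\cdot)$-invariant subgroups of bounded index being intersected to form $G^{\infty}$. So it suffices to prove $\supseteq$.

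Fix $g$ in the right-hand side, and set $S_i := \M \cup \bigcup_{j \in [k-1] \setminus \{i\}} \bar{b}_j$ for $i \in [k-1]$, and $S_0 := C \cup \bar{b}_1 \cup \ldots \cup \bar{b}_{k-1}$; the assumption gives $g \in X_{S_i}^{n_i}$ for some $n_i \in \omega$, $0 \leq i \leq k-1$. Setting $N := \max_i n_i$, it then suffices to show $g \in X_{\M \cup \bar{b}_1 \cup \ldots \cup \bar{b}_{k-1}}^N$ once $C$ is chosen appropriately with $|C| \leq \beth_2(|T|+|A|)$. By Section \ref{sec: Lstp and thick}, Lascar equivalence $a \equiv^{L}_{S} b$ is the intersection over all $S$-invariant \emph{uniformly thick} symmetric formulas $\theta(x,y)$ of the conditions $\models \theta(a,b)$, so $X_S^N = \bigcap_\theta X_\theta^N$ with $X_\theta := \{a^{-1}b : \models \theta(a,b)\}$. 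The use of uniform (rather than plain) thickness is what enables the compactness manipulations below, and reduces the problem to showing that $g \in X_\theta^N$ for every $\M \bar{b}_1 \cdots \bar{b}_{k-1}$-invariant uniformly thick $\theta$.

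The combinatorial heart of the proof then applies generic position of $(\M, A, \bar{b}_1, \ldots, \bar{b}_{k-1})$ and iterated Erd\H{o}s--Rado, as in the proof of Fact \ref{fac: G00 for n-dep}, to produce a mutually $\M$-indiscernible array of $\Aut(\mathbb{M})$-conjugates of $\bar{b}_1, \ldots, \bar{b}_{k-1}$ compatible with the coheir chain $\bar{b}_i \ind_{\M_i}^{u,\kappa_i} \bar{b}_{<i} \M$ and fixing $g$. For the chosen $\theta$, the hypotheses on $g$ translate into length-$\leq N$ product expressions for $g$ in $X_\theta$-elements whose parameters come from $S_i$ on each ``missing-$i$'' slice of the array, together with an analogous length-$\leq N$ expression with $\M$-parameters confined to a fixed small $C$ on the ``small-$C$'' slice. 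A $k$-dependence/type-counting input of the same flavour as in Section \ref{sec: fin type count}, applied to the array of $\theta$-patterns, then bounds the number of patterns that can appear, so that for $C$ absorbing all these patterns the expressions coming from the various slices agree on shared parameters.

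The main obstacle, as the authors emphasise, is the absence of group structure at the $X_\theta^n$-level: unlike intersections of type-definable subgroups of bounded index, finite unions or intersections of the sets $X_\theta^n$ are not in general closed under multiplication, so the straightforward subgroup-theoretic bookkeeping from the $G^{00}$ proof is unavailable. This is where the hypothesis that $G$ is abelian is used decisively: it permits the length-$\leq N$ expressions produced on the different slices to be freely reordered and collated into a single length-$N$ product representing $g$ with parameters entirely within $\M \cup \bar{b}_1 \cup \ldots \cup \bar{b}_{k-1}$, thereby showing $g \in X_\theta^N$, and, letting $\theta$ range, $g \in X_{\M \cup \bar{b}_1 \cup \ldots \cup \bar{b}_{k-1}}^N \subseteq G^{\infty}_{\M \cup \bar{b}_1 \cup \ldots \cup \bar{b}_{k-1}}$, as required.
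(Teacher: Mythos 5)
Your proposal correctly identifies the ingredients (working with the approximate subgroups $X_S^m$ rather than subgroups, uniform thickness to enable compactness, generic position plus Erd\H{o}s--Rado, $k$-dependence as the ultimate input, and abelianity for reordering products), but as written it is a plan rather than a proof, and the part you leave vague is precisely the part that carries all the difficulty. The paper does not argue directly; it argues by contradiction: assuming no $C$ of size $\leq \beth_2(|T|+|A|)$ works, it uses Lemma \ref{lem: countable intersec} to build a transfinite sequence of countable sets $S_\alpha \subseteq \M$ and witnesses $d_\alpha$ of length $\beth_2(|T|)^+$ (this is where the cardinal bound on $|C|$ actually comes from, via Erd\H{o}s--Rado and pigeonhole on uniformly thick formulas), then uses the generic position hypotheses in a reverse induction (Claim \ref{claim Ginftymain}) to spread these witnesses into a $k$-dimensional array, extracts an indiscernible array, and finally forms, for each finite $I \subseteq \omega^k$, the alternating product $d_I = \prod_{\bar i \in \Ev} d_{I,\bar i} \cdot (\prod_{\bar i \in \Odd} d_{I,\bar i})^{-1}$ over the $2^k$ corners of doubled indices. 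Claims \ref{C:ddinXek} and \ref{C:ddnotinXek} show $d_I \in (X^{\Phi}_{\bar\alpha})^k$ exactly when $\bar\alpha \notin I$, and an explicit formula $\psi$ built from the uniformly thick $\Phi$ and length-$2k$ products then has $\IP_k$, contradicting $k$-dependence. This even/odd box construction, which is exactly where abelianity and Lemma \ref{lem: lasc dist 1 on an array} (Lascar distance $1$ along an indiscernible array) are used, has no counterpart in your sketch.

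Concretely, two gaps in your proposal would need to be filled before it could be called a proof. First, the quantifier structure: you fix $g$ in the right-hand side before specifying $C$, but the right-hand side depends on $C$, and the theorem requires one $C$ of bounded size working for all $g$ simultaneously; your phrase ``for $C$ absorbing all these patterns'' gives no mechanism for producing such a $C$, nor for why $\beth_2(|T|+|A|)$ suffices. Second, the decisive step ``the length-$\leq N$ expressions produced on the different slices \ldots{} collated into a single length-$N$ product representing $g$ with parameters entirely within $\M \cup \bar b_1 \cup \ldots \cup \bar b_{k-1}$'' is unsupported: membership in $X^{\theta}$ over the full parameter set requires producing a pair $a,b$ with $\models\theta(a,b)$ and the relevant invariance over the union of the parameters, and merely reordering (by abelianity) products of $X^{\theta}$-elements whose defining pairs are only Lascar/thickly related over the various smaller parameter sets does not yield this. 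In the paper abelianity is used for a different purpose: to make the products $d_{I,\bar i}$ and the inclusion-exclusion element $d_I$ well defined independently of ordering and to carry out the cancellations in Claims \ref{C:ddinXek} and \ref{C:ddnotinXek}. So while your high-level outline is compatible in spirit with the actual argument, the proof's engine (contradiction, transfinite witness construction, the array claim, and the alternating product yielding an $\IP_k$ configuration) is missing, and the substitute you gesture at would not go through as stated.
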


\begin{conj}
	The conclusion of Theorem \ref{thm: Ginfty k-dep main} holds for arbitrary $k$-dependent groups (not necessarily abelian).
\end{conj}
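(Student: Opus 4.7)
The plan is to extend the proof of Theorem \ref{thm: Ginfty k-dep main} from the abelian case to arbitrary $k$-dependent groups by isolating and controlling, in quantitative form, the non-commutativity obstructions. Throughout, $G^{\infty}_S$ remains the subgroup generated by the symmetric set $X_S := \{a^{-1}b : a \equiv^L_S b\}$, equal to the intersection of the sets $\varphi(\mathbb{M})$ as $\varphi(x,y)$ ranges over uniformly thick formulas over $S$; this description from Section~\ref{sec: Ginfty and Lstp} and the local approximation by powers $X_S^n$ carry over verbatim. The global target, just as in the abelian case, is to produce a small $C \subseteq \mathcal{M}$ so that any element belonging to each ``coordinate-deleted'' $G^{\infty}_{\mathcal{M}\bar{b}_1 \cdots \widehat{\bar{b}_i} \cdots \bar{b}_{k-1}}$ and to $G^{\infty}_{C\bar{b}_1\cdots \bar{b}_{k-1}}$ already lies in $G^{\infty}_{\mathcal{M}\bar{b}_1\cdots \bar{b}_{k-1}}$.

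First I would develop non-abelian analogs of the manipulation lemmas on the sets $X_S$: (i) the conjugation equivariance $gX_Sg^{-1} \subseteq X_{Sg}$ that follows from $\Aut(\mathbb{M})$-invariance of Lascar equivalence, and (ii) a commutator estimate showing that for $g \in X_S^m$ and $h \in X_T^n$ the commutator $[g,h]$ lies in $X_{S\cup T}^{f(m,n)}$ for an explicit function $f$. Together these let one ``reorder'' a finite product $X_{S_1}\cdots X_{S_r}$ at the cost of inserting elements of $X_{S_i \cup S_j}$, while preserving uniform control on word length. With these in place, the main inductive compactness argument from Section~\ref{sec: proof of Ginf} transfers: one reduces the equality of subgroups to a local statement about products $\varphi(\mathbb{M})^n$ for a uniformly thick $\varphi$, and then runs the Erd\H{o}s--Rado/type-counting induction on $k$ over the nested models $\mathcal{M}_0 \preceq \cdots \preceq \mathcal{M}_{k-1} = \mathcal{M}$ witnessing the generic position of $\bar{b}_1,\ldots,\bar{b}_{k-1}$, extracting $C$ exactly as in the abelian argument and in Fact~\ref{fac: G00 for n-dep}.

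The main obstacle, and the place where the abelian proof breaks, is the step that combines several local approximations coming from different missing coordinates into one approximation over $\mathcal{M}\bar{b}_1\cdots\bar{b}_{k-1}$. In the abelian case this is immediate because concatenation of words in the $X_{S_i}$ commutes up to reordering; in the non-abelian case concatenation produces conjugate correction terms, and one must show that the resulting commutator elements remain inside $G^{\infty}_{\mathcal{M}\bar{b}_1\cdots\bar{b}_{k-1}}$ with word length bounded independently of $\mathcal{M}$. I expect this to succeed because any commutator $[u,v]$ with $u \in X_{S_i}$ and $v \in X_{S_j}$, where $S_i, S_j$ each omit only one of the $\bar{b}_\ell$'s, is controlled by $X_{S_i \cap S_j}$ via the commutator estimate; and $S_i \cap S_j$ lies in $\mathcal{M}$ together with a strictly smaller subset of the $\bar{b}$'s, so the induction hypothesis (applied at a lower depth) absorbs the correction. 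A clean way to implement this quantitative bookkeeping is to work not with raw powers $X_S^n$ but with an iterated-commutator filtration $\Gamma_c(X_S)$ indexed by $c \leq k$, and show that at each inductive step the residue caused by non-commutativity drops one level of the filtration.

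Once this commutator bookkeeping is in place, the topological-dynamical skeleton of the abelian proof --- the compactness reduction, the use of uniform thickness to perform the relevant extraction of a sufficiently saturated sequence over $\mathcal{M}$, and the appeal to $k$-dependence to bound the number of patterns --- adapts with only notational changes, yielding the conjectured statement for arbitrary $k$-dependent groups. A useful sanity check along the way is that in the case $k=1$ one should recover Gismatullin's theorem that $G^{\infty}_S = G^{\infty}_{\emptyset}$ in NIP theories (since the machinery involving $X_S$, uniform thickness, and commutator corrections specializes to his argument), and that in the abelian case the commutator estimate becomes trivial and one recovers Theorem~\ref{thm: Ginfty k-dep main}.
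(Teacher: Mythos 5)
You are proposing a proof of a statement that the paper itself leaves \emph{open}: the conjecture is stated after Theorem \ref{thm: Ginfty k-dep main} precisely because the authors' argument does not cover non-abelian $G$ (they only observe, in Remark \ref{rem: amenable follows}, that the definably amenable case follows from Fact \ref{fac: G00 for n-dep} together with the identity $G^{00}_S = G^{\infty}_S$ for amenable groups). So there is no paper proof to match; the question is whether your sketch closes the gap, and it does not.

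The crux of your plan is the ``commutator estimate'': for $g \in X_S^m$ and $h \in X_T^n$, that $[g,h] \in X_{S \cup T}^{f(m,n)}$. This is unjustified and, as far as I can see, essentially circular. What \emph{is} available from Remark \ref{rem: basic props of Xs}(5) is that conjugation distorts $X_S$ only by a bounded power of $X_S$ itself, whence $[g,h] = g^{-1}(h^{-1}gh) \in X_S^{O(m)}$ and symmetrically $[g,h] \in X_T^{O(n)}$; i.e.\ $[g,h]$ lies in $G^{\infty}_S \cap G^{\infty}_T$ with bounded word length. Upgrading that to membership in a bounded power of $X_{S\cup T}$ --- equivalently, to $G^{\infty}_{S\cup T}$ with control --- is exactly an instance of the intersection statement the theorem is trying to prove, so it cannot be taken as a lemma. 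Your fallback, that the correction is ``controlled by $X_{S_i \cap S_j}$ and absorbed by the induction at lower depth,'' runs in the wrong direction: since $S_i \cap S_j \subseteq S_i \cup S_j$ we have $X_{S_i \cap S_j} \supseteq X_{S_i \cup S_j}$, so lying in a power of $X_{S_i \cap S_j}$ is the \emph{weaker} conclusion and does not place the correction term inside $G^{\infty}_{\mathcal{M} \bar{b}_1 \cdots \bar{b}_{k-1}}$, which is the group over the \emph{largest} parameter set and hence the smallest of all the groups in play. Finally, the point where abelianity actually enters the paper's argument is not a single pairwise reordering but the cube construction $d_I = \prod_{\bar i \in \Ev} d_{I,\bar i} \cdot \bigl(\prod_{\bar i \in \Odd} d_{I,\bar i}\bigr)^{-1}$ and the wholesale regroupings in Claims \ref{C:ddinXek} and \ref{C:ddnotinXek}; the error terms there are iterated commutators of \emph{long products} of elements from many different $X_{\bar\alpha}$'s, and your proposed filtration $\Gamma_c(X_S)$ that is supposed to tame them is asserted, not constructed. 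Until one either proves a genuine commutator absorption lemma over the full parameter set or finds a construction avoiding reordering altogether, the conjecture remains open.
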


\begin{remark}\label{rem: amenable follows}
\begin{enumerate}
	\item We note that in any theory, if a definable group $G$ is definably amenable (so for example abelian), then for any set $S$ we have $G^{00}_S = G^{\infty}_S$ (this is a special case of \cite[Theorem 0.5]{krupinski2019amenability}). Hence a generalization of Theorem \ref{thm: Ginfty k-dep main} for definably amenable groups $G$ follows from Fact \ref{fac: G00 for n-dep}. However we provide a self-contained proof most of which works for arbitrary groups, and highlight the use of abelianity in the final part.
	
	\item Similarly, if the theory $T$ is simple, then $G^{00}_S = G^{\infty}_S$, hence in a simple $k$-dependent theory,  Theorem \ref{thm: Ginfty k-dep main} holds for an arbitrary group $G$ by Fact \ref{fac: G00 for n-dep}.
\end{enumerate}

\end{remark}
%


\subsection{Lascar strong types and thick formulas}\label{sec: Lstp and thick}
We recall some material about Lascar strong types \cite{Ziegler1998}, \cite{CasLasPilZiegler2001}, \cite[Chapter 9]{casanovas2011simple} as a reference. 
	Recall that two (possibly infinite) tuples $a,b$ have the same \emph{Lascar strong type} over a small set of parameters $S \subseteq \mathbb{M}$ if $a E b$ holds for any $S$-invariant bounded equivalence relation $E$ (on tuples of the appropriate sorts of $\mathbb{M}$); in which case we write $a \equiv^{L}_S b$.

\begin{defn}\cite[Definition 9.5]{casanovas2011simple} 
\begin{enumerate}
	\item A symmetric formula $\theta(x, y) \in  \mathcal{L}(\mathbb{M})$, with $x,y$ tuples of variables of the same sorts (and length),  is \emph{thick}   if there is no sequence $(a_i)_{i \in \omega}$ from $\mathbb{M}^x$ (with some of $a_i$'s possibly repeated) with $\models \neg \theta (a_i, a_j)$ for all $i < j < \omega$. Note that in particular every thick formula is reflexive.
	\item  For $S \subseteq \mathbb{M}$ and $x,y$ fixed tuples of variables  of the same sorts, let $\Theta_S(x,y)$ be the set of all  thick formulas with parameters in $S$ in (finite subtuples of) variables $x,y$.
\end{enumerate}
\end{defn}
\begin{fact}\label{fac: thick vs indisc}
\cite[Lemma 9.7]{casanovas2011simple} For any set $S$ and tuples $a,b$ of the same sorts, $\models \Theta_S(a,b)$ if and only if there is a sequence $(c_i)_{i \in \omega}$  indiscernible over $S$ with $c_0 = a, c_1 = b$.
\end{fact}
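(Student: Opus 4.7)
The plan is to prove both directions of the equivalence.

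For the easy direction $(\Leftarrow)$, given an $S$-indiscernible sequence $(c_i)_{i<\omega}$ with $c_0=a$, $c_1=b$, and any thick $\theta(x,y)\in\mathcal{L}(S)$, indiscernibility together with the symmetry of $\theta$ force $\theta(c_i,c_j)$ to take constant truth value on all pairs $i<j$. This common value cannot be false, since otherwise the sequence $(c_i)_{i<\omega}$ itself would witness the failure of thickness of $\theta$. In particular $\models\theta(a,b)$, and ranging over all thick $\theta\in\mathcal{L}(S)$ gives $\models\Theta_S(a,b)$.

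For the harder direction $(\Rightarrow)$, I would proceed by compactness: using $\omega$-saturation of $\mathbb{M}$, it suffices to find, for each finite $\Delta\subseteq\mathcal{L}(S)$ and each $n\in\omega$, tuples $c_2,\ldots,c_{n-1}$ so that $(a,b,c_2,\ldots,c_{n-1})$ is $\Delta$-indiscernible. Given such $\Delta$ and $n$, form the $\mathcal{L}(S)$-formula
\[
\psi(x,y):=\exists z_2\ldots z_{n-1}\bigl[(x,y,z_2,\ldots,z_{n-1})\text{ is }\Delta\text{-indiscernible}\bigr],
\]
together with its symmetrization $\theta(x,y):=\psi(x,y)\vee\psi(y,x)$. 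The key step is to show $\theta$ is thick: a putative infinite sequence $(d_i)_{i<\omega}$ with $\models\neg\theta(d_i,d_j)$ for all $i<j$ would, by iterated finite Ramsey applied inside $\mathbb{M}$ to the finitely many $k$-ary type distributions induced by $\Delta$, contain an infinite $\Delta$-indiscernible subsequence $(d_{i_k})_{k<\omega}$; the initial segment $(d_{i_0},\ldots,d_{i_{n-1}})$ would then witness $\psi(d_{i_0},d_{i_1})$ and hence $\theta(d_{i_0},d_{i_1})$, a contradiction. The hypothesis $\models\Theta_S(a,b)$ therefore yields $\models\theta(a,b)$, so at least one of $\psi(a,b)$, $\psi(b,a)$ holds, and the former directly supplies the required finite sequence.

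The main technical obstacle is handling the remaining case, in which only $\psi(b,a)$ holds and one has a $\Delta$-indiscernible sequence starting with the reversed pair $(b,a)$: reversing such a sequence need not preserve indiscernibility, so the desired sequence starting with $(a,b)$ is not obtained for free. My plan to overcome this is to redo the thickness argument with a richer symmetric formula --- for example, the existence of a $\Delta$-indiscernible sequence of length $\geq 2n$ in which $a,b$ appear as two consecutive interior elements in one of the two possible orientations --- and to exploit the symmetry $\Theta_S(x,y)=\Theta_S(y,x)$ of the partial type of thick formulas to coordinate the two orientations. Once this symmetrization is carried out (the most delicate part of the argument), the finitary approximation exists and compactness delivers the full $\omega$-indiscernible sequence starting with $(a,b)$.
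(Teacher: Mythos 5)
Your overall route is the standard one: the paper itself gives no proof of this fact (it is quoted from Casanovas, Lemma 9.7), and your argument -- the easy direction via constancy of $\theta$ on increasing pairs, and the hard direction via compactness, the auxiliary formula $\psi_{n,\Delta}$, and Ramsey to show its symmetrization is thick -- is exactly the classical proof. The $(\Leftarrow)$ direction and the Ramsey/compactness skeleton are correct as written. However, the one step you yourself flag as ``the most delicate part'' is left as a plan rather than an argument, and as stated it is not yet pinned down, so there is a genuine (though reparable) gap: from $\Theta_S(a,b)$ you only obtain $\psi(a,b)\vee\psi(b,a)$, and nothing in the proposal actually converts a witness of $\psi(b,a)$ into a $\Delta$-indiscernible sequence of length $n$ beginning with $(a,b)$. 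Your proposed richer formula (``two consecutive interior elements, length $\geq 2n$, either orientation'') is the right idea, but ``interior'' is not enough, and the appeal to the symmetry of $\Theta_S$ does no extra work here.

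Here is how the step is completed, and also a small correction to your diagnosis. First enlarge the finite set $\Delta\subseteq\mathcal{L}(S)$ so that it is closed under permuting variables; then the reverse of a $\Delta$-indiscernible finite sequence is again $\Delta$-indiscernible (evaluating a formula of $\Delta$ on a decreasing tuple is evaluating a permuted formula, still in $\Delta$, on an increasing one). So the obstruction is not, as you write, that ``reversing such a sequence need not preserve indiscernibility''; the real problem is that reversal moves the distinguished pair from the front of the sequence to the back. This is cured by placing the pair in the exact middle: let $\chi(x,y)$ say ``there is a $\Delta$-indiscernible sequence $(z_1,\ldots,z_{2n})$ with $z_n=x$ and $z_{n+1}=y$'', and let $\theta:=\chi(x,y)\vee\chi(y,x)$. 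Your Ramsey argument shows $\theta$ has no infinite antichain (any infinite sequence contains an infinite $\Delta$-indiscernible subsequence, whose first $2n$ terms witness $\chi$ for a suitable pair), and $\theta$ is symmetric, hence thick, so $\models\theta(a,b)$. If $\chi(a,b)$ holds, the tail $(z_n,\ldots,z_{2n})$ is a $\Delta$-indiscernible sequence of length $n+1$ starting with $a,b$. If instead $\chi(b,a)$ holds, reverse the witnessing sequence: it is still $\Delta$-indiscernible, and now $a$ sits at position $n$ and $b$ at position $n+1$, so again the tail from position $n$ has length $n+1$ and starts with $a,b$. This gives the finitary approximations in all cases, and compactness (together with the observation that only finite subtuples of the possibly infinite tuples $a,b$ occur in each finite fragment, as in the paper's convention for $\Theta_S$) yields the full $S$-indiscernible $\omega$-sequence beginning with $a,b$.
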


\begin{defn}
	We will say that a partitioned formula $\theta(x,y; z) \in \mathcal{L}$ is \emph{uniformly thick} if for every $c \in \mathbb{M}^{z}$, the formula $\theta(x,y;c)$ is thick.
\end{defn}
\begin{remark}\label{rem: uniformly thick iff thick}
	\begin{enumerate}
	\item $\theta(x,y; z)$ is uniformly thick if and only if there is some $n = n(\theta) \in \omega$ so that: for any $c \in \mathbb{M}^{z}$, $\theta(x,y;c)$ is symmetric and there are no $(a_0, \ldots, a_{n-1})$ with $\models \neg \theta(a_i,a_j;c)$ for all $i<j<n$.
		\item Every thick formula $\theta(x,y) \in \mathcal{L}(\mathbb{M})$ is equivalent to an instance (over the same parameters) of some uniformly thick formula $\theta'(x,y;z) \in \mathcal{L}$.
		\item Let $\Theta^{u}(x,y;z)$ be the set of all uniformly thick formulas (without parameters) in (finite subtuples of) the variables $x,y,z$.  Then if $S$ is a set and $z$ corresponds to an enumeration of $S$, we have  $\Theta_S(x,y) \equiv \Theta^u(x,y;S)$.
	\end{enumerate}
\end{remark}
\begin{proof}
	(1) By compactness.
	
	(2) Assume $\theta(x,y) \in \mathcal{L}(\mathbb{M})$ is thick, say $\theta(x,y) = \varphi(x,y;c)$ for some $\varphi(x,y;z) \in \mathcal{L}$ and $c \in \mathbb{M}^z$, and $n \in \omega$ is such that there are no $(a_0, \ldots, a_{n-1})$ with $\models \neg \theta(a_i,a_j)$ for all $i<j<n$. Then the formula
	\begin{gather*}
		\theta'(x,y;z) := \left( \exists x_0, \ldots, x_{n} \bigwedge_{i=0}^{n-1} \neg \varphi(x_i,x_{i+1};z) \right) \lor \Big( \varphi(x,y;z) \land \varphi(y,x;z)  \Big)
	\end{gather*}
	is uniformly thick, and $\theta(x,y) \equiv \theta'(x,y;c)$.
	
	(3) Follows from (2).
\end{proof}
\begin{remark}\label{rem: thick conj}
	Note that by Ramsey's theorem, both sets of formulas $\Theta^{u}(x,y;z)$ and $\Theta_S(x,y)$ (for an arbitrary $S$) are closed under conjunctions.
\end{remark}

\begin{defn}\label{def: Lascar distance} For $n \in \omega$, we let the \emph{Lascar distance $\dist_S(a,b)$ over $S$ between $a$ and $b$} be the smallest $n \in \omega$ such that there exist $a_0, \ldots, a_n$ with $a_0 = a$, $a_n = b$ and $\models \Theta_S(a_i,a_{i+1})$ for each $i < n$; or $\infty$ if no such $n$ exists.
\end{defn}
\begin{fact}\label{fac: thick vs lascar distance}\cite[Lemma 7]{Ziegler1998}
	\begin{enumerate}
		\item If $\dist_S(a,b) = 1$, then there is a small model $M \supseteq S$ such that $a \equiv_M b$.
\item If $a \equiv_M b$ then $\dist_M(a,b) \leq 2$.
	\end{enumerate}
\end{fact}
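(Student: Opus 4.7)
For part (1), the plan is to find an $S$-indiscernible witness, transport it over a preselected small model by extraction of indiscernibles, and then conjugate the model back. Fact~\ref{fac: thick vs indisc} applied to $\models \Theta_S(a,b)$ yields an $S$-indiscernible sequence $(a_i)_{i<\omega}$ with $a_0 = a$ and $a_1 = b$. I will fix any small model $M_0 \supseteq S$ and extract, by the standard Ramsey+compactness argument, an $M_0$-indiscernible sequence $(b_i)_{i<\omega}$ based on $(a_i)_{i<\omega}$ over $S$. Because $(a_i)_{i<\omega}$ is already $S$-indiscernible, ``based on'' promotes to equality of $S$-types of same-length increasing tuples, giving $\tp(b_0,b_1/S) = \tp(a,b/S)$; and $b_0 \equiv_{M_0} b_1$ by $M_0$-indiscernibility. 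Pulling back by $\sigma \in \Aut(\mathbb{M}/S)$ sending $(b_0,b_1)$ to $(a,b)$, the model $M := \sigma(M_0)$ contains $S$ and satisfies $a \equiv_M b$, as required.

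For part (2), the plan is to produce a single intermediary $c$ and extend both pairs $(a,c)$ and $(c,b)$ to $M$-indiscernible sequences using an $M$-invariant global extension as the engine. Given $a \equiv_M b$, I will pick a global $M$-invariant type $p(x) \in S(\mathbb{M})$ extending $\tp(a/M) = \tp(b/M)$ — for instance a coheir, which exists because $M$ is a model. Take $c \models p \upharpoonright (M \cup \{a,b\})$ and iteratively choose $c_{i+1}, c'_{i+1}$ realizing $p$ over $M$ together with all previously chosen points, producing chains $(a,c,c_2,c_3,\ldots)$ and $(b,c,c'_2,c'_3,\ldots)$. Each such chain is a Morley sequence of $p$ over $M$ and hence $M$-indiscernible, so Fact~\ref{fac: thick vs indisc} delivers $\models \Theta_M(a,c)$ and $\models \Theta_M(b,c)$; symmetry of thick formulas then yields $\models \Theta_M(c,b)$. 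The two-step chain $a, c, b$ witnesses $\dist_M(a,b) \leq 2$.

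The only slightly delicate point is verifying that the prepended sequences in (2) are genuine Morley sequences — this uses $M$-invariance of $p$ together with the fact that $a, b, c, c_i, c'_i$ all realize $p \upharpoonright M$, so each new element realizes $p$ over $M$ and all previously chosen points. This is also precisely where the hypothesis that $M$ is a model (rather than an arbitrary parameter set) is essential, since global $M$-invariant extensions of a type need not exist otherwise.
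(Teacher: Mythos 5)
Your proof is correct: part (1) is the standard extraction-of-indiscernibles argument (take the $S$-indiscernible witness from Fact~\ref{fac: thick vs indisc}, extract an $M_0$-indiscernible sequence based on it, note the $S$-type of the first two terms is preserved, and conjugate $M_0$ back by an automorphism over $S$), and part (2) is the standard coheir argument (a Morley sequence of an $M$-invariant extension of $\tp(a/M)=\tp(b/M)$, prepended with $a$ and with $b$, gives $\models\Theta_M(a,c)$ and $\models\Theta_M(b,c)$, hence $\dist_M(a,b)\leq 2$). The paper gives no proof of this Fact — it cites Ziegler's Lemma 7 — and your argument is essentially the one behind that citation, with the delicate points (the promotion of "based on" to equality of $S$-types, and the use of $M$ being a model to get an invariant global extension) correctly identified.
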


\begin{fact}\cite[Corollary 8]{Ziegler1998}
	The relation $\equiv^L_S$ is the finest bounded $S$-invariant equivalence relation, and $a \equiv^L_S b$ if and only if $\dist_{S}(a,b) < \infty$.
\end{fact}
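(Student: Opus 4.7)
The plan is to introduce the relation $\sim_S$ on same-sort tuples defined by $a \sim_S b \iff \dist_S(a,b) < \infty$, and to argue that $\sim_S$ coincides with $\equiv^L_S$ and is the finest bounded $S$-invariant equivalence relation. That $\sim_S$ is reflexive and symmetric follows from the fact that every formula in $\Theta_S(x,y)$ is reflexive (thick formulas are) and symmetric by definition; transitivity is immediate by concatenation of Lascar paths. For $S$-invariance, any $\sigma \in \Aut(\mathbb{M}/S)$ permutes $\Theta_S(x,y)$ setwise, so sending a witnessing path $a = a_0, a_1, \ldots, a_n = b$ through $\sigma$ yields a witnessing path for $\sigma(a) \sim_S \sigma(b)$ of the same length.

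Next I would show $\sim_S$ is bounded. Suppose for contradiction that $(a_i)_{i<\lambda}$ are pairwise $\sim_S$-inequivalent tuples of the same sort, with $\lambda$ large (say $\lambda = \beth_2(|T|+|S|)^+$). By Erd\H{o}s-Rado, one may extract an $S$-indiscernible subsequence of length $\omega$, say $(a_{i_t})_{t<\omega}$. Fact \ref{fac: thick vs indisc} applied in the reverse direction to this sequence yields $\models \Theta_S(a_{i_0}, a_{i_1})$, so $\dist_S(a_{i_0}, a_{i_1}) \leq 1$, contradicting pairwise inequivalence. Therefore $\sim_S$ is a bounded $S$-invariant equivalence relation, and by the definition of Lascar strong type, $\equiv^L_S$ refines $\sim_S$.

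The main step is to prove the reverse: $\sim_S$ is contained in every bounded $S$-invariant equivalence relation $E$, which by transitivity of $E$ reduces to showing that $\models \Theta_S(a,b)$ implies $a E b$. Given such $a,b$, by Fact \ref{fac: thick vs indisc} together with compactness there is an $S$-indiscernible sequence $(c_i)_{i < \kappa}$ with $c_0 = a$, $c_1 = b$, for $\kappa$ strictly larger than the number of $E$-classes. Pigeonhole yields $i<j$ with $c_i E c_j$. By $S$-indiscernibility, $\tp(c_i c_j / S) = \tp(c_{i'} c_{j'} / S)$ for every $i' < j'$, so there is $\sigma \in \Aut(\mathbb{M}/S)$ sending $c_i c_j$ to $c_{i'} c_{j'}$; $S$-invariance of $E$ then yields $c_{i'} E c_{j'}$ for every $i'<j'$, and in particular $c_0 E c_1$, i.e.~$a E b$. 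Combining both directions gives $\sim_S = \equiv^L_S$ and identifies it as the finest bounded $S$-invariant equivalence relation.

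The delicate point I expect to require the most care is the passage from ``$c_i E c_j$ for some $i<j$'' to ``$c_0 E c_1$'': it relies crucially on \emph{$S$-invariance} of $E$ (not merely type-definability) together with $S$-indiscernibility of $(c_i)_i$, and this is exactly why the characterization goes through \emph{Lascar} distance rather than some finer notion. The rest of the argument is essentially a compactness and Ramsey-type extraction, and the same strategy would recover the stronger statement that bounded $S$-invariant equivalence relations on the space of tuples of a fixed sort form a complete lattice with $\equiv^L_S$ as its minimum.
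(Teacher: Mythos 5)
The paper offers no proof of this statement (it is quoted from Ziegler), so your argument has to stand on its own. Its core is fine and is the standard one: showing that $\models\Theta_S(a,b)$ implies $aEb$ for every bounded $S$-invariant equivalence relation $E$, by stretching the $S$-indiscernible sequence from Fact \ref{fac: thick vs indisc} to length exceeding the number of $E$-classes, applying pigeonhole, and moving the resulting pair to $(c_0,c_1)$ by an automorphism over $S$; together with the formal bookkeeping this gives both the identification with finite Lascar distance and the "finest" statement, provided the relation $\sim_S$ is actually bounded.

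The genuine gap is in that boundedness step. Erd\H{o}s--Rado does not yield an $S$-indiscernible \emph{subsequence} of $(a_i)_{i<\lambda}$: homogeneity simultaneously for all formulas and all arities is a partition property of Erd\H{o}s-cardinal strength, not available at $\beth_2(|T|+|S|)^+$ (or at any cardinal provable in ZFC). What the usual extraction gives is a new indiscernible sequence \emph{based on} the original, and for that sequence your contradiction disappears: pairwise $\sim_S$-inequivalence is not finitely approximable (infinite Lascar distance is not expressed by any type), so it does not transfer to the extracted sequence, and the fact that its first two terms satisfy $\Theta_S$ contradicts nothing. Since the inclusion $\equiv^L_S\ \subseteq\ \sim_S$ rests precisely on $\sim_S$ being bounded, this is not a cosmetic point. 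Two standard repairs: (i) for each pair $i\ne j$ pick a thick formula over $S$ failing on $(a_i,a_j)$ (possible since $\dist_S(a_i,a_j)>1$), view this as a colouring of pairs with at most $|T|+|S|$ colours, and apply Erd\H{o}s--Rado to get an infinite set on which a single thick formula fails on all pairs, contradicting thickness directly; or (ii), closer to what you already quote, use Fact \ref{fac: thick vs lascar distance}(2): any two realizations of the same type over a model $M\supseteq S$ are at Lascar distance at most $2$ over $M$, hence over $S$, so the number of $\sim_S$-classes is at most $|S_x(M)|\le 2^{|T|+|S|}$. With either fix the rest of your proof goes through.
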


\begin{remark}\label{rem: las dist type def}
	For any fixed (finite or small infinite) tuples of variables $x,y,z$ and $n \in \omega$, there is a partial $\mathcal{L}(\emptyset)$-type $\pi_{n}(x,y;z)$ so that for any tuples $a,b,c$ in $\mathbb{M}$ of appropriate length we have 
	$\models \pi_n(a,b;c) \iff \dist_{c}(a,b) \leq n$. Moreover, $\pi_{n}(x,y;z) \equiv \bigwedge_{x' \subseteq x, y' \subseteq x, z' \subseteq z \textrm{ finite}} \pi_{n}(x',y';z')$.
\end{remark}
\begin{proof}
	By compactness, Remark \ref{rem: uniformly thick iff thick} and Remark \ref{rem: thick conj}, we can take $\pi_n(x,y;z)$ to be
	\begin{gather*}
		\bigwedge_{x',y',z'} \  \bigwedge_{\theta(x',y';z') \in \Theta^u(x,y;z)}  \exists u_0, \ldots, u_{n} \left(  u_0 = x' \land u_{n} = y' \land \bigwedge_{i < n} \theta(u_i,u_{i+1};z) \right),
	\end{gather*}
	%
where $x', z'$ are arbitrary finite subtuples of $x,z$ respectively, and $y'$ is a finite subtuple of $y$ corresponding to $x'$.
The ``moreover'' part follows from the definition of $\pi_n$ and compactness.
\end{proof}

\begin{lemma}\label{lem: lasc dist 1 on an array}
	Assume that $(a_i : i \in \mathbb{Q})$ is an $S$-indiscernible sequence (of finite or infinite tuples) and $I,J \subseteq \mathbb{Q}$ are finite with $I \cap J = \emptyset$, say $I = \{i_1, \ldots, i_n\}$ for some $n \in \omega$ and $i_1 < \ldots < i_n$. For $t \in [n]$, let $i'_t \in \mathbb{Q}$  be arbitrary so that $i_t < i'_t$ and 
	$$\qftp_{<}(i'_t / J \cup \{ i_k : k \in [n] \setminus \{t\} \}) = \qftp_{<}(i_t / J \cup \{ i_k : k \in [n] \setminus \{t\} \}).$$
	Then 
	$$\dist_{S \cup (a_{i})_{i \in J}} \left( (a_{i_1}, \ldots, a_{i_n}), (a_{i'_1}, \ldots, a_{i'_n})\right) \leq 1.$$
\end{lemma}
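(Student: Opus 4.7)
My plan is to apply Fact~\ref{fac: thick vs indisc}: it suffices to produce an $\omega$-indexed sequence of $n$-tuples indiscernible over $S \cup (a_i)_{i \in J}$ whose first two terms are $(a_{i_1}, \ldots, a_{i_n})$ and $(a_{i'_1}, \ldots, a_{i'_n})$. Since $(a_i : i \in \mathbb{Q})$ is $S$-indiscernible, the $\mathcal{L}(S \cup (a_i)_{i \in J})$-type of any tuple $(a_{j_1}, \ldots, a_{j_n})$ is determined by the quantifier-free type of $(j_1, \ldots, j_n)$ over $J$ in the linear order $(\mathbb{Q}, <)$. So the whole problem reduces to finding an $\omega$-indexed sequence $(\bar{j}^{(\ell)})_{\ell \in \omega}$ of strictly increasing $n$-tuples from $\mathbb{Q}$, quantifier-free indiscernible over $J$, with $\bar{j}^{(0)} = (i_1, \ldots, i_n)$ and $\bar{j}^{(1)} = (i'_1, \ldots, i'_n)$.

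Define $M_t := \min\{x \in J \cup \{i_k : k > t\} : x > i_t\}$, set to $+\infty$ if the set is empty. The hypothesis $\qftp_<(i'_t / J \cup \{i_k : k \neq t\}) = \qftp_<(i_t / J \cup \{i_k : k \neq t\})$ applied to each $x > i_t$ in $J \cup \{i_k : k > t\}$ gives $i'_t < x$, hence $i'_t < M_t$; combined with $i_t < i'_t$ we obtain $i'_t \in (i_t, M_t)$. Now for each $t$ set $j_t^{(0)} := i_t$ and $j_t^{(1)} := i'_t$, and by density of $\mathbb{Q}$ pick any strictly increasing sequence $i'_t < j_t^{(2)} < j_t^{(3)} < \ldots$ inside $(i'_t, M_t) \cap \mathbb{Q}$. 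Note that by definition of $M_t$, the interval $(i_t, M_t)$ contains no element of $J$ and no $i_k$, so each $j_t^{(\ell)}$ lies in the same $J$-interval as $i_t$.

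To verify qf-indiscernibility over $J$ of $\bigl(\bar{j}^{(\ell)}\bigr)_{\ell \in \omega}$, observe that for any $t, t' \in [n]$ and $\ell, \ell' \in \omega$, we have $j_t^{(\ell)} < j_{t'}^{(\ell')}$ iff $t < t'$, or $t = t'$ and $\ell < \ell'$: when $t < t'$ this follows from $j_t^{(\ell)} < M_t \leq i_{t'} \leq j_{t'}^{(\ell')}$. Hence for any $\ell_1 < \ldots < \ell_m$ the total order of $\{j_t^{(\ell_s)} : t \in [n],\, s \in [m]\} \cup J$ depends only on $n$ and $m$, not on the choice of $\ell_s$'s, which is exactly qf-indiscernibility of $(\bar{j}^{(\ell)})_{\ell \in \omega}$ over $J$. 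Transferring via $S$-indiscernibility of $(a_i)_{i \in \mathbb{Q}}$ and applying Fact~\ref{fac: thick vs indisc} concludes the argument. The only delicate step is the inequality $i'_t < M_t$ in the case where $M_t \in J$ lies strictly between $i_t$ and $i_{t+1}$; this is where the hypothesis on $i'_t$'s qf-type over $J \cup \{i_k : k \neq t\}$ (rather than just over $\{i_k : k \neq t\}$) is genuinely used. Past this observation, the construction is a routine application of density of $\mathbb{Q}$, and I do not anticipate further obstacles.
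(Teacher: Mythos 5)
Your proof is correct and follows essentially the same route as the paper: for each coordinate $t$ you extend $i_t, i'_t$ to a strictly increasing $\omega$-sequence of indices realizing the same cut over $J \cup \{i_k : k \neq t\}$ (your interval $(i_t, M_t)$ is just an explicit description of that cut), transfer order-indiscernibility of the index tuples to indiscernibility of the $a$-tuples over $S \cup (a_i)_{i \in J}$, and conclude via Fact \ref{fac: thick vs indisc}. The only difference is that you verify the order-type computation explicitly, which the paper leaves implicit.
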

\begin{proof}
	For each $t \in [n]$, let $i_{t, 0} := i_t < i_{t,1} := i'_t$ and by assumption and density of $\mathbb{Q}$ for $2 \leq \alpha < \omega$  we can choose strictly increasing  $i_{t, \alpha} \in \mathbb{Q}$ so that $i_{t,1} < i_{t,2}$ and $\qftp_{<}(i_{t, \alpha} / J \cup \{ i_k : k \in [n] \setminus \{t\} \}) = \qftp_{<}(i_t / J \cup \{ i_k : k \in [n] \setminus \{t\} \}).$
	Then by $S$-indiscernibility of  $(a_i : i \in \mathbb{Q})$ it follows that  the sequence $\left((a_{i_{1,\alpha}}, \ldots, a_{i_{n, \alpha}})  : \alpha \in \omega \right)$ is indiscernible over $S \cup (a_{i})_{i \in J}$ and its first two elements are $(a_{i_1}, \ldots, a_{i_n})$ and $(a_{i'_1}, \ldots, a_{i'_n})$, so we conclude by Fact \ref{fac: thick vs indisc}.
\end{proof}

\begin{remark}\label{rem : dcl of lascar distance}
	If  for some tuples $a,b$ we have $\dist_S(a,b) \leq n$ and $f$ is an $S$-definable function, then $\dist_S(f(a),f(b)) \leq n$.
\end{remark}
\begin{proof}
	By induction on $n$, using that if $(c_i: i \in \omega)$ is an $S$-indiscernible sequence with $c_0 = a, c_1 = b$, then $(f(c_i) : i \in \omega)$ is also an $S$-indiscernible sequence starting with $f(a), f(b)$.
\end{proof}

\subsection{$G^{\infty}$ and Lascar strong types}\label{sec: Ginfty and Lstp}
Let $G$ be an $\emptyset$-type-definable group.
For $X \subseteq G$  and $m \in \omega$, we let $X^m := \left\{g_1 \cdot \ldots \cdot g_m : g_i \in X \right\}$ and let $\langle X \rangle$ be the subgroup of $G$ generated by $X$.
\begin{defn}
Let $S \subseteq \mathbb{M}$.
\begin{enumerate}
	\item We let $X_S = X_{S,1} := \{ a^{-1}b : a,b \in G(\mathbb{M}), \models \Theta_S(a,b) \}$.
 \item For $n \in \omega$, we let $X_{S,n} := \{ a^{-1}b : a,b \in G(\mathbb{M}), \dist_{S}(a,b) \leq n \}$. We also let $X_{S,<\omega} := \bigcup_{n \in \omega} X_{S,n} = \{ a^{-1}b : a,b \in G(\mathbb{M}), a \equiv^{L}_{S} b \}$.
 \item Given a set of formulas $\Phi(x, y;z) \subseteq \Theta^u(x,y;z)$, if $S$ is a set such that $z$ corresponds to an enumeration of $S$, we define 
$$X_S^{\Phi}=\{ a^{-1}b : \models \theta(a, b),\ \theta(x,y) \in \Phi(x, y;S)  \}.$$
We write $X_S^{\theta}$ instead of $X_S^{\{\theta\}}$.
By Remark \ref{rem: uniformly thick iff thick}(3), Remark \ref{rem: thick conj} and compactness, $X_S = \bigcap_{\theta(x,y;z) \in \Theta^u(x,y;z)}X^{\theta}_{S}$.
\end{enumerate}
\end{defn}

We summarize some basic properties of the sets $X_{S,n}$ that we will often use, sometimes without explicitly mentioning it: 
\begin{remark}\label{rem: basic props of Xs}
	\begin{enumerate}
	\item If $1_{G}$ is the identity of $G$, then $1_{G} \in X_{S,n}$ for all $S$ and $n \in \omega$.
		\item For all $\Phi$, if $a \in X^{\Phi}_{S}$, then also $a^{-1} \in X^{\Phi}_{S}$. Similarly, for all $n$, if $a \in X_{S,n}$ then also $a^{-1} \in X_{S,n}$.
		\item $X_{S,n}^m \subseteq X_{S',n'}^{m'}$ for all $S' \subseteq S$ and $n \leq n', m \leq m'$.
		\item $X_{S,n} \subseteq X_{S}^{n}$.
		\item If $d \in G$ and $n \in \omega$ are arbitrary, then $d (X_{S,n}) d^{-1} \subseteq (X_{S, 2n})^2$.
		\item For any fixed $m,n$ and (finite or small infinite) tuple of variables $z$, there is a partial $\mathcal{L}(\emptyset)$-type $\rho_{m,n}(x;z)$ so that for any  $g \in G(\mathbb{M})$ and tuple in $\mathbb{M}$ of appropriate length we have 
	$\models \rho_{m,n}(g;c) \iff g \in (X_{c,n})^m$. Moreover, 		$\rho_{m,n}(x;z) \equiv \bigwedge_{z' \subseteq z, |z'| < \aleph_0} \rho_{m,n}(x;z')$. 
	\item In particular, for any $S$ and $m \in \omega$, 
	$$X_{S}^m = \bigcap_{S' \subseteq S, |S'| < \aleph_0} X_{S'}^m = \bigcap_{S' \subseteq S \textrm{ finite, } \theta(x,y;z) \in \Theta^{u}(x,y;z)}(X_{S'}^{\theta})^m.$$
\end{enumerate}
\end{remark}
\begin{proof}
(1) As $\dist_S(a,a) = 0$ for all $S$ and $a \in G$, and $1_{G} = a^{-1}a$.

	(2) All formulas in $\Phi(x,y;S)$ are thick, hence symmetric, so if $a = b^{-1} c$ and $\models \Phi(b,c;S)$, then $a^{-1} = c^{-1} b$ and $\models \Phi(c,b;S)$, and the second part follows as $\dist_{S}(a,b) \leq n \iff \dist_{S}(b,a) \leq n$.
	
	(3) Clear from the definitions, using (1).
	
	(4) Assume $c \in X_{S,n}$, that is $c = a^{-1} \cdot b$ and there exist some $c_0, \ldots, c_n \in G(\mathbb{M})$ such that $a = c_0, b = c_n$ and $\models \Theta_S(c_i, c_{i+1})$ for all $i < n$. But then we have 
	$$a^{-1} \cdot b = \prod_{i<n}(c_i^{-1} \cdot c_{i+1}) \in X_{S,1}^n.$$
	 (5) Let $e \in X_S$, so (using (2)) $e = a \cdot b^{-1}$ for some $a,b \in G$ with $\dist_{S}(a,b) \leq n$, and let $d \in G$ be arbitrary. By Fact \ref{fac: thick vs lascar distance}(1) there exist some $c_i \in G, i \leq n$ with $c_0 = a$, $c_n = b$ and models $\M_i \supseteq S$ so that $c_i \equiv_{\M_i} c_{i+1}$ for $i<n$. We let $d_0 := d$ and inductively (taking automorphisms over $\M_i$) choose $d_i$ so that $(c_i, d_i) \equiv_{\M_i} (c_{i+1}, d_{i+1})$. Fact \ref{fac: thick vs lascar distance}(2), we have $\dist_{\M_i}\left(  (c_i, d_i), (c_{i+1}, d_{i+1} )\right) \leq 2$ for all $i < n$, so $\dist_{S}\left(  (a, d), (b, d_{n} )\right) \leq 2n$ (hence also $\dist_{S}\left( d, d_{n} )\right) \leq 2n$).
	  Hence
$$d e d^{-1}  = d (a b^{-1}) d^{-1} = (da)(d_n b)^{-1} \cdot d_n d^{-1} \in (X_{S,2n})^2.$$

(6) By Remark \ref{rem: las dist type def} and compactness we can define $\rho_{m,n}(x;z)$ as follows:
\begin{gather*}
	\exists x_1, \ldots, x_m \exists y_1, \ldots, y_{m} \left(  \pi_{n}(x_i, y_i; z) \land x = x_1^{-1} \cdot y_1 \cdot \ldots \cdot x_{m}^{-1} \cdot y_m \right).
\end{gather*}
The ``moreover'' part follows by compactness from this definition and the ``moreover'' part of Remark \ref{rem: las dist type def}.

(7) By the ``moreover'' part of (6), definition of $\rho_{m,1}$ and Remark \ref{rem: thick conj}.
\end{proof}
The main point for us is the following description of $G^{\infty}$:
\begin{fact}\label{fac: generates G infty}
 \cite[Lemma 2.2]{Gismatullin2011} 
 For any set of parameters $S$, 
 $$G_S^{\infty} = \langle X_{S, < \omega} \rangle = \langle X_{S} \rangle = \bigcup_{n \in \omega} X_S^{n} \textrm{ and } \left[ G:G^{\infty}_{S} \right] \leq 2^{|\mathcal{L}(S)|} $$
 (where the second equality is by Remark \ref{rem: basic props of Xs}(4)).
\end{fact}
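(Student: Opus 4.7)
The plan is to establish the chain of equalities by a standard argument showing that $\langle X_{S,<\omega}\rangle$ is simultaneously the smallest $S$-invariant bounded-index subgroup and is generated already by $X_S$ (in fact coincides with $\bigcup_n X_S^n$). The index bound will come essentially for free from counting Lascar strong types.

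First I would verify that $H := \langle X_{S,<\omega}\rangle$ is an $S$-invariant subgroup of bounded index. Invariance is immediate since the relation $\equiv^L_S$ (and hence the set $X_{S,<\omega} = \{a^{-1}b : a \equiv^L_S b\}$) is $\Aut(\mathbb{M}/S)$-invariant, and subgroup generation preserves this. For bounded index, the key observation is that if $a \equiv^L_S b$ then $a^{-1}b \in X_{S,<\omega} \subseteq H$, so $aH = bH$; thus the map sending each element of $G$ to its coset of $H$ factors through the Lascar strong type map, and the number of Lascar strong types over $S$ is at most $2^{|\mathcal{L}(S)|}$. This already gives the index bound $[G : G^{\infty}_S] \leq [G:H] \leq 2^{|\mathcal{L}(S)|}$, and also the inclusion $G^{\infty}_S \subseteq H$ since $G^{\infty}_S$ is the intersection of all such subgroups.

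For the reverse inclusion $H \subseteq G^{\infty}_S$, I would take an arbitrary $S$-invariant subgroup $N \leq G$ of bounded index and show $X_{S,<\omega} \subseteq N$. The equivalence relation on $G$ defined by $x \sim_N y \iff x^{-1}y \in N$ is $S$-invariant (by invariance of $N$) and has boundedly many classes (by bounded index), so it must be coarser than $\equiv^L_S$ (the finest such equivalence relation). Hence $a \equiv^L_S b \implies a^{-1}b \in N$, i.e., $X_{S,<\omega} \subseteq N$, and taking the intersection over all such $N$ yields $H \subseteq G^{\infty}_S$.

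Finally, for the equalities $\langle X_{S,<\omega}\rangle = \langle X_S\rangle = \bigcup_n X_S^n$: the inclusion $\langle X_S\rangle \subseteq \langle X_{S,<\omega}\rangle$ is trivial since $X_S = X_{S,1} \subseteq X_{S,<\omega}$, while Remark \ref{rem: basic props of Xs}(4) gives $X_{S,n} \subseteq X_S^n$, whence $X_{S,<\omega} \subseteq \bigcup_n X_S^n \subseteq \langle X_S\rangle$. The identification $\langle X_S\rangle = \bigcup_n X_S^n$ follows because $X_S$ contains $1_G$ and is closed under inverses (Remark \ref{rem: basic props of Xs}(1)--(2)), so the union of powers is already a subgroup. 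I don't expect any serious obstacle here --- the only point requiring care is the passage from ``$a \equiv^L_S b$'' to ``cosets coincide'', which rests cleanly on the characterization of $\equiv^L_S$ as the finest bounded $S$-invariant equivalence relation on tuples (combined with the observation that $x \sim_N y$ is such a relation on $G$).
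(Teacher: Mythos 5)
This statement appears in the paper only as a quoted Fact with a citation to Gismatullin's Lemma 2.2 --- no proof is given there --- so there is no in-paper argument to compare against; your write-up is essentially a correct reconstruction of the standard proof. One genuine difference from the usual route: Gismatullin/Ziegler prove $X_{S,<\omega}\subseteq N$ for an $S$-invariant bounded-index $N$ by a pigeonhole on a long $S$-indiscernible sequence starting with $a,b$ (two terms fall into the same coset, and indiscernibility plus invariance of $N$ pulls this back to the first two terms), whereas you invoke the characterization of $\equiv^L_S$ as the finest bounded $S$-invariant equivalence relation, which the paper does record from Ziegler. That works, but needs one line of care you omit: the coset relation $x\sim_N y\iff x^{-1}y\in N$ is only defined on $G$, so to apply the ``finest bounded invariant equivalence relation'' fact you should extend it to all tuples of the ambient sort (e.g.\ declare the complement of $G$ a single class); the extension is still bounded and $S$-invariant because $G$ is type-definable over $\emptyset$, and then $a\equiv^L_S b$ with $a,b\in G$ indeed yields $a^{-1}b\in N$.

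Two smaller points. First, the sentence ``This already gives the index bound $[G:G^\infty_S]\leq[G:H]$'' is premature: from $G^\infty_S\subseteq H$ alone the indices satisfy the \emph{opposite} inequality, and the bound on $[G:G^\infty_S]$ only becomes available after your second paragraph establishes $H\subseteq G^\infty_S$, whence $G^\infty_S=H$ and $[G:G^\infty_S]=[G:H]\leq 2^{|\mathcal{L}(S)|}$. Second, the bound on the number of Lascar strong types over $S$ deserves a word of justification: fix a model $M\supseteq S$ with $|M|\leq|\mathcal{L}(S)|$; by Fact \ref{fac: thick vs lascar distance}(2), $a\equiv_M b$ implies $\dist_S(a,b)\leq 2$, so the number of $\equiv^L_S$-classes is at most the number of complete types over $M$, i.e.\ at most $2^{|\mathcal{L}(S)|}$. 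Your final paragraph (that $\langle X_{S,<\omega}\rangle=\langle X_S\rangle=\bigcup_n X_S^n$, using $X_S\subseteq X_{S,<\omega}$, Remark \ref{rem: basic props of Xs}(4), and closure of $\bigcup_n X_S^n$ under products and inverses) is correct as written.
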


%
%
%

\begin{lemma}\label{lem: countable intersec}
For any small set of parameters $C \subset \mathbb{M}$ we have
$$G^\infty_C = \bigcap \left\{ G^\infty_S : S \subseteq C \textrm{ countable} \right\}.$$	
	\end{lemma}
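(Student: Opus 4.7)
The plan is to prove the two inclusions separately, with the harder direction relying on the description of $G^\infty_S$ via the sets $X_S$ from Fact \ref{fac: generates G infty} together with the approximation of $X_C$ by $X_{S'}$ for finite $S' \subseteq C$ provided by Remark \ref{rem: basic props of Xs}(7).

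First I would handle the easy inclusion $G^\infty_C \subseteq \bigcap_S G^\infty_S$: for any $S \subseteq C$, every $S$-invariant subgroup of bounded index is in particular $C$-invariant of bounded index, so $G^\infty_C \subseteq G^\infty_S$, and the inclusion follows by intersecting over all countable $S \subseteq C$.

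For the reverse inclusion, I would argue by contrapositive. Suppose $g \in G(\mathbb{M})$ with $g \notin G^\infty_C$. By Fact \ref{fac: generates G infty}, $G^\infty_C = \bigcup_{n \in \omega} X_C^n$, so $g \notin X_C^n$ for every $n \in \omega$. By Remark \ref{rem: basic props of Xs}(7), for each $n$ we have $X_C^n = \bigcap \{ X_{S'}^n : S' \subseteq C \text{ finite}\}$, so for each $n$ there exists a finite $S'_n \subseteq C$ with $g \notin X_{S'_n}^n$. Set $S := \bigcup_{n \in \omega} S'_n$; this is a countable subset of $C$. Since $S'_n \subseteq S$, the inclusion $\Theta_{S'_n}(x,y) \subseteq \Theta_S(x,y)$ gives $X_{S,1} \subseteq X_{S'_n,1}$ (as in Remark \ref{rem: basic props of Xs}(3)), and hence $X_S^n \subseteq X_{S'_n}^n$. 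Therefore $g \notin X_S^n$ for every $n$, so $g \notin \bigcup_n X_S^n = G^\infty_S$, witnessing that $g \notin \bigcap \{G^\infty_S : S \subseteq C \text{ countable}\}$.

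There is no real obstacle here: the argument is essentially a compactness/approximation trick. The only point worth being careful about is the monotonicity $S' \subseteq S \Rightarrow X_S \subseteq X_{S'}$ (more parameters yield fewer thick formulas satisfied, hence a smaller commutator set), which is exactly what lets a single countable $S$ absorb the finite witnesses $S'_n$ for all $n$ at once.
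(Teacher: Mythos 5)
Your proof is correct and follows essentially the same route as the paper: the easy inclusion by monotonicity of $G^\infty$, and the reverse by writing $G^\infty_C = \bigcup_n X_C^n$ (Fact \ref{fac: generates G infty}), using Remark \ref{rem: basic props of Xs}(7) to extract a finite witness $S'_n \subseteq C$ with $g \notin X_{S'_n}^n$ for each $n$, and absorbing them into the countable union $S$ via the monotonicity in Remark \ref{rem: basic props of Xs}(3). No gaps.
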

	
	\begin{proof}
	The inclusion from left to right is obvious. To see the inclusion from right to left, assume that $d \in G \setminus G^\infty_C$, i.e.~$d \notin \langle X_C \rangle$ by Fact \ref{fac: generates G infty}.
By Remark \ref{rem: basic props of Xs}(7), for any $m < \omega$, we can choose a finite set  $S_{m}\subseteq C$, such that $d \not \in (X_{S_{m}})^m$. Let $S := \bigcup_{m< \omega} S_{m}  $. Then, using Remark \ref{rem: basic props of Xs}(3), $d \not \in  \langle X_{S}\rangle = G^{\infty}_{S}$, and $S$ is countable. 
\end{proof}

\subsection{Proof of Theorem \ref{thm: Ginfty k-dep main}}\label{sec: proof of Ginf}
We fix $k \geq 1$ and assume that $T$ is a $k$-dependent theory, $A \subseteq \C \models T$ is a small parameter set and $G = G(\C)$ is a type-definable over $A$ abelian group. To ease the notation let us name the elements of $A$ by constant in our language. We let $\M$ be a model and $ \bar b_1, \dots, \bar b_{k-1}$ finite tuples in $\C$ in a generic position.

Assume the conclusion of Theorem \ref{thm: Ginfty k-dep main} fails. 
	Then, using Lemma \ref{lem: countable intersec}, by transfinite induction on $\alpha < \beth_2(|T|)^+$ we can choose \emph{countable} tuples $S_{\alpha} \subseteq \mathcal{M}$ and  elements $d_{\alpha} \in G(\mathbb{M})$ so that for every $\alpha < \beth_2(|T|)^+$ we have:
	\begin{gather}
		d_\alpha \in \bigcap_{i\in \set{k-1}} G_{\M \cup  (\bar b_1 \cup \dots \cup \bar b_{k-1})\setminus  \bar b_i }^{\infty},\label{eq: con comp 1}\\
		d_\alpha \in \bigcap_{\beta < \alpha} G^{\infty}_{S_{\beta} \cup \bar b_1 \cup \dots \cup \bar b_{k-1}},\label{eq: con comp 2}\\
		d_\alpha \not\in G^{\infty}_{S_{\alpha} \cup \bar b_1 \cup \dots \cup \bar b_{k-1}}. \label{eq: con comp 3}
	\end{gather}

	
	By \eqref{eq: con comp 1}, \eqref{eq: con comp 2}, Fact \ref{fac: generates G infty} and  Remark \ref{rem: basic props of Xs}(3), for every $\alpha < \beta$ there exists some $m_{\alpha,\beta} \in \omega$ so that 
	\begin{gather*}
		d_{\alpha} \in \bigcap_{i\in \set{k-1}} \left( X_{\M \cup  (\bar b_1 \cup \dots \cup \bar b_{k-1})\setminus  \bar b_i } \right)^{m_{\alpha, \beta}} \textrm{ and}\\
		d_{\beta} \in \left( X_{S_\alpha \cup  \bar b_1, \dots, \bar b_{k-1}}\right)^{m_{\alpha, \beta}}.
	\end{gather*}
 By Erd\H{o}s-Rado theorem, passing to a subsequence of $\left( (d_{\alpha}, S_{\alpha}) : \alpha < \beth_2(|T|)^+ \right)$ of length $\kappa_0 = (|T| )^+$, we may assume that there is some $1 \leq  m \in \omega$ so that   $m_{\alpha, \beta} = m$ for all $\alpha < \beta \in \kappa_0$.
Next, by \eqref{eq: con comp 3}, Fact \ref{fac: generates G infty} and Remark \ref{rem: basic props of Xs}(7), for every $ \alpha < \kappa_0$ there is some  uniformly thick formula $\Phi_{\alpha} (x,y;z) \in \Theta^u(x,y;z)$  and $n_{\alpha}\in\omega$ such that 
\[d_{\alpha} \not\in \left(X_{(S_{\alpha} \upharpoonright n_{\alpha})  \cup \bar b_{1} \cup  \dots \cup\bar b_{k-1}}^{\Phi_{ \alpha}} \right)^{2k+ (2^{2k-1} -1)m},\]
where $S_{\alpha} \upharpoonright n_{\alpha}$ denotes the initial segment of length $n_{\alpha}$ of the countable tuple $S_{\alpha}$.
As $\kappa_0 >  |T|$, by pigeonhole, passing to a subsequence of length $\kappa_0$, we may assume that $\Phi_{\alpha} (x,y;z) = \Phi(x,y;z)$ and $n_{\alpha} = n$ for some fixed uniformly thick formula $\Phi(x,y;z) \in  \Theta^u(x,y;z)$  and $n \in \omega$. That is, for all $\alpha < \beta < \kappa_0$ we have:
\begin{gather}
	d_\alpha \in \bigcap_{i\in \set{k-1}} \left( X_{\M \cup  (\bar b_1 \cup \dots \cup \bar b_{k-1})\setminus  \bar b_i} \right)^m, \label{eq: con comp 4}\\
	d_\beta \in \left( X_{S_\alpha \cup  \bar b_1, \dots, \bar b_{k-1}}\right)^m, \label{eq: con comp 5}\\
	d_{\alpha} \not\in \left(X_{S_{\alpha} \upharpoonright n \cup \bar b_{1} \cup  \dots \cup\bar b_{k-1 }}^{\Phi} \right)^{2k+ (2^{2k-1} - 1) m}. \label{eq: con comp 6}
\end{gather}

 As $ \bar b_1, \dots, \bar b_{k-1}$ are finite tuples in $\C$ in a generic position, there are regular cardinals $\kappa_1 < \kappa_2 < \ldots < \kappa_{k-1}$  and models $\M_0 \preceq \M_1 \preceq \ldots \preceq \M_{k-1} = \M$ such that $\beth_2(|\M_i|)^+ \leq \kappa_{i+1}$ for $i=0, \ldots, k-2$ and 
 \begin{gather}
 	\bar b_i \ind_{\M_i}^{u,\kappa_i}  \bar b_{<i} \M_{k-1} \label{eq: con comp 7}
 \end{gather}
for all $1\leq i \leq k-1$. 

\begin{claim} \label{claim Ginftymain} There exist $m' \in \omega$, sequences $\left(\bar{b}_{l,\alpha}: \alpha< \omega \right)$ for $l \in \set{k-1}$ (in $\M_{k-1}$) and elements $\left(d_{\alpha_1, \dots, \alpha_{k}} : 
(\alpha_1,   \dots, \alpha_{k})\in \omega^k := \prod_{i=1}^k \omega \right)$ in $G$ such that:
\begin{enumerate}
\item  for all  $(\alpha_1,  \dots, \alpha_k) <^{\lex} (\beta_1, \dots, \beta_k) \in \omega^k$ we have 
$$d_{\beta_1, \dots, \beta_{k}}  \in \left(X_{S_{\alpha_k} \cup \bar b_{1,\alpha_1}, \dots, \bar b_{k-1, \alpha_{k-1}}} \right)^{m'};  $$
\item for all $(\alpha_1, \dots, \alpha_k) \in \omega^k$ we have 
$$d_{\alpha_1, \dots, \alpha_k} \not\in \left(X_{S_{\alpha_k} \upharpoonright n \cup \bar b_{1, \alpha_1} \cup  \dots \cup\bar b_{k-1, \alpha_{k-1} }}^{\Phi} \right)^{2k+ (2^k-1) m'}.$$ 
\end{enumerate}
\end{claim}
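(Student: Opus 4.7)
The proof of Claim~\ref{claim Ginftymain} proceeds by constructing a $(k-1)$-dimensional array of $\bar{b}$-replicas inside $\M$ via the generic position hypothesis, and then pulling back the original sequence $(d_\alpha)_{\alpha < \kappa_0}$ along the resulting automorphisms of $\C$. This is the analog for $G^\infty$ of our earlier strategy from \cite{chernikov2021n} for $G^{00}$, but now carried out at the level of the approximate subgroups $X_S^{m}$ rather than type-definable subgroups.

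First, I would pick a small substructure $C_0 \subseteq \M$ containing $\bigcup_{\alpha < \kappa_0} S_\alpha$; note that $|C_0| \leq \kappa_0 = (|T|)^+ < \kappa_1 \leq \ldots \leq \kappa_{k-1}$. Then, by induction on $l = 1, \ldots, k-1$, I would construct sequences $(\bar{b}_{l, \alpha})_{\alpha < \omega}$ inside $\M_l$ using the generic position \eqref{eq: con comp 7}: each $\bar{b}_{l, \alpha}$ would be a realization in $\M_l$ of the type of $\bar{b}_l$ over $C_0 \cup \bar{b}_{<l} \cup (\bar{b}_{l', \alpha'})_{l' < l,\, \alpha' < \omega} \cup \bar{b}_{l, < \alpha}$, which exists because the base has size at most $\kappa_0 + \aleph_0 < \kappa_l$. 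The construction would be coordinated across the coordinates $l$ so that, for every $\bar{\alpha} \in \omega^{k-1}$, there is an automorphism $\sigma_{\bar{\alpha}}$ of $\C$ fixing $C_0$, mapping $(\bar{b}_l)_{l \in [k-1]}$ to $(\bar{b}_{l, \alpha_l})_{l \in [k-1]}$, and additionally fixing every replica $\bar{b}_{l, \alpha'}$ with $\alpha' \neq \alpha_l$ (the \emph{anchoring} property).

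I would then define
\[ d_{\alpha_1, \ldots, \alpha_{k-1}, \alpha_k} := \sigma_{(\alpha_1, \ldots, \alpha_{k-1})}(d_{\alpha_k}) \]
and set $m' := m$, which lies in the allowed range since $(2^k - 1)m \leq (2^{2k-1} - 1)m$ for all $k \geq 1$. For condition (2), applying $\sigma_{\bar{\alpha}}$ to the non-containment \eqref{eq: con comp 6} for $d_{\alpha_k}$ (whose parameters $S_{\alpha_k}\upharpoonright n \subseteq C_0$ are fixed) yields $d_{\bar{\alpha}, \alpha_k} \notin \bigl(X^{\Phi}_{S_{\alpha_k} \upharpoonright n \cup \bar{b}_{1, \alpha_1} \cup \ldots \cup \bar{b}_{k-1, \alpha_{k-1}}}\bigr)^{2k + (2^{2k-1} - 1)m}$, which contains the set appearing in (2) with exponent $2k + (2^k - 1)m'$. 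For condition (1), let $\bar{\alpha} <_{\mathrm{lex}} \bar{\beta}$ and $l^* := \min\{l : \alpha_l \neq \beta_l\}$. If $l^* = k$, then $\sigma_{\bar{\alpha}_{<k}} = \sigma_{\bar{\beta}_{<k}}$, and applying this automorphism to \eqref{eq: con comp 5} gives (1) directly. If $l^* < k$, I would apply $\sigma_{\bar{\beta}_{<k}}$ to \eqref{eq: con comp 4} with $i = l^*$, obtaining $d_{\bar{\beta}} \in (X_B)^m$ for
\[ B := \sigma_{\bar{\beta}_{<k}}(\M) \cup \{\bar{b}_{l, \beta_l} : l \in [k-1] \setminus \{l^*\}\}. \]
By the anchoring property, $\sigma_{\bar{\beta}_{<k}}$ fixes each $\bar{b}_{l, \alpha_l}$ with $\alpha_l \neq \beta_l$, so these all lie in $\sigma_{\bar{\beta}_{<k}}(\M)$; the remaining ones (with $\alpha_l = \beta_l$) are already in $B$ by construction; and $S_{\alpha_k} \subseteq C_0 \subseteq \sigma_{\bar{\beta}_{<k}}(\M)$. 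Therefore $B \supseteq S_{\alpha_k} \cup \bar{b}_{1, \alpha_1} \cup \ldots \cup \bar{b}_{k-1, \alpha_{k-1}}$, and monotonicity of $X_{(\cdot)}$ in its parameter set (larger base yields a smaller $X$) delivers (1) with $m' = m$.

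The main technical obstacle is to secure the anchoring property in the construction of Step 1: one needs the enlarged family $\{\bar{b}_l\} \cup \{\bar{b}_{l, \alpha}\}_{\alpha < \omega}$ to admit, for each $\bar{\alpha}$, an automorphism of $\C$ over $C_0$ that swaps $\bar{b}_l$ with $\bar{b}_{l, \alpha_l}$ for each $l$ while pointwise fixing every other replica. This is stronger than what standard coheir sequence arguments provide, and would require treating all the $\bar{b}$-rows as a single sufficiently symmetric indiscernible structure over $C_0$ (e.g.~by extending the $\omega$-indexing to a densely ordered set so that $\bar{b}_l$ can be viewed as an element of a $\mathbb{Q}$-indexed sequence whose natural order-automorphisms realize the required moves), and carefully interleaving this with the inductive coheir realizations across the levels $l = 1, \ldots, k-1$.
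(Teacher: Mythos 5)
Your verification of conditions (1) and (2) from the ``anchoring property'' is fine as bookkeeping, but that property is precisely the missing content, and it cannot be secured in general. You need, for every $\bar{\alpha}\in\omega^{k-1}$, an automorphism over $C_0$ moving $(\bar b_1,\dots,\bar b_{k-1})$ onto $(\bar b_{1,\alpha_1},\dots,\bar b_{k-1,\alpha_{k-1}})$ while \emph{pointwise fixing every other replica}, i.e.\ $\bar b_l$ must have the same type as $\bar b_{l,\alpha_l}$ over $C_0$ together with replicas lying on \emph{both sides} of $\alpha_l$. Coheir-type realizations coming from $\bar b_l \ind^{u,\kappa_l}_{\M_l} \bar b_{<l}\M$ only give equality of types over the \emph{previously chosen} data; two-sided symmetry amounts to the family $\{\bar b_l\}\cup\{\bar b_{l,\alpha}\}_{\alpha<\omega}$ behaving like an indiscernible \emph{set} with an exchange property over $C_0$, which fails in general $k$-dependent (even NIP) theories — e.g.\ as soon as there is a definable linear order on tuples of the relevant sort, a coheir sequence is monotone and no automorphism can swap $\bar b_l$ with an interior replica while fixing the others. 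Your fallback suggestion (index by $\mathbb{Q}$ and use order-automorphisms) does not repair this: an order-automorphism of a dense order that fixes all indices except one is the identity, and order-indiscernibility never yields the required fixing of replicas on both sides of the moved index. So the construction in your Step 1 is not just ``technically delicate''; as stated it has no proof, and the strategy of transporting the original $d_{\alpha_k}$'s by automorphisms collapses with it.

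The paper's proof confronts exactly the difficulty you are trying to legislate away. It proceeds by reverse induction on the level $l=k,\dots,1$, and at each level it \emph{re-chooses} the $d$-array together with the replica $\bar b_{l,\alpha}$, realizing the type of the pair $\left(\bar b_l,(d_{\alpha_{l+1},\dots,\alpha_k})\right)$ over the previously constructed data (condition $(\star)$). This automatically gives the lexicographic condition when the level-$l$ index of the witness is $\leq$ that of the element, but for the opposite direction (witness $\bar b_{l,\alpha_l}$ chosen \emph{after} $d_{\beta_l,\dots}$) one cannot conclude anything by invariance; instead the paper applies Erd\H{o}s--Rado to extract a subsequence along which the $d$'s have constant type over models containing the later replicas, and then replaces each $d$ by a difference element $d_{2\alpha_l,\dots}^{-1}\cdot d_{2\alpha_l+1,\dots}$, which lands in the square of the relevant $X$-set. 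This doubles the exponent at each of the $k-1$ levels, which is why the claim is proved with $m'=2^{k-1}m$ and why your $m'=m$ is a warning sign: obtaining the claim with $m'=m$ would require exactly the two-sided symmetry that is unavailable, whereas the genuine argument must pay a factor of $2$ per level and adjust the excluded power $2k+(2^k-1)m'$ accordingly.
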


\proof[Proof of Claim \ref{claim Ginftymain}]
\newcommand{\lam}[1]{\lambda_{#1}}
Let $m' := 2^{k-1}m$. For $l = 1, \ldots, k$, let
\begin{gather}
	\lam{l} := \omega^{k-l + 1} = \prod_{i=l}^{k} \omega, \  m_l := 2^{k-l} m, \  q_1 := 2k + (2^{k} -1) m', \  
	q_l := \sum_{i=2}^{l} m_i + q_1. \label{eq: con comp -1} \\
\textrm{In particular }	m' = m_1, \  2 m_{l+1} = m_{l}, \   q_{l+1} = m_{l+1} + q_l, \  q_k = 2k + (2^{2k-1} - 1)m. \nonumber
\end{gather}

 To show the claim we choose, by reverse induction on $l=k, \dots, 1$, sequences $\left (\bar{b}_{l,\alpha}: \alpha< \omega \right)
 $ (these are only chosen and needed for $l<k$) and elements 
$(d_{\alpha_{l}, \dots, \alpha_{k}} : ( \alpha_l, \dots, \alpha_{k})\in \lam{l})$ in $G$ such that the following hold:
\begin{description}
\item [($\dagger_1^l$)]   
for all  $(\alpha_l,\dots, \alpha_k), (\beta_l, \dots, \beta_k) \in \lam{l}$ we have
$$d_{\beta_l, \dots, \beta_{k}} \in \bigcap_{i\in \set{l-1}} \left( X_{\M_{l-1} \cup S_{\alpha_k}\cup  \bar b_1 \cup \dots \cup \bar{b}_{l-1}\cup \bar b_{l, \alpha_l}\cup	 \dots\cup \bar b_{k-1, \alpha_{k-1}}\setminus  \bar b_i  } \right)^{m_l};$$
	\item [($\dagger_2^l$)]  
	for all  $(\alpha_l,\dots, \alpha_k) <^{\lex} (\beta_l, \dots, \beta_k) \in \lam{l}$ we have
$$d_{\beta_l, \dots, \beta_{k}}  \in \left( X_{S_{\alpha_k} \cup \bar b_{1}\cup  \dots\cup \bar b_{l-1}\cup \bar b_{l, \alpha_l}\cup	 \dots\cup \bar b_{k-1, \alpha_{k-1}}} \right)^{m_l};$$	 
\item [($\dagger_3^l$)]
for all  $(\alpha_l,\dots, \alpha_k) \in \lam{l}$ we have
$$ d_{\alpha_l, \dots, \alpha_{k}} \not\in \left(X_{S_{\alpha_k} \upharpoonright n   \cup \bar b_{1} \cup  \dots \cup \bar b_{l-1}  \cup \bar b_{l, \alpha_{l}}\cup  \dots\cup \bar b_{k-1, \alpha_{k-1} }}^{\Phi} \right)^{q_{l}}.$$
\end{description}

For $l=1$ this completes the proof of  Claim \ref{claim Ginftymain} (by ($\dagger^1_2$), ($\dagger^1_3$) and the choice of $\lambda_1, m_1, q_1$, see \eqref{eq: con comp -1}).

Since we had $S_{\alpha_k} \subseteq \M = \M_{k-1}$ for all $\alpha_k < \kappa_0$, the above holds for $l=k$ with $(d_{\alpha_k} : \alpha_k < \omega)$, i.e.~($\dagger_1^k$), ($\dagger_2^k$) and ($\dagger_3^k$) hold by \eqref{eq: con comp 4}, \eqref{eq: con comp 5} and \eqref{eq: con comp 6}, respectively (and by the choice of $\lambda_k, m_k, q_k$ \eqref{eq: con comp -1}).

Now suppose we have found the data as above satisfying ($\dagger_1^{l+1}$), ($\dagger_2^{l+1}$) and ($\dagger_3^{l+1}$) for $2 \leq l+1 \leq k$, and we want to choose the corresponding data for $l$. Let  $S$ be the concatenation of all the tuples $S_\alpha$ for $\alpha < \omega$ (then $S$ is countable) and 
$$A_l := \M_{l-1} \cup S \cup \bar b_{<l}\cup \left\{\bar b_{i,\beta}: l+1\leq i<k, \beta < \omega \right\}.$$
As $A_l \subseteq \bar b_{<l} \M_{k-1}$, $|A_l| \leq |\M_{l-1}| + \aleph_0 < \kappa_l$ and 
$$ \bar b_l \ind_{\M_l}^{u,\kappa_l}  \bar b_{<l} \M_{k-1}$$
by \eqref{eq: con comp 7}, we can choose by transfinite induction a sequence of tuples 
$$\left( \bar b_{l, \alpha}, 
(d_{\alpha, \alpha_{l+1}, \dots, \alpha_{k}}: 
(\alpha_{l+1}, \dots, \alpha_{k}) \in \lam{l+1} \right): \alpha < \kappa_l)$$ 
with  $\bar{b}_{l, \alpha} \in \M_l$ and $d_{\alpha, \alpha_{l+1}, \dots, \alpha_{k}} \in G(\mathbb{M})$ such that, taking $A_{l,\alpha} := A_l \cup \left\{\bar b_{l,\beta}: \beta < \alpha \right \} $, for each $\alpha < \kappa_l$ we have
\begin{description}
	\item[$(\star)$] 
	$\left(\bar{b}_{l,\alpha}, (d_{\alpha, \alpha_{l+1}, \dots, 
	\alpha_{k}})_{ (\alpha_{l+1}, \dots, \alpha_{k}) \in \lam{l+1}} \right) \models \tp \left(\bar{b}_l,(d_{\alpha_{l+1}, \dots, 
	\alpha_{k}})_{ (\alpha_{l+1}, \dots, \alpha_{k}) \in  \lam{l+1}}/ A_{l,\alpha} \right)$
\end{description}
(we apply automorphisms of $\mathbb{M}$ over $A_{l,\alpha}$ sending $\bar{b}_l$ to $\bar{b}_{l,\alpha}$  to find the $d$'s).

We claim that ($\dagger_2^{l}$) holds in the following strong form: for
any $(\alpha_l,\dots, \alpha_k) <^{\lex} (\beta_l, \dots, \beta_k) \in \kappa_{l} \times \lambda_{l+1}$ we have 
\begin{gather}
	d_{\beta_l, \dots, \beta_{k}}  \in \left( X_{S_{\alpha_k} \cup \bar b_{1}\cup  \dots\cup \bar b_{l-1}\cup \bar b_{l, \alpha_l}\cup	 \dots\cup \bar b_{k-1, \alpha_{k-1}}} \right)^{m_{l+1}}. \label{eq: con comp 8}
\end{gather}

\noindent Indeed, fix any $(\alpha_l,\dots, \alpha_k) <^{\lex} (\beta_l, \dots, \beta_k) \in \kappa_{l} \times \lambda_{l+1}$.

\noindent If $\beta_l = \alpha_l$, then $(\alpha_{l+1},\dots, \alpha_k) <^{\lex} (\beta_{l+1}, \dots, \beta_k)$, hence by $(\dagger_2^{l+1})$ 
we have 
\begin{gather*}
	d_{\beta_{l+1}, \dots, \beta_{k}}  \in \left( X_{S_{\alpha_k} \cup \bar b_{1}\cup  \dots\cup \bar b_{l}\cup \bar b_{l+1, \alpha_{l+1}}\cup	 \dots\cup \bar b_{k-1, \alpha_{k-1}}} \right)^{m_{l+1}},
\end{gather*}
which using $(\star)$ (for $\alpha_l$) and Remark \ref{rem: basic props of Xs}(6)  implies 
\begin{gather*}
d_{\beta_l,\beta_{l+1},\dots, \beta_{k}}= d_{\alpha_l, \beta_{l+1}, \dots, \beta_{k}}  \in \left( X_{S_{\alpha_k} \cup \bar b_{1}\cup  \dots\cup \bar b_{l-1}\cup \bar b_{l, \alpha_l}\cup	 \dots\cup \bar b_{k-1, \alpha_{k-1}}} \right)^{m_{l+1}}.	
\end{gather*}

\noindent Otherwise $\beta_l > \alpha_l$.  By $(\dagger_1^{l+1})$ we have in particular
\begin{gather*}
	d_{\beta_{l+1}, \dots, \beta_{k}} \in  \left( X_{\M_{l} \cup S_{\alpha_k}\cup  \bar b_1 \cup \dots \cup \bar{b}_{l-1}\cup \bar b_{l+1, \alpha_{l+1}}\cup	 \dots\cup \bar b_{k-1, \alpha_{k-1}}} \right)^{m_{l+1}}.
\end{gather*}
As  $\M_{l-1} \subseteq \M_l$ and $ \bar b_{l, \alpha_{l}}\in \M_l$, by Remark \ref{rem: basic props of Xs}(3) this implies  
\begin{gather*}
	d_{\beta_{l+1}, \dots, \beta_{k}} \in \left( X_{\M_{l-1} \cup S_{\alpha_k}\cup  \bar b_1 \cup \dots \cup \bar{b}_{l-1}\cup  \bar b_{l, \alpha_{l}} \cup \bar b_{l+1, \alpha_{l+1}}\cup	 \dots\cup \bar b_{k-1, \alpha_{k-1}} } \right)^{m_{l+1}}.
\end{gather*}
\noindent And $\bar b_{l, \alpha_{l}} \subseteq A_{l, \beta_l} $ as $\beta_l > \alpha_l$, so by $(\star)$ and Remark \ref{rem: basic props of Xs}(6) this again implies that \eqref{eq: con comp 8} holds.

Using $(\dagger_3^{l+1})$  and $(\star)$, we get that ($\dagger_3^l$) is satisfied in the following strong form: for all  $(\alpha_l, \ldots, \alpha_k) \in \kappa_{l} \times \lambda_{l+1}$ we have 
\begin{gather}
	d_{\alpha_l, \dots, \alpha_{k}} \not\in \left(X_{S_{\alpha_k} \upharpoonright n   \cup \bar b_{1} \cup  \dots \cup \bar b_{l-1}  \cup \bar b_{l, \alpha_{l}}\cup  \dots\cup \bar b_{k-1, \alpha_{k-1} }}^{\Phi} \right)^{q_{l+1}}. \label{eq: con comp 9}
\end{gather}

Concerning ($\dagger_1^{l}$), we first show the following:
for all  $(\alpha_{l+1},\dots, \alpha_k), (\beta_{l+1}, \dots, \beta_k) \in \lam{l+1}$ and $\alpha_l \leq \beta_l < \kappa_l$, 
\begin{gather}
	d_{\beta_l, \dots, \beta_{k}} \in \bigcap_{i\in \set{l-1}} \left( X_{\M_{l-1} \cup S_{\alpha_k}\cup  \bar b_1 \cup \dots \cup \bar{b}_{l-1}\cup \bar b_{l, \alpha_l}\cup	 \dots\cup \bar b_{k-1, \alpha_{k-1}}\setminus  \bar b_i} \right)^{m_{l+1}}.\label{eq: con comp 10}
\end{gather}

\noindent Indeed, fix any $(\alpha_{l+1},\dots, \alpha_k), (\beta_{l+1}, \dots, \beta_k) \in \lam{l+1}$. By ($\dagger_1^{l+1}$) we have

\begin{gather}
	d_{\beta_{l+1}, \dots, \beta_{k}} \in \bigcap_{i\in \set{l}} \left( X_{\M_{l} \cup S_{\alpha_k}\cup  \bar b_1 \cup \dots \cup \bar{b}_{l}\cup \bar b_{l+1, \alpha_{l+1}}\cup	 \dots\cup \bar b_{k-1, \alpha_{k-1}}\setminus  \bar b_i } \right)^{m_{l+1}}.\label{eq: con comp 10 a}
\end{gather}

\noindent  If $\alpha_l = \beta_l$, then by \eqref{eq: con comp 10 a} we have in particular
\begin{gather*}
	d_{\beta_{l+1}, \dots, \beta_{k}} \in \bigcap_{i\in \set{l-1}} \left( X_{\M_{l-1} \cup S_{\alpha_k}\cup  \bar b_1 \cup \dots \cup \bar{b}_{l}\cup \bar b_{l+1, \alpha_{l+1}}\cup	 \dots\cup \bar b_{k-1, \alpha_{k-1}}\setminus  \bar b_i  } \right)^{m_{l+1}},
\end{gather*}
\noindent which using $(\star)$  (and $d_{\beta_l,\beta_{l+1},\dots, \beta_{k}}= d_{\alpha_l, \beta_{l+1}, \dots, \beta_{k}}$) implies \eqref{eq: con comp 10}.

\noindent And if $\alpha_l < \beta_l$, from \eqref{eq: con comp 10 a} we have (using $\bar{b}_{l,\alpha_l} \in \M_{l}$) in particular
\begin{gather*}
	d_{\beta_{l+1}, \dots, \beta_{k}} \in \bigcap_{i\in \set{l-1}} \left( X_{\M_{l-1} \cup S_{\alpha_k}\cup  \bar b_1 \cup \dots \cup \bar{b}_{l-1}\cup \bar{b}_{l, \alpha_l} \cup \bar b_{l+1, \alpha_{l+1}}\cup	 \dots\cup \bar b_{k-1, \alpha_{k-1}}\setminus  \bar b_i } \right)^{m_{l+1}},
\end{gather*}
which using $(\star)$ (and $\alpha_l < \beta_l$) again implies \eqref{eq: con comp 10}.

\medskip

However, in order to achieve ($\dagger_1^{l}$) in full (i.e.~so that the condition in ($\dagger_1^{l}$) also holds when $\beta_l < \alpha_l < \omega$) we  have to modify the sequence and increase $m_{l+1}$. 
Consider the sequence of \emph{countable} tuples
\begin{gather*}
	\left(\bar{b}_{l, \alpha_l}, (d_{\alpha_l, \ldots, \alpha_k} : (\alpha_{l+1}, \ldots, \alpha_k) \in \lam{l+1}) : \alpha_l < \kappa_l \right).
\end{gather*}

\noindent For $\beta_l < \kappa_l$, let $\M_{l, \beta_l} \prec \mathbb{M}$ be an arbitrary model containing $A_l \cup \bar{b}_{l, \beta_l}$ with $\left \lvert \M_{l, \beta_l} \right \rvert \leq \left \lvert \M_{l-1} \right \rvert$. Let $\left( p^{\beta_l}_{\gamma}(x) : \gamma < 2^{|\M_{l-1}|}  \right)$ be an arbitrary enumeration (possibly with repetitions if there are fewer types) of the set $S_{x}\left(\M_{l, \beta_l} \right)$ of complete types over $\M_{l, \beta_l}$ in the tuple of variables $x$ corresponding to the sort of $G$. 

\noindent For $\alpha_l < \beta_l < \kappa_l$ and $ (\alpha_{l+1}, \ldots, \alpha_k) \in \lam{l+1}$, let $t_{\alpha_l, \beta_l}^{(\alpha_{l+1}, \ldots, \alpha_k)}$ be the smallest $\gamma < 2^{|\M_{l-1}|} $ so that $\tp \left( d_{\alpha_l, \ldots, \alpha_k} / \M_{l, \beta_l} \right) = p^{\beta_l}_{\gamma}$, and consider the tuple
\begin{gather*}
	\bar{t}_{\alpha_l, \beta_l} := \left( t_{\alpha_l, \beta_l}^{(\alpha_{l+1}, \ldots, \alpha_k)}   : (\alpha_{l+1}, \ldots, \alpha_k) \in \lam{l+1} \right).
\end{gather*}
There are at most $ \left(2^{|\M_{l-1}|} \right)^{\aleph_0} \leq 2^{|\M_{l-1}|}$ possible choices for $	\bar{t}_{\alpha_l, \beta_l}$.  As $\kappa_l \geq \beth_2 \left( |\M_{l-1}| \right)^+$
by assumption, applying Erd\H{o}s-Rado and passing to a countable subsequence we have that \eqref{eq: con comp 8} and \eqref{eq: con comp 9} still hold (replacing $\kappa_l$ by $\omega$) and for any  fixed $(\alpha_{l+1}, \ldots, \alpha_k) \in \lambda_{l+1}$  we have additionally:  for all $\alpha_l < \alpha'_l < \beta_l < \omega$,
\begin{gather}
	d_{\alpha_l, \alpha_{l+1}, \ldots, \alpha_k} \equiv_{\M_{l,\beta_l}}   	d_{\alpha'_l, \alpha_{l+1}, \ldots, \alpha_k},\label{eq: con comp 11}
	\end{gather}
	hence, using Fact \ref{fac: thick vs lascar distance}(2) and Remark \ref{rem: basic props of Xs}(4), in particular 
\begin{gather}
	d^{-1}_{\alpha_l, \alpha_{l+1}, \ldots, \alpha_k} \cdot d_{\alpha'_l, \alpha_{l+1}, \ldots, \alpha_k} \in X_{\M_{l-1} S \bar{b}_1 \ldots \bar{b}_{l-1} \bar{b}_{l, \beta_l} \left\{\bar b_{i,\gamma}: l+1\leq i<k, \gamma < \omega \right\}, 2 } \label{eq: con comp 12} \\
	 \subseteq \left( X_{\M_{l-1} S \bar{b}_1 \ldots \bar{b}_{l-1} \bar{b}_{l, \beta_l} \left\{\bar b_{i,\gamma}: l+1\leq i<k, \gamma < \omega \right\}}\right)^{2}. \nonumber
\end{gather}

Now for $(\alpha_l, \ldots, \alpha_k) \in \lam{l}$, we define
\begin{gather*}
	e_{\alpha_l, \alpha_{l+1}, \ldots, \alpha_k} := d^{-1}_{2 \alpha_l, \alpha_{l+1}, \ldots, \alpha_k} \cdot d_{2 \alpha_l + 1, \alpha_{l+1}, \ldots, \alpha_k} \in G(\mathbb{M}),\\
	\bar{c}_{l, \alpha_l} := \bar{b}_{l, 2 \alpha_l} \in \M_{l},
\end{gather*}
and  claim that the sequences $(\bar{c}_{l,\alpha_l} : \alpha_l < \omega), (\bar{b}_{l+1, \alpha_{l+1}} : \alpha_{l+1} < \omega), \ldots, (\bar{b}_{k-1, \alpha_{k-1}} : \alpha_{k-1} < \omega)$ and elements $\left(e_{\alpha_l, \ldots, \alpha_k} : 
(\alpha_l,   \dots, \alpha_{k})\in \omega^k \right)$ satisfy the requirements ($\dagger_1^{l}$), ($\dagger_2^{l}$) and ($\dagger_3^{l}$).

Fix any  $(\alpha_l,\dots, \alpha_k), (\beta_l, \dots, \beta_k) \in \lam{l}$. If $\alpha_l \leq \beta_l$, then $2 \alpha_l \leq 2 \beta_l, 2 \beta_l + 1$, so by \eqref{eq: con comp 10} we have 
\begin{gather*}
	e_{\beta_l, \beta_{l+1}, \ldots, \beta_k} = d^{-1}_{2 \beta_l, \beta_{l+1}, \ldots, \beta_k} \cdot d_{2 \beta_l + 1, \beta_{l+1}, \ldots, \beta_k} \in \\
	\bigcap_{i\in \set{l-1}} \left( X_{\M_{l-1} \cup S_{\alpha_k}\cup  \bar b_1 \cup \dots \cup \bar{b}_{l-1}\cup \bar b_{l, 2 \alpha_l}\cup \bar{b}_{l+1, \alpha_{l+1}} \cup	 \dots\cup \bar b_{k-1, \alpha_{k-1}}\setminus  \bar b_i } \right)^{2 m_{l+1}}.
\end{gather*}

\noindent If $\beta_l < \alpha_l$, then $2 \beta_l < 2 \beta_l + 1 < 2 \alpha_l$, so by \eqref{eq: con comp 12} we have
\begin{gather*}
	e_{\beta_l, \beta_{l+1}, \ldots, \beta_k} \in \left( X_{\M_{l-1} S \bar{b}_1 \cup \ldots \bar{b}_{l-1} \bar{b}_{l, 2 \alpha_l}  \bar{b}_{l+1, \alpha_{l+1}} \cup	  \dots \cup \bar b_{k-1, \alpha_{k-1}} } \right)^2,
\end{gather*}
hence in particular
\begin{gather*}
e_{\beta_l, \beta_{l+1}, \ldots, \beta_k} \in 
	\bigcap_{i\in \set{l-1}}  \left( X_{\M_{l-1} \cup S_{\alpha_k}\cup  \bar b_1 \cup \dots \cup \bar{b}_{l-1}\cup \bar b_{l, 2 \alpha_l}\cup \bar{b}_{l+1, \alpha_{l+1}}	 \dots\cup \bar b_{k-1, \alpha_{k-1}}\setminus  \bar b_i} \right)^2.	
\end{gather*}

\noindent In either case, we have (as $1 \leq m = m_k \leq m_{l+1}$ and $ 2 m_{l+1} = m_l$ by \eqref{eq: con comp -1})
\begin{gather*}
		e_{\beta_l, \dots, \beta_{k}} \in \bigcap_{i\in \set{l-1}} \left( X_{\M_{l-1} \cup S_{\alpha_k}\cup  \bar b_1 \cup \dots \cup \bar{b}_{l-1}\cup \bar c_{l, \alpha_l}\cup \bar{b}_{l+1, \alpha_{l+1}}	 \dots\cup \bar b_{k-1, \alpha_{k-1}}\setminus  \bar b_i  } \right)^{m_{l}},
\end{gather*}
so ($\dagger_1^{l}$) holds.

Fix any $(\alpha_l,\dots, \alpha_k) <^{\lex} (\beta_l, \dots, \beta_k) \in \lambda_{l}$. Then both 
\begin{gather*}
(2 \alpha_l, \alpha_{l+1}, \dots, \alpha_k) <^{\lex} (2 \beta_l, \beta_{l+1}, \dots, \beta_k) \textrm{ and } \\
(2 \alpha_l, \alpha_{l+1}, \dots, \alpha_k) <^{\lex} (2 \beta_l + 1, \beta_{l+1}, \dots, \beta_k),
\end{gather*}
 hence using \eqref{eq: con comp 8} (and Remark \ref{rem: basic props of Xs}(2))  we have 
\begin{gather*}
e_{\beta_l, \beta_{l+1}, \ldots, \beta_k} = d^{-1}_{2 \beta_l, \beta_{l+1}, \ldots, \beta_k} \cdot d_{2 \beta_l + 1, \beta_{l+1}, \ldots, \beta_k}  \in \\
 \left( X_{S_{\alpha_k} \cup \bar b_{1}\cup  \dots\cup \bar b_{l-1}\cup \bar b_{l, 2 \alpha_l}\cup \bar{b}_{l+1, \alpha_{l+1}} \cup	 \dots\cup \bar b_{k-1, \alpha_{k-1}}} \right)^{2 m_{l+1}},
\end{gather*}
\noindent demonstrating that ($\dagger_2^{l}$) holds (as $ 2 m_{l+1} = m_l$ by \eqref{eq: con comp -1}).

Finally, assume towards contradiction that ($\dagger_3^{l}$) does not hold. That is, there is some $(\alpha_l,\dots, \alpha_k) \in \lam{l}$ so that
\begin{gather*}
	e_{\alpha_l, \ldots, \alpha_k} = d^{-1}_{2 \alpha_l, \alpha_{l+1}, \ldots, \alpha_k} \cdot d_{2 \alpha_l + 1, \alpha_{l+1}, \ldots, \alpha_k} \in \\\left(X_{S_{\alpha_k} \upharpoonright n   \cup \bar b_{1} \cup  \dots \cup \bar b_{l-1}  \cup \bar b_{l, 2 \alpha_{l}}\cup \bar{b}_{l+1, \alpha_{l+1}} \cup \dots\cup \bar b_{k-1, \alpha_{k-1} }}^{\Phi} \right)^{q_l}.
\end{gather*}

\noindent As $(2 \alpha_l + 1, \alpha_{l+1}\dots, \alpha_k) >^{\lex} (2 \alpha_l, \alpha_{l+1}\dots, \alpha_k) $, by \eqref{eq: con comp 8}  we have 
\begin{gather*}
	 d_{2 \alpha_l + 1, \alpha_{l+1}, \ldots, \alpha_k}  \in \left( X_{S_{\alpha_k} \cup \bar b_{1}\cup  \dots\cup \bar b_{l-1}\cup \bar b_{l, 2 \alpha_l}\cup \bar{b}_{l+1, \alpha_{l+1}}	\cup  \dots\cup \bar b_{k-1, \alpha_{k-1}}} \right)^{m_{l+1}}. 
\end{gather*}
\noindent As $d_{2 \alpha_l, \alpha_{l+1}, \ldots, \alpha_k} =  d_{2 \alpha_l + 1, \alpha_{l+1}, \ldots, \alpha_k}  \cdot e_{\alpha_l, \ldots, \alpha_k}^{-1}$, together these imply (using Remark \ref{rem: basic props of Xs}(2),(7) and uniform thickness of $\Phi(x,y;z)$) that 
\begin{gather*}
	d_{2 \alpha_l, \alpha_{l+1}, \ldots, \alpha_k} \in \left(X_{S_{\alpha_k} \upharpoonright n   \cup \bar b_{1} \cup  \dots \cup \bar b_{l-1}  \cup \bar b_{l, 2 \alpha_{l}}\cup \bar{b}_{l+1, \alpha_{l+1}} \cup \dots\cup \bar b_{k-1, \alpha_{k-1} }}^{\Phi} \right)^{m_{l+1} + q_l}.
\end{gather*}

But as $m_{l+1} + q_l = q_{l+1}$ by \eqref{eq: con comp -1}, this contradicts \eqref{eq: con comp 9}. So ($\dagger_3^{l}$) holds.
\qed$_{\textrm{Claim \ref{claim Ginftymain}}}$

We can additionally assume that the sequences form an indiscernible array (and are indexed by $\mathbb{Q}$ instead of $\omega$): 
\begin{claim}\label{cla ind}
There exist sequences $\left(\bar{b}_{l,\alpha}: \alpha \in \mathbb{Q} \right)$ for $l \in \set{k-1}$ and  $(S_{\alpha_k} : \alpha_k \in \mathbb{Q})$,  and elements $\left(d_{\alpha_1, \dots, \alpha_{k}} : 
(\alpha_1,   \dots, \alpha_{k})\in \mathbb{Q}^k \right)$ in $G(\mathbb{M})$ such that:
\begin{enumerate}
\item  for all  $(\alpha_1,  \dots, \alpha_k) <^{\lex} (\beta_1, \dots, \beta_k) \in \mathbb{Q}^k$ we have 
$$d_{\beta_1, \dots, \beta_{k}}  \in \left(X_{S_{\alpha_k} \cup \bar b_{1,\alpha_1}, \dots, \bar b_{k-1, \alpha_{k-1}}} \right)^{m'};  $$
\item for all $(\alpha_1, \dots, \alpha_k) \in \mathbb{Q}^k$ we have 
$$d_{\alpha_1, \dots, \alpha_k} \not\in \left(X_{S_{\alpha_k} \upharpoonright n \cup \bar b_{1, \alpha_1} \cup  \dots \cup\bar b_{k-1, \alpha_{k-1} }}^{\Phi} \right)^{2k+ (2^k-1) m'};$$
\item Taking $\bar{c}_{\alpha_1, \ldots, \alpha_k} := (S_{\alpha_k} \upharpoonright n,  \bar b_{1, \alpha_1}, \dots,  \bar b_{k-1, \alpha_{k-1}})$, the $k$-dimensional array $(\bar{c}_{\bar \alpha} : \bar\alpha\in\mathbb{Q}^k)$ is indiscernible in the sense of Definition \ref{def: indiscernible array}, i.e.~ for any $i \in \set{k}$ the sequence 
$$\Big (\big(\bar{c}_{(\alpha_1, \dots, \alpha_k)}: (\alpha_1, \dots, \alpha_{i-1}, \alpha_{i+1}, \dots, \alpha_k)  \in \mathbb{Q}^{k-1}\big) : \alpha_i \in  \mathbb{Q} \Big )$$ is indiscernible.
\end{enumerate}
\end{claim}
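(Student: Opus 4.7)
\emph{Plan for the proof of Claim \ref{cla ind}.} The strategy is a routine application of the standard indiscernible-array extraction (iterated Ramsey / modeling property for $k$-dimensional grids) followed by bookkeeping to ensure that the type-definable conditions (1) and (2) transfer.

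First, I would package all the data produced by Claim \ref{claim Ginftymain} into a single $k$-dimensional array. For each $\bar\alpha = (\alpha_1,\dots,\alpha_k) \in \omega^k$, consider the (small) tuple
\[
\bar{e}_{\bar\alpha} := \bigl(S_{\alpha_k},\ \bar{b}_{1,\alpha_1},\ \dots,\ \bar{b}_{k-1,\alpha_{k-1}},\ d_{\bar\alpha}\bigr),
\]
noting that the $l$-th vector-space slot depends only on the $l$-th index. I would then apply the standard extraction of a $\mathbb{Q}^k$-indexed indiscernible array $(\bar{e}'_{\bar\alpha})_{\bar\alpha\in\mathbb{Q}^k}$, indiscernible over $A$ in the sense of Definition \ref{def: indiscernible array} and based on $(\bar{e}_{\bar\alpha})_{\bar\alpha \in \omega^k}$. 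This extraction is obtained by iterating the usual Ramsey-based extraction of an indiscernible sequence along each of the $k$ coordinates in turn (or, equivalently, by applying the modeling property for the Ramsey class of finite $k$-dimensional grids). From $\bar{e}'_{\bar\alpha}$ one reads off the required sequences $(S'_{\alpha_k})_{\alpha_k\in\mathbb{Q}}$, $(\bar{b}'_{l,\alpha_l})_{\alpha_l\in\mathbb{Q}}$ for $l\in [k-1]$, and elements $(d'_{\bar\alpha})_{\bar\alpha\in\mathbb{Q}^k}$, and condition (3) of the claim is then immediate from indiscernibility.

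Next, I would verify that conditions (1) and (2) transfer. The key observation is that the lexicographic order on $\mathbb{Q}^k$ is entirely determined by the componentwise qf-types of index pairs: for any $\bar\alpha <^{\lex}\bar\beta$ in $\mathbb{Q}^k$, there is a pair $(\bar\alpha^*,\bar\beta^*) \in \omega^k$ with the same componentwise order pattern, and hence with $\bar\alpha^* <^{\lex}\bar\beta^*$. For condition (2), the statement $d_{\bar\alpha}\notin (X^{\Phi}_{S_{\alpha_k}\upharpoonright n \cup \bar b_{1,\alpha_1}\cup\dots\cup \bar b_{k-1,\alpha_{k-1}}})^{2k+(2^k-1)m'}$ is the negation of a single formula with parameters contained in the single tuple $\bar{e}'_{\bar\alpha}$ (since $S_{\alpha_k}\upharpoonright n$ and $\Phi$ are of finite character), so it transfers by based-on applied to the single index $\bar\alpha$. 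For condition (1), using Remark \ref{rem: basic props of Xs}(6), the containment $d_{\bar\beta}\in (X_{S_{\alpha_k}\cup \bar b_{1,\alpha_1}\cup\dots\cup \bar b_{k-1,\alpha_{k-1}}})^{m'}$ is the conjunction, over finite $S''\subseteq S_{\alpha_k}$, of formulas in the finite tuple $(d_{\bar\beta},\bar b_{1,\alpha_1},\dots,\bar b_{k-1,\alpha_{k-1}},S'')$, each of which transfers by based-on applied to the pair $(\bar\alpha,\bar\beta)$.

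The only mildly delicate point is ensuring that the factored structure is preserved, i.e., that after extraction $\bar{b}'_{l,\alpha_l}$ still depends only on $\alpha_l$ and $S'_{\alpha_k}$ only on $\alpha_k$. This is automatic: the functional relation ``the $l$-th coordinate of $\bar{e}_{\bar\alpha}$ coincides with the $l$-th coordinate of $\bar{e}_{\bar\alpha'}$ whenever $\alpha_l=\alpha'_l$'' is a qf-definable property of the index pair in the sense of Definition \ref{def: indiscernible array}, so it is preserved by indiscernibility of the extracted array. This is the only real place where one must be slightly careful; no deep input (beyond the standard existence of indiscernible arrays) is needed.
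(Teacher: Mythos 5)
Your proposal is correct and is essentially the paper's argument: the paper obtains the claim by applying its finite array-Ramsey lemma (Lemma \ref{lem: indisc subarray}) to the data from Claim \ref{claim Ginftymain} and then using compactness together with the finite character of the conditions (Remark \ref{rem: basic props of Xs}(6)), which is exactly the transfer bookkeeping you carry out. The only cosmetic difference is that you phrase the extraction via the infinitary modeling property (a ``based-on'' indiscernible array, with the $d_{\bar\alpha}$'s included, which is a harmless strengthening of item (3)), whereas the paper runs the equivalent finite-Ramsey-plus-compactness version; your observations that (1) and (2) are order-pattern-uniform, type-definable conditions and that the factored dependence of the slots on single coordinates is preserved are precisely what makes either formulation go through.
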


\proof By Lemma \ref{lem: indisc subarray} applied to the sequences given by Claim \ref{claim Ginftymain} (note that (1) and (2) still hold restricting to any subarray) and compactness (using Remark \ref{rem: basic props of Xs}(6)).
	\qed$_{\operatorname{claim}}$
	

For $\bar \alpha=(\alpha_1, \dots, \alpha_k) \in \omega^k$, finite $I \subset \omega^k$ and $\bar i \in \{0,1\}^k$, we define
	\[
	\begin{array}{ll}
	d_{\bar \alpha,\bar i }:=d_{2\alpha_1 + i_1, \dots , 2\alpha_k + i_k}&\quad  d_{I,\bar i }:= \prod_{(\gamma_1, \dots, \gamma_k) \in I} d_{\bar \gamma, \bar i}\\[2ex]
	X_{\bar \alpha } := X_{S_{2\alpha_k}\upharpoonright n \cup  \bar b_{1, 2\alpha_1} \cup \dots  \cup  \bar b_{k-1, 2\alpha_{k-1}}}&\quad  X^{\Phi}_{\bar \alpha } := X_{S_{2\alpha_k}\upharpoonright n \cup  \bar b_{1, 2\alpha_1} \cup \dots  \cup  \bar b_{k-1, 2\alpha_{k-1}}}^{\Phi}.
	\end{array}
	\] 
Since the group $G$ is abelian (we have not used abelianity of $G$ up to this point), the order in which we take the product in $d_{I, \bar{i}}$ does not matter. As usual, the product over the empty set is the identity, so e.g.~if $I = \emptyset$ then $d_{I, \bar{i}} = 1_{G}$.
Moreover, we let
\begin{gather*}
	\Odd := \left\{ (i_1, \dots, i_k) \in \{0,1\}^k\,:\, \sum_{j=1}^k i_j \mbox{  is odd}\right\},\\
	\Ev := \left\{ (i_1, \dots, i_k) \in \{0,1\}^k\,:\, \sum_{j=1}^k i_j \mbox{  is even}\right\}.
\end{gather*}
Finally, for any finite $I \subset \omega^k$, we define
\[
d_I := \prod_{\bar i \in \Ev }d_{I, \bar i}\ \cdot \  \left (\prod_{\bar i \in \Odd }  d_{I, \bar i}\right )^{-1}.
\]

\begin{claim}\label{C:ddinXek} Let $I$ be a finite subset of $\omega^k$ and  $\bar \alpha \in \omega^k \setminus I$. Then $d_I \in X_{\bar \alpha}^{k}\subseteq (X_{\bar \alpha}^{\Phi})^k$ (the inclusion holds by Remark \ref{rem: basic props of Xs}(7) and uniform thickness of $\Phi(x,y;z)$).
\end{claim}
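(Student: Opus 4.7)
The strategy is to show that $d_I$ decomposes, coordinate-by-coordinate, into a product of $k$ elements of $X_{\bar\alpha}$; the inclusion $X_{\bar\alpha}^k\subseteq (X_{\bar\alpha}^{\Phi})^k$ then follows from Remark \ref{rem: basic props of Xs}(7) and uniform thickness of $\Phi$. Using that $G$ is abelian, I first rewrite
\[
d_I \;=\; \prod_{\bar i\in\{0,1\}^k} d_{I,\bar i}^{(-1)^{|\bar i|}} \;=\; \prod_{\bar\gamma\in I}\,f(\bar\gamma),\qquad f(\bar\gamma):=\prod_{\bar i\in\{0,1\}^k} d_{\bar\gamma,\bar i}^{(-1)^{|\bar i|}}.
\]
Since $\bar\alpha\notin I$, for each $\bar\gamma\in I$ there is a smallest coordinate $j(\bar\gamma)\in[k]$ with $\gamma_{j(\bar\gamma)}\neq\alpha_{j(\bar\gamma)}$. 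Partitioning $I=I_1\sqcup\cdots\sqcup I_k$ according to $j(\bar\gamma)$ yields $d_I=\prod_{j=1}^k d_{I_j}$, where $d_{I_j}:=\prod_{\bar\gamma\in I_j}f(\bar\gamma)$. It therefore suffices to show $d_{I_j}\in X_{\bar\alpha}$ for each $j$.

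Fix $j$. Pairing each $\bar i$ with its $j$-flip and using abelianness one more time, write $f(\bar\gamma)=A(\bar\gamma)\cdot B(\bar\gamma)^{-1}$, where $A(\bar\gamma)$ (resp.\ $B(\bar\gamma)$) is the signed product of the $d_{\bar\gamma,\bar i}$'s over those $\bar i$ with $i_j=0$ (resp.\ $i_j=1$). Then $d_{I_j}=A_j\cdot B_j^{-1}$ with $A_j:=\prod_{\bar\gamma\in I_j}A(\bar\gamma)$ and $B_j:=\prod_{\bar\gamma\in I_j}B(\bar\gamma)$; crucially, $A_j$ is a fixed definable term in the $d_{\bar\beta}$'s whose $j$-th coordinate lies in $P_j:=\{2\gamma_j:\bar\gamma\in I_j\}$, and $B_j$ is the \emph{same} term applied to the $d_{\bar\beta}$'s whose $j$-th coordinate lies in $P_j+1$, all other coordinates unchanged. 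By abelianness and Remark \ref{rem: basic props of Xs}(2), we will have $d_{I_j}=B_j^{-1}A_j\in X_{\bar\alpha}$ as soon as we show
\[
\dist_{\,S_{2\alpha_k}\upharpoonright n\,\cup\,\bar b_{1,2\alpha_1}\cup\cdots\cup\bar b_{k-1,2\alpha_{k-1}}}(A_j,B_j)\;\leq\;1.
\]

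For this I apply Lemma \ref{lem: lasc dist 1 on an array} to the $1$-dimensional slice of the indiscernible array in direction $j$. Here I use a mild strengthening of Claim \ref{cla ind}(3): applying Lemma \ref{lem: indisc subarray} to the augmented tuples $(\bar c_{\bar\alpha},d_{\bar\alpha})$ makes the whole array $(\bar c_{\bar\alpha},d_{\bar\alpha})_{\bar\alpha}$ indiscernible. Choosing the slice in directions $j'\neq j$ to include the indices $2\alpha_{j'}$ ensures that every $\bar b_{j',2\alpha_{j'}}$ (and $S_{2\alpha_k}\upharpoonright n$ when $j\neq k$) is contained in the slice data, so it can be absorbed into the base over which the $1$-dimensional sequence is indiscernible. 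The shift $p\mapsto p+1$ on $P_j$ is strictly increasing; and since $\gamma_j\neq\alpha_j$ for every $\bar\gamma\in I_j$, a parity check gives $\{2\gamma_j,2\gamma_j+1\}\cap\{2\alpha_j\}=\emptyset$ with $2\gamma_j$ and $2\gamma_j+1$ on the same side of $2\alpha_j$, so the quantifier-free order type over $\{2\alpha_j\}$ and over the remaining unshifted elements of $P_j$ is preserved. Lemma \ref{lem: lasc dist 1 on an array} then yields Lascar distance $\leq 1$ between the slice data at $P_j$ and at $P_j+1$ over the relevant parameters, and Remark \ref{rem : dcl of lascar distance} transfers this to the definable terms $A_j$ and $B_j$. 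The main technical obstacle is precisely this last bookkeeping step: arranging the $1$-dimensional slice so that all the parameters $\bar b_{j',2\alpha_{j'}}$ and $S_{2\alpha_k}\upharpoonright n$ enter as base data, while still allowing simultaneous shifts of all of $P_j$ that verify the order-type hypothesis of Lemma \ref{lem: lasc dist 1 on an array}.
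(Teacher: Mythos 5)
Your proof is correct and follows essentially the same route as the paper: the same partition of $I$ into pieces $I_j$ according to a coordinate where $\bar\gamma$ differs from $\bar\alpha$, followed by an application of Lemma \ref{lem: lasc dist 1 on an array} to the direction-$j$ slices of the array (with the slice at the base index $2\alpha_j$ absorbing the parameters $S_{2\alpha_k}\upharpoonright n, \bar b_{1,2\alpha_1},\dots,\bar b_{k-1,2\alpha_{k-1}}$) and Remark \ref{rem : dcl of lascar distance} to transfer Lascar distance $\leq 1$ to the group words $A_j, B_j$, giving $d_{I_j}\in X_{\bar\alpha}$ and hence $d_I\in X_{\bar\alpha}^k$. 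Your explicit strengthening of Claim \ref{cla ind}(3) to include the $d_{\bar\alpha}$'s in the indiscernible array (via Lemma \ref{lem: indisc subarray} applied to the augmented tuples) is precisely what is needed to justify the paper's displayed Lascar-distance statement about $d$-tuples, so this is a welcome precision rather than a genuine divergence.
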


\proof We split $I$ up into $k$ disjoint subset $(I_j)_{j  \in \set{k}}$ (some of which could be empty), such that for each $\bar \beta \in I_j$ the $j$--th coordinate of $\bar \beta $ is different from the $j$--th coordinate of $\bar \alpha$. Formally, for a fixed $\bar \alpha = (\alpha_1, \dots, \alpha_k)$, we define recursively
	\[
	I_1 = \{(\gamma_1, \dots, \gamma_k) \in I \, | \, \gamma_1 \neq \alpha_1\} \quad \text{and} \quad I_{j+1}= \{(\gamma_1, \dots, \gamma_k) \in I \setminus \bigcup_{i=1}^j I_i  \, | \, \gamma_k \neq \alpha_k\}.
	\]
As $\bar \alpha \not \in I$, we obtain that $I = \bigsqcup_{j=1}^k I_j$ and, as $G$ is abelian, $d_I = \prod_{j=1}^k d_{I_j}$. We now show that $d_{I_j} \in X_{\bar \alpha}$ for every $j\in \set{k}$.

To ease notation, we let $j=1$ (the general case works analogously).  Moreover, we write $J$ for $ I_1$ and $(0, \bar i)$ (respectively, $(1, \bar i)$) for elements in $\{0,1\}^k$ whose first coordinate is $0$ (respectively, $1$).
By definition of $J$, we have $\beta_1 \neq \alpha_1$ for every $\bar \beta=(\beta_1, \dots , \beta_k)  \in J$. Hence, by Claim \ref{cla ind}(3) and Lemma \ref{lem: lasc dist 1 on an array} applied to the sequence of countable tuples $\Big (\big(\bar{c}_{(\alpha_1, \dots, \alpha_k)}: (\alpha_1, \dots, \alpha_{i-1}, \alpha_{i+1}, \dots, \alpha_k)  \in \mathbb{Q}^{k-1}\big) : \alpha_1 \in  \mathbb{Q} \Big )$ we have 
\begin{gather*}
	\dist_{S_{2\alpha_k}\upharpoonright n \cup  \bar b_{1, 2\alpha_1} \cup \dots  \cup  \bar b_{k-1, 2\alpha_{k-1}}} \Bigg( \left(d_{\bar{\beta}, (0, \bar i)} : \beta \in J, (0, \bar i) \in \{0\} \times \{0,1\}^{k-1} \right),  \\
	 \left(d_{\bar{\beta}, (1, \bar i)} : \bar{\beta} \in J, (1, \bar i) \in \{1\} \times \{0,1\}^{k-1} \right) \Bigg) \leq 1.
\end{gather*}
\noindent By Remark \ref{rem : dcl of lascar distance} this implies 
\begin{gather*}
	\dist_{S_{2\alpha_k}\upharpoonright n \cup  \bar b_{1, 2\alpha_1} \cup \dots  \cup  \bar b_{k-1, 2\alpha_{k-1}}} \Bigg( \prod_{(0, \bar i) \in \Ev}d_{J, (0, \bar i)} \cdot \prod_{(0, \bar i) \in \Odd}d^{-1}_{J, (0, \bar i)},  \\
	 \prod_{(1, \bar i) \in \Odd}d_{J, (1, \bar i)} \cdot \prod_{(1, \bar i) \in \Ev}d^{-1}_{J, (1, \bar i)} \Bigg) \leq 1.
\end{gather*}
Using this, abelianity and definition of $d_J$, we get 
\begin{gather*}
	d_{J} =  \prod_{\bar i \in \Ev }d_{J, \bar i}\ \cdot \  \left (\prod_{\bar i \in \Odd }  d_{J, \bar i}\right )^{-1} =  \\
	\left(  \prod_{(0, \bar i) \in \Ev}d_{J, (0, \bar i)} \cdot \prod_{(1, \bar i) \in \Ev}d_{J, (1, \bar i)} \right) \cdot  \left( \prod_{(0, \bar i) \in \Odd}d_{J, (0, \bar i)}  \cdot  \prod_{(1, \bar i) \in \Odd}d_{J, (1, \bar i)} \right)^{-1} =\\
	\left(  \prod_{(0, \bar i) \in \Ev}d_{J, (0, \bar i)} \cdot \prod_{(0, \bar i) \in \Odd}d^{-1}_{J, (0, \bar i)} \right) \cdot \left ( \prod_{(1, \bar i) \in \Odd}d_{J, (1, \bar i)} \cdot \prod_{(1, \bar i) \in \Ev}d^{-1}_{J, (1, \bar i)}\right)^{-1} \\
	\in  X_{S_{2\alpha_k}\upharpoonright n \cup  \bar b_{1, 2\alpha_1} \cup \dots  \cup  \bar b_{k-1, 2\alpha_{k-1}}}  =  X_{\bar \alpha}.
\end{gather*}

It follows that $ d_I = \prod_{j=1}^k d_{I_j} \in X_{\bar \alpha}^{k}$, as wanted.
\qed$_{\text{claim}}$

On the other hand, we have:
\begin{claim}\label{C:ddnotinXek}  If $\bar \alpha  \in I$, then $d_I \not \in (X_{\bar \alpha}^{\Phi})^k.$

\end{claim}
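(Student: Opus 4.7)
\textbf{Proof plan for Claim \ref{C:ddnotinXek}.} The plan is to argue by contradiction and isolate the ``bad'' factor indexed by $\bar\alpha$. Assume $d_I \in (X_{\bar\alpha}^\Phi)^k$. Using that $G$ is abelian and that $I = \{\bar\alpha\} \sqcup (I \setminus \{\bar\alpha\})$, factor $d_{I,\bar i} = d_{\bar\alpha, \bar i} \cdot d_{I\setminus\{\bar\alpha\}, \bar i}$ inside each product defining $d_I$, and rearrange to obtain the decomposition
\[ d_I = d_{\{\bar\alpha\}} \cdot d_{I \setminus \{\bar\alpha\}}. \]
Since $\bar\alpha \notin I \setminus \{\bar\alpha\}$, Claim \ref{C:ddinXek} applies and gives $d_{I \setminus \{\bar\alpha\}} \in (X_{\bar\alpha}^\Phi)^k$, hence also its inverse (by Remark \ref{rem: basic props of Xs}(2) and uniform thickness of $\Phi$). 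Multiplying, we conclude $d_{\{\bar\alpha\}} \in (X_{\bar\alpha}^\Phi)^{2k}$.

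Next I would examine the $2^k$ factors entering $d_{\{\bar\alpha\}}$. The factor corresponding to $\bar i = (0,\ldots,0) \in \Ev$ is exactly $d_{\bar\alpha, (0,\ldots,0)} = d_{2\alpha_1, \ldots, 2\alpha_k}$. For every other $\bar i \in \{0,1\}^k \setminus \{(0,\ldots,0)\}$, the tuple $(2\alpha_1 + i_1, \ldots, 2\alpha_k + i_k)$ is strictly $>^{\lex} (2\alpha_1, \ldots, 2\alpha_k)$ (at the first coordinate $j$ where $i_j = 1$, the first tuple has $2\alpha_j + 1 > 2\alpha_j$). Hence by Claim \ref{cla ind}(1) applied to $(2\alpha_1, \ldots, 2\alpha_k) <^{\lex} (2\alpha_1 + i_1, \ldots, 2\alpha_k + i_k)$, together with monotonicity of $X_{(-)}$ in the base set (Remark \ref{rem: basic props of Xs}(3)) to drop from $S_{2\alpha_k}$ to $S_{2\alpha_k}\upharpoonright n$, and the inclusion $X \subseteq X^\Phi$ (Remark \ref{rem: basic props of Xs}(7)), each such factor $d_{\bar\alpha, \bar i}$ lies in $(X_{\bar\alpha}^\Phi)^{m'}$.

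Finally, solving the identity
\[ d_{\{\bar\alpha\}} = d_{2\bar\alpha} \cdot \prod_{\bar i \in \Ev \setminus \{(0,\ldots,0)\}} d_{\bar\alpha, \bar i} \cdot \Big(\prod_{\bar i \in \Odd} d_{\bar\alpha, \bar i}\Big)^{-1} \]
for $d_{2\bar\alpha}$ using abelianity and the bounds just obtained on the $2^k - 1$ other factors, I would conclude
\[ d_{2\bar\alpha} = d_{2\alpha_1,\ldots,2\alpha_k} \in (X_{\bar\alpha}^\Phi)^{2k + (2^k - 1)m'}. \]
This directly contradicts Claim \ref{cla ind}(2) applied to $(2\alpha_1, \ldots, 2\alpha_k)$, completing the proof of the claim.

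The main bookkeeping step is the exponent accounting, designed precisely so that the $2k$ from splitting $d_I$ plus $(2^k-1)m'$ from the ``off-diagonal'' terms match the threshold ensured by the construction in Claim \ref{claim Ginftymain}; the use of abelianity of $G$ to commute the $2^k$ contributions of $d_{\bar\alpha}$ past those of $d_{I\setminus\{\bar\alpha\}}$ is the essential structural ingredient (and the only place where the abelian hypothesis enters this part of the argument).
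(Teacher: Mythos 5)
Your proposal is correct and follows essentially the same route as the paper: isolate the $\bar\alpha$-contribution using abelianity, bound $d_{I\setminus\{\bar\alpha\}}$ by Claim \ref{C:ddinXek}, bound the $2^k-1$ factors $d_{\bar\alpha,\bar i}$ with $\bar i\neq(0,\ldots,0)$ via Claim \ref{cla ind}(1), and solve for $d_{2\alpha_1,\ldots,2\alpha_k}$ to contradict Claim \ref{cla ind}(2), with identical exponent accounting $2k+(2^k-1)m'$. The only cosmetic difference is that you pass through the intermediate bound $d_{\{\bar\alpha\}}\in (X_{\bar\alpha}^{\Phi})^{2k}$, whereas the paper performs the rearrangement in a single step.
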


\proof Assume that the conclusion does not hold, i.e.~$d_I \in (X_{\bar \alpha}^{\Phi})^k$. Let 
$$ I' := I \setminus \{\bar \alpha\}.$$ Then, using abelianity,  
$$d_I = d_{I'} \cdot  \prod_{\bar i \in \Ev}d_{\bar \alpha, \bar i} \cdot \prod_{ \bar i \in \Odd}d_{\bar \alpha, \bar i}^{-1}.$$
Reordering, we obtain

$$\begin{array}{cccrrcc}
d_{\bar \alpha, (0,\dots, 0)} = 
& \underbrace{d_I}
& \cdot \underbrace{d_{I'}^{-1}} 
&  \cdot \prod_{\bar i \in \Ev  \setminus \{(0,\dots,0)\}}
\underbrace{d^{-1}_{\bar \alpha, \bar i}} 
& \cdot \prod_{ \bar i \in \Odd }
\underbrace{d_{\bar \alpha, \bar i}}
\\
&\scriptscriptstyle \overset{\text{assumption}}{\in} (X_{\bar \alpha}^{\Phi})^k
&\scriptscriptstyle\overset{\text{Claim } \ref{C:ddinXek}}{\in} (X_{\bar \alpha}^{\Phi})^k

&\scriptscriptstyle\overset{\text{Claim } (\ref{cla ind}(1))}{\in} (X_{\bar \alpha })^{m'}

&\scriptscriptstyle\overset{\text{Claim } (\ref{cla ind}(1))}{\in} (X_{\bar \alpha })^{m'}.
\end{array}\\[2ex]$$
%
%
It follows (using Remark \ref{rem: basic props of Xs}(7)) that 
 $d_{2\alpha_1, \ldots, 2 \alpha_k} = d_{\bar \alpha, (0,\dots, 0)}  \in (X_{\bar \alpha}^{\Phi})^{2k +(2^k-1)m'}$, which contradicts  Claim \ref{cla ind}(2).
\qed$_{\operatorname{claim}}$

Finally, consider the formula   
$$\psi(x;\bar z_1, \dots, \bar z_k) := \exists u_1, \ldots, u_{2k}
 \left ( \left(\bigwedge_{i=0}^{k-1} \Phi(u_{2i}, u_{2i+1}; \bar y_1,\dots, \bar y_k) \right)  \land x = \prod_{i=0}^{k-1}x_{2i}^{-1} \cdot x_{2i+1}\right ).$$
 
By Claims \ref{C:ddinXek} and \ref{C:ddnotinXek} and compactness, the sequences $(\bar b_{1, 2\alpha_1}: \alpha_1 \in \omega), \ldots, (\bar b_{k-1, 2\alpha_{k-1}}: \alpha_{k-1} \in \omega), (S_{2 \alpha_k} \restriction n : \alpha_k \in \omega)$ witness that $\psi(x;\bar z_1, \dots, \bar z_k)$ is not $k$-dependent.

\subsection{Example: $G^{\infty}$ and $G^{00}$ in multilinear forms over finite fields} \label{sec: Ginfty in multilin}
As an example, we calculate the connected component of the additive group of an infinite dimensional vector space over a finite field with an alternating non-degenerate (see Definition \ref{def: non-degen}, and also Corollary \ref{C:NonDegSimplyfied}) $n$-linear form.
We consider the $2$-sorted structure
\[\mathcal M = (V, \mathbb F_p, +_G, 0_G, \langle - ,\dots, - \rangle_n, \epsilon_0, \dots, \epsilon_{p-1}),\]
where $V$ is an infinite dimensional vector space over  $\mathbb F_p$, $\langle,\dots,\rangle_n : V \times V \to \mathbb F_p$ is an alternating non-degenerate $n$-linear form and $\F_p = \{\epsilon_0, \dots, \epsilon_{p-1}\}$. The theory  $\Th(\M)$ is (strictly) $n$-dependent (by Theorem \ref{thm: Granger}), $\omega$-categorical (by Remark \ref{rem: VS complete and omega-cat}(2)) and simple (Corollary \ref{cor: multilin fin field simple}).

\begin{claim}
	For any small set of parameter $A \subset \mathbb{M}$, we have 
		\[
		V^{\infty}_A = V^{00}_A = V^{0}_{A}= \bigcap_{\pmb a \in A^{n-1}} V_{\pmb a},
		\]
		where $ V_{\pmb a} = \{ v \in V \st \langle \pmb a , v \rangle_n = 0\}$.
\end{claim}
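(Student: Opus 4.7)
The three inclusions $V^{\infty}_A \subseteq V^{00}_A \subseteq V^0_A$ hold automatically, and each $V_{\pmb{a}}$ is visibly the kernel of the $A$-definable homomorphism $v \mapsto \langle \pmb{a}, v\rangle_n \colon V \to \mathbb{F}_p$, hence an $A$-definable subgroup of $V$ of index dividing $p$, so $V^0_A \subseteq \bigcap_{\pmb{a}} V_{\pmb{a}}$. The real content of the claim is therefore the reverse inclusion $\bigcap V_{\pmb{a}} \subseteq V^{\infty}_A$, which I plan to split into first proving $\bigcap V_{\pmb{a}} \subseteq V^0_A$ and then collapsing the whole chain $V^0_A = V^{00}_A = V^{\infty}_A$.

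For $\bigcap V_{\pmb{a}} \subseteq V^0_A$, fix an arbitrary $A$-definable finite-index subgroup $H \leq V$. Since $V/H$ is a finite $\mathbb{F}_p$-vector space, the projection $V \to V/H$ decomposes into finitely many $A$-definable $\mathbb{F}_p$-linear functionals, so the task reduces to showing that every such $\varphi \colon V \to \mathbb{F}_p$ vanishes on $\bigcap V_{\pmb{a}}$. For $v \in \bigcap V_{\pmb{a}}$ linearly independent from $A$, quantifier elimination (Theorem \ref{thm: QE for multilinear forms}) together with the alternating property forces $\tp(v/A)$ to be fully determined by the vanishing linear functional $\pmb{a} \mapsto \langle \pmb{a}, v\rangle_n \equiv 0$ on $A^{n-1}$. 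Now pick a second $u \in \bigcap V_{\pmb{a}}$ linearly independent from $A \cup \{v\}$ (available by saturation of $\mathbb{M}$ together with Lemma \ref{L:findingw}, since $\bigcap V_{\pmb{a}}$ has bounded codimension in the infinite-dimensional $V$); then $u + v$ also lies in $\bigcap V_{\pmb{a}}$ and is linearly independent from $A$, so $u \equiv_A v \equiv_A u+v$. By $\mathrm{Aut}(\mathbb{M}/A)$-invariance and linearity of $\varphi$, $\varphi(u) = \varphi(u+v) = \varphi(u) + \varphi(v)$, whence $\varphi(v) = 0$ (independent of characteristic). The residual case $v \in \bigcap V_{\pmb{a}} \cap \Span(A)$ is handled by the decomposition $v = (v + u) - u$ with $u \in \bigcap V_{\pmb{a}}$ linearly independent from $A$, both summands being killed by $\varphi$ by the previous case.

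For the collapse $V^0_A = V^{00}_A = V^{\infty}_A$, I invoke two global properties of $T = \prescript{}{\Alt}T^{\mathbb{F}_p}_n$: it is $\omega$-categorical (Remark \ref{rem: VS complete and omega-cat}(2)) and simple (Corollary \ref{cor: multilin fin field simple}). The quotient $V/V^{00}_A$ is a compact Hausdorff abelian $\mathbb{F}_p$-torsion topological group in the logic topology; every continuous character into $\mu_p \subseteq \mathbb{T}$ lifts to an $A$-definable $\mathbb{F}_p$-valued functional on $V$ (fibres over single values in $\mu_p$ are clopen in the logic topology, hence $A$-definable by compactness), and such characters separate points on any compact abelian group, so the canonical surjection $V/V^{00}_A \twoheadrightarrow V/V^0_A$ is injective, giving $V^{00}_A = V^0_A$. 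Finally, simplicity together with $V$ being abelian yields $V^{\infty}_A = V^{00}_A$ via Remark \ref{rem: amenable follows}(2). Chaining these identities completes the proof.

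The most delicate point I expect is the QE-based rigidity assertion in the second paragraph — carefully confirming, using Fact \ref{fac: term reduction} and the alternating identity, that for a form-annihilating vector $v$ outside $\Span(A)$ the type $\tp(v/A)$ is rigid enough to coincide with $\tp(u/A)$ and $\tp(u+v/A)$ for the auxiliary element $u$. A secondary concern is the character/compactness argument in the collapse step; as a backup one can instead argue via the saturation dichotomy that every $A$-definable subgroup of $V$ has either finite or $|V|$-index, so every $A$-type-definable subgroup of bounded index is an intersection of $A$-definable finite-index subgroups, giving the same conclusion $V^{00}_A = V^0_A$.
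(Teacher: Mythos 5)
Your core mechanism is the same as the paper's: by quantifier elimination, any $v\in\bigcap_{\pmb a\in A^{n-1}}V_{\pmb a}$ with $v\notin\Span(A)$ realizes one fixed type over $A$, and the span part is then handled by writing it as a difference of two such generic vectors. However, as assembled your proof has two genuine gaps. First, the reduction of an $A$-definable finite-index $H$ to ``finitely many $A$-definable $\mathbb{F}_p$-linear functionals'' is unjustified: the index-$p$ subgroups above $H$ (equivalently, the functionals on the finite quotient $V/H$) are only definable over $\acl^{\eq}(A)$ in general, and $\Aut(\mathbb{M}/A)$ may permute them, so the step $\varphi(u)=\varphi(u+v)$ from $u\equiv_A u+v$ is not licensed. (This is repairable without functionals: choose the auxiliary $u$ inside $H\cap\bigcap V_{\pmb a}$, off $\Span(A\cup\{v\})$; then $u\equiv_A u+v$ and $A$-definability of $H$ itself give $u+v\in H$, hence $v\in H$.) Second, and more seriously, your collapse $V^{00}_A=V^0_A$ does not go through as written: in the character-lifting argument the clopen fibres of a character of $V/V^{00}_A$ are preimages of sets that are only (type-)definable over some small model containing $A$, and the character itself need not be $\Aut(\mathbb{M}/A)$-invariant, so you only obtain $M$-definable finite-index subgroups for larger $M$; and your backup principle --- that every $A$-type-definable bounded-index subgroup is an intersection of $A$-definable finite-index subgroups --- is false in general (it is exactly the failure $G^{00}\neq G^{0}$, e.g.\ infinitesimals in o-minimal groups) and here it would be circular, since it is essentially part of what is being proved.

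The second gap is also avoidable by restructuring as the paper does: instead of proving the weakest inclusion $\bigcap V_{\pmb a}\subseteq V^{0}_A$ and then trying to collapse downwards, run the rigidity argument directly on an $A$-invariant bounded-index subgroup that is already sandwiched inside $\bigcap V_{\pmb a}$, namely $V^{\infty}_A$ (or $V^{00}_A$): since it has bounded index it contains some $v\notin\Span(A)$, which necessarily annihilates all $\pmb a\in A^{n-1}$ and hence realizes the unique generic type; $A$-invariance then forces all of $\bigcap V_{\pmb a}\setminus\Span(A)$ into it, and the difference trick finishes $\Span(A)\cap\bigcap V_{\pmb a}$. This yields $\bigcap V_{\pmb a}\subseteq V^{\infty}_A$ in one stroke and collapses the entire chain, with no separate $V^{00}_A=V^{0}_A$ step; your appeal to simplicity and Remark \ref{rem: amenable follows}(2) for $V^{\infty}_A=V^{00}_A$ is legitimate but likewise becomes unnecessary.
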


\begin{remark}
	This is a generalization of the \cite[Example 4.1.14]{wagner2000simple}, although our form is alternating rather than symmetric.
\end{remark}

\begin{proof}Note first, that, for any $\pmb a \in A^{n-1}$, $V_{\pmb a}$ is either $V$ (if $\overline{\pmb a} = 0$ in $\bigwedge^{n-1} V$) or an $\pmb a$-definable subgroup of $V$ of index $p$. Hence $V^{00}_A \leq  \bigcap_{\pmb a \in A^{n-1}} V_{\pmb a}$.

	By quantifier elimination from Theorem \ref{thm: QE for multilinear forms} (specialized to the case of a finite field)  we have that for any $v \in V(\mathbb{M})$ and set of parameters $A$,  $\tp(v/A)$ is determined by one of the following conditions:
	\begin{enumerate}
	\item $v \in \operatorname{Span}(A)$, i.e.~(using that the field is finite) $v = \sum_{i=1}^m a_i$ for some $m \in \mathbb{N}$ and $a_i \in A$ (not necessarily distinct); or
	\item $v \not \in \operatorname{Span}(A)$ and $\bigwedge_{\pmb a \in A^{n-1}} \langle \pmb a, v \rangle_n = \epsilon_{i_t}$ for some $i_t \in \{0, \dots, p-1\}$ (using that the form is alternating,  its values with $v$ placed in the other coordinates are determined by this).
	\end{enumerate}
As $A$ is small and $V^{00}_A$ has bounded index in $V$, we know that $V^{00}_A \not \subseteq \operatorname{Span}(A)$  (in particular $\bigcap_{\pmb a \in A^{n-1}} V_{\pmb a} \not \subseteq  \operatorname{Span}(A)$). So we can find $v \in V^{00}_A \setminus \operatorname{Span}(A)$. As $V^{00}_A \leq  \bigcap_{\pmb a \in A^{n-1}} V_{\pmb a}$, we have $\bigwedge_{\pmb a \in A^{n-1}} \langle \pmb a, v \rangle_n = 0$. By the above observation, it follows that $\tp(v/A)$ is determined by $v \not \in \operatorname{Span}(A)$ and $\bigwedge_{\pmb a \in A^{n-1}} \langle \pmb a, v \rangle_n = 0$. Hence for any $v' \in  \left(\bigcap_{\pmb a \in A^{n-1}} V_{\pmb a}\right) \setminus \Span(A) $ we have $v' \equiv_A v$. And as $V^{00}_A$ is type-definable over $A$ and $v \in V^{00}_A$, we have  $v' \in V^{00}_A$. Thus we have shown that  $  \left(\bigcap_{\pmb a \in A^{n-1}} V_{\pmb a}\right) \setminus \Span(A) \subset V^{00}_A$. Now, let $u \in  \Span(A) \cap  \bigcap_{\pmb a \in A^{n-1}} V_{\pmb a}$ be arbitrary (if it exists). Take any $w \in \left( \bigcap_{\pmb a \in A^{n-1}} V_{\pmb a} \right) \setminus \Span(A)$. 
Then $-(w - u) \in \left( \bigcap_{\pmb a \in A^{n-1}} V_{\pmb a} \right)  \setminus \Span(A)$. In particular $w$ and $ -(w - u)$ belong to $V^{00}_A$. As $V^{00}_A$ is a group, we obtain that $u = w +(-(w-u)) \in V^{00}_A$. Hence
		\[
		 \bigcap_{\pmb a \in A^{n-1}} V_{\pmb a}=  \left (\bigcap_{\pmb a \in A^{n-1}} V_{\pmb a} \setminus \Span(A)\right ) \cup   \left (\Span(A) \cap  \bigcap_{\pmb a \in A^{n-1}} V_{\pmb a}\right) \subseteq V^{00}_A.
		\]
\end{proof}

In particular, for any sets $A_1, \ldots, A_n \subseteq \mathbb{M}$ we have  
$$V^{\infty}_{A_1 \cup \ldots \cup A_n} = \bigcap_{i \in [n]} V^{\infty}_{A_1 \cup \ldots \cup A_{i-1} \cup A_{i+1} \cup \ldots \cup A_n},$$
--- a strong form of Theorem \ref{thm: Ginfty k-dep main} is satisfied without any additional assumptions.
\bibliographystyle{plain}
\bibliography{ref}

\end{document}